
\documentclass[12pt]{article}


\usepackage{graphicx,hyperref,enumitem}
\usepackage{amsmath,amsthm,amssymb,color}
\usepackage[noadjust,sort]{cite}
\usepackage{tocloft,accents}
\usepackage[title]{appendix}

\makeatletter
\g@addto@macro\normalsize{%
  \setlength\abovedisplayskip{8pt plus 3pt minus 3pt}
  \setlength\belowdisplayskip{8pt plus 3pt minus 3pt}
  \setlength\abovedisplayshortskip{6pt plus 3pt minus 2pt}
  \setlength\belowdisplayshortskip{6pt plus 3pt minus 2pt}
}
\makeatother

\date{\today}

\setlength{\textwidth}{16cm}
\setlength{\textheight}{22cm}
\setlength{\topmargin}{0pt}
\setlength{\headsep}{0pt}
\setlength{\headheight}{0pt}
\setlength{\oddsidemargin}{0pt}
\setlength{\evensidemargin}{0pt}
\setlength{\parskip}{0.3ex}
\normalsize

\setlength{\cftbeforesecskip}{0pt}
\setlength{\cftbeforesubsecskip}{0pt}

\numberwithin{equation}{section}

\def\({\bigl(}
\def\){\bigr)}
\def\st{\mathrel{:}}
\def\St{\mathrel{:}}
\let\leq=\leqslant   
\let\geq=\geqslant   
\newcommand{\medtilde}{\protect\accentset{\sim}}
\newcommand{\medring}{\protect\accentset{\,\circ}}

\def\ringX{\mathchoice{\medring\X}{\medring\X}{\,\ring{\!\X}}{\,\ring{\!\X}}}

\newtheorem{thm}{Theorem}[section]
\newtheorem{cor}[thm]{Corollary}
\newtheorem{lemma}[thm]{Lemma}
\newtheorem{conj}[thm]{Conjecture}
\newtheorem{assume}[thm]{Assumptions}

\def\abs#1{\lvert#1\rvert} 
\def\Abs#1{\bigl\lvert#1\bigr\rvert} \let\Card=\Abs
\def\norm#1{\lVert#1\rVert}
\def\maxnorm#1{\norm{#1}_{\mathrm{max}}}
\def\infnorm#1{\norm{#1}_\infty}
\def\onenorm#1{\norm{#1}_1}
\def\twonorm#1{\norm{#1}_2}
\def\Norm#1{\bigl\lVert#1\bigr\rVert}

\def\infNorm#1{\Norm{#1}_\infty}

\def\Babs{B_{\mathrm{abs}}}
\def\dmax{d_{\mathrm{max}}}
\def\dmin{d_{\mathrm{min}}}
\def\lmax{\ell_{\mathrm{max}}}
\def\mumin{\mu_{\mathrm{min}}}

\def\betamin{\beta_{\mathrm{min}}}
\def\betamax{\beta_{\mathrm{max}}}
\def\gammamax{\gamma_{\mathrm{max}}}

\def\dfrac#1#2{\lower0.12ex\hbox{\large$\textstyle\frac{#1}{#2}$}}
\def\Dfrac#1#2{\raise0.05ex\hbox{\small$\displaystyle\frac{#1}{#2}$}}
\def\({\bigl(}
\def\){\bigr)}
\newcommand{\stirlingii}{\genfrac{\{}{\}}{0pt}{}}
\def\Zero{\boldsymbol{0}}
\def\One{\boldsymbol{1}}
\def\ljk{\lambda_{jk}}
\def\tauQ{\tau^{}_Q}

\let\eps=\varepsilon


\def\S{\boldsymbol{S}}

\def\X{\boldsymbol{X}}
\def\Y{\boldsymbol{Y}}

\def\calC{\mathcal{C}}

\def\calL{\mathcal{L}}

\def\calB{\mathcal{B}}
\def\gammavec{\boldsymbol{\gamma}}
\def\thetavec{\boldsymbol{\theta}}
\def\deltavec{\boldsymbol{\delta}}
\def\phivec{\boldsymbol{\phi}}
\def\betavec{\boldsymbol{\beta}}
\def\betavecstar{\boldsymbol{\beta^*}}

\def\tphi{\medtilde\phi}
\def\tphivec{\boldsymbol{\tphi}}
\def\tlambda{\medtilde\lambda}
\def\avec{\boldsymbol{a}}
\def\evec{\boldsymbol{e}}
\def\fvec{\boldsymbol{f}}

\def\dvec{\boldsymbol{d}}
\def\gvec{\boldsymbol{g}}
\def\hvec{\boldsymbol{h}}

\def\vvec{\boldsymbol{v}}
\def\wvec{\boldsymbol{w}}
\def\xvec{\boldsymbol{x}}
\def\yvec{\boldsymbol{y}}
\def\zvec{\boldsymbol{z}}
\def\alphavec{\boldsymbol{\alpha}}
\def\xivec{\boldsymbol{\xi}}

\def\trans{^{\mathsf{T}}}
\def\Trans#1{\trans\mkern-#1mu }


\def\E{\operatorname{\mathbb{E}}}

\def\RG{\operatorname{RG}}
\def\Span{\operatorname{span}}
\def\Prob{\operatorname{Prob}}

\def\Cov{\operatorname{Cov}}
\def\diag{\operatorname{diag}}
\def\diam{\operatorname{diam}}

\def\Reals{{\mathbb{R}}}
\def\Complexes{{\mathbb{C}}}
\def\tr{\operatorname{tr}}

\def\nicebreak{\vskip 0pt plus 50pt\penalty-300\vskip 0pt plus -50pt }

\begin{document}

\title{Asymptotic enumeration of graph factors by cumulant expansion}

\author{
Mikhail Isaev\\
\small Department of Mathematics and Statistics\\[-0.8ex]
\small UNSW Sydney\\[-0.8ex]
\small Sydney, NSW, Australia\\
\small \tt m.isaev@unsw.edu.au
  \and
Brendan D. McKay\\
\small School of Computing\\[-0.8ex]
\small Australian National University\\[-0.8ex]
\small Canberra, ACT, Australia\\
\small\tt brendan.mckay@anu.edu.au
}

\date{}

\maketitle

\begin{abstract}
Let $G$ be a dense graph with good expansion properties and
not too close to being bipartite.
Let~$\dvec$ be a graphical degree sequence.
Under very weak conditions, we find the number of subgraphs
of~$G$ with degree sequence~$\dvec$ to arbitrary precision. 
The average degree can be any power of $n$ and the variation
in degrees can be very large.
The method uses an explicit bound on the tail
of the cumulant generating function found by the first author. 
As a first application, we prove that there is an asymptotic expansion
for the number of regular graphs and find several terms explicitly.
We believe that this is the first combinatorial application of the Fourier inversion
method for which the integral outside the dominant regions cannot
be bounded by the integral of the absolute value, and we give
a general method for dealing with that situation.
\end{abstract}

\nicebreak
\section{Introduction}\label{S:intro}

Let $G$ be a graph with $n$ vertices, assumed to be $[n]=\{1,\ldots,n\}$,
and let $\dvec=(d_1,\ldots,d_n)$ be a graphical degree sequence.
A \textit{$\dvec$-factor} of $G$ is a subgraph of~$G$ whose degree
sequence is~$\dvec$.
Define $N(G,\dvec)$ to be the number of $\dvec$-factors of~$G$.
Here, and throughout the paper, all graphs are labelled and simple.

The number $N(K_n,\dvec)$ is of course just the number of graphs
with degree sequence~$\dvec$.
This has been the object of intensive study since Read counted
cubic graphs in his 1958 PhD thesis~\cite{ReadThesis}.
Bender and Canfield~\cite{BenCan} and Wormald~\cite{WormLow}
independently handled bounded degrees, while Bollobas~\cite{BollLog}
allowed maximum degree as high as $\sqrt{2\log n}-1$.

Let $d=\frac12\sum_{j=1}^n d_j$ be the average degree,
and say that a set of degree sequences is \textit{$d$-near-regular} if it
includes all degree sequences in $[d',d'']^n$ where $[d',d'']$
contains the degrees of a random graph of average degree~$d$
with probability $1-o(1)$.
McKay allowed maximum degree $o(d^{1/4}n^{1/4})$ \cite{McKaySparse},
which was extended to $o(d^{1/3}n^{1/3})$ by McKay and
Wormald~\cite{MWsparse}, and to sparse heavy-tailed degree sequences
by Gao and Wormald~\cite{PuNick}.
Meanwhile McKay and Wormald asymptotically enumerated
$N(K_n,\dvec)$ for all $d$-near-regular degree sequences when
$\min\{d,n-d-1\} > cn/\log n$ for constant $c>\frac23$~\cite{MWreg}.
Two breakthrough results complete the $N(K_n,\dvec)$ picture up to
the present.
Barvinok and Hartigan counted graphs with average degree
$\Theta(n)$ and a range of degrees far broader than 
$d$-near-regular~\cite{BarvHart1}.
Liebanau and Wormald solved the $d$-near-regular case for
a range of average degrees wide enough to overlap previous
knowledge at both the sparse and dense ends~\cite{Liebenau}.

The main reason for generalizing $N(K_n,\dvec)$ to 
$N(G,\dvec)$ is to study the structure of random graphs.
For example, if $H$ is a subgraph of~$G$ with degree sequence~$\hvec$,
then the probability that a random $\dvec$-factor of~$G$
contains~$H$, or is edge-disjoint from~$H$, is respectively
\[
   \frac{N(G-H,\dvec-\hvec)}{N(G,\dvec)}
   \text{~~~or~~~}
   \frac{N(G-H,\dvec)}{N(G,\dvec)}.
\]
This generalization has been pursued in the sparse domain by
\cite{BollMcK,WormLow,BenCan}, with the best result being~\cite{McKaySparse}.
In the dense regime, McKay~\cite{ranx} allowed $K_n-G$ to have maximum
degree $O(n^{1/2+\eps})$ and $O(n^{1+2\eps})$ edges, for sufficiently small
$\eps>0$, provided the degree sequence is $d$-near-regular.
For average degree $\Theta(n)$, and the same wide variation of degrees
as covered by~\cite{BarvHart1}, Isaev and McKay~\cite{mother} allowed
$K_n-G$ to have maximum degree $O(n^{1/6})$ and sum of squared
degrees~$O(n)$.

Missing from the above recitation are any precise calculations for
large subgraphs and degree sequences with average degree between
$o(n^{1/3})$ and $O(n)$. We will fill that gap in this paper.
Our main results are stated explicitly in the following section.
Here we will summarize the differences between this work and
previous work.

Our estimate of $N(G,\dvec)$ allows $G$ to be any graph with
minimum degree $\Omega(n)$ that is sufficiently far from bipartite
and has moderately good expansion properties.
This is much more general than any previous work.
Our average degree~$d$ need only satisfy
$\min\{d,n-d-1\}=\Omega(n^\sigma)$ for arbitrary $\sigma>0$, which
is more general than previous work except for~\cite{Liebenau}.
However, \cite{Liebenau} only considers $G=K_n$
and $d$-near-regular degree sequences.
Our degree sequences can have widely varying degrees, matching
\cite{BarvHart1} in the case of average degree $\Theta(n)$ and
otherwise new except in the very sparse range.
Finally, our formula can provide $N(G,\dvec)$ to relative precision
$O(n^{-p})$ for any~$p$, whereas no previous work provided
even~$O(n^{-1})$.
This is significant for the computation of variances and other properties
that involve near-cancellation of large values.

From the technical point of view, our work is related to a
generalisation of the so-called
\textit{$\beta$-equations}, which appeared in disguise in~\cite{MWreg}
and explicitly in~\cite{BarvHart1}, see also~\cite{CDS}.
However, we do not require an exact solution but only a sufficient
approximation.
Our method is that of Fourier inversion, introduced into combinatorics
in~\cite{Mtourn,MWreg}.
As in the other problems solved by that method (see~\cite{mother} for
a long list), the value of the integral is concentrated in some dominant 
regions and negligible outside them.
However, unlike all previous applications we are aware of, the
contribution from outside the dominant  regions cannot be sufficiently
bounded by the integral of the absolute value of the integrand.
This requires us to invent a new technique for bounding it.
 
\nicebreak
\subsection{Statement of the main results}\label{s:results}

In this section we will present our main theorems.

 We will write ``$jk\in G$'' as a less cluttered alternative to ``$\{j,k\}\in E(G)$''.
 Let $\betavec=(\beta_1,\ldots,\beta_n)$ be a vector of
 real numbers, and for $1\leq j,k\leq n$ define
 \[
   \ljk  = \ljk (\betavec) :=
      \frac{e^{\beta_j+\beta_k}}{1+e^{\beta_j+\beta_k}}.
 \]
 Under very general conditions, which we investigate in a future paper,
there is a unique solution~$\betavecstar$ to the equations
 \begin{equation}\label{betaeqn}
    \sum_{k\st jk\in G} \ljk  (\betavecstar)= d_j \quad (1\leq j\leq n).
 \end{equation}
However, although the existence of $\betavecstar$ is implied by our
assumptions, our results are not phrased in terms of $\betavecstar$
or, explicitly, in terms of the distance of $\betavec$ from $\betavecstar$ .
Instead, we require an approximate solution of~\eqref{betaeqn} in the
sense that $\deltavec$ is small, where
$\deltavec= \deltavec(\betavec) =(\delta_1,\ldots,\delta_n)$ is defined by
\begin{equation}\label{approxbeta}
   \delta_j := \sum_{k:jk\in G} \ljk  - d_j
   \quad (1\leq j\leq n).
\end{equation}
 Define 
\[
  \lambda:=\frac{\sum_{j=1}^n d_j} {\sum_{j=1}^n g_j}  
  \quad\text{ and }\quad \varLambda :=\lambda(1-\lambda),
\]
and note that $\lambda$ is the average of $\ljk (\betavecstar)$ 
over $jk\in G$, if $\betavecstar$ exists.

The notations $\onenorm{\cdot}$, $\twonorm{\cdot}$  and $\infnorm{\cdot}$
have their usual meanings as vector norms and the corresponding
induced matrix norms.  For a matrix $M=(m_{jk})$ we will also use
$\maxnorm{M}=\max_{jk}\,\abs{m_{jk}}$ but note that it is
not submultaplicative.
 
 The \textit{signless Laplacian} of $G$ is the matrix
 $Q(G)=(q_{jk})$ where
 \[
    q_{jk} = \begin{cases}
               \,g_j, & \text{~~if $j=k$}; \\
               \,1,   & \text{~~if $jk\in G$}; \\
               \,0,   & \text{~~otherwise}.
             \end{cases}
 \]
 The least eigenvalue $q(G)$ of $Q(G)$ is called the
 \textit{algebraic bipartiteness} of~$G$ and is zero
 iff $G$ has a bipartite component~\cite{Fallat,HePan}.
 A large value of $q(G)$ implies that $G$ is far from bipartite.
 For example, at least $q(G)$ vertices or edges must be deleted
from $G$ to obtain a bipartite graph~\cite{Fallat}.
 
 For $U\subseteq [n]$,
 $\partial_G \,U$ is the set of edges of $G$ with exactly one
 end in $U$.
 The \textit{Cheeger constant}, also called the isoperimetric
 constant, of $G$ is
 \[
    h(G ) := \min_{1\leq\abs U\leq n/2} \frac{\abs{\partial_G\, U}}{\abs U}.
 \]
 A large Cheeger constant implies that $G$ has strong expansion
 properties.

 We can now define the assumptions on $(\dvec,G)$ under
 which we will be able to estimate~$N(G,\dvec)$.

 \begin{assume}\label{mainassumptions}
   There are positive constants $\sigma,B,C,\tauQ$,
   a vector $\betavec\in\Reals^n$, and a positive
   integer sequence $\dvec=(d_1,\ldots,d_n)$ with
   even sum  such that the following hold for sufficiently
   small $\eps>0$.
   \begin{itemize}[noitemsep,topsep=0pt]
      \item[(a)] $d_j\leq g_j$ for $j\in[n]$.
        \item[(b)]  $\abs{\beta_j-\beta_k} \leq C$ for $j,k\in[n]$.
      \item[(c)] $\varLambda \geq B n^{\sigma-1}$.
      \item[(d)] $h(G )\geq \varLambda^{-1}\log^2 n$ and 
          $q(G)\geq \tauQ n$ (so $G$ has
          minimum degree at least $\tauQ n$).
      \item[(e)] $\infnorm\deltavec=O(\varLambda^{1/2}n^{1/2-\sigma/2})$
         and $\onenorm\deltavec=O(n^{1+\eps})$.
   \end{itemize}   
 \end{assume}

  The second part of Assumption~(d) is equivalent to that,
 for any $(\theta_1,\ldots,\theta_n)\in\Reals^n$,
 $\sum_{jk\in G}\, (\theta_j+\theta_k)^2 \geq \tauQ n
 \sum_{j=1}^n \theta_j^2$.  Taking $\theta_j=1$ and
 $\theta_k=0$ for $k\ne j$, we see that $g_j\geq\tauQ n$.
 
 We will use $O(n^{-p})$ as a precision target, for some $p>0$.

\smallskip
Throughout the paper, asymptotic claims will be with
respect to $n\to\infty$ and sometimes involve a constant
$\eps>0$ which must be small enough.
Precisely, we will observe the
following discipline regarding the $O(\,)$ and 
similar notations.  There are values
$n_0=n_0(p,B,C,\sigma,\tauQ,\eps)$,
$\eps_0=\eps_0(p,B,C,\sigma,\tauQ)>0$,
$\check c=\check c(p,B,C,\sigma,\tauQ,\eps)$
and $\hat c=\hat c(p,B,C,\sigma,\tauQ,\eps)>0$,
\textit{depending only on the arguments listed},
such that
$O(g(n,\eps))$, respectively $\Omega(g(n,\eps))$,
represents a quantity of absolute value at most
$\check c\, g(n,\eps)$, respectively a positive
quantity of value at least $\hat c\, g(n,\eps)$,
provided $n\geq n_0$ and $0<\eps\leq \eps_0$.
The notation $\Theta(g(n,\eps))$ represents the combination
of $O(g(n,\eps))$ and $\Omega(g(n,\eps))$. 

Define $f_\lambda(z)=\ln(1+\lambda(e^{iz}-1))$ and denote
its Taylor coefficients by $c_\ell=c_\ell(\lambda)$ where
$f_\lambda(z)=\sum_{\ell=1}^\infty c_\ell z^\ell$.
The first few values are
\begin{align*}
  c_1 &= i\lambda, &
  c_2 &= -\dfrac12 \varLambda,  \\
  c_3 &= -\dfrac16 i (1-2\lambda)\varLambda,&
  c_4 &= \dfrac1{24} (1-6\varLambda)\varLambda .
\end{align*}

Define
\[
    M(G,\dvec) := \frac{\prod_{jk\in G}(1+e^{\beta_j+\beta_k})}
                                 {e^{\sum_{j=1}^n d_j\beta_j}}
                            = \frac{e^{\sum_{j=1}^n \delta_j\beta_j}}
                                      {\prod_{jk\in G} \ljk^{\ljk}(1-\ljk)_{\vphantom{k}}^{1-\ljk}}.
\]

\begin{thm}\label{mainthm}
Adopt Assumptions~\ref{mainassumptions}.  Define
\begin{align*}
      \ell_0 &:= 2\biggl\lceil \frac{1+p}{\sigma} \biggr\rceil, \quad
      r_0 := 2\biggl\lceil \frac{1+p}{\sigma} \biggr\rceil - 2,\\
      R_{\ell_0}(\thetavec) &:= i\sum_{j\in[n]}\delta_j\theta_j
       + \sum_{\ell=3}^{\ell_0}
        \sum_{jk\in G} c_\ell(\ljk ) (\theta_j+\theta_k)^\ell.
\end{align*}
Define the real symmetric matrix $A=A(G,\betavec)$ by 
$\sum_{jk\in G} \,c_2(\ljk ) (\theta_j+\theta_k)^2
= -\frac12 \varLambda n\,\thetavec\Trans6 A\thetavec$.
Then
  \begin{equation*}
    N(G,\dvec) = 
      \frac{2\,M(G,\dvec)}
                     {(2\pi \varLambda n)^{n/2}\abs{A}^{1/2}}
     \exp\biggl(\, \sum_{r=1}^{r_0} \dfrac{1}{r!} \kappa_r(R_{\ell_0}(\Y)) +O(n^{-p})\biggr),
  \end{equation*}
where $\Y$ is a Gaussian  random variable with density
$(2\pi)^{-n/2} \abs{A}^{1/2} (\varLambda n)^{n/2}
   e^{-\frac12\varLambda n\sum_{j=1}^n \yvec\Trans5 A\yvec}$
  and $\kappa_r$ denotes the $r$-th cumulant.
\end{thm}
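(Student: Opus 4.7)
The plan is to apply the Fourier inversion method. Starting from the identity
$$N(G,\dvec) = \frac{M(G,\dvec)}{(2\pi)^n}\int_{[-\pi,\pi]^n} \exp\Bigl(\,\sum_{jk\in G} f_{\ljk}(\theta_j+\theta_k) - i\sum_j d_j\theta_j\Bigr)\,d\thetavec,$$
obtained by extracting the coefficient of $\prod_j x_j^{d_j}$ from $\prod_{jk\in G}(1+x_j x_k)$ with $x_j=e^{\beta_j+i\theta_j}$ and factoring out $M(G,\dvec)$. Because $\sum_j d_j$ is even (Assumption~(a)), the integrand is invariant under the simultaneous shift $\theta_j\mapsto\theta_j+\pi$, producing two equivalent saddle points at $\Zero$ and $\pi\One$; combining their contributions is the origin of the factor $2$ in the theorem.

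Next, I would localize on a dominant region $\mathcal{R}$ around $\Zero$ and Taylor-expand $f_\lambda(z)=\sum_\ell c_\ell(\lambda)z^\ell$ there. The $\ell=1$ contribution combines with $-i\sum_j d_j\theta_j$ to give $i\sum_j \delta_j\theta_j$ (the linear part of $R_{\ell_0}$), the $\ell=2$ contribution assembles into $-\tfrac12\varLambda n\,\thetavec\trans A\thetavec$, and the terms with $3\leq\ell\leq\ell_0$ produce the remainder of $R_{\ell_0}(\thetavec)$. The Taylor tail $\ell>\ell_0$ contributes only $O(n^{-p})$ uniformly on $\mathcal{R}$ once the choice $\ell_0=2\lceil(1+p)/\sigma\rceil$ is combined with Assumption~(c), namely $\varLambda\geq Bn^{\sigma-1}$. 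After this reduction, extending the integration from $\mathcal{R}$ to all of $\Reals^n$ adds only a negligible Gaussian tail, and the dominant-region contribution collapses to
$$\int_{\Reals^n}\exp\bigl(-\tfrac12\varLambda n\,\thetavec\trans A\thetavec + R_{\ell_0}(\thetavec)\bigr)\,d\thetavec = \frac{(2\pi)^{n/2}}{(\varLambda n)^{n/2}\abs{A}^{1/2}}\,\E\,e^{R_{\ell_0}(\Y)}.$$
The stated formula then follows by writing $\log\E\,e^{R_{\ell_0}(\Y)}$ as the cumulant series $\sum_{r\geq 1}\kappa_r(R_{\ell_0}(\Y))/r!$ and truncating at $r=r_0$, where the first author's explicit bound on the tail of the cumulant generating function (cited in the abstract) controls the truncation error at $O(n^{-p})$.

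The main obstacle is the integral over the complement of $\mathcal{R}$ (together with its $\pi\One$-translate). As the authors stress, $\abs{\exp F(\thetavec)}$ need not be small outside $\mathcal{R}$, so the standard technique of bounding by the integral of the absolute value breaks down, and a genuine cancellation argument is required. My plan would be to partition the tail region according to how the angles $\theta_j\pmod{\pi}$ cluster across vertex subsets $U\subseteq[n]$ and, on each piece, integrate out sufficiently many variables to expose oscillatory cancellation among the remaining ones. The expansion hypothesis $h(G)\geq\varLambda^{-1}\log^2 n$ guarantees that any nontrivial partition of $[n]$ cuts $\Omega(\varLambda^{-1}\abs{U}\log^2 n)$ edges along which phases can be made to average out, while the algebraic bipartiteness bound $q(G)\geq\tauQ n$ rules out near-bipartite configurations that would otherwise create spurious saddle points distinct from $\Zero$ and $\pi\One$. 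I expect the design of this cancellation estimate, rather than any of the routine Gaussian/cumulant bookkeeping above, to carry the central new technical difficulty of the proof.
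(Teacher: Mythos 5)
Your Fourier-inversion setup, the identification of the two saddle points at $\Zero$ and $\pi\One$ (giving the factor $2$), and the decomposition of the exponent into $-\tfrac12\varLambda n\,\thetavec\Trans6 A\thetavec + R_{\ell_0}(\thetavec) + \text{tail}$ all match the paper. However, the pivot at the center of the argument has a genuine gap. You extend the integration from the dominant region $\mathcal{R}$ to all of $\Reals^n$ and then identify
\[
\int_{\Reals^n}\exp\bigl(-\tfrac12\varLambda n\,\thetavec\trans A\thetavec + R_{\ell_0}(\thetavec)\bigr)\,d\thetavec
   = \frac{(2\pi)^{n/2}}{(\varLambda n)^{n/2}\abs{A}^{1/2}}\,\E\,e^{R_{\ell_0}(\Y)} .
\]
This quantity need not be finite: $R_{\ell_0}$ is a polynomial of even degree $\ell_0$ whose leading coefficients $c_{\ell_0}(\ljk)$ are $\Theta(\varLambda)$ and may be positive (e.g.\ $c_4(\lambda)>0$ when $\varLambda<\tfrac16$), so the degree-$\ell_0$ term eventually overwhelms the negative quadratic and the integral diverges. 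Consequently $\log\E\,e^{R_{\ell_0}(\Y)}$ cannot be expanded as a cumulant series in the way you propose. The paper never extends to $\Reals^n$: it keeps the integration on the bounded parallelepiped $\calB_0=TU_n(\rho)$ and applies the tail bound for the cumulant generating function (Theorem~\ref{daughterthm}) to the \emph{truncated} random vector $\hat\X$ supported on that cuboid, only afterwards replacing truncated cumulants by untruncated Gaussian ones once the series has already been cut to finitely many terms (Lemmas~\ref{applydaughter}, \ref{truncatedsum}, \ref{better}). Moreover, Theorem~\ref{daughterthm} requires a random vector with \emph{independent} components, which is why the paper changes variables by the matrix $T$ with $T\Trans4 AT=I$ and integrates over a tilted parallelepiped rather than an axis-aligned box; your proposal applies the cumulant-tail bound directly to the correlated $\Y$, which is outside its hypotheses.

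On the off-saddle contribution, you correctly flag that absolute-value bounds fail and that a cancellation argument is needed, and your intuition about $q(G)$ ruling out near-bipartite configurations (cf.\ Theorem~\ref{oddpaths} feeding Lemma~\ref{firstcut}) and the Cheeger constant controlling the split between the two saddle regions (Theorem~\ref{shortpaths} feeding Lemma~\ref{secondcut}) is essentially right. But the core difficulty lies elsewhere: after all those absolute-value bounds there remains a belt of $\thetavec$ with a bounded number of coordinates moderately far from the origin where $\abs{F_{\betavec}}$ is \emph{not} small. The paper handles it by tiling that region with shifted parallelepipeds, bounding the corresponding lower-dimensional integrals of $F_{\betavec}$ (not $\abs{F_{\betavec}}$) against $J_0$ via a second cumulant expansion on each slice (Corollary~\ref{Ibound}), and then summing via the general periodic-function tiling estimate of Theorem~\ref{MishaMagic}. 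Your phrase ``integrate out sufficiently many variables to expose oscillatory cancellation'' gestures in this direction but does not produce the mechanism, which is the genuinely new technical content of the proof.
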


As we have noted, we don't require~\eqref{betaeqn} to have a solution,
but instead are satisfied with a near solution in the sense of
Assumptions~\ref{mainassumptions}(e).
Explicit expressions for near solutions are not easy to come by, but
the following gives such an expression under moderately strong conditions.

\begin{thm}\label{goodbeta}
    Suppose $G$ satisfies Assumptions~\ref{mainassumptions}(a,c,d).
    Define $\betavec$ by
    \[
        \beta_j := \dfrac12\log\frac{\lambda}{1-\lambda} + \gamma_j,
    \]
    where $\gammavec:=(\gamma_1,\ldots,\gamma_n)\trans 
    = \varLambda^{-1} Q(G)^{-1}(\dvec-\lambda\gvec)$.
    Suppose
    \[
         \infnorm{\dvec-\lambda\gvec}=O(\varLambda^{3/4}n^{3/4-\sigma/4})
         \text{~~~and~~~}
         \twonorm{\dvec-\lambda\gvec}=O(\varLambda^{1/2}n^{1+\eps/2}).
    \]
   Then $\betavec$ satisfies Assumptions~\ref{mainassumptions}(b,e) with
   \begin{equation}\label{nearreg}
    \infnorm\deltavec=O(\varLambda^{-1}n^{-1})\,\infnorm{\dvec-\lambda\gvec}^2
    \text{~~~and~~~}
    \onenorm\deltavec=O\(\varLambda^{-1}n^{-1})\,\twonorm{\dvec-\lambda\gvec}^2.
   \end{equation}
\end{thm}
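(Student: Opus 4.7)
The plan is to work with the coordinates $\gamma_j$ defined by $\beta_j = \tfrac12\log\tfrac{\lambda}{1-\lambda} + \gamma_j$, in which the edge probability becomes $\ljk = \sigma(\log\tfrac{\lambda}{1-\lambda} + \gamma_j + \gamma_k)$ with $\sigma(x) = e^x/(1+e^x)$, and the definition of $\betavec$ is equivalent to the linear system $Q(G)\gammavec = \varLambda^{-1}(\dvec - \lambda\gvec)$. First I establish a priori bounds on $\gammavec$. The 2-norm bound $\twonorm\gammavec \leq \varLambda^{-1}q(G)^{-1}\twonorm{\dvec-\lambda\gvec} = O(\varLambda^{-1}n^{-1})\twonorm{\dvec-\lambda\gvec}$ is immediate from $q(G) \geq \tauQ n$. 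To get an $\ell^\infty$ bound I rewrite the $j$-th equation as $g_j\gamma_j = \varLambda^{-1}(d_j - \lambda g_j) - \sum_{k\sim j}\gamma_k$, apply Cauchy--Schwarz giving $\bigl|\sum_{k\sim j}\gamma_k\bigr| \leq \sqrt{g_j}\,\twonorm\gammavec$, divide by $g_j \geq \tauQ n$, and use $\twonorm{\dvec-\lambda\gvec} \leq \sqrt n\,\infnorm{\dvec-\lambda\gvec}$ to obtain
\[
   \infnorm\gammavec = O(\varLambda^{-1}n^{-1})\,\infnorm{\dvec-\lambda\gvec}.
\]
Combined with the hypothesis $\infnorm{\dvec-\lambda\gvec} = O(\varLambda^{3/4}n^{3/4-\sigma/4})$ and $\varLambda \geq Bn^{\sigma-1}$, this yields $\infnorm\gammavec = O(n^{-\sigma/2}) = o(1)$, so $|\beta_j - \beta_k| = |\gamma_j - \gamma_k|$ is smaller than any fixed $C > 0$ for large $n$, verifying Assumption~(b).

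Next I Taylor-expand $\sigma$ at $x_0 = \log\tfrac{\lambda}{1-\lambda}$. Using $\sigma(x_0) = \lambda$, $\sigma'(x_0) = \varLambda$, the identity $|\sigma''| \leq \sigma'$, and $\sigma'(x_0+s) = O(\varLambda)$ for $|s| = o(1)$, the a priori bound on $|\gamma_j+\gamma_k|$ gives
\[
   \ljk = \lambda + \varLambda(\gamma_j + \gamma_k) + O\bigl(\varLambda(\gamma_j + \gamma_k)^2\bigr).
\]
Summing over $k \sim j$ and substituting $\sum_{k\sim j}(\gamma_j+\gamma_k) = (Q(G)\gammavec)_j = \varLambda^{-1}(d_j - \lambda g_j)$, the constant $\lambda g_j$ and the linear piece $d_j - \lambda g_j$ combine to cancel the $-d_j$ in the definition of $\delta_j$, leaving
\[
   \delta_j = O(\varLambda)\,\sum_{k\sim j}(\gamma_j + \gamma_k)^2.
\]

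Finally I read off the two bounds. For $\onenorm\deltavec$, summing gives $\sum_j\sum_{k\sim j}(\gamma_j+\gamma_k)^2 = 2\gammavec\trans Q(G)\gammavec = 2\varLambda^{-1}\gammavec\trans(\dvec - \lambda\gvec)$, so Cauchy--Schwarz with the 2-norm estimate yields $\onenorm\deltavec = O\bigl(\twonorm\gammavec\,\twonorm{\dvec-\lambda\gvec}\bigr) = O(\varLambda^{-1}n^{-1})\twonorm{\dvec-\lambda\gvec}^2$. For $\infnorm\deltavec$, split $(\gamma_j+\gamma_k)^2 \leq 2\gamma_j^2 + 2\gamma_k^2$ to get $\sum_{k\sim j}(\gamma_j+\gamma_k)^2 \leq 2g_j\infnorm\gammavec^2 + 2\twonorm\gammavec^2$; inserting the estimates from the first step together with $\twonorm{\dvec-\lambda\gvec} \leq \sqrt n\,\infnorm{\dvec-\lambda\gvec}$ makes both summands $O(\varLambda^{-2}n^{-1})\infnorm{\dvec-\lambda\gvec}^2$, yielding the claimed $\infnorm\deltavec = O(\varLambda^{-1}n^{-1})\infnorm{\dvec-\lambda\gvec}^2$. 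The only slightly delicate step is the Cauchy--Schwarz trick that converts the 2-norm estimate on $\gammavec$ into a sharp $\ell^\infty$ estimate; everything else is routine Taylor bookkeeping, and it is worth noting that only the spectral part $q(G) \geq \tauQ n$ of Assumption~(d) is actually used.
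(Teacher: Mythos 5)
Your proof is correct and follows essentially the same route as the paper: write $\beta_j$ as the constant plus $\gamma_j$, Taylor-expand $\ljk$ about that constant, use the defining relation $Q(G)\gammavec=\varLambda^{-1}(\dvec-\lambda\gvec)$ to cancel the linear term, and bound the quadratic remainder via norm estimates on $\gammavec$ that ultimately rest on $q(G)\geq\tauQ n$. The two cosmetic differences are that you derive $\infnorm\gammavec=O(\varLambda^{-1}n^{-1})\infnorm{\dvec-\lambda\gvec}$ elementarily (Cauchy--Schwarz on $\sum_{k\sim j}\gamma_k$ plus $g_j\geq\tauQ n$) where the paper cites $\infnorm{Q(G)^{-1}}=O(n^{-1})$ from Theorem~\ref{matrixthm1}(a), and for $\onenorm\deltavec$ you use the exact identity $\sum_j\sum_{k\sim j}(\gamma_j+\gamma_k)^2=2\gammavec\trans Q(G)\gammavec$ where the paper uses the cruder $O(n)\twonorm\gammavec^2$; both yield the same bounds.
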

\begin{proof}
  By Assumption~\ref{mainassumptions}(d) and Theorem~\ref{matrixthm1}(a),
  $\infnorm{Q(G)^{-1}}=O(n^{-1})$.
    Assumption~\ref{mainassumptions}(b) is therefore satisfied
   since $\abs{\beta_j-\beta_k}\leq 2\infnorm\gammavec$ for all $j,k$.
  
If $z=O(1)$, then $(e^z-1)/(1+\lambda(e^z-1))=z+O(z^2)$
uniformly over $\lambda\in(0,1)$.
Using this and the definition of $Q$, we have for $j\in[n]$ that
\begin{align*}
    \sum_{k:jk\in G} \ljk(\betavec) &= \lambda g_j +
              \varLambda\sum_{k:jk\in G} \frac{e^{\gamma_j+\gamma_k}-1}
                                                                    {1+\lambda(e^{\gamma_j+\gamma_k}-1)} \\
     &= \lambda g_j + \varLambda \sum_{k:jk\in G}
                       \( \gamma_j+\gamma_k + O((\gamma_j+\gamma_k)^2)\) \\
     &= d_j + O(\varLambda^{-1} n)\, \infnorm{Q^{-1}(\dvec-\lambda \gvec)}^2 \\[0.5ex]
     &= d_j + O(\varLambda^{-1}n^{-1})\,\infnorm{\dvec-\lambda\gvec}^2,
\end{align*}
which proves the first part of~\eqref{nearreg}.
For the second part, note that
\begin{align*}
   \varLambda \sum_{jk\in G} O((\gamma_j+\gamma_k)^2)
       &= O(\varLambda n) \,\twonorm{\gammavec}^2 \\[-2ex]
       &= O(\varLambda^{-1}n^{-1})\,\twonorm{\dvec-\lambda \gvec}^2,
\end{align*}
since $\twonorm{Q(G)^{-1}}=O(n^{-1})$ by Assumption~\ref{mainassumptions}(d).
\end{proof}

Consider a random factor of $G$ formed by taking each edge independently
with probability~$\lambda$.  Then, provided $\varLambda\geq n^{-1+\sigma}\log^2 n$,
both the conditions on $\dvec-\lambda\gvec$ are satisfied with high probability.
Thus, Theorem~\ref{goodbeta} provides an adequate value of $\betavec$ for
studying random factors.
Note that for the case $G=K_n$, we have
\[
    \gamma_j = \frac{d_j-\lambda(n-1)}{\varLambda(n-2)}.
\]

\smallskip
A weak corollary of Theorem~\ref{mainthm} is the following,
whose discussion and proof is postponed until Section~\ref{s:observations}.

\begin{thm}\label{edgeprob}
 Suppose $G$,  $\dvec$ and $\betavec$
 satisfy Assumptions~\ref{mainassumptions} with
 $\onenorm{\deltavec}=O(\varLambda n)$.
 Let $uv$ be an edge of~$G$.
 Then the probability that a uniform random $\dvec$-factor $H$ of $G$
 contains $uv$~is
 \[
      \(1+O(\varLambda^{-1}n^{-1}(\onenorm\deltavec+1))\)\,\lambda_{uv}(\betavec).
 \]
 In particular, if~\eqref{betaeqn} has a solution~$\betavecstar$,
 then the probability is $\(1+O(\varLambda^{-1}n^{-1})\)\,\lambda_{uv}(\betavecstar)$.
\end{thm}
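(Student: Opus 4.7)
The probability equals
\[
   \Prob(uv\in H) = \frac{N(G-uv,\,\dvec-\evec_u-\evec_v)}{N(G,\dvec)},
\]
since removing $uv$ sets up a bijection between $\dvec$-factors of $G$ containing $uv$ and $(\dvec-\evec_u-\evec_v)$-factors of $G-uv$ (here $\evec_j$ is the $j$th standard basis vector). My plan is to apply Theorem~\ref{mainthm} to both numerator and denominator using the \emph{same} $\betavec$ and compare. First I would check that $(G-uv,\,\dvec-\evec_u-\evec_v,\,\betavec)$ still satisfies Assumptions~\ref{mainassumptions}: removing one edge shifts $h(G)$ and $q(G)$ by at most $1$, shifts $\lambda$ and $\varLambda$ by $O(n^{-2})$ (since $\sum_j d_j$ and $\sum_j g_j$ are both $\Theta(n^2)$), and changes the deficit vector $\deltavec$ only in coordinates $u,v$, each by $1-\lambda_{uv}\in[0,1]$; so $\onenorm{\deltavec}$ grows by at most $2$ and $\infnorm{\deltavec}$ by at most $1$, and all assumptions persist up to constants.

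Applying Theorem~\ref{mainthm} to both triples with precision parameter chosen much larger than $p$, the ratio splits into (i)~the $M$-ratio, (ii)~the ratio of Gaussian normalisers $(\varLambda n)^{n/2}\abs{A}^{1/2}$, and (iii)~the exponential of the difference of cumulant sums. Using the product form $M=\prod_{jk\in G}(1+e^{\beta_j+\beta_k})\big/e^{\sum_j d_j\beta_j}$, factor~(i) evaluates exactly to
\[
   \frac{M(G-uv,\,\dvec-\evec_u-\evec_v)}{M(G,\dvec)}
   = \frac{e^{\beta_u+\beta_v}}{1+e^{\beta_u+\beta_v}} = \lambda_{uv}(\betavec),
\]
producing the claimed main term. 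It then remains to show that (ii) and (iii) together contribute $1+O(\varLambda^{-1}n^{-1}(\onenorm\deltavec+1))$.

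For (ii) the analysis is routine. We have $\varLambda nA - \varLambda'nA' = \lambda_{uv}(1-\lambda_{uv})(\evec_u+\evec_v)(\evec_u+\evec_v)\trans$, a rank-one perturbation. Assumptions~(b),(d) force $\twonorm{A^{-1}}=O(1)$ (all $\lambda_{jk}(1-\lambda_{jk})$ are comparable to $\varLambda$ by~(b), and $q(G)\geq \tauQ n$ controls $\sum_{jk\in G}(\theta_j+\theta_k)^2$ from below). The matrix-determinant lemma combined with the shift $(\varLambda/\varLambda')^{n/2}=1+O(\varLambda^{-1}n^{-1})$ then leaves a net factor $1+O(\varLambda^{-1}n^{-1})$.

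The main obstacle is factor~(iii). Writing
\[
   \Delta R(\thetavec) := R'_{\ell_0}(\thetavec) - R_{\ell_0}(\thetavec)
   = i(1-\lambda_{uv})(\theta_u+\theta_v) - \sum_{\ell=3}^{\ell_0} c_\ell(\lambda_{uv})(\theta_u+\theta_v)^\ell,
\]
I would expand $\kappa_r\(R_{\ell_0}(\Y)+\Delta R(\Y)\) - \kappa_r\(R_{\ell_0}(\Y)\)$ by multilinearity of joint cumulants into a finite sum indexed by the number of $\Delta R$ slots. The dominant contribution is $\Cov\(R_{\ell_0}(\Y),\Delta R(\Y)\)$, in which the $\deltavec$-linear piece $i\sum_j\delta_j Y_j$ of $R_{\ell_0}$ couples with the leading linear piece of $\Delta R$ through $\Cov(Y_j,Y_u+Y_v)=(\varLambda n)^{-1}\evec_j\trans A^{-1}(\evec_u+\evec_v)$, giving a term of size $O(\varLambda^{-1}n^{-1}\onenorm\deltavec)$ via $\twonorm{A^{-1}}=O(1)$. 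All remaining multi-cumulant contributions carry at least one $\Delta R$ factor localised to coordinates $u,v$, and are controlled by repeated use of $\V[Y_u+Y_v]=O((\varLambda n)^{-1})$, yielding $O(\varLambda^{-1}n^{-1})$. Summing over $1\leq r\leq r_0$ gives the stated error. The final clause of the theorem follows by specialising to $\deltavec(\betavecstar)=\Zero$.
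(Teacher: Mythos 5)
Your opening move is genuinely different from the paper's, and it introduces a gap. You write $\Prob(uv\in H)=N(G-uv,\dvec-\evec_u-\evec_v)/N(G,\dvec)$ and apply Theorem~\ref{mainthm} to both terms; the paper instead works with the odds ratio $\Prob/(1-\Prob)=N(G-uv,\dvec-\evec)/N(G-uv,\dvec)$, so that \emph{both} applications of Theorem~\ref{mainthm} use the same graph $G-uv$, hence the same matrix $A=A(G-uv,\betavec)$, the same determinant, and the same Gaussian random variable~$\Y$. With the paper's set-up, the cumulant sums in the numerator and denominator differ only in the linear coefficients $\delta_j$, and Theorem~\ref{cumdiff} applies directly with $\mu=0$, $\mu_\ell=0$ for $\ell\geq 2$, and only $\mu_1,\mu'_1$ nonzero.

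The gap in your factor (iii) is that your two cumulant sums are $\sum_r\kappa_r(R'_{\ell_0}(\Y'))$ with $\Y'$ having density built from $A(G-uv,\betavec)$ and $\varLambda'$, and $\sum_r\kappa_r(R_{\ell_0}(\Y))$ with $\Y$ built from $A(G,\betavec)$ and $\varLambda$. You write the difference as $\kappa_r(R_{\ell_0}(\Y)+\Delta R(\Y))-\kappa_r(R_{\ell_0}(\Y))$ and expand by multilinearity, but this identification implicitly replaces $\Y'$ by $\Y$. That replacement is not free: the covariance structure of $\Y$ changes when the underlying quadratic form picks up the rank-one term $\lambda_{uv}(1-\lambda_{uv})(\evec_u+\evec_v)(\evec_u+\evec_v)\trans$ and $\varLambda$ shifts to $\varLambda'$. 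To repair your route you would need a second comparison step, e.g.\ invoking Theorem~\ref{cumdiff} with a nonzero $\mu$ that captures the change in covariance, and you would have to check that $\mu$ is small enough. Your factor (ii) is also more delicate than you suggest, since $A$ changes for two reasons simultaneously (the rank-one perturbation of the weighted signless Laplacian and the rescaling by $\varLambda'/\varLambda$), whereas in the paper's odds-ratio formulation the two determinants are literally identical and factor (ii) is trivial. In short, your decomposition can likely be made to work, but it requires handling the change of Gaussian measure and the change of determinant, both of which the paper's odds-ratio trick eliminates at a stroke; as written, your proposal silently conflates $\Y$ and $\Y'$ and so has a genuine hole at the cumulant-comparison step.
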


\medskip

The most detailed application we will present in this paper concerns the
number of $R(n,d)$ of regular graphs of order~$n$ and degree~$d$.
The main term of the asymptotic value is already known as a consequence
of the overlapping domains proved in~\cite{Liebenau,MWreg,MWsparse}.
Here we show that there is an asymptotic expansion.

\begin{thm}\label{regularthm}
There are
polynomials $p_j(x)$ for $j\geq 1$, with $p_j$ having degree
$j$ for each~$j$, such that
\begin{equation}\label{regularexpansion}
   \RG (n,d) = \sqrt2 \,
     \(\lambda^\lambda (1{-}\lambda)^{1{-}\lambda}\)^{\binom n2}
      \binom{n-1}{d}^{\!n}
   \exp\biggl( \,
        \sum_{j=1}^k \frac{p_j(\varLambda)}{\varLambda^j n^{j-1}} 
          + O(\varLambda^{-k-1}n^{-k}) \biggr)
\end{equation}
for any fixed~$k\geq 1$,
provided $\min\{d,n-d-1\}\geq n^\sigma$ for some $\sigma>0$.
\end{thm}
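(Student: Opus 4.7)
\textbf{The plan is} to specialize Theorem~\ref{mainthm} to $G=K_n$ and $\dvec=(d,\ldots,d)$, with the constant choice $\betavec=(\beta^*,\ldots,\beta^*)$ where $\beta^*=\tfrac12\log\frac{\lambda}{1-\lambda}$ and $\lambda=d/(n-1)$. This makes $\ljk(\betavec)=\lambda$ for every edge of $K_n$ and hence $\deltavec=\Zero$, so Assumptions~\ref{mainassumptions}(b,e) are automatic. The hypothesis $\min\{d,n-d-1\}\geq n^\sigma$ yields $\varLambda\geq\tfrac13 n^{\sigma-1}$, while $h(K_n)\geq\lceil n/2\rceil$ and $q(K_n)=n-2$ easily handle the remaining conditions.

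\textbf{Next I would} evaluate the three ingredients of Theorem~\ref{mainthm} in closed form. Since all $\ljk=\lambda$ and $\deltavec=\Zero$, $M(K_n,\dvec)=(\lambda^\lambda(1-\lambda)^{1-\lambda})^{-\binom n2}$. The $S_n$-symmetry gives
\[
  \sum_{jk\in K_n}(\theta_j+\theta_k)^2
     = (n-2)\twonorm{\thetavec}^2+(\One\trans\thetavec)^2,
\]
so $A=n^{-1}((n-2)I+J)$ and $\abs{A}=2(n-1)(n-2)^{n-1}n^{-n}$, and the Gaussian $\Y$ is exchangeable with $\Cov(Y_j,Y_k)=\varLambda^{-1}(n-2)^{-1}(\delta_{jk}-(2n-2)^{-1})$. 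Substituting into Theorem~\ref{mainthm} and comparing the resulting prefactor with the local central limit expansion of $\binom{n-1}{d}\lambda^d(1-\lambda)^{n-1-d}$ carried to arbitrary precision by Stirling's series reorganizes it as $\sqrt2(\lambda^\lambda(1-\lambda)^{1-\lambda})^{\binom n2}\binom{n-1}{d}^n$ times $\exp\bigl(\sum_j s_j(\varLambda)\varLambda^{-j}n^{-(j-1)}+O(\varLambda^{-k-1}n^{-k})\bigr)$ for explicit polynomials $s_j$.

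\textbf{The main obstacle is} to show that the cumulant sum $\sum_{r=1}^{r_0}\tfrac1{r!}\kappa_r(R_{\ell_0}(\Y))$ admits an expansion of the same shape $\sum_j t_j(\varLambda)\varLambda^{-j}n^{-(j-1)}+O(\varLambda^{-k-1}n^{-k})$, so that $p_j:=s_j+t_j$ gives the claimed polynomials. Since $\deltavec=\Zero$,
\[
  R_{\ell_0}(\Y)=\sum_{\ell=3}^{\ell_0}c_\ell(\lambda)\sum_{jk\in K_n}(Y_j+Y_k)^\ell,
\]
a polynomial in an exchangeable Gaussian. Expanding $\kappa_r$ by the Wick/Isserlis formula yields a sum over connected multigraphs (Feynman diagrams) on $r$ nodes, whose edges are labelled by pairings of factors $(Y_j+Y_k)$ and whose value is a product of covariance entries. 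The sum over the edge-labels $jk\in K_n$ attached to each node is a polynomial in $n$ whose degree equals the number of distinct vertex labels of the diagram, so each connected diagram contributes a term of the form $(\text{polynomial in }\lambda)\times\varLambda^{-a}n^{-b}$ with $b$ a combinatorial invariant. Grouping terms by $b$, and noting that $\RG(n,d)=\RG(n,n-1-d)$ forces the total contribution to be invariant under $\lambda\leftrightarrow1-\lambda$, pins the coefficients down to polynomials in $\varLambda$. A careful accounting of the maximum power of $\varLambda^{-1}$ that survives at order $n^{-(j-1)}$, after combining with the $\varLambda$-factors built into each $c_\ell$, then yields the degree-$j$ bound on $p_j$; the individual polynomials can in principle be computed diagram-by-diagram for any desired $j$.
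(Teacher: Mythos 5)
Your proposal follows essentially the same route as the paper: specialize Theorem~\ref{mainthm} to $G=K_n$ with the constant $\betavec$, compute $M$, $A$, $|A|$, fold the prefactor into the Stirling expansion of $\binom{n-1}{d}^n$, and organize the cumulant sum as a diagrammatic (Wick/Isserlis) expansion over connected graphs whose contributions collect into powers of $\varLambda^{-1}n^{-1}$. The one small extra ingredient you add — invoking the complementation symmetry $\RG(n,d)=\RG(n,n-1-d)$ to force the coefficients to be polynomials in $\varLambda$ rather than general functions of $\lambda$ — is a nice shortcut, but the paper reaches the same conclusion directly from the parity structure of the cumulants (odd-index $c_\ell$'s carry a factor $(1-2\lambda)$, and the even-sum constraint guarantees these appear in pairs).
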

Theorem~\ref{regularthm} will be proved in Section~\ref{s:regular},
where we give $p_1,\ldots,p_7$ and present empirical evidence that
the condition $\min\{d,n-d-1\}\geq n^\sigma$  is stronger than necessary.

\subsection{Outline of the method}

Our overall approach follows previous work~\cite{BarvHart1,MWreg,ranx}
but the greater generality poses new challenges.
Since
$N(G,\dvec) = [x_1^{d_1}\cdots x_n^{d_n}] \,\prod_{jk\in G} (1+x_jx_k)$,
we have by Cauchy's Theorem that
\[
   N(G,\dvec) = \frac{1}{(2\pi i)^n}
     \oint\!\cdots\!\oint \,
     \frac{\prod_{jk\in G} (1+x_jx_j)}
          {\prod_{j=1}^n x_j^{d_j+1}}\, dx_1\cdots dx_n,
\]
where the contours circle the origin once anticlockwise.
Now change variables as $x_j = e^{\beta_j+i\theta_j}$.
This gives
\[
    N(G,\dvec) = (2\pi)^{-n} M(G,\dvec)\, J(G,\dvec),
\]
where
\begin{align}
  J(G,\dvec) &:= \int_{-\pi}^\pi\!\cdots\!\int_{-\pi}^\pi
                  F_{\betavec}(\thetavec)\,d\thetavec, \label{Fdefn} \\
  F_{\betavec}(\thetavec) &:= \frac{\prod_{jk\in G}\,
                    \(1+\ljk (e^{i(\theta_j+\theta_k)}-1)\)}
               {e^{i\sum_{j=1}^n d_j\theta_j}}. \notag
\end{align}
The integrand $F_{\betavec}(\thetavec)$ takes its maximum absolute
value of~1 at $\thetavec\in\{ (0,\ldots,0), (\pi,\ldots,\pi)\}$.
In Section~\ref{s:inbox} we evaluate the integral inside
small regions $\calB_0,\calB_\pi$ surrounding those two points,
using the theory from~\cite{daughter} to obtain much more
precision than previously.  In Section~\ref{s:outofbox}
we show that the integral outside $\calB_0\cup\calB_\pi$
is negligible. Contrary to all previous work, the integral
of the absolute value $\abs{F_{\betavec}(\thetavec)}$ outside
$\calB_0\cup\calB_\pi$ is \textit{not} small compared to the integral of
$F_{\betavec}(\thetavec)$ inside $\calB_0\cup\calB_\pi$, which
requires us to invent new techniques.

\nicebreak
\section{Cumulants}\label{s:cumulants}

For the basic theory of cumulants, we suggest the paper
of Speed~\cite{Speed}.  We will make much use of the
mixed-cumulant formulation, in which
$\kappa_r(Z) = \kappa(\underbrace{Z,Z,\ldots,Z}_{r\text{ copies}})$.

If $S$ is a set of even size, a \textit{pairing} of $S$ is a
partition of $S$ into $\frac12\,\abs S$ disjoint pairs.  We
will write the pairs like $(i_1,i_2)$, but note that each pair
is an unordered set.
Recall the following result of Isserlis~\cite{Isserlis}.
\begin{thm}\label{isserlis}
  Let $A$ be a positive-definite real symmetric matrix of
  order~$n$ and
  let  $\X=(X_1,\ldots,X_n)$ be a random vector with the
  Gaussian  density $\pi^{-n/2}\abs{A}^{1/2} e^{-\xvec\Trans6 A\xvec}$.
  Let $\varSigma=(\sigma_{jk})=(2A)^{-1}$ be the corresponding
  covariance matrix.
  Consider a product $Z=X_{j_1}X_{j_2}\cdots X_{j_k}$, where the
  subscripts do not need to be distinct.  If $k$ is odd, then
  $\E Z=0$.  If $k$ is even, then
  \[
     \E Z = \sum_{\{(i_1,i_2),(i_3,i_4),\ldots,(i_{k{-}1},i_k)\}}
        \sigma_{j_{i_1}j_{i_2}}\cdots\sigma_{j_{i_{k{-}1}}j_{i_k}},
  \]
  where the sum is over all partitions of\/ $\{1,\ldots,k\}$
  into disjoint pairs.
  The number of terms in the sum is $(k-1)(k-3)\cdots3\cdot 1$.
\end{thm}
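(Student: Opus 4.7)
The plan is to compute the moment generating function (MGF) of $\X$ and read off the moments from its power series expansion.

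A standard Gaussian integral, obtained by completing the square, gives
\[
  \E\,e^{\boldsymbol{t}\trans \X}
    = \pi^{-n/2}\abs{A}^{1/2}\!\int_{\Reals^n}\!
          e^{-\xvec\trans A\xvec+\boldsymbol{t}\trans\xvec}\,d\xvec
    = e^{\frac14\boldsymbol{t}\trans A^{-1}\boldsymbol{t}}
    = e^{\frac12\boldsymbol{t}\trans \varSigma\boldsymbol{t}},
\]
so $\X$ is centered Gaussian with covariance $\varSigma=(2A)^{-1}$, and $\E[X_{j_1}\cdots X_{j_k}]$ is the evaluation at $\boldsymbol{t}=\Zero$ of $\partial^k/\partial t_{j_1}\cdots\partial t_{j_k}$ applied to
\[
  e^{\frac12\boldsymbol{t}\trans\varSigma\boldsymbol{t}}
     = \sum_{m=0}^{\infty}\frac{1}{m!\,2^m}
         \Bigl(\sum_{a,b}\sigma_{ab}\,t_a t_b\Bigr)^{\!m}.
\]
Every term on the right has even total degree in $\boldsymbol{t}$, which immediately gives $\E Z=0$ for odd $k$.

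For $k=2m$ only the $m$-th term of the series contributes. I would view the factor $\bigl(\sum_{a,b}\sigma_{ab}t_a t_b\bigr)^m$ as a product of $m$ copies of the quadratic form, each with a left slot and a right slot ($2m$ slots in total). Applying $\partial^k/\partial t_{j_1}\cdots\partial t_{j_k}$ at $\boldsymbol{t}=\Zero$ then amounts to summing over all bijections from the positions $\{1,\ldots,k\}$ to the $2m$ slots; a bijection placing position $i$ at slot $L(f)$ or $R(f)$ of factor $f$ contributes $\prod_{f=1}^m\sigma_{j_{L(f)}j_{R(f)}}$. Because the $m$ factors are interchangeable and $\sigma$ is symmetric, each unordered pairing $\{(i_1,i_2),\ldots,(i_{k-1},i_k)\}$ of $\{1,\ldots,k\}$ is realised by exactly $m!\,2^m$ bijections, which cancels the series prefactor $1/(m!\,2^m)$. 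This leaves a single copy of $\sigma_{j_{i_1}j_{i_2}}\cdots\sigma_{j_{i_{k-1}}j_{i_k}}$ per pairing, which is the stated formula; the count $(2m-1)!!=(k-1)(k-3)\cdots 3\cdot 1$ of such pairings is a classical combinatorial identity.

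The only step that really needs care is the last one: keeping the symmetry factor $m!\,2^m$ aligned with unordered pairings of positions in the presence of possibly repeated labels $j_1,\ldots,j_k$. Indexing the pairings by the positions $\{1,\ldots,k\}$ (as the statement does), rather than by their values in $[n]$, avoids any subtlety with repeated labels and renders this bookkeeping routine.
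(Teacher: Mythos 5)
Your proof is correct. Note, however, that the paper states Theorem~\ref{isserlis} without proof, citing Isserlis's 1918 paper, so there is no in-paper argument to compare against. Your moment-generating-function approach is the classical route to Wick's formula: the MGF computation by completing the square is correct, the power-series expansion correctly kills odd moments, and the slot/bijection counting for even $k=2m$ is handled properly — in particular, you are right that the factor $m!\,2^m$ (from permuting the $m$ interchangeable quadratic factors and swapping left/right slots within each, using symmetry of $\varSigma$) exactly cancels the Taylor prefactor, and that indexing pairings by the positions $1,\ldots,k$ rather than by the values $j_1,\ldots,j_k$ is what makes the count clean when labels repeat. This is a complete and self-contained proof of the cited result.
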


In quantum field theory, pairings are known as (complete)
Feynman graphs and Theorem~\ref{isserlis} is known as
Wick's formula after a later discoverer.  The following
generalization is
well-known in that field, but for completeness we will give
a simple proof here.

\begin{thm}\label{cums}
Assume the conditions of Theorem~\ref{isserlis} and
let $\{K_1,\ldots,K_r\}$ be a partition of $\{1,\ldots,k\}$.
Define
\[
 \kappa =
  \kappa\biggl(\,\prod_{i\in K_1} X_{j_i},\ldots,\prod_{i\in K_r} X_{j_i}\biggr),
\]
where $1\leq j_1,\ldots,j_k\leq n$ are not necessarily distinct.
If $k$ is odd then $\kappa=0$, so assume that $k$ is even.

If $\pi$ is a pairing of $\{1,\ldots,k\}$, define the graph $G_\pi$
as follows: $V(G_\pi)=\{1,\ldots,r                                                                                                                                                                                                                                                                                                                                                                                                                                                                                                                                                                                                                                                                                                                                                                                                                                                                                                                                                                                                                                                                                                                                                                                                                                                                                                                                                                                                                                                                                                                                                                                                                                                               \}$, and for $\ell\ne m$, 
$\{\ell,m\}\in E(G_\pi)$ iff $\pi$ has a pair $(i_1,i_2)$ such that
$i_1\in K_\ell$ and $i_2\in K_m$.
Let $\varPi$ be the set of all pairings $\pi$ such that $G_\pi$ is connected.
Then
 \[
     \kappa
      = \sum_{\{(i_1,i_2),(i_3,i_4),\ldots,(i_{k{-}1},i_k)\}\in\varPi}
        \sigma_{j_{i_1}j_{i_2}}\cdots\sigma_{j_{i_{k{-}1}}j_{i_k}}.
  \]
\end{thm}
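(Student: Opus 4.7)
The plan is to combine the Möbius-inversion form of the moment--cumulant relation with Isserlis's formula (Theorem~\ref{isserlis}) and then interchange summations, so that each pairing of $\{1,\ldots,k\}$ appears exactly once with an explicit coefficient. First I would dispose of the odd case. Since $\X\overset{d}{=}-\X$, jointly $(Z_1,\ldots,Z_r)\overset{d}{=}\bigl((-1)^{\abs{K_1}}Z_1,\ldots,(-1)^{\abs{K_r}}Z_r\bigr)$, and multilinearity of the cumulant gives $\kappa=(-1)^k\kappa$, which forces $\kappa=0$ whenever $k$ is odd. Henceforth assume $k$ is even.

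Next, Möbius inversion on the partition lattice yields
\[
\kappa(Z_1,\ldots,Z_r)=\sum_{\mathcal{P}}(-1)^{\abs{\mathcal{P}}-1}(\abs{\mathcal{P}}-1)!\,\prod_{B\in\mathcal{P}}\E\biggl[\prod_{\ell\in B}Z_\ell\biggr],
\]
the sum ranging over set partitions $\mathcal{P}$ of $\{1,\ldots,r\}$. Setting $Z_\ell=\prod_{i\in K_\ell}X_{j_i}$ and applying Theorem~\ref{isserlis} blockwise,
\[
\E\prod_{\ell\in B}Z_\ell=\sum_{\pi_B}\prod_{(i_1,i_2)\in\pi_B}\sigma_{j_{i_1}j_{i_2}},
\]
where $\pi_B$ ranges over all pairings of $\bigcup_{\ell\in B}K_\ell$ (blocks of odd union size contribute zero automatically). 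Concatenating $(\pi_B)_{B\in\mathcal{P}}$ produces a pairing $\pi$ of $\{1,\ldots,k\}$ in which every pair lies inside a single block of $\mathcal{P}$; equivalently, $\mathcal{P}$ is coarser than the connected-component partition of~$G_\pi$. This concatenation is a bijection onto pairs $(\pi,\mathcal{P})$ with $\mathcal{P}$ so constrained, so I may swap the order of summation.

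After the swap,
\[
\kappa=\sum_{\pi}\biggl(\prod_{(i_1,i_2)\in\pi}\sigma_{j_{i_1}j_{i_2}}\biggr)\sum_{\mathcal{P}}(-1)^{\abs{\mathcal{P}}-1}(\abs{\mathcal{P}}-1)!,
\]
with the inner sum over partitions $\mathcal{P}$ of $\{1,\ldots,r\}$ coarser than the components of $G_\pi$. That sum depends only on the number $c(\pi)$ of connected components and collapses to $\sum_{\mathcal{Q}}(-1)^{\abs{\mathcal{Q}}-1}(\abs{\mathcal{Q}}-1)!$ over partitions $\mathcal{Q}$ of a $c(\pi)$-element set. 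This classical coefficient identity equals $1$ when $c(\pi)=1$ and $0$ otherwise; the quickest check is to apply the same Möbius inversion to the deterministic variable $Z\equiv 1$, whose joint cumulants of order $\geq 2$ vanish. Thus only pairings with $G_\pi$ connected survive, and those are precisely the elements of $\varPi$, proving the theorem.

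The main obstacle is the combinatorial reorganisation in the middle step: verifying carefully that concatenation gives a bijection between the data $\bigl(\mathcal{P},(\pi_B)_{B\in\mathcal{P}}\bigr)$ and the data $(\pi,\mathcal{P})$ with $\mathcal{P}$ a coarsening of the components of $G_\pi$. Once that bijection is pinned down, the rest is the routine collapse of the inner alternating sum via the $c(\pi)=1$ identity, and no analytic input beyond Theorem~\ref{isserlis} is needed.
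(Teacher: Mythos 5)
Your proof is correct and follows essentially the same route as the paper's: expand $\kappa$ via the moment--cumulant Möbius formula over set partitions of $\{1,\ldots,r\}$, apply Isserlis blockwise, interchange summation, and collapse the inner alternating sum over coarsenings of the component partition of $G_\pi$ to $[c(\pi)=1]$. The only cosmetic differences are that you justify the vanishing in the odd case via $\X\overset{d}{=}-\X$ where the paper just appeals to oddness of $\kappa$, and you state the inner sum directly over partitions of a $c(\pi)$-set rather than grouping by block count into the Stirling-number form $\sum_k(-1)^{k+1}(k-1)!\stirlingii{m}{k}$ as the paper does.
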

\begin{proof}
  If $k$ is odd then $\kappa$ is an odd function of its arguments,
  so by symmetry $\kappa=0$. Now assume that $k$ is even.

 For $B\subseteq\{1,\ldots,r\}$, define $K(B)=\bigcup_{j\in B} K_j$.
 By the standard formula for cumulants in terms of moments~\cite{Speed},
 \begin{equation}\label{cumsum}
    \kappa
    =
   \sum_{B_1\cup\cdots\cup B_t=\{1,\ldots,r\}}
      (-1)^{t+1} (t-1)! \, \prod_{u=1}^t \,
        \E\Bigl( \prod_{i\in K(B_u)} X_{j_i} \Bigr),
 \end{equation}
 where the sum is over all partitions of $\{1,\ldots,r\}$.
 By Theorem~\ref{isserlis}, we have
 \[
    \prod_{u=1}^t \,\E\Bigl(\, \!\prod_{i\in K(B_u)} X_{j_i} \Bigr)
    =
    \sum_{\{(i_1,i_2),(i_3,i_4),\ldots,(i_{k{-}1},i_k)\}\in\varPi_B}
        \sigma_{j_{i_1}j_{i_2}}\cdots\sigma_{j_{i_{k{-}1}}j_{i_k}},
 \]
 where $\varPi_B$ is the set of pairings such that no pair
  spans two of the sets $K(B_u)$.
 
 Now consider any particular product
 $\sigma_{j_{i_1}j_{i_2}}\cdots\sigma_{j_{i_{k{-}1}}j_{i_k}}$,
 corresponding to pairing~$\pi$.
 The total weight with which this occurs in~\eqref{cumsum} is
 \begin{equation}\label{stirsum}
     \sum_{k=1}^{m} (-1)^{k+1} (k-1)! \,\stirlingii mk,
 \end{equation}
 where $m$ is the number of components of $G_\pi$
 and $\stirlingii mk$ denotes the Stirling number of the
 second kind (number of partitions of an $m$-set into
 $k$ parts).
 A standard identity for the Stirling numbers is that~\eqref{stirsum}
 equals~1 for $m=1$ and~0 for $m\geq 2$, which completes the proof.
\end{proof}

We will apply this theory to a family of random variables of a 
particular structure.  For $1\leq j\leq k\leq n$ let $Y_{jk}$ be a
real random variable, such that $\{ Y_{jk} \}$ have a joint
Gaussian  distribution with mean 0 and
\begin{equation}\label{covarbounds}
  \Cov(Y_{jk},Y_{uv})=
  \begin{cases} O(1), 
    & \text{~~if $\{j,k\}\cap\{u,v\}\ne\emptyset$}; \\
    O(n^{-1}), 
        & \text{~~if $\{j,k\}\cap\{u,v\}=\emptyset$}.
  \end{cases}
\end{equation}
Note that there are diagonal variables $Y_{jj}$.
In applying the $O(\,)$ notation, we will assume that the
cumulant order, and the powers of $Y_{jk}$ variables, are
bounded.

For any graph $K$ with weighted edges, let $n_1,n_2$ be the
number of components of total odd weight or even weight,
respectively.   Define
\[
     y(K) := \begin{cases}
             \,0, & \text{~~if~~} n_1=0, n_2=1 \\
           \,n_2, & \text{~~if~~} n_1=0, n_2>1 \\
           \,n_1+n_2-1, & \text{~~if~~} n_1>0.
          \end{cases} 
\]

For a sequence $Y_{j_1k_1}^{\ell_1},\ldots,Y_{j_rk_r}^{\ell_r}$, define
the edge-weighted graph $K=K(Y_{j_1k_1}^{\ell_1},\ldots,Y_{j_rk_r}^{\ell_r})$
as follows: $V(K)=\{j_1,k_1,\ldots,j_r,k_r\}$ (a set with duplicates removed),
and edges $\{j_1k_1,\allowbreak
\ldots,j_rk_r\}$ (a multiset with duplicates retained)
where edge $j_sk_s$ has weight $\ell_s$ for $1\leq s\leq r$.
In Figure~\ref{fig1}, we show $K=K(Y_{11}^1,Y_{55}^2,Y_{12}^1,Y_{12}^2,Y_{34}^3,Y_{45}^4)$.
Note that $K(Y_{j_1k_1}^{\ell_1},\ldots,Y_{j_rk_r}^{\ell_r})$ does not\
encode the order of its arguments, but that will not matter since
cumulants are symmetric functions.

\begin{figure}[ht]
\centering
\includegraphics[scale=0.8]{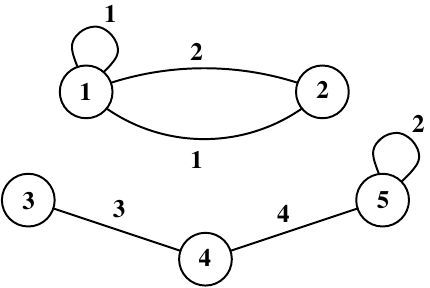}
\caption{The graph $K=K(Y_{11}^1,Y_{55}^2,Y_{12}^1,Y_{12}^2,Y_{34}^3,Y_{45}^4)$,
  which has $y(K)=1$.}
\label{fig1} 
\end{figure} 

\begin{lemma}\label{exactcum}
 Consider the cumulant 
 $\kappa=\kappa\( Y_{j_1k_1}^{\ell_1},\ldots,Y_{j_rk_r}^{\ell_r}\)$.
 Define $\ell:=\sum_{i=1}^r \ell_i$.  If~$\ell$ is odd then
 $\kappa=0$, so assume that $\ell$ is even.
 Define $K=K\(Y_{j_1k_1}^{\ell_1},\ldots,Y_{j_rk_r}^{\ell_r}\)$.
 Then
 \[
     \kappa = O( n^{-y(K)} ).
 \]
\end{lemma}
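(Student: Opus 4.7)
The plan is to apply Theorem \ref{cums} and then perform a combinatorial bookkeeping on the resulting pairing sum. First, view each $Y_{j_sk_s}^{\ell_s}$ as a product of $\ell_s$ ghost copies of $Y_{j_sk_s}$ placed in a group $K_s$, so that Theorem \ref{cums} writes $\kappa$ as a sum over pairings $\pi$ of the $\ell$ ghost edges for which the group-graph $G_\pi$ on $\{1,\ldots,r\}$ is connected, each contributing a product of $\ell/2$ covariances. By \eqref{covarbounds}, a single pair contributes $O(1)$ when its two ghost edges share a vertex of $V(K)$ and $O(n^{-1})$ otherwise; call the latter non-sharing pairs and write $\beta_\pi$ for their number. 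Each term is then $O(n^{-\beta_\pi})$, and since there are only $(\ell-1)!!=O(1)$ pairings, it suffices to show $\beta_\pi\geq y(K)$ for every $\pi$ with $G_\pi$ connected.

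For each component $C$ of $K$, write $w(C)$ for its total edge weight and $d_C$ for the number of pairs of $\pi$ having exactly one ghost edge in $C$; the $w(C)-d_C$ remaining ghost edges in $C$ are paired within $C$, so $d_C\equiv w(C)\pmod 2$. Cross-component pairs are automatically non-sharing since distinct components of $K$ are vertex-disjoint, so, setting $\beta^* := \tfrac12\sum_C d_C$, we have $\beta_\pi\geq\beta^*$. I would next observe that connectivity of $G_\pi$ forces connectivity of the multigraph $G_\pi^{\text{comp}}$ on the components of $K$ having one edge per cross-component pair: otherwise the components of $K$ partition into two non-empty classes with no cross pair between them, the groups $\{1,\ldots,r\}$ split accordingly, and $G_\pi$ acquires no edge across the split, a contradiction.

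The bound $\beta_\pi\geq y(K)$ then follows by cases. If $n_1=0$ and $n_2=1$, then $y(K)=0$ and there is nothing to prove. If $n_1>0$, then $n_1\geq 2$ (since $\ell$ even forces $n_1$ even), $G_\pi^{\text{comp}}$ is connected on $n_1+n_2\geq 2$ vertices, so it has at least $n_1+n_2-1$ edges counted with multiplicity, giving $\beta^*\geq n_1+n_2-1=y(K)$. If $n_1=0$ and $n_2\geq 2$, every $d_C$ is even and at least $1$ (by connectivity of $G_\pi^{\text{comp}}$), hence at least $2$, so $\beta^*\geq n_2=y(K)$.

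The main obstacle I expect is the case $n_1=0$, $n_2\geq 2$: mere spanning connectivity of $G_\pi^{\text{comp}}$ gives only $\beta^*\geq n_2-1$, and the extra unit has to be squeezed out of the parity constraint $d_C\equiv w(C)\equiv 0\pmod 2$, which upgrades each $d_C$ from $\geq 1$ to $\geq 2$. Everything else is straightforward bookkeeping built on Theorem \ref{cums} together with the covariance bound \eqref{covarbounds}.
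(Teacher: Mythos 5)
Your proof is correct and follows the same route as the paper: apply Theorem~\ref{cums}, then argue that every pairing with connected $G_\pi$ has at least $y(K)$ cross-component pairs, each contributing an $O(n^{-1})$ factor via~\eqref{covarbounds}. The paper states the combinatorial fact ``$\geq y(K)$ cross-component pairs'' without proof; your case analysis via the auxiliary multigraph $G_\pi^{\mathrm{comp}}$ and the parity constraint $d_C\equiv w(C)\pmod 2$ is a correct filling-in of that assertion.
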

\begin{proof}
 We can apply Theorem~\ref{cums}. By~\eqref{covarbounds}.
 Each pairing $\pi$ for which $G_\pi$ is connected contains
 at least $y(K)$ pairs that correspond to edges in different
 components of~$K$ and which therefore give contribution
 $O(n^{-1})$ by~\eqref{covarbounds}.
\end{proof}

Recall the multilinearity property of cumulants~\cite{Speed}:
If $Z=\sum_j a_j Z_j$, then
\begin{equation}\label{multilinear}
      \kappa_r(Z) = \sum_{j_1,j_2,\ldots,j_r} 
         a_{j_1}\cdots a_{j_r}\,\kappa(Z_{j_1},Z_{j_2},\ldots,Z_{j_r}),
\end{equation}
with no assumptions other than that the cumulants exist.

\begin{thm}\label{maxcumcor}
Consider the sum
\[
    Y :=  \sum_{j=1}^n c^{(1)}_{jj} Y_{jj} 
    + \!\!\sum_{1\leq j\leq k\leq n} c_{jk}^{(2)} \,Y_{jk}^2
    + \sum_{\ell=3}^{\hat\ell} \, \sum_{1\leq j< k\leq n} c_{jk}^{(\ell)} \,Y_{jk}^\ell,
\]
where the coefficients may be complex. 
Assume that $r,\hat\ell=O(1)$ and
\begin{align*}
   \abs{c^{(1)}_{jj}} &\leq nC_1, \text{~~for $1\leq j\leq n$}; \\
    \abs{c^{(2)}_{jk}} &\leq \begin{cases}
               nC_2, & \text{~~for $1\leq j=k\leq n$}, \\
                 C_2 &  \text{~~for $1\leq j< k\leq n$},
                 \end{cases} \\
  \abs{c_{jk}^{(\ell)}} & \leq C_\ell,  \text{~~for $1\leq j< k\leq n$, $\ell\geq 3$}.
\end{align*}
Then
\[
   \kappa_r(Y)=O(n^{r+1})\max_{\ell_1,\ldots,\ell_r} \(C_{\ell_1}\cdots C_{\ell_r}\),
\]
where the maximum is over $\ell_1,\ldots,\ell_r$ with
$1\leq \ell_1,\ldots,\ell_r\leq \hat\ell$ and
$\ell_1+\cdots+\ell_r$ even.
\end{thm}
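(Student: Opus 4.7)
The plan is to apply multilinearity of cumulants~\eqref{multilinear} to expand $\kappa_r(Y)$, bound each elementary cumulant via Lemma~\ref{exactcum}, and close with a combinatorial counting argument. Expansion gives
\begin{equation*}
    \kappa_r(Y) = \sum_{(\ell_1,j_1,k_1),\ldots,(\ell_r,j_r,k_r)}
      \biggl(\prod_{s=1}^r c_{j_sk_s}^{(\ell_s)}\biggr)\,
      \kappa\(Y_{j_1k_1}^{\ell_1},\ldots,Y_{j_rk_r}^{\ell_r}\),
\end{equation*}
where each triple $(\ell_s,j_s,k_s)$ ranges over the indices of a summand of~$Y$; in particular $j_s=k_s$ is permitted only when $\ell_s\in\{1,2\}$. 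Terms with $\ell_1+\cdots+\ell_r$ odd vanish by Lemma~\ref{exactcum}, so we restrict to even sums.

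Group the remaining terms by their \textit{shape}: the sequence $(\ell_1,\ldots,\ell_r)$ together with the equivalence pattern on $\{j_1,k_1,\ldots,j_r,k_r\}$ specifying which of these indices coincide. Since $r$ and $\hat\ell$ are bounded, the number of shapes is $O(1)$. For a fixed shape, let $v$ be the number of distinct vertex labels (so there are at most $n^v$ instantiations using labels from $[n]$), let $L$ be the number of indices $s$ with $j_s=k_s$, and let $K$ be the edge-weighted graph constructed before Lemma~\ref{exactcum}. The hypothesised coefficient bounds give
\begin{equation*}
    \prod_{s=1}^r \abs{c_{j_sk_s}^{(\ell_s)}} \leq n^L\prod_{s=1}^r C_{\ell_s},
\end{equation*}
since the extra factor of $n$ that appears in $nC_1$ and $nC_2$ occurs exactly at loops, and Lemma~\ref{exactcum} bounds the associated elementary cumulant by $O(n^{-y(K)})$.

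The combinatorial heart of the argument is the inequality $v+L-y(K)\leq r+1$. Let $c$ be the number of connected components of~$K$. From the definition of $y(\cdot)$ one checks $y(K)\geq c-1$ in all three cases. In each component with $v_i$ vertices, $r_i$ edges and $L_i$ loops, the $r_i-L_i$ non-loop edges must span the $v_i$ vertices, so $v_i\leq r_i-L_i+1$ (this also holds when $v_i=1$ and every edge is a loop). Summing over components gives $v\leq r-L+c$, and hence
\begin{equation*}
    v+L-y(K)\leq (r-L+c)+L-(c-1)=r+1.
\end{equation*}

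Combining the three bounds, each shape contributes at most $n^v\cdot n^L\cdot O(n^{-y(K)})\prod_s C_{\ell_s}=O(n^{r+1})\prod_s C_{\ell_s}$ to $\kappa_r(Y)$. Summing over the $O(1)$ shapes and replacing the product by its maximum over valid tuples $(\ell_1,\ldots,\ell_r)$ with $\ell_1+\cdots+\ell_r$ even yields the stated bound. The only step requiring care is the combinatorial inequality; everything else is routine bookkeeping once multilinearity and Lemma~\ref{exactcum} are in place.
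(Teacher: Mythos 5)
Your proof is correct and follows essentially the same approach as the paper's: multilinearity expansion, bounding the elementary cumulants via Lemma~\ref{exactcum}, and the same combinatorial inequality (the paper states it as ``at most $r+c-t$ vertices'' with $t$ loops and $c$ components, which is your $v\leq r-L+c$). Your component-by-component derivation of $v_i\leq r_i-L_i+1$ is a more explicit version of the same counting argument.
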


\begin{proof}
 Consider the graph
 $K=K\(Y_{j_1k_1}^{\ell_1},\ldots,Y_{j_rk_r}^{\ell_r}\)$.
 If the number of loops is $t$ and the number of components is~$c$,
 then it has at most $r+c-t$ vertices.
 Therefore, the number of choices of $j_1,k_1,\ldots,j_r,k_r$
 is less than $n^{r+c-t}$. Also, $y(K)\geq c-1$.
 Thus Lemma~\ref{exactcum} and~\eqref{multilinear} imply that the
 contribution to $\kappa_r(Y)$ of terms of the form
\[
    \kappa\(c_{j_1k_1}^{(\ell_1)}Y_{j_1k_1}^{\ell_1},
    \ldots,c_{j_rk_r}^{(\ell_r)} Y_{j_rk_r}^{\ell_r}\),
\]
when summed over all $j_1,k_1,\ldots,j_r,k_r$, is
zero if $\ell_1+\cdots+\ell_r$ is odd and otherwise is
\[
   O\( n^{r+1} C_{\ell_1}\cdots C_{\ell_r} \).
\]
Since by assumption the number of graphs is bounded, the 
lemma follows.
 \end{proof}

\begin{thm}\label{cumdiff}
 Define the random variable $Y$ as in Theorem~\ref{maxcumcor}.
 Let $\medtilde Y_{jk}$ be another set of jointly Gaussian  random variables
 satisfying~\eqref{covarbounds} and define
 \[
   \medtilde Y :=  \sum_{j=1}^n \tilde c^{(1)}_{jj} \medtilde Y_{jj} 
       + \!\!\sum_{1\leq j\leq k\leq n} \tilde c_{jk}^{(2)} \,\medtilde Y_{jk}^2
       + \sum_{\ell=3}^{\hat\ell} \, \sum_{1\leq j< k\leq n} \tilde c_{jk}^{(\ell)} \,\medtilde Y_{jk}^\ell,
\]
where the coefficients satisfy the same bounds as those of~$Y$.

Let $\sigma_{j_1k_1,j_2k_2}$ denote $\Cov(Y_{j_1k_1},Y_{j_2k_2})$ and
$\tilde\sigma_{j_1k_1,j_2k_2}$ denote $\Cov(\medtilde Y_{j_1k_1},\medtilde Y_{j_2k_2})$.
Suppose there is a value $\mu$ such that
for $1\leq j_1,j_2,k_1,k_2\leq n$,
\[
\Dfrac{1}{n!} \sum_{g\in S_n} 
   \Abs{\sigma_{g(j_1)g(k_1),g(j_2)g(k_2)}-\tilde\sigma_{g(j_1)g(k_1),g(j_2)g(k_2)}}
   \leq \begin{cases} \mu, & \text{~if $\{j_1,k_1\}\cap\{j_2,k_2\}\ne\emptyset$}; \\
                                \mu/n & \text{~otherwise}.
             \end{cases}   
\]
Also suppose that there are $\mu_1,\mu'_1,\mu_2,\ldots,\mu_m$ such that
\begin{align*}
   \sum_{j=1}^n \,\abs{c^{(1)}_{jj} - \tilde c^{(1)}_{jj}} &\leq \mu_1 n^2 C_1, \\
   \abs{c^{(1)}_{jj} - \tilde c^{(1)}_{jj}}\,
    \sum_{j=1}^n \,\(\abs{c^{(1)}_{jj}}+\abs{\tilde c^{(1)}_{jj}}\) &\leq \mu'_1 n^3 C_1^2,
    \quad\text{for $1\leq j\leq n$}; \\
   \sum_{j=1}^n \,\abs{c^{(2)}_{jj} - \tilde c^{(2)}_{jj}} &\leq \mu_2 n^2 C_2, 
       \quad\text{for $1\leq j\leq n$};\\
  \sum_{1\leq j<k\leq n} \abs{c^{(\ell)}_{jk} - \tilde c^{(\ell)}_{jk}}
       &\leq \mu_\ell n^2 C_\ell, \quad\text{for $2\leq \ell\leq \hat\ell$}.
\end{align*}
Then for $r,\hat\ell=O(1)$,
\[
  \abs{\kappa_r(Y)-\kappa_r(\medtilde Y)} =
     O(n^{r+1}) \max_{\ell_1,\ldots,\ell_r} \biggl(C_{\ell_1}\cdots C_{\ell_r}
        \Bigl(\mu + \!\!\sum_{j\in \{\ell_1,\ldots,\ell_r\}} \mu_j\Bigr)\biggr),
\]
 where the maximum is over $\ell_1,\ldots,\ell_r$ with
$1\leq \ell_1,\ldots,\ell_r\leq \hat\ell$ and $\ell_1+\cdots+\ell_r$ even.
In the case that $\ell_j=1$ for two or more of $\ell_1,\ldots,\ell_r$, the
term $\mu_1C_{\ell_1}\cdots C_{\ell_r}$ can optionally be replaced by
$\mu'_1 C_{\ell_1}\cdots C_{\ell_r}$.
\end{thm}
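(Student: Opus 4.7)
The plan is to follow the outline of the proof of Theorem~\ref{maxcumcor}, but inserting telescoping identities that expose the coefficient differences $c^{(\ell)}_{jk}-\tilde c^{(\ell)}_{jk}$ and covariance differences $\sigma-\tilde\sigma$ one factor at a time. Using~\eqref{multilinear} and Theorem~\ref{cums}, each of $\kappa_r(Y)$ and $\kappa_r(\tilde Y)$ can be written as an explicit sum over tuples $(j_i,k_i,\ell_i)_{i=1}^r$ and connected pairings $\pi$ of products of coefficients and covariances. Applying the identity $AB-\tilde A\tilde B=(A-\tilde A)B+\tilde A(B-\tilde B)$ at the level of coefficient-products versus $Y$-cumulants splits $\kappa_r(Y)-\kappa_r(\tilde Y)$ into a \emph{coefficient-difference part} (same $Y$-cumulants, differing $c$'s) and a \emph{covariance-difference part} (all $\tilde c$'s, differing $Y$-cumulants), which I handle separately.

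For the coefficient-difference part, I further telescope $\prod_i c^{(\ell_i)}_{j_ik_i}-\prod_i \tilde c^{(\ell_i)}_{j_ik_i}$ so that only one factor at some position $s$ differs, bound the unchanged coefficients by their sup-norms ($nC_1$, $nC_2$, or $C_{\ell_i}$) from the hypothesis, and apply Lemma~\ref{exactcum} together with the graph-counting of Theorem~\ref{maxcumcor} to the unchanged $Y$-cumulant. The sum over the differing index slot $(j_s,k_s)$ is then controlled by $\sum_{(j_s,k_s)}|c^{(\ell_s)}_{j_sk_s}-\tilde c^{(\ell_s)}_{j_sk_s}|\leq\mu_{\ell_s} n^2 C_{\ell_s}$, which replaces the factor of $n^2 C_{\ell_s}$ that the original Theorem~\ref{maxcumcor} bound would have contributed at that slot. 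Summing over $s$ yields the $\mu_{\ell_s}$ contributions in the claimed bound.

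For the covariance-difference part, I telescope $\prod_{\text{pair}}\sigma-\prod_{\text{pair}}\tilde\sigma$ one pair at a time, bound the unchanged $\sigma$'s and $\tilde\sigma$'s via~\eqref{covarbounds}, and bound the coefficients by their sup-norms. After these substitutions the summand over $(j_1,k_1,\ldots,j_r,k_r)$ becomes invariant under the action of $S_n$ on vertex labels (aside from the diagonal/off-diagonal distinction encoded in the graph $K$ of Lemma~\ref{exactcum}), hence equal to its $S_n$-average, so the symmetrized hypothesis on $|\sigma-\tilde\sigma|$ contributes a factor $\mu$ or $\mu/n$ according to index intersection. The remaining graph-counting matches Theorem~\ref{maxcumcor} and yields the $\mu$ contribution. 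The optional $\mu'_1$ replacement, valid when $\ell_j=1$ for at least two indices, is obtained by bounding two of the $\ell=1$ factors jointly using the product hypothesis rather than telescoping each separately; this avoids paying $\mu_1$ twice at the cost of a single application of the stronger bound.

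The main obstacle will be the bookkeeping surrounding the $S_n$-symmetrization: after the sup-norm substitutions, the remaining expression must be vertex-label symmetric to invoke the averaged hypothesis on $|\sigma-\tilde\sigma|$, and this requires separating diagonal from off-diagonal contributions and carefully tracking which vertex orbits in $K$ carry loops. I expect, as in the proof of Theorem~\ref{maxcumcor}, that the extra factors of $n$ in the diagonal sup-norms ($nC_1$, $nC_2$) are exactly offset by the reduction in vertex count when loops are present, so that the final $n^{r+1}$ power is preserved in every case.
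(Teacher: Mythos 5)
Your proposal matches the paper's proof in all essentials: both introduce an intermediate (holding one of coefficients/variables fixed while changing the other), telescope the covariance products and coefficient products one factor at a time, invoke the $S_n$-averaged covariance hypothesis by noting that the remaining factors in each term are bounded by label-invariant sup-norms so the sum over index tuples can be symmetrized, and obtain the $\mu'_1$ variant by bounding two $\ell=1$ slots jointly via the product hypothesis. The only nit is your wording that the \emph{summand} becomes $S_n$-invariant (it is the sum over index tuples of a fixed isomorphism type of $K$ that is invariant, which then lets you replace the one differing covariance factor by its $S_n$-average), and the remark about ``avoiding paying $\mu_1$ twice'' does not quite describe the mechanism; but the underlying argument is the same as the paper's.
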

\begin{proof}
It will be convenient to consider an intermediate function with the
same coefficients as $Y$ but the same variables as $\medtilde Y$.
Define
 \[
   \hat Y :=  \sum_{j=1}^n c^{(1)}_{jj} \medtilde Y_{jj} 
       + \!\!\sum_{1\leq j\leq k\leq n} c_{jk}^{(2)} \,\medtilde Y_{jk}^2
       + \sum_{\ell=3}^{\hat\ell} \, \sum_{1\leq j< k\leq n} c_{jk}^{(\ell)} \,\medtilde Y_{jk}^\ell,
\]
Let $\hat\sigma_{j_1k_1,j_2k_2}$ denote $\Cov(\hat Y_{j_1k_1},\hat Y_{j_2k_2})$.
Recall from Theorem~\ref{cums} that $\kappa_r(Y)$ is the sum of products
of covariances of $\{Y_{jk}\}$, with terms like
 $\prod_{h=1}^{\hat h} \sigma_{j_hk_h,j'_hk'_h}$.
 To compare this term with the corresponding term in the expansion of
 $\kappa_r(\hat Y)$, we can use a telescoping sum:
 \begin{align}
      \prod_{h=1}^H \sigma_{j_hk_h,j'_hk'_h} &- 
          \prod_{h=1}^H \hat\sigma_{j_hk_h,j'_hk'_h} \notag\\
      &=
      \sum_{q=1}^{\hat h}\, \Bigl(
        (\sigma_{j_qk_q,j'_qk'_q}-\hat\sigma_{j_qk_q,j'_qk'_q})
        \prod_{h=1}^{q-1} \sigma_{j_hk_h,j'_hk'_h}
       \prod_{h=q+1}^{\hat h}\hat\sigma_{j_hk_h,j'_hk'_h} \Bigr).\label{e:cumdiff}
 \end{align}
 In computing the bounds of Theorem~\ref{maxcumcor}, we used the
 sum of the absolute values of the terms, ignoring the signs.  Also, we
 used uniform bounds on the coefficients for each degree.
  
 Now consider a parallel bounding calculation for the difference of
 cumulants, ignoring the sign of terms as before.
 For any term like those on the right side of~\eqref{e:cumdiff}
  for particular~$q$, applying a permutation to each of the
  indices involved produces a term which also appears.
  Since the bound we use for
  $\prod_{h=1}^{q-1} \sigma_{j_hk_h,j'_hk'_h}
       \prod_{h=q+1}^{\hat h} \hat\sigma_{j_hk_h,j'_hk'_h}$ is
   unchanged by the permutation, the average effect of the
   factor 
 $\sigma_{j_qk_q,j'_qk'_q}-\hat\sigma_{j_qk_q,j'_qk'_q}$
  is at most $\mu$ times the bound on $\sigma_{j_qk_q,j'_qk'_q}$.
  This implies
  \[
  \abs{\kappa_r(Y)-\kappa_r(\hat Y)} =
     O(n^{r+1}) \,\mu \max_{\ell_1,\ldots,\ell_r} (C_{\ell_1}\cdots C_{\ell_r}).
  \]
  
  Next we compare $\kappa_r(\hat Y)$ to $\kappa_r(\medtilde Y)$.
  In this case the variables are the same but the coefficients vary.
  Consider a coefficient $c^{(\ell_1)}_{j_1k_1}\cdots c^{(\ell_r)}_{j_rk_r}$.
  We can express $c^{(\ell_1)}_{j_1k_1}\cdots c^{(\ell_r)}_{j_rk_r}
  - \tilde c^{(\ell_1)}_{j_1k_1}\cdots \tilde c^{(\ell_r)}_{j_rk_r}$ as
  a telescoping sum similarly to~\eqref{e:cumdiff}.
  Averaging again over permutations of the indices, the net effect of
  a factor like $c^{(\ell)}_{jk}-\tilde c^{(\ell)}_{jk}$ is to replace the bound
  on $c^{(\ell)}_{jk}$ by $\mu_\ell$ times that bound.
  This implies
  \[
  \abs{\kappa_r(\hat Y)-\kappa_r(\medtilde Y)} =
     O(n^{r+1}) \,\max_{\ell_1,\ldots,\ell_r}
     \biggl(C_{\ell_1}\cdots C_{\ell_r}\!\!\!\sum_{j\in \{\ell_1,\ldots,\ell_r\}} \mu_j\biggr).
  \]
  In the case that $\ell_1=\ell_2=1$, we have expressions like
  $(c_{j_1j_1}^{(1)}-\tilde c_{j_1j_1}^{(1)})\,\hat c_{j_2j_2}^{(1)}
    \hat c^{(\ell_3)}_{j_3k_3}\cdots \hat c^{(\ell_r)}_{j_rk_r}$, where
    each $c^{(\ell_r)}_{j_rk_r}$ is either $\tilde c^{(\ell_r)}_{j_rk_r}$
    or $\hat c^{(\ell_r)}_{j_rk_r}$.
  Bounding  $c_{j_1j_1}^{(1)}-\tilde c_{j_1j_1}^{(1)}$ and
  $\hat c^{(\ell_3)}_{j_3k_3}\cdots \hat c^{(\ell_r)}_{j_rk_r}$ by their
  uniform bounds and averaging $\hat c_{j_2j_2}^{(1)}$ gives the
  final version.
  This completes the proof.
\end{proof}

\subsection{Estimation via truncated
                 cumulant expansion}\label{s:daughter}

The arbitrary precision in Theorem~\ref{mainthm} is obtained by
use of the following theorem of the first author~\cite{daughter}.

Consider a product domain $\S =S_1\times\cdots\times S_n$
and a bounded measurable function $g:\S \to \Complexes$.
For vectors $\xvec=(x_1,\ldots,x_n),\yvec=(y_1,\ldots,y_n)\in\S$
and $V\subseteq[n]$, define $\xvec\triangleright_V\yvec = (u_1,\ldots,u_n)$
where, $u_j=x_j$ if $j\notin V$ and $u_j=y_j$ if $j\in V$.
Define the constants
\[
   \Delta_V(f) = \sup_{\xvec,\yvec\in\S}\;
   \biggl| \sum_{W\subseteq V} (-1)^{\abs{W}} f(\xvec\triangleright_V\yvec)
   \biggr|.
\]
Further define, for integer $t>0$, the quantity
\[
   \widehat \Delta_t(g) = \max_{j \in [n]}
   \,\sum_{V\in \binom{[n]}{t} \st j \in V } \!\Delta_V(g).
\]

\begin{thm}[\cite{daughter}] \label{daughterthm}
	Let $\X=(X_1,\ldots,X_n)$ be a random vector with independent components
 taking values in $\S =S_1\times\cdots\times S_n$ and let
  $g:\S \to \Complexes$ be measurable. 
Suppose that for some $s\in[n]$ and $0<\alpha\leq\frac{1}{100}$,
and all $u\in[s]$,
\begin{equation}\label{alphaneed}
      \sum_{t=u}^n \binom{t-1}{u-1} e^{3\alpha t/2} 2^u \widehat \Delta_t(g)
      \leq\alpha.
\end{equation}
Then
\[
    \E e^{g(\X)} = (1+K)^n \exp\biggl(\sum_{r=1}^s \frac{\kappa_r (g(\X))}{r!} \biggr),
\]
where $K=K(g,s,\X)\in\Complexes$ satisfies $\abs{K(g,s,\X)} \leq e^{(100\alpha)^{s+1}}-1$.
\end{thm}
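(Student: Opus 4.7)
The plan is to express $\log\E e^{g(\X)}$ through a cluster-expansion identity adapted to functions of independent random variables, and then match its leading terms to $\sum_{r=1}^s \kappa_r(g(\X))/r!$. Since the quantities $\Delta_V(g)$ are exactly the mixed finite differences of $g$ over $\S$, they naturally control a Hoeffding-type decomposition $g(\X)=\sum_{V\subseteq[n]}u_V(\X_V)$ in which $u_V$ depends only on $(X_j)_{j\in V}$ and $\infnorm{u_V}$ is bounded by $\Delta_V(g)$, obtained via M\"obius inversion on the Boolean lattice of subsets of $[n]$.

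With such a decomposition in hand, I would write $e^{g(\X)}=\prod_V\(1+(e^{u_V}-1)\)$ and expand, turning $\E e^{g(\X)}$ into a formal sum indexed by multisets of subsets of $[n]$. Because the $X_j$ are independent, each term factorises over the connected components of the associated hypergraph, and a standard Mayer--Bogolyubov cluster identity then rewrites $\log\E e^{g(\X)}$ as the same sum but restricted to configurations whose hypergraph is connected. The moment--cumulant relation should let me recognise the contribution from configurations of sufficiently small total size as exactly $\sum_{r=1}^s \kappa_r(g(\X))/r!$, so the theorem reduces to bounding the tail contribution from large configurations.

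For that tail bound, the natural approach is to root each connected configuration at a vertex of $[n]$, fix the root, and enumerate the rest. The binomial weight $\binom{t-1}{u-1}$, the geometric factor $e^{3\alpha t/2}$, and the factor $2^u$ appearing in~\eqref{alphaneed} are precisely what one expects to dominate, respectively, a Cayley-style enumeration of rooted spanning structures on a size-$t$ hypergraph with $u$ distinct edges, a geometric size bias, and an orientation/ordering factor. Summing the resulting per-root bound and exponentiating across the $n$ vertices explains the $(1+K)^n$ prefactor with $|K|$ of order $(100\alpha)^{s+1}$; the absolute constant $100$ and the exponent $s+1$ come from tracking the geometric series through the constraint $\alpha\leq\tfrac{1}{100}$.

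The main obstacle will be the combinatorial bookkeeping at two points: first, matching an enumeration of connected rooted hypergraph configurations against the precise weights $\binom{t-1}{u-1}e^{3\alpha t/2}2^u$ in~\eqref{alphaneed}, so that the hypothesis bites exactly where it should; and second, reconciling the natural grouping of the cluster expansion (by number of edges) with the grouping by cumulant order needed to identify $\sum_{r=1}^s\kappa_r(g(\X))/r!$ as the truncated main term rather than only an approximation of it. Both are technical but routine once the correct combinatorial framework is fixed; the analytic content — geometric summability of the cluster expansion — is entirely encoded in the single scalar hypothesis~\eqref{alphaneed}.
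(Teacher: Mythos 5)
The paper does not prove this theorem; it is imported verbatim from reference~\cite{daughter} (arXiv:2508.16952), so there is no in-paper proof to compare your attempt against, and your proposal has to be judged on its own.

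The fatal step is the claim that, after the Mayer-style expansion $e^{g}=\prod_{V}\bigl(1+(e^{u_V}-1)\bigr)$, the "configurations of sufficiently small total size" sum to \emph{exactly} $\sum_{r=1}^s\kappa_r(g(\X))/r!$. That separation does not exist. The cumulant $\kappa_r(g(\X))$ is the $r$-th Taylor coefficient of $t\mapsto\log\E e^{tg(\X)}$ at $t=0$, and by multilinearity it equals $\sum_{V_1,\ldots,V_r}\kappa(u_{V_1},\ldots,u_{V_r})$ — a sum over $r$-tuples of Hoeffding pieces of all sizes. A Mayer cluster $\E\prod_{V\in\mathcal{A}}(e^{u_V}-1)$, by contrast, contributes to every Taylor order $\geq\abs{\mathcal{A}}$, because each factor $e^{u_V}-1=u_V+u_V^2/2+\cdots$ is itself an infinite series. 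So grouping by cluster size and grouping by cumulant order are transverse, and reorganizing one into the other is not bookkeeping but a full resummation, requiring estimates of exactly the strength you are trying to prove. You flag this as "technical but routine"; it is neither.

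The shape of the statement points at the more direct route, which bypasses the Mayer expansion entirely: keep the Hoeffding decomposition $g=\sum_V u_V$ (with $\|u_V\|_\infty\leq\Delta_V(g)$, reading the sum in the paper's $\Delta_V$ with $\triangleright_W$ rather than $\triangleright_V$), expand $\kappa_r(g(\X))=\sum_{V_1,\ldots,V_r}\kappa(u_{V_1},\ldots,u_{V_r})$ by multilinearity, observe that independence kills every tuple whose union hypergraph is disconnected, and then bound the surviving connected tuples by rooting them at a coordinate — which is where the factor $n$ in $(1+K)^n$ comes from, and where the weights $\binom{t-1}{u-1}e^{3\alpha t/2}2^u$ in \eqref{alphaneed}, imposed for every $u\in[s]$, are calibrated so that the rooted sums of $\Delta$'s telescope geometrically under $\alpha\leq\frac{1}{100}$. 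You also need, and your sketch omits, a justification that $\log\E e^{g(\X)}$ actually equals the infinite cumulant series and not merely formally; absolute convergence, which the same tail estimates deliver, is what closes that gap.
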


We will work with functions of random variables with linear dependencies,
in which case the following lemma adapted from~\cite{eulerian} will assist in
applying Theorem~\ref{daughterthm}.
 
 \begin{lemma}[\cite{eulerian}]\label{Deltabound}
 Suppose $g : \S\to\Complexes$ is an 
 infinitely smooth function, where $\S=I_1\times\cdots\times I_n$
 and each $I_j$ is a real interval of length at most~$z$.
 Let $T$ be a real $n\times n$ invertible matrix.
  Let  $f : T(\S) \to \Reals$ be defined by $f(\xvec):=g(T^{-1}\xvec)$. 
  Then,  for all $t\in [n]$,
  \[
      \widehat \Delta_t(g)
      \leq
      \frac{\infnorm{T}^{t-1}\, \onenorm{T}}{(t-1)!}  z^t\,
  \max_{u^{}_1\in[n]} \biggl(\,
        \sum_{u^{}_2,\ldots,u^{}_t\in [n]}\,
          \sup_{\xvec\in T(\S)}\;
        \biggl| \frac {\partial^t f(\xvec)}{\partial x_{u^{}_1}\cdots\partial x_{u^{}_t}}\biggr|
       \, \biggr) .
\]
\end{lemma}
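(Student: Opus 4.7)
The plan is to bound $\Delta_V(g)$ by an iterated mixed partial derivative of $g$ over the box $\prod_{j \in V} I_j$ and then, using the chain rule relating $g$ to $f$, redistribute the resulting sum using the row- and column-sum norms of $T$.

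First I would observe that, for smooth $g$, the alternating sum defining $\Delta_V(g)$ is a tensor product of one-variable first differences: for each coordinate $j \in V$, the operator swapping $x_j$ for $y_j$ and subtracting is applied to $g$. By the fundamental theorem of calculus applied one coordinate at a time, this equals (up to sign) the iterated integral of the mixed partial derivative $\partial^t g/\prod_{j\in V}\partial x_j$ over the box $\prod_{j \in V}[x_j,y_j]$. Since each $I_j$ has length at most $z$, this yields
\[
  \Delta_V(g) \leq z^t \sup_{\xvec\in\S}\,\Big|\frac{\partial^t g(\xvec)}{\prod_{j\in V}\partial x_j}\Big|, \qquad t = \abs{V}.
\]

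Next I would sum over the $V \in \binom{[n]}{t}$ containing a fixed index $j$. Labelling the elements of $V$ as $u_1 = j, u_2, \ldots, u_t$, passing from unordered subsets to ordered tuples contributes a factor of $1/(t-1)!$, and dropping the distinctness requirement on $u_2, \ldots, u_t$ gives an upper bound. Applying the chain rule for $g(\xvec) = f(T\xvec)$ then gives
\[
  \frac{\partial^t g}{\partial x_{u_1}\cdots\partial x_{u_t}}
  = \sum_{v_1,\ldots,v_t\in[n]} T_{v_1 u_1}\cdots T_{v_t u_t}\,
    \frac{\partial^t f}{\partial x_{v_1}\cdots\partial x_{v_t}}.
\]
Passing to absolute values and moving the supremum inside, the $t-1$ inner sums $\sum_{u_i}\abs{T_{v_i u_i}}$ (for $i \geq 2$, with each $v_i$ fixed) are row sums, each at most $\infnorm{T}$; the remaining sum $\sum_{v_1}\abs{T_{v_1 j}}$ is a column sum, at most $\onenorm{T}$. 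What is left is $\max_{v_1}\sum_{v_2,\ldots,v_t}\sup\abs{\partial^t f}$, precisely the quantity in the stated bound.

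The main obstacle is the bookkeeping around the order of operations: one must take the maximum over the distinguished index $u_1 = j$ \emph{before} applying the column-sum bound $\onenorm{T}$, while the remaining indices $u_2,\ldots,u_t$ must be summed \emph{before} taking maxima so that row-sum bounds $\infnorm{T}$ can be pulled out. Getting this asymmetry right is what produces the mixed factor $\onenorm{T}\,\infnorm{T}^{t-1}$ rather than a symmetric $\infnorm{T}^t$ or $\onenorm{T}^t$; everything else is a routine chain-rule-and-triangle-inequality calculation.
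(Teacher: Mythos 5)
Your proof is correct and follows the natural line of attack: bound each $\Delta_V(g)$ by $z^{|V|}$ times the sup of the corresponding mixed partial via the fundamental theorem of calculus applied coordinate-by-coordinate, convert the sum over subsets $V\ni j$ of size $t$ to a sum over ordered (not-necessarily-distinct) tuples at the cost of a $1/(t-1)!$, push through the chain rule $g(\xvec)=f(T\xvec)$, and then pull out $t-1$ row sums as $\infnorm{T}^{t-1}$ and one column sum as $\onenorm{T}$. The paper cites this lemma from an external source without reproducing the proof, but the argument you give is the standard and essentially unique route; your closing remark about the asymmetric handling of $u_1$ versus $u_2,\ldots,u_t$ correctly identifies why the bound has the mixed factor $\onenorm{T}\infnorm{T}^{t-1}$.
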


\nicebreak
\section{Integrating in a cuboid}\label{s:inbox}

In this section, we will evaluate an integral that
will later be applied to several similar functions.
Define
\begin{align}
   \medtilde F(\thetavec) &:= \exp\biggl(-\sum_{jk\in G} c_{jk} (\theta_j+\theta_k)^2
   + \medtilde R(\thetavec)\biggr),\quad\text{where~ } \label{Fdef}\\
    \medtilde R(\thetavec)  &:=   \sum_{j\in[n]}\alpha_j\theta_j
      + \sum_{1\leq j\leq k\leq n} \gamma_{jk}(\theta_j+\theta_k)^2
      + \sum_{\ell\geq 3}\sum_{jk\in G} c^{(\ell)}_{jk} (\theta_j+\theta_k)^\ell. \notag
\end{align}
In this function, $\{c_{jk}\}$ are real, but $\{\alpha_j\}$, $\{\gamma_{jk}\}$ and
$\{c^{(\ell)}_{jk}\}$ may be complex.
We can assume without loss of generality that $\{\gamma_{jk}\}$ and
$\{c^{(\ell)}_{jk}\}$ are symmetric.
Define 
\begin{equation}\label{rhodef}
   \rho:=\varLambda^{-1/2}n^{-1/2+\eps}
\end{equation}
 for some sufficiently small $\eps>0$ and, for any $t>0$, 
\[
   U_n(t):=\{\xvec\in\Reals^n\st \infnorm\xvec \leq t\}.
\]
Let $\calC\subseteq\Reals^n$ be a cuboid of the form $I_1\times\cdots\times I_n$,
where $I_1,\ldots,I_n$ are intervals.

We make the following assumptions, uniformly over $j,k,\ell$.
\begin{assume}\label{intassumptions}
  \item[(a)] The graph $G$ satisfies Assumptions~\ref{mainassumptions}(a,c,d).
  \item[(b)] $c_{jk}^{(\ell)}=\Theta(\varLambda)$ for $jk\in G$.
  \item[(c)]
    $\infnorm\alphavec=O(\varLambda^{1/2}n^{1/2-\sigma/2})$,
    where $\alphavec=(\alpha_1,\ldots,\alpha_n)\trans $.
  \item[(d)]  $\abs{\gamma_{jj}}\leq \gammamax\, n$ for $1\leq j\leq n$ and
    $\abs{\gamma_{jk}}\leq \gammamax$ for $j\ne k$,
    where $\gammamax=O(\varLambda n^{-\sigma/2})$.
  \item[(e)] $c^{(\ell)}_{jk}=O(\varLambda)$ uniformly for $jk\in G,\ell\geq 3$.
   \item[(f)] The cuboid $\calC$ satisfies
    $U_n(t_1\rho)\subseteq \calC\subseteq U_n(t_2\rho)$
  for some constants $0<t_1\leq t_2$.
\end{assume}

Our aim is to use Theorem~\ref{daughterthm} to
estimate the integral
\[
      \int_{T\calC} \medtilde F(\thetavec)\,d\thetavec,
\]
where $T$ is a particular matrix such that $T\Trans6 AT=I$,
defined below in Lemma~\ref{quadform}(c).
This will be useful both for the dominant  regions of
integral~\eqref{Fdef} and for bounding the contribution 
outside the dominant  regions.
We can't work directly with a cuboid aligned with the $\thetavec$
axes because Theorem~\ref{daughterthm} requires independent components.

Our approach is to use Theorem~\ref{daughterthm} to write the integral
to our desired precision $O(n^{-p})$ in terms of a bounded number of
cumulants of $\medtilde{R}(\thetavec)$.
Then we truncate $\medtilde{R}(\thetavec)$ to a bounded number of
terms using elementary bounds.
Having reduced the problem to a bounded number of cumulants 
of a polynomial of bounded degree, we use Theorem~\ref{maxcumcor}
to reduce the bounds even further.

\begin{lemma}\label{quadform}
 Assume Assumptions~\ref{intassumptions}.
 Define the symmetric matrix $\medtilde A$ by
 \[
   \sum_{jk\in G} c_{jk}(\theta_j+\theta_k)^2 = 
 \dfrac12 \varLambda n \thetavec\Trans4 \medtilde A\thetavec.
 \]
 Let $D$ be the diagonal matrix with the same diagonal as~$\medtilde A$.
\begin{itemize}[noitemsep,topsep=0pt]
     \item[(a)] $\medtilde A$ is positive definite, with all eigenvalues $\Theta(1)$.
     Its diagonal elements are $\Theta(1)$ and its off-diagonal elements
     are $O(n^{-1})$.
      \item[(b)]  $\infnorm{\medtilde A^{-1}}=O(1)$ and
     $\maxnorm{\medtilde A^{-1}-D^{-1}}= O(n^{-1})$.
   \item[(c)] There is a matrix $T$ such that
     $T\Trans4  \medtilde A T=I$.  Moreover, $T$ can be chosen such that
    $\infnorm{T}, \infnorm{T^{-1}} =O(1)$,
    $\maxnorm{T-D^{-1/2}} =O(n^{-1})$ and $\maxnorm{T^{-1}-D^{1/2}}=O(n^{-1})$.
    \item[(d)] Let $\Y=(Y_1,\ldots,Y_n)$ be a Gaussian  random variable with density
      \[
          (2\pi)^{-n/2} (\varLambda n)^{n/2}
      \abs{\tilde A}^{1/2}e^{-\frac12\varLambda n\sum_{j=1}^n \yvec\Trans5\tilde A\yvec}.
      \]
      For $1\leq j\leq k\leq n$, define $Y_{jk} = (\varLambda n)^{1/2} (Y_j+Y_k)$.
     Then $\{ Y_{jk} \}$ satisfy~\eqref{covarbounds}.
    \end{itemize}
\end{lemma}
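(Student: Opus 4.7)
The plan is to treat $\medtilde A$ as a controlled perturbation of its diagonal $D$ via Assumptions~\ref{intassumptions}(b) and~\ref{mainassumptions}(d), then transfer the resulting max-norm estimates to the symmetric square root in part~(c). For (a), expanding the quadratic form directly gives $\medtilde A_{jj}=\tfrac{2}{\varLambda n}\sum_{k:jk\in G}c_{jk}$ and $\medtilde A_{jk}=\tfrac{2c_{jk}}{\varLambda n}$ for $jk\in G$ (zero otherwise); so $c_{jk}=\Theta(\varLambda)$ combined with $\tauQ n\leq g_j\leq n-1$ forces $\medtilde A_{jj}=\Theta(1)$ and $\medtilde A_{jk}=O(n^{-1})$, while sandwiching $\sum_{jk\in G}(\theta_j+\theta_k)^2$ between $\tauQ n\twonorm\thetavec^2$ (from the algebraic-bipartiteness clause of Assumption~\ref{mainassumptions}(d)) and $2(n-1)\twonorm\thetavec^2$ pins the eigenvalues of $\medtilde A$ in a range of $\Theta(1)$.

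For (b), $\infnorm{\medtilde A^{-1}}=O(1)$ follows from Theorem~\ref{matrixthm1} applied to the positively-weighted signless Laplacian $\tfrac12\varLambda n\,\medtilde A$, the same device used in the proof of Theorem~\ref{goodbeta}. Writing $E:=\medtilde A-D$, the identity $\medtilde A^{-1}-D^{-1}=-\medtilde A^{-1}E D^{-1}$ combined with $\abs{(\medtilde A^{-1}E)_{ij}}\leq\infnorm{\medtilde A^{-1}}\maxnorm E=O(n^{-1})$ and $(D^{-1})_{jj}=\Theta(1)$ then yields $\maxnorm{\medtilde A^{-1}-D^{-1}}=O(n^{-1})$.

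For (c), I would take $T:=D^{-1/2}(I+F)^{-1/2}$ with $F:=D^{-1/2}ED^{-1/2}$, so that the factorization $\medtilde A=D^{1/2}(I+F)D^{1/2}$ gives $T\Trans 4\medtilde A T=I$ directly, and part (b) applied to $(I+F)^{-1}=D^{1/2}\medtilde A^{-1}D^{1/2}$ yields $\maxnorm{(I+F)^{-1}-I}=O(n^{-1})$ together with a $\Theta(1)$ spectrum for $I+F$. Since $T-D^{-1/2}=D^{-1/2}[(I+F)^{-1/2}-I]$ and $D^{-1/2}$ has $\Theta(1)$ diagonal, the task reduces to $\maxnorm{(I+F)^{-1/2}-I}=O(n^{-1})$, which is the main obstacle: naive spectral or Cauchy--Schwarz estimates lose a factor of $n^{1/2}$. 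My approach is the integral identity
\[
  B^{1/2}-I = \frac{2}{\pi}\int_0^\infty \frac{t^2}{t^2+1}\,(t^2I+B)^{-1}(B-I)\,dt, \quad B:=(I+F)^{-1},
\]
combined with the uniform bound $\infnorm{(t^2I+B)^{-1}}=O((1+t^2)^{-1})$. That bound comes from the factorization $(t^2I+B)^{-1}=(I+t^2B^{-1})^{-1}B^{-1}$ together with $\infnorm{B^{-1}}=\infnorm{I+F}=O(1)$ for bounded $t$, and from a Neumann series for large $t$ (since $\maxnorm{B-I}=O(n^{-1})$ forces $\infnorm{(B-I)/(t^2+1)}<\tfrac12$ once $t^2$ is large enough). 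The integrand then has max-norm $O(n^{-1}t^2/(1+t^2)^2)$, which integrates to $O(n^{-1})$. The same argument applied to $(I+F)^{1/2}$ delivers $\maxnorm{T^{-1}-D^{1/2}}=O(n^{-1})$, and $\infnorm{T},\infnorm{T^{-1}}=O(1)$ are automatic since any $n\times n$ matrix with max-norm $O(n^{-1})$ has infinity norm $O(1)$ and $\infnorm{D^{\pm 1/2}}=O(1)$.

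For (d), the Gaussian density makes $\Cov(Y_j,Y_k)=(\varLambda n)^{-1}(\medtilde A^{-1})_{jk}$, so
\[
  \Cov(Y_{jk},Y_{uv}) = \varLambda n\,\Cov(Y_j+Y_k,\,Y_u+Y_v) = (\medtilde A^{-1})_{ju}+(\medtilde A^{-1})_{jv}+(\medtilde A^{-1})_{ku}+(\medtilde A^{-1})_{kv}.
\]
By (b), each off-diagonal entry of $\medtilde A^{-1}$ is $O(n^{-1})$ and each diagonal entry is $\Theta(1)$; if $\{j,k\}\cap\{u,v\}\ne\emptyset$ at least one of the four terms is diagonal (total $\Theta(1)$), and disjointness forces all four terms off-diagonal (total $O(n^{-1})$), matching~\eqref{covarbounds}.
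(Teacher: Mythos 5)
Your proposal takes a genuinely different route from the paper. The paper proves parts (b) and (c) by a one-line citation of Theorem~\ref{matrixthm1} (which is itself imported from \cite[Lem.~4.9]{mother}), whereas you construct the transformation matrix explicitly as $T=D^{-1/2}(I+F)^{-1/2}$ and control the square root via the integral representation. Parts (a), (b) and (d) of your argument are correct; part (a) in particular spells out the entry-level computation that the paper attributes to Lemma~\ref{lambdarange}, and your derivation of $\maxnorm{\medtilde A^{-1}-D^{-1}}=O(n^{-1})$ from the identity $\medtilde A^{-1}-D^{-1}=-\medtilde A^{-1}ED^{-1}$ is a clean, self-contained alternative to citing Theorem~\ref{matrixthm1}(a).

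The argument for part (c) has a gap at the step where you claim the uniform resolvent bound $\infnorm{(t^2I+B)^{-1}}=O((1+t^2)^{-1})$ for all $t>0$. Your two Neumann-series arguments cover disjoint regimes: the factorization $(t^2I+B)^{-1}=(I+t^2B^{-1})^{-1}B^{-1}$ gives a convergent series only when $t^2\infnorm{B^{-1}}<1$, and the expansion of $((t^2+1)I+(B-I))^{-1}$ converges only when $t^2+1>\infnorm{B-I}$. Since both $\infnorm{B^{-1}}$ and $\infnorm{B-I}$ are merely $O(1)$ — they come from multiplying $n$ entries of size $O(n^{-1})$ — nothing guarantees $1/\infnorm{B^{-1}}+1\geq\infnorm{B-I}$, so a range of intermediate $t$ values may be uncovered. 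For that middle range the conclusion $\infnorm{(t^2I+B)^{-1}}=O(1)$ is true but not established by your argument; the simplest repair is to note that $t^2I+B$ has diagonal entries $\Theta(1+t^2)$, off-diagonal max-norm $O(n^{-1})$, and least eigenvalue $\Theta(1+t^2)$, and then apply Theorem~\ref{matrixthm1}(a) to it (after rescaling by $(1+t^2)^{-1}$ if desired) to obtain the bound uniformly in $t$. With that one step supplied, the integral-representation route does deliver the $O(n^{-1})$ max-norm estimates for $T$ and $T^{-1}$.
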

\begin{proof}
 The lower bound on the eigenvalues comes from
 Assumption~\ref{mainassumptions}(d) and Assumption~\ref{intassumptions}(b).
 The upper bound comes from the maximum row sum of~$\medtilde A$.
 The bounds on the entries of~$\medtilde A$ come from Lemma~\ref{lambdarange}.
Parts (b) and (c) follow from Theorem~\ref{matrixthm1}.
The final claim (d) is a consequence of parts (a) and~(b).
\end{proof}

Adopt the matrix $T$ from Lemma~\ref{quadform} and define
$\phivec$ by $\thetavec=T\phivec$.
This transformation diagonalizes the quadratic
form: $\thetavec\Trans4 \medtilde A\thetavec= \phivec\Trans3 \phivec$.

Let $\X$ be the Gaussian  random variable whose
 density is
 $(2\pi)^{-n/2} (\varLambda n)^{n/2}e^{-\frac12\varLambda n\sum_{j=1}^n x_j^2}$.
Define $\hat\X$ to be the normalized truncation of $\X$ to the
cuboid~$\calC$.
We can now apply Theorem~\ref{daughterthm}.

\begin{lemma}\label{applydaughter}
Adopt Assumptions~\ref{intassumptions} and the matrix $T$ given
by Lemma~\ref{quadform}(c).  Assume $\eps<\frac1{16}\sigma$ and define
  \begin{equation}\label{sweak}
        r_1 := \biggl\lfloor \frac{4(p+1)}{\sigma} \biggr\rfloor.
  \end{equation}
  Then
  \[
     \E e^{\medtilde R(T\hat\X)} = (1+O(n^{-p}))\exp\biggl(
       \sum_{r=1}^{r_1} \dfrac{1}{r!} \kappa_r(\medtilde R(T\hat\X))\biggr).
  \]
\end{lemma}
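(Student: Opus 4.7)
The plan is to apply Theorem~\ref{daughterthm} to $g(\xvec):=\medtilde R(T\xvec)$, with $T$ the matrix from Lemma~\ref{quadform}(c), and to the random vector~$\hat\X$. Since $\calC=I_1\times\cdots\times I_n$ is a product domain and the components of~$\X$ are independent one-dimensional Gaussians, the components of~$\hat\X$ remain independent, as Theorem~\ref{daughterthm} requires. I would take $s:=r_1$ and $\alpha:=c\,n^{-\sigma/4}$ for a small positive constant~$c$. By~\eqref{sweak} we have $(r_1+1)\sigma/4>p+1$, so $(100\alpha)^{r_1+1}\leq n^{-p-1}$, and hence the $(1+K)^n$ factor delivered by Theorem~\ref{daughterthm} equals $1+O(n^{-p})$.

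The core technical step is verifying hypothesis~\eqref{alphaneed} for each $u\in[r_1]$, which reduces to showing that $\widehat\Delta_t(g)$ is polynomially small in~$n$ for every $t\geq 1$. I would apply Lemma~\ref{Deltabound} to~$g$, using the matrix~$T$ and the cuboid~$\calC\subseteq U_n(t_2\rho)$, whose intervals have length at most~$2t_2\rho$. Since $\infnorm T,\onenorm T=O(1)$ by Lemma~\ref{quadform}(c), it yields
\[
\widehat\Delta_t(g)\leq O(\rho^t)\,\max_{u_1\in[n]}\,\sum_{u_2,\ldots,u_t\in[n]}\,\sup_{\thetavec\in T(\calC)}\Abs{\frac{\partial^t\medtilde R(\thetavec)}{\partial\theta_{u_1}\cdots\partial\theta_{u_t}}}.
\]
Moreover, $\infnorm\thetavec\leq\infnorm T\cdot t_2\rho=O(\rho)$ on $T(\calC)$, so every factor $\theta_j+\theta_k$ appearing in these derivatives is $O(\rho)$.

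Next I would estimate the inner sum piece by piece. The linear part of~$\medtilde R$ contributes only at $t=1$ and is bounded by $\infnorm\alphavec=O(\varLambda^{1/2}n^{1/2-\sigma/2})$ by Assumption~\ref{intassumptions}(c). The quadratic part contributes only at $t\in\{1,2\}$; splitting the diagonal ($\abs{\gamma_{jj}}\leq\gammamax n$) and off-diagonal ($\abs{\gamma_{jk}}\leq\gammamax$) cases in Assumption~\ref{intassumptions}(d) and summing, the $t=2$ contribution is $O(\gammamax n)=O(\varLambda n^{1-\sigma/2})$ and the $t=1$ contribution a factor $\rho$ smaller. For each $\ell\geq 3$ with $\ell\geq t$, differentiating $(\theta_j+\theta_k)^\ell$ yields $\ell(\ell-1)\cdots(\ell-t+1)(\theta_j+\theta_k)^{\ell-t}$ multiplied by an indicator that $u_1,\ldots,u_t\in\{j,k\}$; summing over $u_2,\ldots,u_t$ (a factor $2^{t-1}$), over the $g_{u_1}=O(n)$ pairs $(j,k)\in G$ with $u_1\in\{j,k\}$, and using $\abs{c^{(\ell)}_{jk}}=O(\varLambda)$, bounds the contribution by $O(\varLambda n\rho^{\ell-t})$; the geometric series in~$\ell$ is dominated by $\ell=\max\{t,3\}$. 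Combining these and inserting the $\rho^t$ prefactor, together with $\varLambda\geq B n^{\sigma-1}$, a direct calculation gives $\widehat\Delta_t(g)=O(n^{-\sigma/2+Ct\eps})$ for an absolute constant~$C$, with further geometric decay in~$t$ arising from the $\rho^t$ factor. Once $\eps<\sigma/16$ the decay ratio is $o(1)$, the factor $e^{3\alpha t/2}$ is easily absorbed, and the sum in~\eqref{alphaneed} is of order $n^{-\sigma/2+O(\eps)}\ll\alpha$. Theorem~\ref{daughterthm} then delivers the claim.

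The main obstacle is the borderline case $t=2$: treating the quadratic coefficients of~$\medtilde R$ with the generic bound~$O(\varLambda)$ would give $\widehat\Delta_2(g)=O(\rho^2\varLambda n)=O(n^{2\eps})$, which is \emph{not} small. It is essential to exploit that these coefficients are perturbations~$\gamma_{jk}$ which are smaller than the dominant Hessian coefficients~$c_{jk}$ by a factor of~$n^{-\sigma/2}$, as encoded in Assumption~\ref{intassumptions}(d). With this refinement in hand, the remaining bookkeeping and the invocation of Theorem~\ref{daughterthm} are routine.
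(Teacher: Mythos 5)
Your proposal is correct and follows essentially the same route as the paper's proof: bound $\widehat\Delta_t(\medtilde R(T\cdot))$ via Lemma~\ref{Deltabound}, exploit the smallness of the perturbation coefficients $\gamma_{jk}$ (Assumption~\ref{intassumptions}(d)) to make the otherwise borderline $t=2$ term small, verify~\eqref{alphaneed} for a polynomially small $\alpha$, and read off the error from Theorem~\ref{daughterthm} using the choice of~$r_1$. Your observation about the $t=2$ case is exactly the subtlety the paper's bounds on $\gammamax$ are designed to handle.
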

\begin{proof}
  We will use Lemma~\ref{Deltabound} to bound the quantities
  $\widehat \Delta_t(\medtilde R)$ needed by Theorem~\ref{daughterthm}.
  Note that the function $\widehat \Delta_t$ is subadditive.
  
  Let $b=\max\{\infnorm T,\onenorm T\}$.
  Then $T(\calC)\subseteq U_n(bt_2\rho)$.
  For $u,v\in[n]$, let $b_u,b_{uv}\in\Complexes$, where the array
  $(b_{uv})$ is symmetric with zero diagonal.
  Consider $r,\ell\geq 1$.  Then we have
  \begin{align}
      \max_{u_1\in[n]} & \biggl(\,\sum_{u_2,\ldots,u_t\in[n]}
      \sup_{\thetavec\in T(\calC)} \biggl|
      \frac{\partial^t}{\partial \theta_{u_1}\cdots\partial\theta_{u_t}}
      \sum_{j=1}^n b_j\theta_j^\ell \biggr| \,\biggr) \notag \\
   &= \max_{u_1\in[n]} \sup_{\thetavec\in T(\calC)}
               \Abs{ b_{u_1} (\ell)_t \theta_{u_1}^{\ell-t} }
          \leq (\ell)_t (b t_2 \rho)^{\ell-t}  \max_{u\in[n]}\, \abs{b_u}. \label{diff1}
 \end{align}
  Similarly,
  \begin{align}
      \max_{u_1\in[n]} & \biggl(\,\sum_{u_2,\ldots,u_t\in[n]}
      \sup_{\thetavec\in T(\calC)} \biggl|
      \frac{\partial^t}{\partial \theta_{u_1}\cdots\partial\theta_{u_t}}
      \sum_{1\leq j<k\leq n} b_{jk}(\theta_j+\theta_k)^\ell \biggr|\, \biggr) \notag\\
   &= \max_{u_1\in[n]}  \biggl(\,\sum_{u_2,\ldots,u_t\in[n]}
      \sup_{\thetavec\in T(\calC)} \biggl|
      \frac{\partial^t}{\partial \theta_{u_1}\cdots\partial\theta_{u_t}}
      \sum_{k=1}^n b_{u_1k}(\theta_{u_1}+\theta_k)^\ell \biggr|\,  \biggr) \notag\\
   &\leq  \max_{u_1\in[n]} \biggl(\, \sum_{k=1}^n \,\abs{b_{u_1k}}
      \sum_{u_2,\ldots,u_t\in[n]}
      \sup_{\thetavec\in T(\calC)} \biggl|
      \frac{\partial^t}{\partial \theta_{u_1}\cdots\partial\theta_{u_t}}
     (\theta_{u_1}+\theta_k)^\ell \biggr|\, \biggr) \notag \\
   &\leq \infnorm{(b_{uv})} \,2^{\ell-1} (\ell)_t (bt_2\rho)^{\ell-t},\label{diff2}
  \end{align}
  where the final step comes from noting that the sum over $u_2,\ldots,u_t$
  only gives nonzero values when $u_2,\ldots,u_t\in\{u_1,k\}$.
  Note that the falling factorial $(\ell)_t$ is 0 if $t>\ell$.

  Now applying~\eqref{diff1} and~\eqref{diff2}, Lemma~\ref{Deltabound},
  the bound $\varLambda^{-1}n^{-1}=O(n^{-\sigma})$ from
  Assumptions~\ref{mainassumptions}(c),
  the identity $\sum_{\ell\geq t} (\ell)_t z^\ell = t!\,z^t/(1-z)^{t+1}$,
  and the bound $\sum_{\ell\geq 3} (\ell)_t z^\ell <28z^3$ for $t=1,2,z\in(0,\frac12)$,
  we obtain
  \begin{align*}
      \widehat\Delta_1(\medtilde R) & = O(n^{-\sigma/2+3\eps}) \\
       \widehat\Delta_2(\medtilde R) & = O(n^{-\sigma/2+3\eps}) \\
        \widehat\Delta_t(\medtilde R) & = O(1)^t n^{-(t/2-1)\sigma+t\eps} \quad(t\geq 3),
   \end{align*}
   where $O(1)$ in the last expression is uniform over~$t$.
   Now, with the aid of the identity $\sum_{t\geq u}\binom{t-1}{u-1}z^t=z^u/(1-z)^u$,
   we can check that~\eqref{alphaneed} is satisfied with $\alpha=n^{-\sigma/2+4\eps}$,
   which, together with the assumption $\eps< \frac{1}{16}\sigma$,
   completes the proof.
\end{proof}

Next we make two simplifications of Lemma~\ref{applydaughter}.
First we show that the series defining $\medtilde R(\thetavec)$ can be 
truncated to a bounded number of terms.  Then we show that
the cumulants of $\medtilde R(T\hat\X)$
can be replaced by the more-manageable cumulants of the
corresponding non-truncated multivariate Gaussian  distribution.
For any $m\geq 3$, define
\[
   \medtilde R_m(\thetavec)
   :=  \sum_{j\in[n]}\alpha_j\theta_j +
        \!\!\sum_{1\leq j\leq k\leq n} \gamma_{jk}(\theta_j+\theta_k)^2 + 
      \sum_{\ell=3}^m
        \sum_{jk\in G} c_\ell(\ljk ) (\theta_j+\theta_k)^\ell.
\]
  
\begin{lemma}\label{truncatedsum}
Adopt Assumptions~\ref{intassumptions}. Assume $\eps<\frac1{16}\sigma$
 and define
  \[
     \ell_1 = \biggl\lfloor \frac{11(p+1)}{\sigma^2}\biggr\rfloor.
  \]
  Then, as $n\to\infty$,
  \[
       \E e^{\medtilde R(T\hat\X)} = (1+O(n^{-p}))\exp\biggl(
       \sum_{r=1}^{r_1} \dfrac{1}{r!} \kappa_r(\medtilde R_{\ell_1}(T\X))\biggr),
  \]
\end{lemma}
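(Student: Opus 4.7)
The plan is to pass from Lemma~\ref{applydaughter}'s formula to the claimed one by two independent reductions: first truncating the series defining $\medtilde R$ to its first $\ell_1$ terms, then replacing the truncated Gaussian $\hat\X$ by the full Gaussian $\X$. Each reduction must change $\sum_{r=1}^{r_1}\frac{1}{r!}\kappa_r$ by at most $O(n^{-p})$, which exponentiates to the claimed multiplicative error.

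\textbf{Step 1 (truncating the series).} Set $V := \medtilde R - \medtilde R_{\ell_1}$. On $\calC$ we have $\infnorm\thetavec \leq t_2\rho$, so summing the geometric tail with Assumption~\ref{intassumptions}(e) and $|E(G)| \leq n^2$ yields a deterministic pointwise bound $|V(T\hat\X)| \leq \epsilon$ where $\epsilon = O(\varLambda n^2\rho^{\ell_1+1})$. By the multilinearity identity \eqref{multilinear}, the cumulant difference $\kappa_r(\medtilde R(T\hat\X)) - \kappa_r(\medtilde R_{\ell_1}(T\hat\X))$ expands into a bounded number of mixed cumulants each containing at least one factor of $V(T\hat\X)$, and the standard cumulant--moment formula bounds each such mixed cumulant by $O(\epsilon M^{r-1})$, where $M$ is any a.s.\ bound on $|\medtilde R_{\ell_1}(T\hat\X)|$. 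Assumptions~\ref{intassumptions}(c)--(e) combined with $\infnorm\thetavec \leq t_2\rho$ give $M = O(n^{1-\sigma/2+O(\eps)})$ (the linear, quadratic, and higher-degree contributions all sit at this order). A routine substitution of $\rho = \varLambda^{-1/2}n^{-1/2+\eps}$ and $\varLambda\geq Bn^{\sigma-1}$ then verifies that $\ell_1 = \lfloor 11(p+1)/\sigma^2 \rfloor$ makes $\epsilon M^{r-1} = O(n^{-p})$ uniformly over $r \leq r_1 = O(1/\sigma)$, provided $\eps < \sigma/16$.

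\textbf{Step 2 (removing truncation).} Each coordinate of $\X$ is $N(0,(\varLambda n)^{-1})$. Since $\infnorm{T^{-1}} = O(1)$ by Lemma~\ref{quadform}(c), the preimage $T^{-1}\calC$ contains a cube $U_n(c\rho)$ whose half-width is $c\rho\,(\varLambda n)^{1/2} = \Omega(n^\eps)$ standard deviations. A Gaussian tail bound and a union bound over coordinates therefore give $\Prob(\X\notin T^{-1}\calC) \leq 2n\,e^{-\Omega(n^{2\eps})}$, which is super-polynomially small. For any polynomial $F$ of bounded degree in the coordinates of $T\X$, $\E F(\X)^2$ is polynomially bounded, so Cauchy--Schwarz applied to the identity $\E F(\hat\X) = \E[F(\X)\mathbf{1}_{\X\in T^{-1}\calC}]/\Prob(\X\in T^{-1}\calC)$ yields $|\E F(\hat\X) - \E F(\X)| = e^{-\Omega(n^{2\eps})}$. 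Because $\kappa_r(\medtilde R_{\ell_1})$ is a polynomial in finitely many such moments, $|\kappa_r(\medtilde R_{\ell_1}(T\hat\X)) - \kappa_r(\medtilde R_{\ell_1}(T\X))| = O(n^{-p})$ with room to spare, and summing over $r\leq r_1$ preserves the bound.

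\textbf{Main obstacle.} The sharp $O(n^{r+1})$ cumulant bounds of Theorem~\ref{maxcumcor} rest on Isserlis-type identities for genuine Gaussians, so they do not apply directly to the truncated variable $T\hat\X$. Step~1 must therefore rely on the crude pointwise estimate $\epsilon M^{r-1}$, which inflates the required $\ell_1$ from order $1/\sigma$ to order $1/\sigma^2$; checking that $\ell_1 = \lfloor 11(p+1)/\sigma^2 \rfloor$ overcomes the competing growth in $M^{r-1}$ (for $r\leq r_1 = O(1/\sigma)$) and in the negative-power factor $\varLambda^{1-\ell_1/2}$ hiding in $\epsilon$ is the main piece of arithmetic. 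Separating the two reductions is what makes this tractable: after the series has been truncated to a polynomial of bounded degree, Step~2 can invoke sharp Gaussian concentration to remove the truncation with only super-polynomially small error.
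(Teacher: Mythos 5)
Your proof follows essentially the same two-step reduction as the paper: first a pointwise tail bound gives $\abs{\medtilde R(T\hat\X)-\medtilde R_{\ell_1}(T\hat\X)}=O(\varLambda n^2\rho^{\ell_1+1})$ and a moment/multilinearity argument converts that into the crude cumulant estimate $O(\eps M^{r-1})$, then the truncated Gaussian $\hat\X$ is exchanged for $\X$ using super-polynomially small tail probabilities; your ``Main obstacle'' paragraph correctly identifies that the crudeness of $\eps M^{r-1}$ (forced because Theorem~\ref{maxcumcor} requires a genuine Gaussian) is what inflates $\ell_1$ to order $1/\sigma^2$. One small slip: $\hat\X$ is the normalized truncation of $\X$ to $\calC$ itself (a region in $\phivec$-coordinates, with $U_n(t_1\rho)\subseteq\calC$ by Assumption~\ref{intassumptions}(f)), not to $T^{-1}\calC$; the tail and Cauchy--Schwarz argument in your Step~2 goes through verbatim with $\calC$ in place of $T^{-1}\calC$.
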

\begin{proof}
  From~Lemma~\ref{quadform} we have that the components
  of $T\hat\X$ have absolute value at most $O(\rho)$.
  Since $\rho\to 0$, for sufficiently large $n$ we have
  \[
      \abs{\medtilde R(T\hat\X)} =
        O(\rho n) \infnorm\alphavec
       + O(\rho^2 n^2) \gammamax
       + \varLambda n^2 \sum_{\ell\geq 3} (O(\rho))^\ell
      = O(n^{1-\sigma/2+3\eps})
      = O(n^{1-5\sigma/16}).
  \]
  Also, for any fixed $m\geq 3$,
  \[
  \abs{\medtilde R(T\hat\X)-\medtilde R_m(T\hat\X)}
    = O\(\varLambda n^2(2\rho)^{m+1}\)
    = O(n^{1+\sigma/2-\sigma m/2+(2m+1)\eps})
    = O(n^{1+9\sigma/16-3\sigma m/8}),
  \]
  using Assumptions~\ref{intassumptions}(e).  
   
  Recall that the $r$-th cumulant of a random variable can be
  written as a polynomial in central moments of total order~$r$.
  (For example $\kappa_4 = \mu_4 - 3\mu_2^2$.)
  Consequently,
  \[
       \Abs{\kappa_r(\medtilde R(T\hat\X))-\kappa_r(\medtilde R_{\ell_1}(T\hat\X))} 
      = O( n^{1+9\sigma/16-3\sigma m/8 + (r-1)(1-5\sigma/16)} ).
 \]
 For $1\leq r\leq r_1$ and $m\geq 11(p+1)/\sigma^2-1$, we have
 $1+\frac{9}{16}\sigma-\frac38\sigma m + (r-1)(1-\frac5{16}\sigma)<-p$.
  
  In order to replace $\hat \X$ by $\X$,
 note that the cumulant sum is a polynomial in a bounded
 number of moments with polynomially bounded coefficients.
 Since the moments for the truncated and
 non-truncated distributions only differ by
  $e^{-\Omega(n^{2\eps})}$ (see~\cite[Lemma 4.1]{mother} for
  a proof), the cumulant sums differ by an
  amount much smaller than the existing $O(n^{-p})$ error term.
  \end{proof}

\begin{lemma}\label{better}
Adopt Assumptions~\ref{intassumptions} and let
$\Y$ be a Gaussian  random variable with density
$(2\pi)^{-n/2} (\varLambda n)^{n/2}
\abs{\medtilde A}^{1/2}e^{-\frac12\varLambda n\sum_{j=1}^n
\yvec\Trans5\tilde A\yvec}$.
Then for $1\leq r\leq r_1$ we have the following uniform bounds
on the cumulants of $\medtilde R_{\hat\ell}(\Y)$
for $3\leq \hat\ell\leq \ell_1$.
\[
  \kappa_r(\medtilde R_{\hat\ell}(\Y)) =
    \begin{cases}
        O(n^{1-\lceil r/2\rceil\sigma}), & \text{~~if $\gammamax\leq\varLambda n^{-\sigma}$}; \\
        O(n^{1-r\sigma/2}), & \text{~~in general}.
    \end{cases}
\]
\end{lemma}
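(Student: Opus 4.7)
The plan is to rewrite $\medtilde R_{\hat\ell}(\Y)$ in the form required by Theorem~\ref{maxcumcor} and then read off the cumulant bounds directly from that theorem. The key enabling fact is Lemma~\ref{quadform}(d), which provides jointly Gaussian variables $Y_{jk}=(\varLambda n)^{1/2}(Y_j+Y_k)$ satisfying the covariance bounds~\eqref{covarbounds}. Since $Y_j+Y_k=(\varLambda n)^{-1/2}Y_{jk}$ and $Y_j=Y_{jj}/(2(\varLambda n)^{1/2})$, every monomial of $\medtilde R_{\hat\ell}$ rewrites as a scaled monomial in the $Y_{jk}$; the $Y_{jj}$-coefficients collect contributions from the $\alpha_j$ terms and the diagonal $\gamma_{jj}$ terms, while the off-diagonal $Y_{jk}$-coefficients come from the remaining quadratic terms and from the $c_\ell(\ljk)$ terms.

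From Assumptions~\ref{intassumptions} I would extract the constants demanded by Theorem~\ref{maxcumcor}. Part~(c), together with $\varLambda n\geq Bn^\sigma$, gives $\abs{c^{(1)}_{jj}}=O(n^{-\sigma/2})$ after rescaling, so one may take $C_1=O(n^{-1-\sigma/2})$. Part~(d) yields $C_2=O(\gammamax/(\varLambda n))$, which equals $O(n^{-1-\sigma/2})$ in general and $O(n^{-1-\sigma})$ under the hypothesis $\gammamax\leq \varLambda n^{-\sigma}$. For $\ell\geq 3$, part~(e) combined with $\varLambda\geq Bn^{\sigma-1}$ gives $C_\ell=O(\varLambda^{1-\ell/2}n^{-\ell/2})=O(n^{-1-\sigma(\ell-2)/2})$.

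Feeding these into Theorem~\ref{maxcumcor} yields $\kappa_r(\medtilde R_{\hat\ell}(\Y))=O(n^{r+1})\max C_{\ell_1}\cdots C_{\ell_r}$, where the maximum is over $1\leq \ell_1,\ldots,\ell_r\leq \hat\ell$ with $\sum_i\ell_i$ even. Each $C_{\ell_i}$ is $O(n^{-1})$ times a decay factor of at least $n^{-\sigma/2}$, which already delivers the general bound $\kappa_r=O(n^{1-r\sigma/2})$. For the refined bound under $\gammamax\leq\varLambda n^{-\sigma}$, I would invoke a short parity argument: when $r$ is odd, the constraint that $\sum_i\ell_i$ be even forces the number of even-valued $\ell_i$'s to be odd, in particular at least one. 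In the good case, any even $\ell_i\geq 2$ contributes decay $n^{-\sigma}$ instead of $n^{-\sigma/2}$, so the total decay is at least $(r-1)\sigma/2+\sigma=(r+1)\sigma/2=\lceil r/2\rceil\sigma$; for even $r$ the bound $\lceil r/2\rceil\sigma=r\sigma/2$ is already delivered by the general argument.

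I do not anticipate any substantive obstacle: the only things needing care are tracking powers of $\varLambda$ through Assumptions~\ref{intassumptions}(c--e) and the small parity bookkeeping for odd~$r$ in the good case. Uniformity over $3\leq \hat\ell\leq \ell_1$ is automatic, since both $\ell_1$ and the cumulant order~$r$ are bounded by constants depending only on the parameters of Assumptions~\ref{mainassumptions}.
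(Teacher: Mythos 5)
Your argument matches the paper's: both rewrite $\medtilde R_{\hat\ell}(\Y)$ in terms of the $Y_{jk}$ from Lemma~\ref{quadform}(d), derive the same constants $C_1=O(\varLambda^{-1/2}n^{-3/2}\infnorm\alphavec)$, $C_2=O(\varLambda^{-1}n^{-1}\gammamax)$, $C_\ell=O(\varLambda^{1-\ell/2}n^{-\ell/2})$ from Assumptions~\ref{intassumptions}, apply Theorem~\ref{maxcumcor}, and finish with the parity observation (mixed cumulants vanish when $\sum_i\ell_i$ is odd) to upgrade the exponent to $\lceil r/2\rceil\sigma$ when $\gammamax\leq\varLambda n^{-\sigma}$. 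You merely spell out the parity bookkeeping that the paper compresses into one sentence.
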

\begin{proof}
For $1\leq j\leq k\leq n$, define $Y_{jk}$ as in Lemma~\ref{quadform}(d).
By Assumptions~\ref{intassumptions}, we can apply
Theorem~\ref{maxcumcor} using
\begin{align*}
   C_1 &= O(\varLambda^{-1/2}n^{-3/2}\infnorm\alphavec)=O(n^{-1-\sigma/2}) \\
   C_2 &= O(\varLambda^{-1}n^{-1}\gammamax) = O(n^{-1-\sigma/2}) \\
   C_\ell &= O(\varLambda^{1-\ell/2}n^{-\ell/2}), \text{~~if $\ell\geq 3$}.
\end{align*}
Note that $C_\ell = O(n^{-1-\sigma/2})$ for all~$\ell$.
Using Theorem~\ref{maxcumcor}, we find the uniform bound
$O(n^{1-r\sigma/2})$ for $n^{r+1}C_{\ell_1}\cdots C_{\ell_r}$
for any $\ell_1,\ldots,\ell_r$ and $r\leq r_1$.
Since there are only $O(1)$ such terms for fixed~$r$,
the contribution to $\kappa_r(\medtilde R_{m'}(\Y))$ from
all cumulants of the form
$\kappa(Y_{j_1k_1}^{\ell_1},\ldots,Y_{j_rk_r}^{\ell_r})$
is $O(n^{1-r\sigma/2})$.

Under the stronger assumption
$\gammamax \leq\varLambda n^{-\sigma}$,
we have a better bound $C_\ell=O(n^{-1-\sigma})$ for
even~$\ell$.
Recall that $\kappa(Y_{j_1k_1}^{\ell_1},\ldots,Y_{j_rk_r}^{\ell_r})=0$
if $\sum_{j=1}^r \ell_j$ is odd, so for all $r$ we
can use the improved bound
$n^{r+1}C_{\ell_1}\cdots C_{\ell_r}=O(n^{1-\lceil r/2\rceil\sigma})$.
\end{proof}

\begin{thm}\label{genbox}
Adopt Assumptions~\ref{intassumptions}.
Then
\[
  \int_{T\calC} \medtilde F(\thetavec)\,d\thetavec = (1+O(n^{-p}))
  \frac{(2\pi)^{n/2}}{(\varLambda n)^{n/2}\abs{\medtilde A}^{1/2}}
   \exp\biggl(
       \sum_{r=1}^{r_1} \dfrac{1}{r!} \kappa_r(\medtilde R_{\ell_0}(\Y))\biggr),
\]
where $\Y$ is a Gaussian  random variable with density
$(2\pi)^{-n/2} (\varLambda n)^{n/2}
\abs{\medtilde A}^{1/2}e^{-\frac12\varLambda n\sum_{j=1}^n \yvec\Trans4\medtilde A\yvec}$
and $\ell_0=2\lceil{(p+1)/\sigma}\rceil$.
\end{thm}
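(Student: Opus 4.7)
My plan is to change variables to diagonalise the quadratic form, invoke Theorem~\ref{daughterthm} via Lemma~\ref{applydaughter}, and then perform two successive truncations of the cumulant expansion.

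First I substitute $\thetavec = T\phivec$ using the matrix $T$ from Lemma~\ref{quadform}(c), so that $T\Trans4\medtilde A T = I$. This reduces the quadratic part of $\medtilde F$ to $-\tfrac12\varLambda n\,\phivec\Trans3\phivec$ and introduces a Jacobian $\abs{\det T} = \abs{\medtilde A}^{-1/2}$. Writing $\X$ for the Gaussian vector with density $(2\pi)^{-n/2}(\varLambda n)^{n/2}\exp(-\tfrac12\varLambda n\sum_j x_j^2)$ and $\hat\X$ for its normalised restriction to the cuboid $\calC$, the integral becomes
\[
 \int_{T\calC}\medtilde F(\thetavec)\,d\thetavec
 = \frac{(2\pi)^{n/2}}{(\varLambda n)^{n/2}\abs{\medtilde A}^{1/2}}\,\Prob(\X\in\calC)\cdot\E\, e^{\medtilde R(T\hat\X)}.
\]
Since $U_n(t_1\rho)\subseteq\calC$ and $\rho^2\varLambda n = n^{2\eps}$, a standard Gaussian tail bound with a union bound gives $\Prob(\X\in\calC)=1-e^{-\Omega(n^{2\eps})}$, which is absorbed into the final $(1+O(n^{-p}))$ error.

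Next, applying Lemma~\ref{applydaughter} and then Lemma~\ref{truncatedsum} yields
\[
 \E\, e^{\medtilde R(T\hat\X)}
  = (1+O(n^{-p}))\exp\biggl(\sum_{r=1}^{r_1}\frac{1}{r!}\kappa_r(\medtilde R_{\ell_1}(T\X))\biggr).
\]
Because $T\X$ and $\Y$ are both centered Gaussians with the same covariance $(\varLambda n)^{-1}\medtilde A^{-1}$ (from $T\Trans4\medtilde A T=I$), they coincide in law, so each $\kappa_r(\medtilde R_{\ell_1}(T\X))$ may be rewritten as $\kappa_r(\medtilde R_{\ell_1}(\Y))$.

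The remaining, and most delicate, step is to truncate from $\medtilde R_{\ell_1}$ down to $\medtilde R_{\ell_0}$ without losing precision. By the multilinearity formula~\eqref{multilinear}, the difference $\kappa_r(\medtilde R_{\ell_1}(\Y))-\kappa_r(\medtilde R_{\ell_0}(\Y))$ is a bounded sum of cumulants in which at least one factor has degree in $(\ell_0,\ell_1]$. Theorem~\ref{maxcumcor}, applied with $C_1,C_2=O(n^{-1-\sigma/2})$ and $C_\ell = O(n^{-1-\sigma(\ell/2-1)})$ for $\ell\geq 3$, together with the defining property $\sigma\ell_0/2\geq p+1$, bounds each such contribution by $O(n^{-p})$. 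The tightest cases come from the parity constraint that $\sum\ell_i$ be even: for $r=1$ Isserlis' theorem forces each contributing degree to be even, so the minimum excess is $\ell_0+2$; for $r\geq 2$ a single odd factor $\ell_i=\ell_0+1$ must be accompanied by another odd factor (necessarily $\ell_j=1$) to restore parity. In both borderline cases the bound is tight, so constants must be tracked carefully to avoid losing an extra $n^{\sigma/2}$. Summing these $O(n^{-p})$ differences over the $O(1)$ values of $r$ and exponentiating produces the claimed factor $(1+O(n^{-p}))$.
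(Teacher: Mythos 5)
Your proposal is correct and follows essentially the same route as the paper: change of variables to diagonalise the quadratic form, invoke Lemma~\ref{truncatedsum} (itself built on Lemma~\ref{applydaughter}), identify $T\X$ with $\Y$, and then show the $\ell_0$-truncation loses only $O(n^{-p})$ per cumulant using the bound $\sigma\ell_0/2\geq p+1$. The one cosmetic difference is that you re-derive the truncation bound directly from multilinearity and Theorem~\ref{maxcumcor}, whereas the paper packages the same computation as an application of Theorem~\ref{cumdiff} with $\mu=\mu_1=\cdots=\mu_{\ell_0}=0$ and $\mu_{\ell_0+1}=\cdots=\mu_{\ell_1}=1$; also, your parenthetical "necessarily $\ell_j=1$" is slightly too strong (any odd $\ell_j$ restores parity, and since $C_\ell=O(n^{-1-\sigma/2})$ uniformly for $\ell\geq 1$ the bound is unchanged), but this does not affect the conclusion.
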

\begin{proof}
The theorem with $R_{\ell_1}$ in place of $R_{\ell_0}$ follows from Lemma~\ref{truncatedsum}
and the definition of~$\Y$.
To show that $R_{\ell_0}$ is adequate, we can apply Theorem~\ref{cumdiff}
with $\mu=0$, $\mu_1=\cdots=\mu_{\ell_0}=0$ and
$\mu_{\ell_0+1}=\cdots=\mu_{\ell_1}=1$.

If $\ell_1=\max\{\ell_1,\ldots,\ell_r\}\geq \ell_0+1$, then
\[
   C_{\ell_1} = O(n^{-1-(\ell_1-2)\sigma/2})
   = \begin{cases}
       O(n^{-2-p+\sigma/2}), & \text{ if $\ell_1=\ell_0+1$}; \\
       O(n^{-2-p}), & \text{ if $\ell_1\geq \ell_0+2$}.
     \end{cases}
\]
If $r=1$, the cumulant $\kappa(Y_{j_1k_1}^{\ell_1},\ldots,Y_{j_rk_r}^{\ell_r})=0$
is zero unless $\ell_1\geq \ell_0+2$
since $\ell_0+1$ if odd, so $n^{r+1}C_{\ell_1}=O(n^{-p})$.
If $r\geq 2$, using the uniform bound $C_{\ell_j}=O(n^{-1-\sigma/2})$
for all $j\geq 2$,
we get $n^{r+1}C_{\ell_1}\cdots C_{\ell_r} =O(n^{-p})$ again.
Therefore $\kappa_r(\medtilde R_{\ell_1}(\Y))
=\kappa_r(\medtilde R_{\ell_0}(\Y))+O(n^{-p})$ for $r=O(1)$.
This completes the proof.
\end{proof}

\nicebreak
\section{Proof of Theorem~\ref{mainthm}}\label{s:outofbox}

Define $F_{\betavec}(\thetavec)$ as in~\eqref{Fdefn}, and 
$\rho$ as in~\eqref{rhodef}.
Let $T$ be the matrix such that $T\Trans5 AT=I$ guaranteed by
Lemma~\ref{quadform}.
Define small regions as follows
\[
     \calB_0:=TU_n(\rho), \quad 
     \calB_\pi := \calB_0 + (\pi,\ldots,\pi)\trans  \pmod{2\pi}.
\]
The integrals of $F_{\betavec}(\thetavec)$ in $\calB_0$ and $\calB_\pi$
are the same, since $F_{\betavec}(\thetavec)$ is invariant under the
translation $\thetavec\mapsto\thetavec+(\pi,\ldots,\pi)\trans $.

In this section, we estimate $\int_{U_n(\pi)}F_{\betavec}(\thetavec)$
in four stages. In Section~\ref{s:inside}, we use
the theory from Section~\ref{s:inbox} to integrate $F_{\betavec}(\thetavec)$
inside the parallelepiped~$\calB_0$.
In Section~\ref{s:outside1}, we
identify a portion of $U_n(\pi)$ far from $\calB_0$ and $\calB_\pi$
in which the integral of the absolute value
$\abs{F_{\betavec}(\thetavec)}$ is negligible compared to the integral
inside $\calB_0$.
The remainder of $U_n(\pi)$ is the most challenging,
as the integral of the absolute value is not negligible there.
Section~\ref{s:outside2} bounds the integral in a parallelepiped
similar to $\calB_0$ but shifted away from origin, in
terms of the integral in~$\calB_0$.
Finally, in Section~\ref{s:finale}, the parts are brought together
to prove Theorem~\ref{mainthm} with the help of Theorem~\ref{MishaMagic}.

There are two ways in which our method differs from previous
methods~\cite{BarvHart1,MWreg,ranx}. First, note that the
parallelepiped $\calB_0$ is not axis-aligned, but is instead tilted
so that it becomes an axis-aligned cuboid when the linear transform~$T$
is applied to diagonalise the quadratic form.  The reason
is that Theorem~\ref{daughterthm} requires a random vector with
independent components.

The second difference is the method of working, which is forced
on us by the fact that
$\int_{\calB-\calB_0-\calB_\pi} \, \abs{F_{\betavec}(\thetavec)}\,d\thetavec$ is not
small compared to $\int_{\calB_0} F_{\betavec}(\thetavec)\,d\thetavec$
when $\varLambda\to 0$ quickly. 

\subsection{The integral inside $\calB_0$}\label{s:inside}

The function $F_{\betavec}(\thetavec)$ has a convergent Taylor
expansion in $\calB_0$:
\begin{align*}
   F_{\betavec}(\thetavec) &= -\dfrac12\varLambda n\,\thetavec\Trans6 A\thetavec 
     + R(\thetavec),\qquad\text{where} \\
   R(\thetavec) &= i\sum_{j=1}^n\delta_j\theta_j +
        \sum_{\ell\geq 3} \sum_{jk\in G} c_\ell(\ljk)(\theta_j+\theta_k)^\ell .
\end{align*}
To apply Theorem~\ref{genbox}, we need to verify that
Assumptions~\ref{intassumptions} follow from Assumptions~\ref{mainassumptions}. 

\begin{lemma}\label{lambdarange}
Under Assumptions~\ref{mainassumptions}, we have for all $jk\in G$ that
\begin{align*}
   e^{-3C} \lambda &\leq \ljk  \leq e^{3C}\lambda, \\
   e^{-3C} (1-\lambda) &\leq 1-\ljk  \leq e^{3C}(1-\lambda), \\
   e^{-6C} \varLambda &\leq \ljk (1-\ljk ) \leq e^{6C}\varLambda.
\end{align*}
\end{lemma}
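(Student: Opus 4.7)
The plan is to prove Lemma~\ref{lambdarange} in two stages. First, I would show that any two values $\ljk$ and $\lambda_{uv}$ with $jk, uv\in G$ lie within a multiplicative factor of $e^{2C}$ of each other, using only Assumption~\ref{mainassumptions}(b). Second, I would show that the average $\bar\ell$ of $\ljk$ over $jk\in G$ satisfies $\bar\ell=\lambda(1+o(1))$, using Assumptions~(c), (d), (e). Combining these, with the $o(1)$ correction absorbed into the extra $C$ in the exponent, will give the claimed bounds $e^{-3C}\lambda\leq\ljk\leq e^{3C}\lambda$.

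For the first stage, write $\ljk=\sigma(\beta_j+\beta_k)$ where $\sigma(x):=e^x/(1+e^x)$. A direct computation gives $\frac{d}{dx}\log\sigma(x)=1-\sigma(x)\in(0,1)$ and $\frac{d}{dx}\log(1-\sigma(x))=-\sigma(x)\in(-1,0)$. Combining the mean value theorem with the triangle inequality applied to $|\beta_j+\beta_k-\beta_u-\beta_v|\leq|\beta_j-\beta_u|+|\beta_k-\beta_v|\leq 2C$ will yield $|\log\ljk-\log\lambda_{uv}|\leq 2C$ and $|\log(1-\ljk)-\log(1-\lambda_{uv})|\leq 2C$, so the ratios $\ljk/\lambda_{uv}$ and $(1-\ljk)/(1-\lambda_{uv})$ both lie in $[e^{-2C},e^{2C}]$.

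For the second stage, summing the defining identity $\delta_j=\sum_{k:jk\in G}\ljk-d_j$ over $j\in[n]$ (each edge counted twice) gives $2\sum_{jk\in G}\ljk=\sum_j d_j+\sum_j\delta_j$. Dividing by $\sum_j g_j=2|E(G)|$ yields
\[
  \bar\ell := \frac{1}{|E(G)|}\sum_{jk\in G}\ljk = \lambda + \frac{\sum_j\delta_j}{\sum_j g_j}.
\]
Assumption~(d) implies $\sum_j g_j\geq\tauQ n^2$, Assumption~(e) gives $\onenorm\deltavec=O(n^{1+\eps})$, and Assumption~(c) guarantees $\min(\lambda,1-\lambda)\geq\varLambda\geq Bn^{\sigma-1}$. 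Therefore $|\bar\ell-\lambda|/\lambda=O(n^{-\sigma+\eps})=o(1)$ for sufficiently small $\eps$, and the analogous computation for $1-\ljk$ shows that its average over $G$ equals $(1-\lambda)(1+o(1))$.

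To finish, I combine the two stages. Every $\ljk$ lies between $\min_{uv\in G}\lambda_{uv}$ and $\max_{uv\in G}\lambda_{uv}$, and these extrema differ by a factor of at most $e^{2C}$, so each $\ljk$ differs from $\bar\ell$ by at most a factor of $e^{2C}$. Composing with $\bar\ell=\lambda(1+o(1))$ and noting that $1\pm o(1)$ is bounded by $e^{\pm C}$ for $n$ past some threshold (since $C>0$ is fixed) gives $e^{-3C}\lambda\leq\ljk\leq e^{3C}\lambda$. An identical argument applied to $1-\ljk$ yields the second line, and multiplying the two sandwich inequalities gives the third. The main technical point is verifying $\bar\ell=\lambda(1+o(1))$, which combines all three of Assumptions~(c), (d), (e); no serious obstacle is anticipated.
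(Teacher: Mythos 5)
Your proposal is correct and takes essentially the same approach as the paper: both sum the near-$\beta$-equations over $j\in[n]$, divide by $2\abs{E(G)}=\Omega(n^2)$ to show the average of $\ljk$ over $G$ is $\lambda(1+o(1))$, and control the spread of $\ljk$ by a factor $e^{2C}$ via Assumption (b), absorbing the $o(1)$ into the extra $e^C$. The only cosmetic difference is that the paper phrases the spread bound through $\beta_{\min},\beta_{\max}$ and monotonicity of $x\mapsto e^x/(1+e^x)$, whereas you invoke the mean value theorem on $\log\sigma$.
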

\begin{proof}
Let $\betamin =\min_{j\in [n]}\beta_j$ and $\betamax=\max_{j\in [n]}\beta_j$.
Sum the equation $d_j+\delta_j = \sum_{j:jk\in G}\ljk$ from~\eqref{approxbeta}
over $j\in[n]$ and divide by $2\abs{E(G)}$.  This gives
\[
    \frac{e^{2\betamin }}{1+e^{2\betamin }}
    \leq \lambda + (2\abs{E(G)})^{-1}\sum_{j\in[n]} \delta_j
    \leq \frac{e^{2\betamax}}{1+e^{2\betamax }}.
\]
From Assumptions~\ref{mainassumptions}(c,e) and the fact that $G$ has
$\Omega(n^2)$ edges, we have
\[
  (2\abs{E(G)})^{-1}\sum_{j\in[n]} \delta_j
     =o(\varLambda^{1/2}n^{-1/2-\sigma/2}) = o(\varLambda).
\]
Consequently, for any $jk\in G$, Assumption~\ref{mainassumptions}(b) gives
\begin{align*}
    \frac{\ljk }{\lambda}
    &\leq \frac{e^{2\betamax }(1+e^{2\betamin })}
                  {e^{2\betamin }(1+e^{2\betamax })}(1 + o(1))
    \leq \frac{e^{2\betamin +2C}(1+e^{2\betamin })}
                  {e^{2\betamin }(1+e^{2\betamin +2C})}(1 + o(1)) \\
     &\leq e^{2C}(1+o(1)) \leq e^{3C}
     \quad\text{for large enough~$n$}.
\end{align*}
The lower bound follows in the same way and the
bounds on $1-\ljk $ follow by symmetry.  The final bound follows
from the first two.
\end{proof}

\begin{lemma}\label{ckbound}
  For $\ell\geq 2$ and $jk\in G$, $\abs{c_\ell(\ljk)} \leq 8 e^{6C}\!\varLambda $.
\end{lemma}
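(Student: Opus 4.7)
The plan is to establish a uniform bound $|c_\ell(\lambda)|\leq 2(e-2)\varLambda$ for every $\lambda\in(0,1)$ and $\ell\geq 2$, and then specialize to $\lambda=\ljk$ using Lemma~\ref{lambdarange} to absorb the factor $e^{6C}$. The key step is an algebraic manipulation that exposes the factor $\varLambda=\lambda(1-\lambda)$ explicitly inside the Taylor expansion. For $\ell\geq 2$, the coefficient $c_\ell(\lambda)$ is unchanged if we subtract the linear term $c_1z=i\lambda z$; writing
\[
   g_\lambda(z) := f_\lambda(z) - i\lambda z
   = \ln\bigl((1-\lambda)e^{-i\lambda z}+\lambda e^{i(1-\lambda)z}\bigr),
\]
where the identity follows from $e^{-i\lambda z}\bigl(1+\lambda(e^{iz}-1)\bigr)=(1-\lambda)e^{-i\lambda z}+\lambda e^{i(1-\lambda)z}$. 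Setting $A_\lambda(z):=(1-\lambda)e^{-i\lambda z}+\lambda e^{i(1-\lambda)z}-1$ and expanding both exponentials, I compute the Taylor coefficients of $A_\lambda$ as
\[
   a_n = \dfrac{i^n}{n!}\bigl((1-\lambda)(-\lambda)^n+\lambda(1-\lambda)^n\bigr)
   = \dfrac{i^n\varLambda}{n!}\bigl((1-\lambda)^{n-1}-(-\lambda)^{n-1}\bigr)
   \qquad (n\geq 2),
\]
so that the factor $\varLambda$ appears manifestly (and the $n=0,1$ terms cancel).

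Next I would bound $|A_\lambda(z)|$ on a fixed disk. Since the function $\lambda\mapsto(1-\lambda)^{n-1}+\lambda^{n-1}$ is convex on $[0,1]$ with value~$1$ at both endpoints, for $n\geq 2$ we have $|(1-\lambda)^{n-1}-(-\lambda)^{n-1}|\leq(1-\lambda)^{n-1}+\lambda^{n-1}\leq 1$, hence $|a_n|\leq\varLambda/n!$ and
\[
    |A_\lambda(z)|\leq\varLambda\bigl(e^{|z|}-1-|z|\bigr).
\]
On the unit circle this gives $|A_\lambda(z)|\leq(e-2)\varLambda\leq(e-2)/4<1/2$, using $\varLambda\leq\frac14$. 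The elementary inequality $|\ln(1+w)|\leq 2|w|$ for $|w|\leq\frac12$ then produces $|g_\lambda(z)|\leq 2(e-2)\varLambda$ on $|z|=1$. Cauchy's coefficient estimate $|c_\ell(\lambda)|=\bigl|\frac{1}{2\pi i}\oint_{|z|=1}g_\lambda(z)z^{-\ell-1}\,dz\bigr|\leq\max_{|z|=1}|g_\lambda(z)|$ thus yields the claimed uniform bound
\[
    |c_\ell(\lambda)|\leq 2(e-2)\varLambda\qquad(\ell\geq 2,\ \lambda\in(0,1)).
\]

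Finally, applying this inequality with $\lambda$ replaced by $\ljk$ and invoking Lemma~\ref{lambdarange}, which gives $\ljk(1-\ljk)\leq e^{6C}\varLambda$, I obtain
\[
   |c_\ell(\ljk)|\leq 2(e-2)\,\ljk(1-\ljk)\leq 2(e-2)e^{6C}\varLambda<8e^{6C}\varLambda,
\]
as required. The only delicate point is the algebraic identity that produces the $\varLambda$ prefactor in every coefficient $a_n$; without this step a direct application of Cauchy estimates to $f_\lambda$ itself would yield only a bound proportional to $\lambda$ or to a constant, losing the essential factor $1-\lambda$ needed for the intended applications.
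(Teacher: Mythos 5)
Your proof is correct, and it takes a genuinely different route from the paper's, though both rest on the same underlying idea of estimating the Taylor coefficients via Cauchy's formula on the unit circle. The paper's proof first reduces to $\ljk\leq\frac12$ via the symmetry $f_{1-\lambda}(z)=iz+f_\lambda(-z)$, then bounds $\max_{|z|=1}|f_{\ljk}(z)|$ by the series $\sum_{t\geq1}\frac{1}{t}|w|^t$ with $w=\ljk(e^{iz}-1)$, obtaining $-\ln(1-(e-1)\ljk)$, and finally uses the elementary inequality $-\ln(1-(e-1)\lambda)\leq8\lambda(1-\lambda)$ for $\lambda\in[0,\frac12]$ to recover the factor $\varLambda$. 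Your argument instead subtracts the linear term (harmless for $\ell\geq2$), rewrites $g_\lambda(z)=\ln\bigl((1-\lambda)e^{-i\lambda z}+\lambda e^{i(1-\lambda)z}\bigr)$ in a form symmetric under $\lambda\leftrightarrow1-\lambda$, and extracts the factor $\varLambda$ directly from the Taylor coefficients of $A_\lambda$ via the algebraic identity $(1-\lambda)(-\lambda)^n+\lambda(1-\lambda)^n=\varLambda\bigl((1-\lambda)^{n-1}-(-\lambda)^{n-1}\bigr)$. This avoids the case split, makes the $\varLambda$-dependence transparent at the start rather than recovering it at the end, and gives the sharper constant $2(e-2)\approx1.44$ in place of $8$ before the $e^{6C}$ correction. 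Both are valid; your version is arguably cleaner and more robust, while the paper's is shorter to state once the symmetry reduction is granted.
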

\begin{proof}
  Since $f_{1-\ljk}(z) = iz + f_{\ljk}(-z)$, we can
  assume that $\ljk\leq\frac 12$.
  For $\abs{z}\leq 1$ we have $\abs{e^{iz}-1}\leq e-1$, so
  $f_{\ljk}(z)$ is analytic in a disk of radius greater than~1.
  Consequently,
  \[
     c_\ell(\ljk) = \frac{1}{2\pi} \int_{-\pi}^\pi 
       \frac{f_{\ljk}(e^{i\theta})}{e^{i\ell\theta}}\,d\theta,
  \]
  and so $\abs{c_\ell(\ljk)} \leq \max_{\abs z=1} \abs{f_{\ljk}(z)}$.
  Applying the Taylor series of $\ln(1+w)$ for
  $w=\ljk(e^{iz}-1)$, we have
    $\abs{c_\ell(\ljk)} \leq \sum_{t\geq 1} \dfrac{1}{t}\, \abs{w}^t
    \leq \sum_{t\geq 1} \dfrac{1}{t} (e-1)^t\ljk^t
    = -\ln\(1-(e-1)\ljk\)$.
  To complete the proof, note that
  $-\ln\(1-(e-1)\ljk\)\leq 8\ljk(1-\ljk) $ for $1\leq\ljk\leq\frac12$
  and apply Lemma~\ref{lambdarange}.
\end{proof}

\begin{thm}\label{inbox}
  Adopt Assumptions~\ref{mainassumptions} and define $r_0,\ell_0$ as in 
  Theorem~\ref{mainthm}.
  Then
\[
  \int_{\calB_0} F(\thetavec)\,d\thetavec = (1+O(n^{-p}))
  \frac{(2\pi)^{n/2}}{(\varLambda n)^{n/2}\abs{A}^{1/2}}
   \exp\biggl(\,
       \sum_{r=1}^{r_0} \dfrac{1}{r!} \kappa_r(R_{\ell_0}(\Y))\biggr),
\]
where $\Y$ is a Gaussian  random variable with density
$(2\pi)^{-n/2} (\varLambda n)^{n/2}
\abs{A}^{1/2}e^{-\frac12\varLambda n\sum_{j=1}^n \yvec\Trans6 A\yvec}$
and
\[
  R_{\ell_0}(\thetavec) := i\sum_{j=1}^n\delta_j\theta_j +
        \sum_{\ell=3}^{\ell_0} \sum_{jk\in G} c_\ell(\ljk)(\theta_j+\theta_k)^\ell .
\]
\end{thm}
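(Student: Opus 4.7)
The plan is to recognize $F_{\betavec}(\thetavec)$ as an instance of the generic integrand $\medtilde F$ from~\eqref{Fdef}, apply Theorem~\ref{genbox} with $\calC = U_n(\rho)$ (so that $T\calC = \calB_0$), and then truncate the cumulant sum from the larger $r_1$ down to $r_0$. The bulk of the work is encapsulated in Theorem~\ref{genbox}, so the proof should amount to verifying Assumptions~\ref{intassumptions} and handling one tail bound.

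First I would Taylor-expand $\log F_{\betavec}(\thetavec)$ using $f_{\ljk}(z) = \sum_{\ell\geq 1} c_\ell(\ljk)z^\ell$. The $\ell=1$ contribution is $i\sum_{jk\in G}\ljk(\theta_j+\theta_k) = i\sum_j\theta_j\sum_{k:jk\in G}\ljk$, which when combined with the $-i\sum_j d_j\theta_j$ from the denominator of $F_{\betavec}$ collapses to $i\sum_j\delta_j\theta_j$ by the definition~\eqref{approxbeta} of $\deltavec$. The $\ell=2$ contribution is exactly $-\frac12\varLambda n\,\thetavec\Trans6 A\thetavec$ by the defining relation for $A$ in Theorem~\ref{mainthm}, and the remaining terms form $R(\thetavec)$. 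Hence $F_{\betavec}$ matches the template~\eqref{Fdef} with $c_{jk} = -c_2(\ljk) = \tfrac12\ljk(1-\ljk)$, $\alpha_j = i\delta_j$, $\gamma_{jk} = 0$, and $c_{jk}^{(\ell)} = c_\ell(\ljk)$ for $\ell\geq 3$.

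Next I would verify Assumptions~\ref{intassumptions}. Part (a) is direct from Assumptions~\ref{mainassumptions}(a,c,d); part (b) follows from Lemma~\ref{lambdarange}, which gives $c_{jk} = \Theta(\varLambda)$; part (c) is Assumption~\ref{mainassumptions}(e) applied to $\alphavec = i\deltavec$; part (d) holds trivially with $\gammamax = 0$; part (e) is Lemma~\ref{ckbound}; and part (f) holds with $t_1=t_2=1$ by choice of $\calC = U_n(\rho)$. Theorem~\ref{genbox} then delivers the claimed formula except that the cumulant sum ranges over $1 \leq r \leq r_1$ rather than $1 \leq r \leq r_0$.

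The final step is to show that the tail $r_0 < r \leq r_1$ contributes only $O(n^{-p})$ inside the exponent. This is where the fact $\gamma_{jk}=0$ pays off: the stronger case of Lemma~\ref{better} applies, yielding $\kappa_r(R_{\ell_0}(\Y)) = O(n^{1-\lceil r/2\rceil\sigma})$. For $r \geq r_0+1 = 2\lceil(1+p)/\sigma\rceil - 1$ one has $\lceil r/2\rceil \geq \lceil(1+p)/\sigma\rceil$, giving $\kappa_r = O(n^{-p})$; since $r_1 = O(1)$, summing the boundedly many discarded terms costs $O(n^{-p})$, which is absorbed into the multiplicative $(1+O(n^{-p}))$. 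I do not anticipate a serious obstacle here — the delicate work has already been done in Theorem~\ref{genbox} — but the point deserving care is that the required improved cumulant bound, and hence the sharper cutoff $r_0$, is available precisely because the Taylor expansion of $F_{\betavec}$ contains no spurious non-Gaussian quadratic perturbation.
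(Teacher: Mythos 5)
Your proposal is correct and follows essentially the same route as the paper: verify Assumptions~\ref{intassumptions} (the paper cites Lemma~\ref{ckbound}; you correctly add Lemma~\ref{lambdarange} for part~(b) and spell out $\alpha_j=i\delta_j$, $\gamma_{jk}=0$), apply Theorem~\ref{genbox} to get the cumulant sum up to $r_1$, then invoke the $\gammamax=0$ case of Lemma~\ref{better} to show $\kappa_r=O(n^{1-\lceil r/2\rceil\sigma})=O(n^{-p})$ for $r_0<r\leq r_1$. Your arithmetic check that $r\geq r_0+1=2\lceil(1+p)/\sigma\rceil-1$ forces $\lceil r/2\rceil\geq\lceil(1+p)/\sigma\rceil$, hence $1-\lceil r/2\rceil\sigma\leq -p$, is exactly the step the paper leaves implicit.
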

\begin{proof}
  Assumptions~\ref{intassumptions} hold by
  Assumptions~\ref{mainassumptions} and Lemma~\ref{ckbound}.
  The result thus follows from Theorem~\ref{genbox} with $r_1$
  in place of~$r_0$.
  Since $\gammamax=0$, we have by Lemma~\ref{better}
  that $\kappa_r(R_{\ell_0})=O(n^{1-\lceil r/2\rceil\sigma})$
  for $1\leq r\leq r_1$.
  This implies that $\kappa_r(R_{\ell_0})=O(n^{-p})$ for
  $r_0< r\leq r_1$, completing the proof.
\end{proof}

\nicebreak
\subsection{The integral outside $\calB_0$ and $\calB_\pi$; part 1}
\label{s:outside1}

Define 
\[
   J_0 := \int_{\calB_0} F_{\betavec}(\thetavec)\,d\thetavec.
\]

In this section we will show that the integral of $\abs{F_{\betavec}}$
is small compared to $J_0$ in some regions very far from $\calB_0$
and $\calB_\pi$.
First we need an approximation of $J_0$ to compare against, 
which can be deduced from Theorem~\ref{inbox}.
\begin{align}
   J_0 &= (1+O(n^{-p}))\,(2\pi)^{n/2} (\varLambda n)^{-n/2}
    \abs{A}^{-1/2}
     \exp\biggl(\,
           \sum_{r=1}^{r_0} \dfrac{1}{r!} \kappa_r(R_{\ell_0}(G,T\X))\biggr) 
              \notag \\[-0.6ex]
    &= (2\pi)^{n/2} (\varLambda n)^{-n/2}
    \abs{A}^{-1/2} e^{O(n^{1-\sigma})}. \label{J0est-1} \\
    &= \Theta(1)^n  (\varLambda n)^{-n/2}.
       \label{J0est-2}
\end{align}

For $x\in\Reals$, define
$\abs{x}_{2\pi} = \min\{\,\abs y \st y \equiv \pm x \pmod{2\pi} \}$.
Note that this function satisfies
$\abs{x+y}_{2\pi}\leq \abs{x}_{2\pi}+\abs{y}_{2\pi}$.

\begin{lemma}\label{boring}
 For $x\in \Reals$ and $0<\lambda<1$, we have
 \begin{itemize}[noitemsep,topsep=1ex]
   \item[(a)] $\abs{1 + \lambda(e^{ix}-1)} 
                  \leq e^{-\frac15\varLambda \abs{x}_{2\pi}^2 }$.
   \item[(b)] $\abs{1 + \lambda(e^{ix}-1)} 
                  \leq e^{-\frac12\varLambda x^2 
                            +\frac1{24}\varLambda x^4}$.
 \end{itemize}
\end{lemma}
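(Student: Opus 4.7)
The key observation is that $|1+\lambda(e^{ix}-1)|^2$ admits a very clean closed form. Writing $1+\lambda(e^{ix}-1) = (1-\lambda) + \lambda\cos x + i\lambda\sin x$ and squaring, the cross terms combine to give
\[
  \abs{1+\lambda(e^{ix}-1)}^2 = 1 - 2\Lambda(1-\cos x) = 1 - 4\Lambda\sin^2(x/2).
\]
Since $\Lambda\in(0,\tfrac14]$ and $\sin^2(x/2)\in[0,1]$, the quantity in parentheses lies in $[0,1]$, so the elementary estimate $\log(1-t)\leq -t$ yields
\[
  \log\abs{1+\lambda(e^{ix}-1)}^2 \leq -4\Lambda\sin^2(x/2) = -2\Lambda(1-\cos x).
\]
Both (a) and (b) now reduce to elementary trigonometric inequalities independent of $\lambda$.

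For part (a), I would exploit the fact that $\sin^2(x/2)$ is $2\pi$-periodic and even, so it suffices to work on $[-\pi,\pi]$, where $\abs{x}_{2\pi}=\abs{x}$. On that interval $x/2\in[-\pi/2,\pi/2]$, and Jordan's inequality $\abs{\sin y}\geq 2\abs{y}/\pi$ gives $\sin^2(x/2)\geq x^2/\pi^2$. Combining with the bound above yields
\[
   \log\abs{1+\lambda(e^{ix}-1)}^2 \leq -\tfrac{4}{\pi^2}\Lambda\abs{x}_{2\pi}^2
   \leq -\tfrac{2}{5}\Lambda\abs{x}_{2\pi}^2,
\]
where the last step uses $\pi^2<10$. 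Taking square roots gives (a).

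For part (b), the task is to show $1-\cos x\geq \tfrac12 x^2-\tfrac1{24}x^4$ for all real $x$. I would introduce $g(x):=1-\cos x - \tfrac12 x^2 + \tfrac1{24}x^4$ and verify $g(0)=g'(0)=g''(0)=g'''(0)=0$, while $g^{(4)}(x) = 1-\cos x\geq 0$ for all $x$. Successive integration from $0$ then propagates nonnegativity up through $g$ itself for $x\geq 0$, and the claim for $x\leq 0$ follows since $g$ is even. Feeding this into the logarithmic bound gives
\[
   \log\abs{1+\lambda(e^{ix}-1)} \leq -\tfrac12\Lambda x^2 + \tfrac1{24}\Lambda x^4,
\]
which is (b).

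Neither step is a genuine obstacle: the identity in the first paragraph reduces both parts to standard one-variable inequalities. The only thing to watch is the numerical constant $\tfrac15$ in (a), which is chosen precisely so that $2/\pi^2>1/5$; this is the reason a slightly loose constant appears rather than the sharp $2/\pi^2$.
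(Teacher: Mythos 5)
Your proof is correct, and it supplies the details that the paper itself declines to provide: the paper simply declares the lemma ``routine,'' omits the proof, and cites [MWreg] for part (b). Your central identity $\abs{1+\lambda(e^{ix}-1)}^2 = 1 - 2\varLambda(1-\cos x)$ is exactly the right reduction, and the two resulting one-variable facts (Jordan's inequality for (a), the fourth-derivative argument for $1-\cos x \geq \tfrac12 x^2 - \tfrac1{24}x^4$ for (b)) are standard and correctly applied. One small point worth making explicit for (a): after reducing to $x\in[-\pi,\pi]$ by $2\pi$-periodicity and evenness, $x/2$ lies in $[-\pi/2,\pi/2]$, which is precisely the range on which Jordan's inequality $\abs{\sin y}\geq 2\abs y/\pi$ is valid — you noted this, but it is the one spot a careless reader could stumble. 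The constant check $2/\pi^2 > 1/5$ (i.e.\ $\pi^2 < 10$) is also correct.
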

\begin{proof}
  These are routine and we omit the proofs. The second
  part appeared in~\cite{MWreg}.
\end{proof}

\begin{lemma}\label{firstcut}
Let $\eta>0$ be constant. Define
\begin{align*}
 \calB'&:=\bigl\{ \thetavec\in [-\pi,\pi]^n \St \\
  &{\qquad}
  \text{more than $\tfrac12 n^{1-\eps/2}$ components $\theta_j$
    lie in  $[\eta\rho,\pi{-}\eta\rho]\cup[-\pi{+}\eta\rho,-\eta\rho]$} \bigr\}.
\end{align*}
    Then
    \[
    \int_{\calB'} \,\abs{F_{\betavec}(\thetavec)}\,d\thetavec
     = e^{-\Omega(n^{1+\eps})} J_0.
    \]
\end{lemma}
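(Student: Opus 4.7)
The plan is to bound $\abs{F_{\betavec}(\thetavec)}$ pointwise on $\calB'$ by means of Lemma~\ref{boring}(a) and then to compare the resulting integral against the estimate~\eqref{J0est-2} for $J_0$. Since $\abs{e^{i\sum_j d_j\theta_j}}=1$ and $\ljk(1-\ljk)\geq e^{-6C}\varLambda$ by Lemma~\ref{lambdarange}, Lemma~\ref{boring}(a) applied edgewise yields
\[
     \abs{F_{\betavec}(\thetavec)}\leq \exp\Bigl(-\tfrac{1}{5}e^{-6C}\varLambda\, S(\thetavec)\Bigr),
     \qquad S(\thetavec) := \sum_{jk\in G}\abs{\theta_j+\theta_k}_{2\pi}^2.
\]
So the whole task reduces to showing $\varLambda\, S(\thetavec)=\Omega(n^{1+3\eps/2})$ uniformly for $\thetavec\in\calB'$, which I will establish via the algebraic-bipartiteness inequality $\sum_{jk\in G}(\psi_j+\psi_k)^2 \geq \tauQ n \sum_j\psi_j^2$ coming from Assumption~\ref{mainassumptions}(d).

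The key step is the substitution $\psi_j := \sin\theta_j$. The sum-to-product identity
$\sin\theta_j+\sin\theta_k = 2\sin\tfrac{\theta_j+\theta_k}{2}\cos\tfrac{\theta_j-\theta_k}{2}$,
combined with the elementary bound $2\abs{\sin(y/2)}\leq\abs{y}_{2\pi}$ for $y\in[-2\pi,2\pi]$ (which covers $y=\theta_j+\theta_k$ since $\theta_j,\theta_k\in[-\pi,\pi]$), delivers the pointwise domination
\[
     (\sin\theta_j+\sin\theta_k)^2 \leq \abs{\theta_j+\theta_k}_{2\pi}^2 .
\]
Summing over $jk\in G$ and applying the algebraic-bipartiteness inequality to $(\sin\theta_1,\ldots,\sin\theta_n)$ gives $S(\thetavec)\geq \tauQ n\sum_j\sin^2\theta_j$.

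For $\thetavec\in\calB'$, there are more than $\tfrac12 n^{1-\eps/2}$ indices $j$ with $\theta_j$ at distance at least $\eta\rho$ from $\{0,\pi,-\pi\}$; for each such $j$, $\abs{\sin\theta_j}\geq\sin(\eta\rho)=\Omega(\rho)$ (valid since $\rho\to 0$, using $\sin x\geq 2x/\pi$ on $[0,\pi/2]$). Hence $\sum_j\sin^2\theta_j=\Omega(\rho^2 n^{1-\eps/2})$, and substituting $\varLambda\rho^2=n^{-1+2\eps}$ gives
\[
   \varLambda\, S(\thetavec)=\Omega(\varLambda\rho^2 n^{2-\eps/2})=\Omega(n^{1+3\eps/2}),
\]
so $\abs{F_{\betavec}(\thetavec)}\leq\exp(-\Omega(n^{1+3\eps/2}))$ throughout $\calB'$. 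Combining with the volume bound $\calB'\subseteq[-\pi,\pi]^n$ and the estimate $J_0 = \Theta(1)^n (\varLambda n)^{-n/2}$ from~\eqref{J0est-2} (whose logarithm is at least $-O(n\log n)$), the ratio
\[
   \frac{\int_{\calB'}\abs{F_{\betavec}(\thetavec)}\,d\thetavec}{J_0}
   \leq \exp\bigl(O(n\log n)-\Omega(n^{1+3\eps/2})\bigr)=\exp(-\Omega(n^{1+\eps}))
\]
since $3\eps/2>\eps$ and $n^{1+3\eps/2}\gg n\log n$.

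The main obstacle I would expect is the choice of the auxiliary vector fed into the spectral inequality. A naive indicator $\phi_j\in\{-1,0,+1\}$ marking the middle region by sign fails badly: at an edge joining a middle vertex to a nonmiddle one, $(\phi_j+\phi_k)^2$ is positive whereas $\abs{\theta_j+\theta_k}_{2\pi}$ can be arbitrarily small, so the $q(G)$-inequality applied to $\phivec$ does not convert into a useful lower bound on $S(\thetavec)$. The sine substitution sidesteps this by simultaneously preserving the middle-type lower bound $\abs{\sin\theta_j}=\Omega(\rho)$ and providing the pointwise domination $(\sin\theta_j+\sin\theta_k)^2\leq\abs{\theta_j+\theta_k}_{2\pi}^2$, thereby converting the spectral bound on $G$ into the desired lower bound on $S(\thetavec)$.
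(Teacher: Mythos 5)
Your proof is correct and takes a genuinely different route from the paper. The paper proceeds combinatorially: it invokes Theorem~\ref{oddpaths} (proved in Appendix~A from the algebraic-bipartiteness hypothesis) to produce $\Omega(n\abs{U})$ edge-disjoint odd walks of bounded length between ``middle'' vertices, and then uses the telescoping identity $\theta_j+\theta_k=\sum_{i}(-1)^{i+1}(\theta^{(i-1)}+\theta^{(i)})$ along an odd walk to show at least one edge on each walk contributes $\Omega(\rho^2)$ to the exponent in Lemma~\ref{boring}(a). Your argument bypasses the odd-walk machinery entirely: the substitution $\psi_j=\sin\theta_j$ converts the periodicity of the objective into an honest Euclidean setting, and the sum-to-product identity gives the pointwise domination $(\sin\theta_j+\sin\theta_k)^2\leq\abs{\theta_j+\theta_k}_{2\pi}^2$ that lets you plug $\psivec$ straight into the quadratic-form consequence of $q(G)\geq\tauQ n$. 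That's cleaner and eliminates a dependency on Theorem~\ref{oddpaths} for this lemma; the trade-off is that the odd-walk structure is still used elsewhere (and reflects why nonbipartiteness is the right hypothesis), whereas the sine substitution is somewhat ad hoc to this particular integrand. Your side remark correctly identifies the obstruction that makes a naive $\{0,\pm1\}$ indicator vector fail and why $\sin$ threads that needle. Both routes end up with the exponent $\Omega(n^{1+3\eps/2})$, dominating the $O(n\log n)$ from $(\varLambda n)^{n/2}$ in~\eqref{J0est-2}.
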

\begin{proof}
 Without loss of generality, at least $\frac14 n^{1-\eps/2}$
 components $\theta_j$ lie in $[\eta\rho,\pi-\eta\rho]$.
 Let $U = \{ j \st \theta_j\in [\eta\rho,\pi-\eta\rho] \}$.
 By Theorem~\ref{oddpaths}, there are constants $\delta,L$ such
 that $G$ has at least $\delta n\abs{U}$ edge-disjoint
 walks of odd length at most $L$ with both ends in~$U$.
 Suppose that $\theta_j=\theta^{(0)},\theta^{(1)},\ldots,\theta^{(t)}
 =\theta_k$ is such a walk, where $t$ is odd.
 Since $\theta_j,\theta_k\in [\eta\rho,\pi-\eta\rho]$, then
 $\abs{\theta_j+\theta_k}_{2\pi} \geq 2\eta\rho$, and
 since
 $\theta_j+\theta_k=(\theta^{(0)}+\theta^{(1)}) - (\theta^{(1)}+\theta^{(2)})
  + \cdots + (\theta^{(t-1)}+\theta^{(t)})$,\
 we have $\abs{\theta^{(i-1)}+\theta^{(i)}}_{2\pi}
 \geq (2\eta\rho)/L$ for some edge $(\theta^{(i-1)},\theta^{(i)})$
 in the walk.
 Consequently, for $\thetavec\in\calB'$, Lemma~\ref{boring}(a)
 implies that
 \[
      \abs{F_{\betavec}(\thetavec)} 
       \leq e^{-\frac15 \varLambda(\frac1{16} \delta n^{2-\eps/2}-1)(2\eta\rho/L)^2}
       = e^{-\Omega(n^{1+\eps})}.
 \]
 Multiplying by the volume of $\calB'$, which is less than $(2\pi)^n$,
 and allowing $2^n$ for the choice of which components of $\thetavec$
 have $\eta\rho \leq \abs{\theta_j}\leq \pi-\eta\rho$ completes the proof
 by comparison with~\eqref{J0est-2}.
\end{proof}

If Lemma~\ref{firstcut} doesn't apply, we have at least $n-\frac12 n^{1-\eps/2}$
components of $\thetavec$ lying in
$[-\eta\rho,\eta\rho]\cup (\pi+[-\eta\rho,\eta\rho])$.
Next we will use a similar argument to show that most of these components
lie in one of those two intervals.

\begin{lemma}\label{secondcut}
Let $\eta>0$ be constant.  Define
\begin{align*}
   \calB'' := \bigl\{ \thetavec\in [-\pi,\pi]^n &\St
  \text{at most $\tfrac12 n^{1-\eps/2}$ components $\theta_j$
    of $\thetavec$ satisfy 
    $\eta\rho\leq\abs{\theta_j}\leq \pi-\eta\rho$} \\[-0.5ex]
   &{~~\quad}\text{and
    each of the intervals $[-\eta\rho,\eta\rho]$ and $(\pi+[-\eta\rho,\eta\rho])$}
      \\[-0.5ex]
    &{~~\quad}\text{contains at least $\varLambda^{-1}\log^2 n$ components} \bigr\}.
\end{align*}    
    Then
    \[
    \int_{\calB''} \,\abs{F_{\betavec}(\thetavec)}\,d\thetavec
      = e^{-\Omega(n\log^2 n)} J_0.
    \]
\end{lemma}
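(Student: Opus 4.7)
The plan is to prove a pointwise bound: I would show that $|F_{\betavec}(\thetavec)| \leq e^{-\Omega(n\log^2 n)}$ uniformly for every $\thetavec \in \calB''$. Integrating this against the crude volume bound $(2\pi)^n$ and comparing with the estimate \eqref{J0est-2}, namely $J_0 = \Theta(1)^n(\varLambda n)^{-n/2}$, gives $\int_{\calB''} |F_{\betavec}| \leq (2\pi)^n e^{-\Omega(n\log^2 n)} \leq e^{-\Omega(n\log^2 n)} J_0$, since $(2\pi)^n(\varLambda n)^{n/2} = e^{O(n\log n)}$ is absorbed into the $e^{-\Omega(n\log^2 n)}$ factor.

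Fix $\thetavec \in \calB''$ and set $U_0 := \{j : |\theta_j|\leq\eta\rho\}$, $U_\pi := \{j : \theta_j \in [-\pi,-\pi+\eta\rho]\cup[\pi-\eta\rho,\pi]\}$, and $M := [n]\setminus(U_0\cup U_\pi)$, so $|U_0|,|U_\pi| \geq \varLambda^{-1}\log^2 n$ and $|M| \leq \frac12 n^{1-\eps/2}$. For any edge $jk\in G$ with $j\in U_0$, $k\in U_\pi$ we have $|\theta_j+\theta_k|_{2\pi} \geq \pi - 2\eta\rho$. More generally, for any walk $j_0\to j_1\to\cdots\to j_t$ in $G$ of length $t\leq L$ with $j_0\in U_0$, $j_t\in U_\pi$, the telescoping identity
\[
\sum_{i=1}^t (-1)^{i+1}(\theta_{j_{i-1}}+\theta_{j_i}) = \theta_{j_0} + (-1)^{t+1}\theta_{j_t},
\]
combined with the triangle inequality $|x_1+\cdots+x_t|_{2\pi} \leq \sum_i |x_i|_{2\pi}$, forces some edge in the walk to have $|\theta_{j_{i-1}}+\theta_{j_i}|_{2\pi} \geq (\pi-2\eta\rho)/L$, irrespective of parity (since both $|\theta_{j_0}+\theta_{j_t}|_{2\pi}$ and $|\theta_{j_0}-\theta_{j_t}|_{2\pi}$ are close to $\pi$ when one endpoint is in $U_0$ and the other in $U_\pi$).

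The main quantitative step is to find at least $\Omega(n \varLambda^{-1}\log^2 n)$ edge-disjoint walks in $G$ of length $\leq L$ with one endpoint in $U_0$ and the other in $U_\pi$. In the easy sub-case $\min(|U_0|,|U_\pi|) \leq \tauQ n/4$, this follows directly from the minimum-degree bound in Assumption~\ref{mainassumptions}(d): each $v$ in the smaller set has at least $\tauQ n - |U_0| - |U_\pi|_{\text{same class}} - |M| = \Omega(n)$ neighbours in the other, yielding $\Omega(n\min(|U_0|,|U_\pi|)) = \Omega(n\log^2 n/\varLambda)$ direct cross-edges. In the remaining sub-case where both sets have size $\Omega(n)$, I would invoke Theorem~\ref{oddpaths} to route edge-disjoint walks between $U_0$ and $U_\pi$; since $|M|$ is negligible, these walks are essentially confined to $U_0\cup U_\pi$, and the count guaranteed is more than enough. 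Granting the walk count, since the walks are edge-disjoint and each contributes at least one edge with $|\theta_j+\theta_k|_{2\pi}^2 \geq (\pi-2\eta\rho)^2/L^2 = \Omega(1)$, we obtain $\sum_{jk\in G} |\theta_j+\theta_k|_{2\pi}^2 \geq \Omega(n\log^2 n/\varLambda)$, and Lemma~\ref{boring}(a) applied edgewise gives $|F_{\betavec}(\thetavec)| \leq \exp(-\tfrac15\varLambda \sum_{jk\in G}|\theta_j+\theta_k|_{2\pi}^2) \leq e^{-\Omega(n\log^2 n)}$.

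The main obstacle will be the sub-case with both $|U_0|,|U_\pi|=\Omega(n)$, where the naive minimum-degree argument fails and the Cheeger boundary count $h(G)|U_0| \geq \varLambda^{-1}n\log^2 n$ is not obviously larger than the "leakage" bound $e(U_0,M) \leq n|M| = O(n^{2-\eps/2})$ when $\varLambda$ is not small. Producing the cross-walks in that regime is precisely where one must combine the algebraic bipartiteness $q(G)\geq \tauQ n$ with the Cheeger bound $h(G)\geq\varLambda^{-1}\log^2 n$, using the fact that $M$ is too small to absorb all the flow routed by Theorem~\ref{oddpaths} between the two dominant regions.
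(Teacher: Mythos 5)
Your proposal correctly identifies the overall shape of the argument: bound $\abs{F_{\betavec}(\thetavec)}$ pointwise by $e^{-\Omega(n\log^2 n)}$, locate the two clusters $U_0$ (near $0$) and $U_\pi$ (near $\pi$), find many edge-disjoint bounded-length walks from $U_0$ to $U_\pi$, and observe via a telescoping/triangle-inequality argument that at least one edge on each walk has $\abs{\theta_j+\theta_k}_{2\pi}=\Omega(1)$, after which Lemma~\ref{boring}(a) and~\eqref{J0est-2} finish. Your easy subcase ($\min(\abs{U_0},\abs{U_\pi})\leq\tauQ n/4$, direct cross-edges via minimum degree) is fine.

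The genuine gap is in the hard subcase, which you flag yourself as "the main obstacle." You propose to route cross-walks via Theorem~\ref{oddpaths}, but that is the wrong tool: Theorem~\ref{oddpaths} produces odd-length closed walks from a \emph{single} set $U$ back to $U$ (it is an anti-bipartiteness statement, driven by $q(G)$), and it gives no control whatsoever over \emph{which} side of $U_0\cup U_\pi$ the walk endpoints land in; applying it with $U=U_0\cup U_\pi$ could in principle produce walks entirely inside $U_0$. The paper instead uses Theorem~\ref{shortpaths} (Appendix~A), which is purpose-built for exactly this situation: given two disjoint sets $U_1,U_2$ of size at least $m$ that together omit at most $r$ vertices, it yields $rm$ mutually edge-disjoint paths of length at most $3$ from $U_1$ to $U_2$, using only the Cheeger bound $h(G)\geq m$ and the minimum-degree condition. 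With $m=\varLambda^{-1}\log^2 n$ and $r=\frac14\min_j g_j = \Omega(n)$, the hypotheses are met directly from Assumption~\ref{mainassumptions}(d) and the definition of $\calB''$ (since $\abs{M}\leq\frac12 n^{1-\eps/2}\leq r$), and they are met uniformly — so no case split is needed, and algebraic bipartiteness is not invoked at all in this lemma. Theorem~\ref{shortpaths} also internally handles the "leakage to $M$" phenomenon you worry about, by extending edges that enter the small complement $R$ into paths of length $2$ or $3$ that terminate on the other side. Without identifying Theorem~\ref{shortpaths} (or reproving something equivalent), the proposal does not close.
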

\begin{proof}
  Let $U_1=\{ j \st \theta_j \in [-\eta\rho,\eta\rho]\}$ and
  $U_2=\{ j \st \theta_j \in (\pi+[-\eta\rho,\eta\rho])\}$.
  By Theorem~\ref{shortpaths} and Assumption~\ref{mainassumptions}(d),
  using $m=\varLambda^{-1}\log^2 n$ and $r=\frac14\min_{j=1}^n g_j$,
  we can find $\Omega(\varLambda^{-1}n\log^2 n)$ edge-disjoint paths in $G$
  of length at most~3 from $U_1$ to $U_2$.  Moreover,  since
  $\abs{\theta_j+\theta_k}_{2\pi}\geq \pi-2\eta\rho$ for $j\in U_1,k\in U_2$,
  at least one edge $(\theta^{(1)},\theta^{(2)})$ on  each such path has
  $\abs{\theta^{(1)}+\theta^{(2)}}_{2\pi}\geq \frac13\pi-\frac23\eta\rho$.
  The proof now follows the same line as in the previous lemma.
\end{proof}

Since adding $\pi$ to each component is a symmetry, we can
now assume that at least $n-n^{1-\eps/2}$ components of
$\thetavec$ lie in $[-\eta\rho,\eta\rho]$.
Applying the same logic to $\frac12\eta$, we can in fact assume that 
at least $n-n^{1-\eps/2}$ components of
$\thetavec$ lie in $[-\frac12\eta\rho,\frac12\eta\rho]$.
Next we will establish limits on those components lying
outside the doubled interval $[-\eta\rho,\eta\rho]$.

For $\xvec=(x_1,\ldots,x_n)\in\Reals^n$ and $a\geq 0$, define
  \[
      M(\xvec,a) := \{ j \st 1\leq j\leq n \text{~and~} \abs{x_j}>a \}.
  \]

\begin{lemma}\label{thirdcut}
  Let $\eta>0,b>0$ be constant.
  Define
  \begin{align*}
     \calB''' := \bigl\{ \thetavec \in [-\pi,\pi]^n &\St
  \text{at least $n-n^{1-\eps/2}$ components $\theta_j$
  lie in $[-\tfrac12\eta\rho,\tfrac12\eta\rho]$} \\
  &{~~\quad} \text{and
  $\sum_{j\in M(\thetavec,\eta\rho)} \theta_j^2
  \geq bn^{-\sigma}\varLambda^{-1}$}\bigr\}.
 \end{align*}
 Then there is a function $b(\eta)$ such that for $b\geq b(\eta)$,
  \[
      \int_{\calB'''} \,\abs{F_{\betavec}(\thetavec)}\,d\thetavec
      = O(e^{-\Omega(n^{2\eps})}) J_0.
  \]
\end{lemma}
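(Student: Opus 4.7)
The plan is to bound $\abs{F_{\betavec}(\thetavec)}$ edge-by-edge and then integrate variable-by-variable. Let $V:=M(\thetavec,\eta\rho)$; on $\calB'''$, $1\leq\abs V\leq n^{1-\eps/2}$. Split $E(G)$ into $E_0$ (both endpoints in $\overline V:=[n]\setminus V$) and the complementary set $E_{\text{ext}}$ of edges incident to $V$. For $jk\in E_0$, both $\abs{\theta_j},\abs{\theta_k}\leq\eta\rho$ so $\abs{\theta_j+\theta_k}\leq 2\eta\rho=o(1)$, and Lemma~\ref{boring}(b) yields the sharp Gaussian bound $\Abs{1+\ljk(e^{i(\theta_j+\theta_k)}-1)}\leq \exp\(-\tfrac12(1-o(1))\varLambda_{jk}(\theta_j+\theta_k)^2\)$. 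For $jk\in E_{\text{ext}}$, Lemma~\ref{boring}(a) gives $\Abs{\cdots}\leq\exp\(-\tfrac15\varLambda_{jk}\abs{\theta_j+\theta_k}_{2\pi}^2\)$, and by Lemma~\ref{lambdarange} $\varLambda_{jk}$ is bounded below by a constant times $\varLambda$.

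For each $j\in V$ and $k\in\overline V$ with $jk\in G$, a case analysis on $\abs{\theta_j}$ versus $\pi/2$ yields the lower bound $\abs{\theta_j+\theta_k}_{2\pi}^2\geq \tfrac12\min\(\theta_j^2,\pi^2/8\)-\theta_k^2$. Indeed, if $\abs{\theta_j}\leq\pi/2$ then $\abs{\theta_j+\theta_k}<\pi$ so $\abs{\theta_j+\theta_k}_{2\pi}=\abs{\theta_j+\theta_k}$, and one applies $(a+b)^2\geq\tfrac12 a^2-b^2$; if $\abs{\theta_j}>\pi/2$ then directly $\abs{\theta_j+\theta_k}_{2\pi}\geq\pi/2-\eta\rho\geq\pi/4$. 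Assumption~\ref{mainassumptions}(d) ensures each $j\in V$ has at least $\tauQ n-\abs V\geq\tfrac12\tauQ n$ neighbours in $\overline V$, so summing over such $k$ and then over $j\in V$ gives
\[
  \sum_{j\in V}\,\sum_{\substack{k\in\overline V\\jk\in G}}\abs{\theta_j+\theta_k}_{2\pi}^2\geq \frac{\tauQ n}{4}\sum_{j\in V}\min\!\(\theta_j^2,\frac{\pi^2}{8}\)-\abs V\sum_{k\in\overline V}\theta_k^2.
\]
The $\theta_k^2$-loss is absorbed into an $o(1)$ fraction of the Gaussian decay from $E_0$ via the signless Laplacian of $G[\overline V]$, since $\abs V/(\tauQ n)=o(1)$.

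Consequently $\abs{F_{\betavec}(\thetavec)}\leq \exp\bigl(-\tfrac12(1-o(1))\varLambda n\,\theta_{\overline V}\Trans5 A_{00}\theta_{\overline V}-c\varLambda n\sum_{j\in V}\min(\theta_j^2,\pi^2/8)\bigr)$, where $A_{00}$ is the (normalised) weighted signless Laplacian of $G[\overline V]$ and $c>0$ is a constant depending on $B,C,\sigma,\tauQ$. The $\overline V$-integration over $[-\eta\rho,\eta\rho]^{\overline V}$ is a truncated multivariate Gaussian; since $\eta\rho\gg(\varLambda n)^{-1/2}$ the truncation error is $e^{-\Omega(n^{2\eps})}$, giving $(1+o(1))(2\pi/\varLambda n)^{\abs{\overline V}/2}\abs{A_{00}}^{-1/2}$. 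For each $j\in V$, the standard Gaussian tail estimate yields $\int_{\abs\theta>\eta\rho}\exp(-c\varLambda n\min(\theta^2,\pi^2/8))\,d\theta\leq O((\varLambda n)^{-1/2}n^{-\eps})\exp(-\Omega(n^{2\eps}))$, where $n^{2\eps}$ comes from $c\varLambda n(\eta\rho)^2=c\eta^2 n^{2\eps}$. Eigenvalue interlacing together with Schur-complement estimates (using that $A$ has eigenvalues $\Theta(1)$ by Lemma~\ref{quadform}(a)) give $\abs A/\abs{A_{00}}=\Theta(1)^{\abs V}$, so comparing with $J_0=\Theta(1)^n(\varLambda n)^{-n/2}\abs A^{-1/2}$ from Theorem~\ref{inbox} produces
\[
  \int_{\calB'''\cap\{M(\thetavec,\eta\rho)=V\}}\!\!\abs{F_{\betavec}}\,d\thetavec\leq J_0\cdot O(1)^{\abs V}n^{-\eps\abs V}\exp(-\Omega(n^{2\eps})\abs V).
\]
Summing over $V\ne\emptyset$ with $\abs V\leq n^{1-\eps/2}$ gives a geometric series $\sum_{k\geq1}(Cn^{1-\eps}e^{-\Omega(n^{2\eps})})^k=e^{-\Omega(n^{2\eps})}$, since $\log n\ll n^{2\eps}$, which completes the bound.

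The main obstacle is that $\abs{\theta_j+\theta_k}_{2\pi}^2$ does not admit a direct signless-Laplacian lower bound, owing to the $2\pi$-wrapping when $\theta_j$ is near $\pm\pi$; the per-vertex case split at $\abs{\theta_j}=\pi/2$ overcomes this at the cost of a weaker constant in the far regime, which is harmless because the integrand is already doubly-exponentially small there. The resulting separable pointwise bound lets us integrate one variable at a time, and the Gaussian-tail saving per $V$-component is precisely what kills the combinatorial factor $\binom{n}{\abs V}$.
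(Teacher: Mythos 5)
Your proposal has the right geometric intuition—bound $\abs{F_{\betavec}}$ edge-by-edge, separate into a near-Gaussian part and a tail part, and sum over the choice of exceptional set—but it contains a genuine gap that makes the conclusion unobtainable as written. The root problem is that you never use the defining constraint $\sum_{j\in M(\thetavec,\eta\rho)}\theta_j^2\geq bn^{-\sigma}\varLambda^{-1}$, and correspondingly the constant $b$ and the promise ``for $b\geq b(\eta)$'' play no role anywhere in your argument. That constraint is not decorative: it is the only source of a factor $e^{-\Omega(b\,n^{1-\sigma})}$, which is exactly what is needed to dominate the $e^{O(n^{1-\sigma})}$ error that unavoidably appears both in $J_0$ (see~\eqref{J0est-1}) and in the $\overline V$-integral.

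Concretely, your ``$(1+o(1))$'' for the $\overline V$-integral is incorrect. Two separate effects each multiply the Gaussian integral by something enormously larger than $1+o(1)$: (i) the quartic correction $\frac1{24}\varLambda_{jk}(\theta_j+\theta_k)^4$ from Lemma~\ref{boring}(b) weakens the quadratic form by a factor $1-O(\rho^2)$, and $(1-O(\rho^2))^{-n/2}=e^{O(\varLambda^{-1}n^{2\eps})}=e^{O(n^{1-\sigma+2\eps})}$; and (ii) your absorption of the $\theta_k^2$-loss weakens it by a further factor $1-O(\abs V/n)$, giving $(1-O(n^{-\eps/2}))^{-n/2}=e^{O(n^{1-\eps/2})}$, which is even worse. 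Once either factor is present, your chain
\[
\int_{\calB'''\cap\{M=V\}}\abs{F_{\betavec}}\leq J_0\cdot O(1)^{\abs V}n^{-\eps\abs V}e^{-\Omega(n^{2\eps})\abs V}
\]
fails: for $\abs V=1$ the $V$-integral supplies only $e^{-\Omega(n^{2\eps})}$, which is swamped by $e^{O(n^{1-\sigma})}$ whenever $\sigma<1-2\eps$. (The paper's Corollary~\ref{bigLambda} handles $\sigma>1-2\eps$ separately, precisely because there the constraint becomes automatic.) The same coarseness shows up in your use of $J_0=\Theta(1)^n(\varLambda n)^{-n/2}\abs A^{-1/2}$: the $\Theta(1)^n$ factors in $J_0$ and in your $\overline V$-estimate do not cancel, and tracking only to that precision cannot yield an $O(e^{-\Omega(n^{2\eps})})$ ratio.

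The paper's proof avoids the $\theta_k^2$-loss entirely by introducing a buffer ring $M_1(\thetavec)=\{j\st\frac12\eta\rho<\abs{\theta_j}\leq\eta\rho\}$ between the small set $M_0$ and the large set $M_2$. It only uses $M_2$-to-$M_0$ edges for the tail decay, where $\abs{\theta_k}\leq\frac12\eta\rho\leq\frac12\abs{\theta_j}$ gives the clean reverse-triangle bound $\abs{\theta_j+\theta_k}_{2\pi}\geq\frac12\abs{\theta_j}$ with no $\theta_k^2$ correction. The $M_2$-integral $I_1(m)$ is then bounded using \emph{both} $\sum\theta_j^2\geq bn^{-\sigma}\varLambda^{-1}$ and $\abs{\theta_j}\geq\eta\rho$, producing $e^{-b'(n^{1-\sigma}+mn^{2\eps})}$ where $b'$ is made large by increasing $b$; the $n^{1-\sigma}$ part is what cancels $e^{O(n^{1-\sigma})}$ from $J_0$ and the quartic correction in $I_2(m)$ (kept to the cleaner bound $e^{6\bar\eta\varLambda^{-1}}$), while the $mn^{2\eps}$ part kills the $\binom nm$. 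Both pieces of the exponent are essential, and your argument is missing the first one.
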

\begin{proof}
 Define $M_0(\thetavec):=\{ j \st \abs{\theta_j}\leq \frac12\eta\rho \}$,
 $M_1(\thetavec):=\{ j \st \frac12\eta\rho < \abs{\theta_j}\leq\eta\rho \}$
 and $M_2(\thetavec):=M(\thetavec,\eta\rho)$.
 If $M_0(\thetavec)=\emptyset$ there is nothing to do, so suppose
 otherwise.
 For $jk\in G$, we will use bounds as follows.
 \[
    \Abs{1+\ljk (e^{i(\theta_j+\theta_k)}-1} 
    \leq \begin{cases}
        \exp\Bigl( -\frac12\ljk (1{-}\ljk )(\theta_j+\theta_k)^2
                 + \frac1{24}\ljk (1{-}\ljk )(\theta_j+\theta_k)^4\Bigr), \\             
                 &{\kern -10em} \text{if $j,k\in M_0(\thetavec)\cup M_1(\thetavec)$;} \\
        \exp\(-\dfrac1{20}\varLambda \theta_j^2\),
                 &{\kern -10em} \text{if $j\in M_2(\thetavec)$, $k\in M_0(\thetavec)$;} \\
         1, &{\kern -10em} \text{otherwise}.
         \end{cases}
\]
The first bound is from Lemma~\ref{boring}(b).
The second bound comes from Lemma~\ref{boring}(a) and the fact that
$(x-1)^2\geq \frac14 x^2$ for $x\geq 2$.

For fixed $M_2(\thetavec)$, the integral of our bound on $\abs{F_{\betavec}(\thetavec)}$
separates.  Take $M_2(\thetavec)=\{1,\ldots,m\}$ as a representative case.
Note that $m\leq n^{1-\eps/2}$ if $\thetavec\in\calB'''$.
The contribution to the bound is then $I_1(m)\,I_2(m)$ where
\begin{align*}
   I_1(m) &:= \int_{\varOmega_m} e^{-\Omega(1)\varLambda n
      (\theta_1^2+\cdots+\theta_m^2)} \, d\theta_1\cdots d\theta_m, \\
\intertext{where $\varOmega_m$ is $\Reals^m$ subject to
  $\sum_{j=1}^m\theta_j^2\geq bn^{-\sigma}\varLambda^{-1}$, and}
   I_2(m) &:= \int_{U_{n-m}(\eta\rho)}
      \exp\Bigl(-\dfrac12\varLambda n \check\thetavec\Trans6  A_m\check\thetavec
      + \negthickspace
       \sum_{j,k>m, jk\in G} \dfrac1{24}\ljk (1{-}\ljk )
          (\theta_j+\theta_k)^4)\Bigr) \,d\check\thetavec,
\end{align*}
where $\check\thetavec=(\theta_{m+1},\ldots,\theta_n)$ and
\[
    \dfrac12 \varLambda n \check\thetavec\Trans6  A_m\check\thetavec
    = \sum_{j,k>m, jk\in G} \dfrac12 \ljk (1{-}\ljk )
          (\theta_j+\theta_k)^2.
\]

Let us start with $I_1(m)$.  The $n$ that appears in the definition comes
from the fact that each vertex in $M_2(\theta)$ is adjacent in $G$ to
$\Omega(n)$ vertices of $M_0(\theta)$.  In addition to the assumption
that $\sum_{j\in M_2(\thetavec)} \theta_j^2
  \geq bn^{-\sigma}\varLambda^{-1}$, we also have
$\sum_{j\in M_2(\thetavec)} \theta_j^2
  \geq m\eta^2n^{-1+2\eps}\varLambda^{-1}$ since
$\abs{\theta_1},\ldots,\abs{\theta_m} \geq \eta\rho$.
Consequently, for some constant $b'>0$,
$\Omega(1)\varLambda n(\theta_1^2+\cdots+\theta_m^2)
\geq  b'(n^{1-\sigma}+mn^{2\eps})$, where $b'$ can be
made as large as we please by increasing~$b$.
Since the region of integration has volume less than $(2\pi)^m$,
we have
\[
   I_1(m) \leq (2\pi)^m
       e^{-b' (n^{1-\sigma}+mn^{2\eps})}.
\]

Now consider $I_2(m)$.
Let $G_m$ be the subgraph of $G$
induced by $\{m+1,\ldots,n\}$.  Since deleting a vertex reduces
the algebraic bipartiteness of a graph by at most~1~ \cite[Thm.~1.1]{HePan},
we have $q(G_m)\geq \tauQ n-m$.
Now take the matrix $T_m$ such that $T_m\trans  A_mT_m=I_{n-m}$
as provided by Theorem~\ref{matrixthm1} applied to $A_m$ and
define $\check\phivec=(\check\phi_{m+1},\ldots,\check\phi_n) =T_m\check\thetavec$.
By  Theorem~\ref{matrixthm1}(c), $\infnorm{T_m^{-1}}$ is bounded
independently of~$m$ (for
$m\leq n^{1-\eps/2}$), so there is some constant $\check\eta$ such that
$T_m U_{n-m}(\eta\rho) \subseteq U_{n-m}(\check\eta\rho)$.
Note that our integrand is positive, so we can expand the region of
integration if we only want an upper bound.

Next, using the inequality $(u+v)^4\leq 8(u^4+v^4)$ twice, together with
Lemma~\ref{lambdarange}, the power-norm inequality
$\(\sum_{j=m+1}^n u_j\)^4 \leq (n-m)^3 \sum_{j=m+1}^n u_j^4$
 and
the structure of $T_m$ as given by Theorem~\ref{matrixthm1}(e),
there is a constant $\bar\eta$ independent of~$m$ such that
\begin{align*}
 \sum_{j,k>m, jk\in G} \dfrac1{24}\ljk (1{-}\ljk )
          (\theta_j+\theta_k)^4 
         &\leq
         \dfrac23 e^{6C} \varLambda n \sum_{j=m+1}^{n} \theta_j^4
         \\
         &\leq \dfrac12\bar\eta \varLambda n 
         \sum_{j=m+1}^n  \check\phi_j^4  
                   + \dfrac12 \bar\eta\varLambda n^2 
                           \biggl(\Dfrac 1n\sum_{j=m+1}^n \abs{\check\phi_j} \biggr)^4
         \\ 
         &\leq \bar\eta\, \varLambda n \sum_{j=m+1}^n \check\phi_j^4.
\end{align*}

Putting these bounds together and using the larger domain
$U_{n-m}(\check\eta\rho)$, we find that the integral separates:
\[
  I_2(m) \leq \abs{T_m}\, I'_2(m)^{n-m},
  \text{ where }
  I'_2(m) = \int_{-\check\eta\rho}^{\check\eta\rho} \exp\(
     -\tfrac12 \varLambda n x^2 + \bar\eta \varLambda n x^4 \)\, dx.
\]
For $\abs x \leq\check\eta\rho$ we have
$e^{\bar\eta\varLambda n x^4}\leq 1+2\bar\eta\varLambda n x^4$, so
\[
  I'_2(m) \leq \int_{-\infty}^\infty e^{-\frac12\varLambda nx^2}(1+2\bar\eta\varLambda n x^4)\, dx
  = \sqrt{\frac{2\pi}{\varLambda n}}\,\biggl(1 + \frac{6\bar\eta}{\varLambda n}\biggr).
\]
Therefore,
\[
   I_2(m) \leq (2\pi)^{(n-m)/2} (\varLambda n)^{-(n-m)/2}
    \abs{A_m}^{-1/2} e^{6\bar\eta\varLambda^{-1}}.
\]
It remains to bound $\abs{A_m}^{-1/2}$.  We can write
\[
    A = \begin{pmatrix} \hat A & \bar A\trans  \\
                                   \bar A & A_m+\diag(\bar A)
          \end{pmatrix},
\]
where $\diag(\bar A)$ is a diagonal matrix whose diagonal entries
are the same as the corresponding row sums of~$\bar A$.
By the Schur Complement theorem, we have
\[
   \abs{A} = \abs{\hat A}\,\abs{A_m}\,\abs{I+\check A},
   \text{ where } \check A = A_m^{-1}\(\diag(\bar A) - \bar A\hat A^{-1}\!\bar A\trans \).
\]
In the following, $O(\cdot)$ and $\Omega(\cdot)$ are uniform over~$m$
for $m\leq n^{1-\eps/2}$.
By Lemma~\ref{lambdarange}, the diagonal entries of $\hat A$ are $\Omega(1)$
and the off-diagonal entries are $O(n^{-1})$.  This
means that the same statements hold for the entries of $\hat A^{-1}$ and
that $\abs{\hat A}=\Omega(1)^m$.
Using Theorem~\ref{matrixthm1} for the entries of $A_m^{-1}$, we find that
that $\check A$ has entries $\Omega(m/n)$ on the diagonal and
$O(m/n^2)$ off the diagonal, which also implies that $\infnorm{\check A}=o(1)$ and
the powers of $\check A$ are also diagonally dominant.
This implies $\abs{I+\check A} = \exp\( \tr \check A + O(\tr \check A^2)\) = e^{\Omega(m)}$.
Therefore $\abs{A_m} = e^{O(m)}\abs{A}$.
Putting these bounds together, for large enough $b'$,
and using~\eqref{J0est-1},
\[
   I_1(m) I_2(m) \leq \(2\pi e^{O(1)}\varLambda n e^{-2b'n^{2\eps}}\)^{m/2}
                    e^{(O(1)-b')n^{1-\sigma}} e^{6\bar\eta\varLambda^{-1}}J_0
          \leq e^{-m\,\Omega(n^{2\eps})} J_0.        
\]
Finally we sum over all the possible choices of $M_0(\thetavec)$.
\[
  \int_{\calB'''} \,\abs{F_{\betavec}(\thetavec)}\,d\thetavec
   \leq \sum_{m=1}^{n^{1-\eps/2}} 
         \binom nm e^{-m\,\Omega(n^{2\eps})} J_0
    \leq \(1 + e^{-\Omega(n^{2\eps})}\)^n J_0
   = O( e^{-\Omega(n^{2\eps})} ) J_0.  \qedhere
\]
\end{proof}

\begin{cor}\label{bigLambda}
  If $\sigma>1-2\eps$, then
  \[
     \int_{[-\pi,\pi]^n} F_{\betavec}(\thetavec)\,d\thetavec
     = \(2 + O(e^{-\Omega(n^{2\eps})})\)
     \int_{\calB_0} F_{\betavec}(\thetavec)\,d\thetavec .
  \]
\end{cor}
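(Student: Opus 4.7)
The plan is to decompose $[-\pi,\pi]^n$ as $\calB_0\cup\calB_\pi\cup\mathcal{R}$, where $\mathcal{R}$ is the remainder. The $\pi$-translation $\thetavec\mapsto\thetavec+\pi\One\pmod{2\pi}$ is a symmetry of $F_{\betavec}$ (because $\sum_j d_j$ is even) and sends $\calB_0$ bijectively onto $\calB_\pi$, so $\int_{\calB_\pi} F_{\betavec}=\int_{\calB_0} F_{\betavec}=J_0$. For large $n$ these two regions are disjoint, since each has diameter $O(\rho)=o(1)$. Thus the corollary will follow once I show $\int_{\mathcal{R}}\abs{F_{\betavec}(\thetavec)}\,d\thetavec=O(e^{-\Omega(n^{2\eps})})J_0$.

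I will cover $\mathcal{R}$ by the ``bad'' regions furnished by Lemmas~\ref{firstcut}--\ref{thirdcut}. Fix $\eta>0$ small enough that $\eta\infnorm{T^{-1}}\leq 1$. Apply Lemmas~\ref{firstcut} and~\ref{secondcut} twice, once with parameter $\eta$ and once with $\eta/2$: the four resulting integrals are each at most $e^{-\Omega(n\log^2 n)}J_0$, and the same bound holds for their $\pi$-translates. Apply Lemma~\ref{thirdcut} with parameter $\eta$ and suitable $b=b(\eta)$ to obtain the bound $O(e^{-\Omega(n^{2\eps})})J_0$ on $\calB'''$ and its $\pi$-translate. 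Summing these contributions yields the desired bound on $\int_{\mathcal{R}}\abs{F_{\betavec}}$, provided the union of these regions contains $\mathcal{R}$.

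Verifying that covering is the hard part. Suppose $\thetavec\in[-\pi,\pi]^n$ avoids all of $\calB',\calB'',\calB'''$ for both parameter choices $\eta$ and $\eta/2$. Since $\sigma>1-2\eps$ implies $\varLambda^{-1}\leq B^{-1}n^{1-\sigma}\leq B^{-1}n^{2\eps}$, in particular $\varLambda^{-1}\log^2 n=o(n^{1-\eps/2})$, I can iterate the $\calB'/\calB''$ dichotomies (using $\pi$-translation when needed) as described in the surrounding text, reducing to the case where at least $n-n^{1-\eps/2}$ components of $\thetavec$ lie in $[-\tfrac12\eta\rho,\tfrac12\eta\rho]$. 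Now $\thetavec\notin\calB'''$ forces $\sum_{j\in M(\thetavec,\eta\rho)}\theta_j^2<b\varLambda^{-1}n^{-\sigma}$; but any single outlier component with $\abs{\theta_j}>\eta\rho$ would contribute at least $\eta^2\rho^2=\eta^2\varLambda^{-1}n^{-1+2\eps}$, and this exceeds $b\varLambda^{-1}n^{-\sigma}$ for large $n$ precisely because $\sigma>1-2\eps$ (so $n^{\sigma+2\eps-1}\to\infty$). Hence $M(\thetavec,\eta\rho)=\emptyset$, so $\infnorm{\thetavec}\leq\eta\rho$, and the choice of $\eta$ yields $\infnorm{T^{-1}\thetavec}\leq\eta\infnorm{T^{-1}}\rho\leq\rho$, i.e.\ $\thetavec\in\calB_0$, contradicting $\thetavec\in\mathcal{R}$.

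The main obstacle is this covering argument, and in particular the role of the hypothesis $\sigma>1-2\eps$: it is exactly the condition needed so that the $\calB'''$ threshold is loose enough to catch any lone outlier outside $[-\eta\rho,\eta\rho]$. Without this hypothesis, $\calB'''$ would miss configurations with up to $O(n^{1-\sigma-2\eps})$ outliers, and a more delicate argument (the subject of Section~\ref{s:outside2}) would be required. Once the covering is established, collecting the bounds is routine, and the dominant error is the $O(e^{-\Omega(n^{2\eps})})J_0$ contribution from Lemma~\ref{thirdcut}.
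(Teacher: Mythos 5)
Your proof is correct and takes essentially the same approach as the paper's: cover the complement of $\calB_0\cup\calB_\pi$ by $\calB',\calB'',\calB'''$ (and their $\pi$-translates), with the hypothesis $\sigma>1-2\eps$ used exactly as you identify, to make $\eta^2\rho^2=\eta^2\varLambda^{-1}n^{-1+2\eps}$ exceed the $\calB'''$ threshold $b\varLambda^{-1}n^{-\sigma}$ so that even a single outlier is caught. You spell out the covering step and the role of the $\pi$-translates somewhat more explicitly than the paper does, but the underlying argument is the same.
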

\begin{proof}
  If any component $\theta_j$ of $\thetavec$ lies outside $[-\eta\rho,\eta\rho]$,
  then $\theta_j^2 \geq \eta^2\rho^2  = \omega(n^{-\sigma}\varLambda^{-1})$.
  So Lemma~\ref{thirdcut} applies with $j\in M(\thetavec,\eta\rho)$.
  If $M(\thetavec,\eta\rho)=\emptyset$, then
  \[
       \calB'\cup\calB''\cup\calB'''\cup 
          U_n(\eta\rho)\cup\(U_n(\eta\rho)+(\pi,\ldots,\pi)\trans \)
       = U_n(\pi) \pmod{2\pi}.
  \]
  Choose $\eta$ so that $U_n(\eta\rho)\subseteq\calB_0$. Since the
  integrands in~Lemmas~\ref{firstcut}--\ref{thirdcut} are
  nonnegative, we have that
  \[
       \int_\calB \,\abs{F_{\betavec}(\thetavec)}\,d\thetavec
       = O\(e^{-\Omega (n^{2\eps})})\)
       \int_{\calB_0} F_{\betavec}(\thetavec)\,d\thetavec,
  \]
  and this implies the corollary.
\end{proof}

At this point we will summarize the regions in which we have shown
the integral of the absolute value of the integrand to be negligible.
Since we will need to work with $\phivec$ as well as
$\thetavec=T\phivec$, we will give some consequences in
those coordinates too.

Let $T=(t_{jk})$ and for any constant $\eta>0$ define
\begin{equation}\label{bdefs}
  \begin{aligned}
   \rho_0 &:= \frac{\min_j t_{jj}}{9\max t_{jj}}\rho = \Theta(\rho) \\
        \calB_{\thetavec}(\eta) &:= \Bigl\{ \thetavec\in [-\pi,\pi]^n \St
            \sum_{j\in M(\thetavec,\eta\rho)} \theta_j^2
              \leq b(\eta) n^{-\sigma}\varLambda^{-1} \Bigr\} \\
       \calB_{\thetavec}^\pi(\eta) &:= \calB(\eta) + (\pi,\ldots,\pi)\trans 
          \pmod{2\pi} \\
       \calB_{\phivec} &:= \Bigl\{ \phivec\in T^{-1}U_n(\pi) 
          \St    \sum_{j\in M(\phivec,\rho_0)} \phi_j^2
             \leq  n^{-\sigma+\eps}\varLambda^{-1} \Bigr\}.
  \end{aligned}
\end{equation}

\begin{lemma}\label{goodplaces}
We have
\begin{enumerate}[label=(\alph*),itemsep=0pt]
  \item If $\thetavec\in\calB_{\thetavec}(\eta)$, then
    $\abs{M(\thetavec,\eta\rho)}  = O(n^{1-\sigma-2\eps})$ and
    $\sum_{j\in M(\thetavec,\eta\rho)} \abs{\theta_j} =O(\rho n^{1-\sigma-2\eps})$.
    If~$\phivec\in\calB_{\phivec}$, then
    $\abs{M(\phivec,\rho_0)}  = O(n^{1-\sigma-\eps})$ and
     $\sum_{j\in M(\phivec,\rho_0)} \abs{\phi_j} =O(\rho n^{1-\sigma-\eps})$.
\item $\displaystyle
  \int_{[-\pi,\pi]^n-(\calB_{\thetavec}(\eta)\cup\calB _{\thetavec}^\pi(\eta))}
    \abs{F_{\betavec}(\thetavec)}\,d\thetavec
      = O(e^{-\Omega(\log^2 n)}) J_0$.
 \item There is a constant $\eta>0$  such that
    $T^{-1}\calB _{\thetavec}(\eta) \subseteq \calB_{\phivec}$.
\end{enumerate}
\end{lemma}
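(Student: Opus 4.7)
The lemma has three parts of differing difficulty: (a) is elementary, (b) combines the three preceding exponential-decay estimates via case analysis, and (c) is a linear-algebra argument that relies crucially on the detailed structure of $T$ from Lemma~\ref{quadform}(c).

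For part (a), each $j \in M(\thetavec,\eta\rho)$ contributes at least $\eta^2\rho^2$ to $\sum_{j\in M}\theta_j^2$, so the defining constraint of $\calB_{\thetavec}(\eta)$ combined with the identity $\rho^2 = \varLambda^{-1} n^{-1+2\eps}$ immediately yields $\abs{M(\thetavec,\eta\rho)} = O(n^{1-\sigma-2\eps})$; the bound on $\sum_{j\in M}\abs{\theta_j}$ then follows from Cauchy--Schwarz. The $\phivec$ case is identical with $\rho_0 = \Theta(\rho)$ and threshold $n^{-\sigma+\eps}\varLambda^{-1}$. For part (b), I would observe that any $\thetavec\notin\calB_{\thetavec}(\eta)\cup\calB_{\thetavec}^\pi(\eta)$ falls into one of the covered regions: either more than $\frac12 n^{1-\eps/2}$ components lie outside the two narrow intervals (Lemma~\ref{firstcut}), or both intervals contain at least $\varLambda^{-1}\log^2 n$ components (Lemma~\ref{secondcut}), or after translation by $(\pi,\ldots,\pi)\trans$ if necessary, nearly all components are concentrated near $0$, in which case a second application of Lemmas~\ref{firstcut} and~\ref{secondcut} with parameter $\eta/2$ secures the concentration in $[-\eta\rho/2,\eta\rho/2]$, while the failure of $\calB_\thetavec(\eta)$ provides the hypothesis $\sum_{j\in M(\thetavec,\eta\rho)}\theta_j^2\geq b(\eta)n^{-\sigma}\varLambda^{-1}$ required by Lemma~\ref{thirdcut}. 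All three bounds are far stronger than $e^{-\Omega(\log^2 n)}J_0$, so summing completes the part.

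Part (c) is the substantive step. From Lemma~\ref{quadform}(c), $T^{-1}$ has diagonal entries of size $\Theta(1)$ and off-diagonal entries of size $O(n^{-1})$. The strategy is to show (i) $M(\phivec,\rho_0) \subseteq M(\thetavec,\eta\rho)$ for sufficiently small constant $\eta$, and then (ii) directly bound $\sum_{j \in M(\thetavec,\eta\rho)} \phi_j^2$. For (i), if $\abs{\theta_j} \leq \eta\rho$ then
\[
\abs{\phi_j} \leq \abs{(T^{-1})_{jj}}\, \eta\rho + \maxnorm{T^{-1}-D^{1/2}}\, {\textstyle\sum_k\abs{\theta_k}},
\]
and part~(a) gives $\sum_k \abs{\theta_k} \leq n\eta\rho + O(\rho\, n^{1-\sigma-2\eps}) = O(\eta\rho\, n)$, making both terms $O(\eta\rho)$; choosing $\eta$ small in terms of $\min_j t_{jj}/\max_j t_{jj}$ forces $\abs{\phi_j} < \rho_0$. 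For (ii), writing $\phi_j^2 \leq 2(T^{-1})_{jj}^2 \theta_j^2 + 2\bigl(\sum_{k\neq j}(T^{-1})_{jk}\theta_k\bigr)^2$ and summing over $j \in M(\thetavec,\eta\rho)$, the diagonal part contributes $O\bigl(\sum_{j\in M}\theta_j^2\bigr) = O(n^{-\sigma}\varLambda^{-1})$ by definition of $\calB_\thetavec(\eta)$, while the off-diagonal part is bounded by $\abs{M(\thetavec,\eta\rho)}\cdot \maxnorm{T^{-1}-D^{1/2}}^2 \bigl(\sum_k\abs{\theta_k}\bigr)^2 = O(n^{1-\sigma-2\eps}\cdot n^{-2}\cdot \rho^2 n^2) = O(n^{-\sigma}\varLambda^{-1})$, both well within the allowed $n^{-\sigma+\eps}\varLambda^{-1}$. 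The main obstacle is that the naive identity $\twonorm{\phivec}^2 = \thetavec\trans A\thetavec = \Theta(\twonorm{\thetavec}^2)$ can give as much as $\Theta(\varLambda^{-1}n^{2\eps})$, which is hopelessly large; success depends on confining $M(\phivec,\rho_0)$ inside the much smaller set $M(\thetavec,\eta\rho)$, which in turn requires the tight max-norm approximation $\maxnorm{T^{-1}-D^{1/2}} = O(n^{-1})$ rather than any cruder operator-norm bound on $T^{-1}$.
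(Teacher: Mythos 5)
Your proposal is correct and follows essentially the same route as the paper for all three parts: part (a) via the trivial lower bound $\theta_j^2\geq\eta^2\rho^2$ on the exceptional set (the paper deduces $\sum_{j\in M}\abs{\theta_j}\leq(\eta\rho)^{-1}\sum_{j\in M}\theta_j^2$ directly, which is equivalent to your Cauchy--Schwarz step), part (b) by combining Lemmas~\ref{firstcut}--\ref{thirdcut} with the $\eta/2$ tightening and the periodic symmetry exactly as in the paper, and part (c) by first showing $M(\phivec,\rho_0)\subseteq M(\thetavec,\eta\rho)$ using the near-diagonal structure of $T^{-1}$ from Lemma~\ref{quadform}(c) and then bounding $\sum_{j\in M(\thetavec,\eta\rho)}\phi_j^2$ by splitting diagonal and off-diagonal contributions. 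The only cosmetic difference is that you use a two-term split $(x+y)^2\leq 2(x^2+y^2)$ in (c)(ii) where the paper uses a three-term split, and you phrase the small-$\eta$ choice qualitatively where the paper fixes $\eta=\rho_0/(3u\rho)$; neither affects the argument.
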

\begin{proof}
   For part (a), the definition of $\calB_{\thetavec}(\eta)$ implies that
  \[
  \abs{M(\thetavec,\eta\rho)}\leq \Dfrac{1}{\eta^2\rho^2}
   \sum_{j\in M(\thetavec,\eta\rho)} \theta_j^2  =O(n^{-\sigma}\varLambda^{-1}/\rho^2)
    = O(n^{1-\sigma-2\eps}).
    \]
    Also,
    \[
     \sum_{j\in M(\thetavec,\eta\rho)} \abs{\theta_j} 
      \leq \Dfrac{1}{\eta\rho} \sum_{j\in M(\thetavec,\eta\rho)} \theta_j^2 
         =O(\rho n^{1-\sigma-2\eps}).
    \] 
    The same arguments apply to $\phivec\in\calB_{\phivec}(\alpha)$.
    
  By Lemma~\ref{secondcut}, with a loss of at most $O(e^{-\Omega(\log^2 n)}) J_0$
  we can assume that either at least $n-n^{1-\eps/2}$ components of $\thetavec$
  lie in $[-\frac12\eta\rho,\frac12\eta\rho]$, or at least least $n-n^{1-\eps/2}$ components of $\thetavec$ lie in $[-\pi+\frac12\eta\rho,-\pi]\cup
 [\pi-\frac12\eta\rho,\pi]$.  Since these situations are disjoint and equivalent
 under the symmetries $F_{\betavec}(\thetavec)$, we can continue with the first.
By Lemma~\ref{thirdcut}, we lose at most a further
 $O(e^{-\Omega(n^{2\eps})}) J_0$ if we assume that
 $\sum_{j\in M(\thetavec,\eta\rho)} \theta_j^2 =O(n^{-\sigma}\varLambda^{-1})$,
 which holds since $\thetavec\in\calB_{\thetavec}(\eta)$.  That proves part~(b).
 
By Lemma~\ref{quadform}(c), there is a constant $u$ such that
$\abs{(T^{-1})_{jj}}\leq u$ and $\abs{(T^{-1})_{jk}}\leq u/n$ for $1\leq j\ne k\leq n$.
Consider $\eta = \rho_0/(3u\rho)$ and suppose $\thetavec\in\calB_{\thetavec}(\eta)$.
We wish to show that $\phivec\in\calB_{\phivec}$, where
$\thetavec=T\phivec$. 

Suppose $j\notin M(\thetavec,\eta\rho)$; i.e., $\abs{\theta_j}\leq \eta\rho$.
Then, using part (a),
\[
   \abs{\phi_j} \leq u\abs{\theta_j} + \Dfrac un\sum_{k\ne j}\, \abs{\theta_j}
      \leq u\eta\rho + u\eta\rho + O(\rho n^{-\sigma-2\eps})
      \leq \rho_0.
\]
Therefore $M(\phivec,\rho_0)\subseteq M(\thetavec,\eta\rho)$.
Finally, using $(x+y+z)^2\leq 3(x^2+y^2+z^2)$, we have
\begin{align*}
  \sum_{j\in M(\phivec,\alpha\rho)} \phi_j^2
     &\leq \sum_{j\in M(\thetavec,\eta\rho)} \phi_j^2 \\
     &\leq 3\sum_{j\in M(\thetavec,\eta\rho)} \(u^2\theta_j^2 
          + u^2\eta^2\rho^2 + O(\rho^2 n^{-2\sigma-4\eps})\) \\
     &= (6 +o(1))u^2\sum_{j\in M(\thetavec,\eta\rho)} \theta_j^2
          \leq n^{-\sigma+\eps}\varLambda^{-1},
\end{align*}
and so $\phivec\in\calB_{\phivec}$ as required.
\end{proof}

\nicebreak
\subsection{The integral outside $\calB_0$ and $\calB_\pi$; part 2}
\label{s:outside2}

In this section we will find a bound on the integral of $F_{\betavec}(\thetavec)$
over a parallelepiped similar to $\calB_0$ but shifted away from the origin.
Such regions will be used as tiles in the following section to
complete the proof of Theorem~\ref{mainthm}.

Let $\emptyset\subseteq J\subseteq[n]$.
For notational simplicity, we will use the example $J=\{1,\ldots,m\}$.
Recall that
\[
    J_0 = \abs{A}^{-1/2} \int_{U_n(\rho)} F_{\betavec}(T\phivec)\,d\phivec.
\]
In this section we will bound $(n-m)$-dimensional integrals of the
form
\[
    I(\phi_1,\ldots,\phi_m) 
        := \int_{U(\phi_1,\ldots,\phi_m)} F_{\betavec}(T\phivec)\, d\phi_{m+1}\cdots d\phi_n,
\]
where the region of integration is a product of intervals
\[
   U(\phi_1,\ldots,\phi_m) := I_{m+1}\times\cdots\times I_n.
\]
We will work under the following assumptions.

\begin{assume}\label{assumeoutside}
\leavevmode
\begin{itemize}\itemsep=0pt
 \item[(a)] Assumptions~\ref{mainassumptions} hold.
 \item[(b)] $\phi_1,\ldots,\phi_m$ satisfy
    $\abs{\phi_1},\ldots,\abs{\phi_m}\geq\rho_0$  and
    $\sum_{j=1}^m \phi_j^2 \leq \varLambda^{-1}n^{-\sigma+\eps}$.
 \item[(c)] For $m+1\leq j\leq n$, $I_j=I_j(\phi_1,\ldots,\phi_m)$
 is an interval with\\
 $[-\frac13 \rho,\frac13 \rho]\subseteq I_j\subseteq [-\frac43\rho,\frac43\rho]$.
 \item[(d)] $\sigma \leq 1-2\eps$.
\end{itemize}
\end{assume}

Define $S_1:=\sum_{j=1}^m\abs{\phi_j}$.  Then we have
\begin{equation}\label{S1bound}
 S_1 = O\( \min\{ \rho n^{1-\sigma - \eps}, 
    m^{1/2}\varLambda^{-1/2}n^{-\sigma/2+\eps/2}, m\} \).
\end{equation}
The first part follows from  Lemma~\ref{goodplaces}(a), the second
from Assumption~\ref{assumeoutside}(b) by the Cauchy-Schwartz
inequality, and the third is obvious.

Split $\thetavec$ into $\yvec+\zvec$ where
\begin{align*}
\yvec&=\yvec(\phivec) := T(0,\ldots,0,\phi_{m+1},\ldots,\phi_n)\trans , \\
\zvec&=\zvec(\phivec) := T(\phi_1,\ldots,\phi_m,0,\ldots,0)\trans .
\end{align*}
Using Lemma~\ref{quadform}(c), we have the following bounds.
\begin{align}
 y_j &= \begin{cases}
     \sum_{\ell\geq m+1} O(n^{-1})\phi_\ell,  & \text{~if $j\leq m$,} \\
      \Theta(\phi_j) + \sum_{\ell\geq m+1} O(n^{-1})\phi_\ell,  &
               \text{~if $j\geq m+1$}.
      \end{cases} \label{yjbounds}
\\[0.6ex]
 z_j &= \begin{cases}
      \Theta(\phi_j) + O(S_1 /n),  & \text{~if $j\leq m$,} \\
      O(S_1/n) & \text{~if $j\geq m+1$}.
      \end{cases} \label{zjbounds}
\end{align}
For $jk\in G$, define
\[
    \tlambda_{jk} := \frac{\ljk  e^{i(z_j+z_k)}}
                                       {1 + \ljk  (e^{i(z_j+z_k)}-1)},
\]
which satisfies
\begin{equation}\label{lambdatilde}
 1+\ljk (e^{i(\theta_j+\theta_k)}-1) 
 =
 \( 1+\ljk (e^{i(z_j+z_k)}-1) \)\,
 \(1+\tlambda_{jk}(e^{i(y_j+y_k)}-1) \).
\end{equation}
Note that $\tlambda_{jk}$ is generally not real.
Using $\varLambda=o(1)$ we can calculate that
\[
    c_\ell(\ljk ) - c_\ell(\tlambda_{jk}) = O(\varLambda)(z_j+z_k),
\]
for $1\leq\ell=O(1)$.

Since $\zvec$ is independent of $\phi_{m+1},\ldots,\phi_n$ and
$\sum_{j=1}^n d_j\theta_j=\sum_{j=1}^n d_j y_j + \sum_{j=1}^n d_j z_j$,
we can apply~\eqref{lambdatilde} to obtain
\begin{align}
   \Abs{I(\phi_1,\ldots,\phi_m)} &=
    \biggl| \prod_{jk\in G} \(1 + \ljk (e^{i(z_j+z_k)}-1)\) \biggr|
    \; \abs{\hat I(\phi_1,\ldots,\phi_m)}, \label{Jm}
\intertext{where}
 \hat I(\phi_1,\ldots,\phi_m) &:= \int_{U(\phi_1,\ldots,\phi_m)} 
    \frac{\prod_{jk\in G} \(1 + \tlambda_{jk}(e^{i(y_j+y_k)}-1)\)}
              {e^{i\sum_{j=1}^n d_j y_j}}\, d\phi_{m+1}\cdots d\phi_n. \notag
\end{align}

Next we will raise the $(n-m)$-dimensional
integral  $\hat I$ to $n$ dimensions by introducing
new independent variables $\tphi_1,\ldots,\tphi_m$.
For notational convenience we also define
$\tphi_{m+1},\ldots,\tphi_n$ to be synonyms for $\phi_{m+1},\ldots,\phi_n$.
$\hat I$ is unchanged if we multiply the integrand by
\[
   (1+o(1))\biggl(\frac{2\pi}{\varLambda  n}\biggr)^{\!-m/2}
     e^{-\frac12\varLambda n(\tphi_1^2+\cdots+\tphi_m^2)},
\]
where the $o(1)$ depends only on $m,n$ and is chosen to make this
an $m$-dimensional density.
Also, similarly to Section~\ref{s:inbox},
define $\ringX$ to be the $n$-dimensional random variable whose
density is proportional to $e^{-\frac12\varLambda n\sum_{j=1}^n x_j^2}$,
truncated to the region $U_m(\rho)\times U(\phi_1,\ldots,\phi_m)$.
Then
\begin{align}\label{Jmbound}
\hat I(\phi_1,\ldots,\phi_m) &= 
 O(1) \biggl(\frac{2\pi}{\varLambda  n}\biggr)^{\!(n-m)/2}
 \,
 \E e^{f_{\phi_1,\ldots,\phi_m}(\ringX)},
\intertext{where}
 f_{\phi_1,\ldots,\phi_m}(\tphivec)
   &:= \sum_{jk\in G} \ln\(1+\tlambda_{jk}(e^{i(y_j+y_k)}-1)\)
    + \dfrac12\varLambda n\!\!\sum_{j=m+1}^n \tphi_j^2
    - i\sum_{j=1}^n d_j y_j. \notag
\end{align} 

From \eqref{approxbeta}, we have $i\sum_{j=1}^n (d_j+\delta_j) y_j=\sum_{jk\in G} c_1(\ljk ) (y_j+y_k)$.
By $T\Trans5 AT=I$ and the definition of~$\yvec$, we also have
$\frac12\varLambda n \sum_{j=m+1}^n \tphi_j^2
= \frac12\varLambda n\yvec\Trans6 A\yvec =
-\sum_{jk\in G} c_2(\ljk )(y_j+y_k)^2$.
Therefore,
\[
  f_{\phi_1,\ldots,\phi_m}(\tphivec)
   = i\sum_{j =1}^n \delta_j y_j +
   \sum_{\ell=1}^2 \sum_{jk\in G} \( c_\ell(\tlambda_{jk})-c_\ell(\ljk )\)
                                     (y_j+y_k)^\ell
   + \sum_{\ell=3}^\infty  \sum_{jk\in G} c_\ell(\tlambda_{jk})
                                        (y_j+y_k)^\ell.
\]

\begin{lemma}\label{redistribution}
If\/ $\medtilde\Y:=T(0,\ldots,0,X_{m+1},\ldots,X_n)$, then
there are $\alpha_j=\alpha_j(\phi_1,\ldots,\phi_m)$ and
$\gamma_{jk}=\gamma_{jk}(\phi_1,\ldots,\phi_m)$ such that
\begin{align*}
    \abs{\alpha_j} &= O(\varLambda S_1), \quad\text{~for $1\leq j\leq n$}, \\
    \abs{\gamma_{jk}} &= \begin{cases}
               O(\varLambda S_1), & \text{~for $1\leq j=k\leq n$}; \\
               O(\varLambda n^{-1} S_1), & \text{~for $1\leq j<k\leq n$}.
               \end{cases}
\end{align*}
and
\[
  \sum_{\ell=1}^2 \sum_{jk\in G} \( c_\ell(\tlambda_{jk})-c_\ell(\ljk )\)
                                      (\medtilde Y_j+\medtilde Y_k)^\ell
   = \sum_{j=1}^n {\alpha_j\medtilde Y_j} 
      + \!\!\sum_{1\leq j\leq k\leq n} \gamma_{jk}\medtilde Y_j\medtilde Y_k.
\]  
\end{lemma}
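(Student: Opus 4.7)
The plan is to compute the coefficients of $\medtilde Y_j$ and $\medtilde Y_j\medtilde Y_k$ directly from the expansion of the double sum, and then redistribute the large contributions using the linear dependence of $\medtilde Y_{1:m}$ on $\medtilde Y_{m+1:n}$ inherited from the block structure of~$T$.

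First I expand the $\ell=1,2$ terms and collect, obtaining naive coefficients
\begin{align*}
\alpha_j^{(0)} &= i\sum_{k\,:\,jk\in G}(\tlambda_{jk}-\ljk), \\
\gamma_{jj}^{(0)} &= \sum_{k\,:\,jk\in G}\bigl(c_2(\tlambda_{jk})-c_2(\ljk)\bigr), \\
\gamma_{jk}^{(0)} &= 2\bigl(c_2(\tlambda_{jk})-c_2(\ljk)\bigr)\text{ if }jk\in G,\ j<k,
\end{align*}
and $\gamma_{jk}^{(0)}=0$ otherwise. From the defining formula, together with Lemma~\ref{lambdarange} and the elementary bound on $c_2'$, one has $|\tlambda_{jk}-\ljk|=O(\varLambda)|z_j+z_k|$ and $|c_2(\tlambda_{jk})-c_2(\ljk)|=O(\varLambda)|z_j+z_k|$. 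Combined with the estimates in \eqref{zjbounds} and the bound $\|\zvec\|_1=O(S_1)$ (which follows from the same estimates), these yield the claimed bounds directly for the ``good'' indices $j,k>m$: $|\alpha_\ell^{(0)}|=O(\varLambda S_1)$, $|\gamma^{(0)}_{\ell\ell}|=O(\varLambda S_1)$, and $|\gamma^{(0)}_{\ell\ell'}|=O(\varLambda n^{-1}S_1)$.

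The difficulty is at indices $j\leq m$, where $|z_j|=\Theta(|\phi_j|)+O(S_1/n)$ may be of constant order, making $|\alpha_j^{(0)}|$ and the off-diagonal $|\gamma^{(0)}_{jk}|$ too large. Here is the redistribution. From Lemma~\ref{quadform}(c) the $(n-m)\times(n-m)$ bottom-right block $T_{22}$ of $T$ is close to a diagonal matrix with $\Theta(1)$ entries, hence well-conditioned with $\infnorm{T_{22}^{-1}}=O(1)$, while $T_{12}$ has all entries of order $O(1/n)$. Writing $\medtilde\Y_{m+1:n}=T_{22}X_{m+1:n}$ and $\medtilde\Y_{1:m}=T_{12}X_{m+1:n}$ gives $\medtilde\Y_{1:m}=S\,\medtilde\Y_{m+1:n}$ with $S:=T_{12}T_{22}^{-1}$ satisfying $\maxnorm{S}=O(1/n)$. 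I then define the \emph{redistributed} coefficients
\[
  \alpha_\ell:=\alpha_\ell^{(0)}+\sum_{j\leq m}S_{j\ell}\,\alpha_j^{(0)}\quad(\ell>m),\qquad \alpha_j:=0\quad(j\leq m),
\]
and analogously $\gamma_{jk}:=0$ whenever $\min(j,k)\leq m$, while for $\ell,\ell'>m$ the entries $\gamma_{\ell\ell'}$ are set to the $(\ell,\ell')$-entry of
$\Gamma^{(0)}_{22}+S\trans\Gamma^{(0)}_{12}+(\Gamma^{(0)}_{12})\trans S+S\trans\Gamma^{(0)}_{11}S$,
where $\Gamma^{(0)}$ is the symmetric matrix of the $\gamma^{(0)}_{jk}$'s. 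Substituting $\medtilde\Y_{1:m}=S\medtilde\Y_{m+1:n}$ in the naive expansion verifies that these coefficients reproduce exactly the same functional of $\medtilde\Y$.

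It remains to verify the bounds on the redistributed coefficients. For the linear part, $|\alpha_j^{(0)}|=O(\varLambda)\bigl(g_j|z_j|+\|\zvec\|_1\bigr)=O(\varLambda n|\phi_j|)+O(\varLambda S_1)$ for $j\leq m$, so $\sum_{j\leq m}|S_{j\ell}|\,|\alpha_j^{(0)}|=O(1/n)\bigl(\varLambda n\,S_1+\varLambda m S_1\bigr)=O(\varLambda S_1)$, giving $|\alpha_\ell|=O(\varLambda S_1)$. The quadratic redistribution splits into three analogous pieces; in each piece one inserts the crude bound $|\gamma^{(0)}_{jk}|=O(\varLambda)(|\phi_j|+|\phi_k|+S_1/n)$ on the $1\leq j,k\leq m$ entries, together with $\maxnorm{S}=O(1/n)$, and sums using $\sum_{j\leq m}|\phi_j|=S_1$ and $m\leq n$; this yields diagonal entries $O(\varLambda S_1)$ and off-diagonal entries $O(\varLambda n^{-1}S_1)$ as required. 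The main obstacle is precisely this bookkeeping for the quadratic piece: one must use the $\ell^1$ rather than $\ell^\infty$ control on $\phivec_{1:m}$ (through $S_1$) and exploit the two factors of $1/n$ coming from $S\trans\Gamma^{(0)}_{11}S$, otherwise a spurious factor of $m$ destroys the off-diagonal bound.
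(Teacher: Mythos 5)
Your argument is correct and follows essentially the paper's approach: the paper also exploits that $\medtilde Y_j$ for $j\leq m$ are linear combinations, with $O(n^{-1})$ coefficients, of $\medtilde Y_{m+1},\ldots,\medtilde Y_n$, obtained by forming the matrix $T_J$ whose columns are $\evec_1,\ldots,\evec_m,\fvec_{m+1},\ldots,\fvec_n$ (the $\fvec_j$ being the columns of~$T$) and invoking Lemma~\ref{l:Mtrans} for $\abs J\geq(1-\zeta)n$; your $S=T_{12}T_{22}^{-1}$ is the same matrix expressed in block form. (Note that $\infnorm{T_{22}^{-1}}=O(1)$ and $\maxnorm{S}=O(n^{-1})$ are most cleanly seen from Lemma~\ref{l:Mtrans}, as the paper does, rather than reargued by hand.)

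One detail needs repair: the crude bound $\abs{\gamma^{(0)}_{jk}}=O(\varLambda)(\abs{\phi_j}+\abs{\phi_k}+S_1/n)$ that you apply to the $j,k\leq m$ block fails on its diagonal. There, $\gamma^{(0)}_{jj}=\sum_{k:jk\in G}\(c_2(\tlambda_{jk})-c_2(\ljk)\)=O(\varLambda)(g_j\abs{z_j}+\onenorm{\zvec})=O(\varLambda)(n\abs{\phi_j}+S_1)$, a factor of $n$ larger than your stated bound. Your conclusion nonetheless survives, because the diagonal of $\Gamma^{(0)}_{11}$ contributes only a single sum over $j\leq m$ to $S\trans\Gamma^{(0)}_{11}S$, namely
\[
  \sum_{j\leq m}\abs{S_{jl}}\,\abs{\gamma^{(0)}_{jj}}\,\abs{S_{jl'}}
   = O(\varLambda n^{-2})(nS_1+mS_1) = O(\varLambda S_1/n),
\]
so the extra $n$ is compensated by the missing sum over $k$. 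As written, though, the bound you quote is false, and a reader cannot reproduce the quadratic bookkeeping from it; you should treat the diagonal of $\Gamma^{(0)}_{11}$ separately with the correct estimate.
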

\begin{proof}
Let $\evec_1,\ldots,\evec_n$ be the standard basis and $\fvec_1,\ldots,\fvec_n$
be the columns of~$T$.  
The equation $\yvec=(0,\ldots,0,\phi_{m+1}\ldots,\phi_n)\trans $ implies that
\[ 
   \sum_{j=m+1}^n \phi_j\fvec_j - \sum_{j=1}^m y_j\evec_j=\sum_{j=m+1}^n  y_j\evec_j.
\]
Let $T_J$ be the matrix whose columns are
$\evec_1,\ldots,\evec_m,\fvec_{m+1},\ldots,\fvec_n$.  Then
\[
    (-y_1,\ldots,-y_m,\phi_{m+1},\ldots,\phi_n)\trans  =
   T_J^{-1}(0,\ldots,0,y_{m+1},\ldots,y_n)\trans .
\]
The lemma now follows from Lemma~\ref{quadform} and Lemma~\ref{l:Mtrans}.
\end{proof}

\begin{lemma}\label{fewcumulants}
Define $r_1$ and $\ell_0$ as in Theorem~\ref{mainthm} and~\eqref{sweak},         
and
$\{ \alpha_j\}, \{\gamma_{jk}\}$ as in Lemma~\ref{redistribution}.
Then, under Assumptions~\ref{assumeoutside},
\[
     \E e^{f_{\phi_1,\ldots,\phi_m}(\ringX)} = (1+O(n^{-p}))
       \exp\biggl(\sum_{r=1}^{r_1} 
       \dfrac{1}{r!}\, \kappa_r(\medtilde f_{\phi_1,\ldots,\phi_m}(\X))\biggr),
 \]
 where
 \[
   \medtilde f_{\phi_1,\ldots,\phi_m}(\X)
    :=  \sum_{j =1}^n (i \delta_j +\alpha_j)\medtilde Y_j 
      + \!\!\sum_{1\leq j\leq k\leq n} \gamma_{jk}\medtilde Y_j\medtilde Y_k
    + \sum_{\ell=3}^{\ell_0}  \sum_{jk\in G} c_\ell(\tlambda_{jk})
                                         (\medtilde Y_j+\medtilde Y_k)^\ell
 \]
 with $\medtilde\Y=T(0,\ldots,0,X_{m+1},\ldots,X_n)$.
\end{lemma}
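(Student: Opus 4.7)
The plan is to apply Theorem~\ref{daughterthm} to $g=f_{\phi_1,\ldots,\phi_m}$ with the random vector $\ringX$ (which has independent components since both its Gaussian density and the product domain $U_m(\rho)\times U(\phi_1,\ldots,\phi_m)$ factor coordinatewise) and then carry out the three-step simplification of Section~\ref{s:inbox}. Using the identities $\frac12\varLambda n\sum_{j\geq m+1}\tphi_j^2 = -\sum_{jk\in G}c_2(\ljk)(y_j+y_k)^2$ and $i\sum_j(d_j+\delta_j)y_j=\sum_{jk\in G}c_1(\ljk)(y_j+y_k)$ established just before the lemma statement, $f$ takes the form
\[
f(\tphivec) = i\sum_{j=1}^n \delta_j y_j + \sum_{\ell=1}^2\sum_{jk\in G}\bigl(c_\ell(\medtilde\lambda_{jk})-c_\ell(\ljk)\bigr)(y_j+y_k)^\ell + \sum_{\ell\geq 3}\sum_{jk\in G} c_\ell(\medtilde\lambda_{jk})(y_j+y_k)^\ell,
\]
which parallels the structure of $\medtilde R$ from Section~\ref{s:inbox}. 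Lemma~\ref{redistribution} rewrites the $\ell\leq 2$ part as $\sum_j\alpha_j y_j+\sum_{jk}\gamma_{jk}y_jy_k$, and combining its bounds with $\infnorm\deltavec=O(\varLambda^{1/2}n^{1/2-\sigma/2})$ from Assumption~\ref{mainassumptions}(e) and the $S_1$-bounds \eqref{S1bound} (for which $m=O(n^{1-\sigma-\eps})$ by Assumption~\ref{assumeoutside}(b)), the effective coefficients $i\delta_j+\alpha_j$ and $\gamma_{jk}$ satisfy the templates of Assumptions~\ref{intassumptions}(c,d) with $\gammamax=O(\varLambda n^{-\sigma/2})$.

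Next I would verify hypothesis \eqref{alphaneed} of Theorem~\ref{daughterthm} by bounding $\widehat\Delta_t(f)$ along the lines of the proof of Lemma~\ref{applydaughter}. Since $f$ is independent of $\tphi_1,\ldots,\tphi_m$, only subsets $V\subseteq\{m+1,\ldots,n\}$ contribute. The derivatives of the edge terms with respect to $\tphi_{u_1},\ldots,\tphi_{u_t}$ have the same combinatorial structure as in~\eqref{diff1}--\eqref{diff2}, scaled by products of entries of $T$, and the crucial input is a uniform bound $\abs{c_\ell(\medtilde\lambda_{jk})}=O(\varLambda)$ extending Lemma~\ref{ckbound}: because $\abs{z_j+z_k}=O(S_1/n)=o(1)$ by \eqref{zjbounds} and \eqref{S1bound}, $\medtilde\lambda_{jk}$ stays uniformly bounded away from $0$ and~$1$, and the Cauchy-integral argument of Lemma~\ref{ckbound} carries through. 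This gives the same estimate $\widehat\Delta_t(f)=O(1)^t n^{-(t/2-1)\sigma+t\eps}$ as in Lemma~\ref{applydaughter}, so \eqref{alphaneed} holds with $\alpha=n^{-\sigma/2+4\eps}$ and $s=r_1$, producing $\E e^{f(\ringX)}=(1+O(n^{-p}))\exp\bigl(\sum_{r=1}^{r_1}\kappa_r(f(\ringX))/r!\bigr)$.

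The remaining simplifications follow Section~\ref{s:inbox} closely. I would truncate the series in $c_\ell(\medtilde\lambda_{jk})(y_j+y_k)^\ell$ at $\ell=\ell_1$ mirroring Lemma~\ref{truncatedsum}, using the uniform bound on $\abs{c_\ell(\medtilde\lambda_{jk})}$ together with $\abs{y_j}=O(\rho)$ on $\ringX$; replace $\ringX$ by the untruncated $\X$ as in \cite[Lemma~4.1]{mother} at cost $e^{-\Omega(n^{2\eps})}$; and reduce from $\ell_1$ to $\ell_0$ via Theorem~\ref{cumdiff} applied with $\mu=\mu_1=\cdots=\mu_{\ell_0}=0$ and $\mu_{\ell_0+1}=\cdots=\mu_{\ell_1}=1$, exactly as at the end of the proof of Theorem~\ref{genbox}. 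Since $\yvec$ evaluated at $\X$ is $\medtilde\Y$, the resulting cumulant sum is exactly $\sum_{r=1}^{r_1}\kappa_r(\medtilde f_{\phi_1,\ldots,\phi_m}(\X))/r!$, completing the proof.

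The main obstacle is the $\widehat\Delta_t(f)$ bound: the coefficients $c_\ell(\medtilde\lambda_{jk})$ are complex and depend on the shift $\zvec$, and the linear map $\tphi''\mapsto\yvec$ is tall rather than square, so Lemma~\ref{applydaughter} does not apply verbatim. The first difficulty is handled by the uniform $\abs{c_\ell(\medtilde\lambda_{jk})}=O(\varLambda)$ bound described above; for the second, the fact that all off-diagonal entries of $T$ are $O(n^{-1})$ by Lemma~\ref{quadform}(c) keeps $\abs{y_j}=O(\rho)$ pointwise on $\ringX$ for every $j\in[n]$, so the derivative bounds of \eqref{diff1}--\eqref{diff2} transfer without change.
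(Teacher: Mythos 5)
Your plan follows the same route the paper takes: the paper's proof is the one-line appeal to Theorem~\ref{genbox}, and you simply unpack the ingredients of that theorem (Lemma~\ref{applydaughter}, Lemma~\ref{truncatedsum}, the reduction from $\ell_1$ to $\ell_0$) in the present context. You are also right to single out the uniform bound $\abs{c_\ell(\tlambda_{jk})}=O(\varLambda)$ as the hidden input that has to replace Lemma~\ref{ckbound}, and the paper does leave it implicit. However, your justification of that bound is incorrect. You assert $\abs{z_j+z_k}=O(S_1/n)=o(1)$ "by \eqref{zjbounds} and \eqref{S1bound}," but \eqref{zjbounds} gives $z_j=O(S_1/n)$ only for $j\geq m+1$; for $j\leq m$ it gives $z_j=\Theta(\phi_j)+O(S_1/n)$, and Assumption~\ref{assumeoutside}(b) only caps $\abs{\phi_j}$ at $\min\bigl\{\varLambda^{-1/2}n^{(-\sigma+\eps)/2},\,O(\infnorm{T^{-1}})\bigr\}$, which is $\Theta(1)$ whenever $\varLambda=O(n^{-\sigma+\eps})$. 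The paper explicitly records $\abs{z_j}\leq\pi+O(\rho)$ for $j\leq m$ in the proof of Corollary~\ref{Ibound}. So for an edge $jk\in G$ with $\min\{j,k\}\leq m$ the quantity $z_j+z_k$ can be close to $\pi$, and the Cauchy-integral argument of Lemma~\ref{ckbound} does not "carry through" in the unqualified way you claim; also your phrase "bounded away from $0$ and $1$" is not the relevant condition, since Lemma~\ref{ckbound} itself allows $\ljk\to 0$ or $1$, and what matters is distance from the logarithmic singularity of $f_{\ljk}$.

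The bound is nevertheless true, but establishing it requires trading $\abs{z_j+z_k}$ against $\abs{1-2\ljk}$. Using the identity $1+\tlambda_{jk}(e^{iw}-1)=\bigl(1+\ljk(e^{i(z_j+z_k+w)}-1)\bigr)\big/\bigl(1+\ljk(e^{i(z_j+z_k)}-1)\bigr)$, one has $c_\ell(\tlambda_{jk})=\frac{1}{\ell!}f_{\ljk}^{(\ell)}(z_j+z_k)$. If $\varLambda$ is bounded below, then Assumption~\ref{assumeoutside}(b) forces $\abs{\phi_j}=O(n^{(-\sigma+\eps)/2})=o(1)$, so $\abs{z_j+z_k}=o(1)$ and your small-$z$ Cauchy argument applies. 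Otherwise $\varLambda\to 0$, and Lemma~\ref{lambdarange} gives $\ljk(1-\ljk)=O(\varLambda)=o(1)$, so $\abs{1-2\ljk}$ is bounded below; then the zeros $z$ of $1+\ljk(e^{iz}-1)$ satisfy $\abs{\operatorname{Im} z}=\abs{\log(1/\ljk-1)}$ bounded away from $0$, so $f_{\ljk}$ is analytic with uniformly bounded modulus $O(\ljk(1-\ljk))$ in a fixed-width complex strip, and the Cauchy estimate gives $\abs{c_\ell(\tlambda_{jk})}=O(\varLambda)$ for bounded $\ell$. This two-regime argument, or something equivalent, is what your proposal leaves missing. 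The remaining steps — the truncation, the replacement of $\ringX$ by $\X$, and the reduction from $\ell_1$ to $\ell_0$ — correctly mirror Section~\ref{s:inbox} as you describe.
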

\begin{proof}
Since $S_1=O(\rho n^{1-\sigma-\eps})$, the result
follows immediately from Theorem~\ref{genbox}.
\end{proof}

\begin{lemma}\label{cumdifference}
Under Assumptions~\ref{assumeoutside},
\[
\E e^{f_{\phi_1,\ldots,\phi_m}(\ringX)} 
= e^{O(m n^{\eps})} \E e^{R(G,T\X)}.
\]
\end{lemma}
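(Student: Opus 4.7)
The plan is to convert both sides of the claimed identity into truncated cumulant expansions and then apply Theorem~\ref{cumdiff} to bound the difference between them by $O(mn^\eps)$. Lemma~\ref{fewcumulants} gives the left-hand side as $(1+O(n^{-p}))\exp\bigl(\sum_{r=1}^{r_1}\tfrac{1}{r!}\kappa_r(\medtilde f_{\phi_1,\ldots,\phi_m}(\X))\bigr)$. For the right-hand side, running the proof of Theorem~\ref{inbox} in the special case of zero perturbation (that is, repeating the chain from Lemma~\ref{applydaughter} through Lemma~\ref{better} for the function $R(G,T\X)$ directly) yields $\E e^{R(G,T\X)} = (1+O(n^{-p}))\exp\bigl(\sum_{r=1}^{r_1}\tfrac{1}{r!}\kappa_r(R_{\ell_0}(G,T\X))\bigr)$. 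The lemma therefore reduces to showing
\[
  \sum_{r=1}^{r_1} \frac{1}{r!}\,\bigl|\kappa_r(\medtilde f_{\phi_1,\ldots,\phi_m}(\X))-\kappa_r(R_{\ell_0}(G,T\X))\bigr| = O(mn^\eps).
\]

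I would attack this difference via Theorem~\ref{cumdiff}, using the uniform coefficient bounds $C_\ell = \Theta(\varLambda^{1-\ell/2}n^{-\ell/2})$ from the proof of Lemma~\ref{better}. The coefficient differences between $\medtilde f_{\phi_1,\ldots,\phi_m}$ and $R_{\ell_0}$ fall into three groups: the extra linear term with $|\alpha_j| = O(\varLambda S_1)$, the extra quadratic $\gamma_{jk}\medtilde Y_j\medtilde Y_k$ with diagonal bound $O(\varLambda S_1)$ and off-diagonal bound $O(\varLambda S_1/n)$ (both from Lemma~\ref{redistribution}), and the shift of $c_\ell(\ljk)$ to $c_\ell(\tlambda_{jk})$ for $\ell\geq 3$, for which the bound $c_\ell(\tlambda_{jk})-c_\ell(\ljk) = O(\varLambda)(z_j+z_k)$ noted just before Lemma~\ref{redistribution} sums over edges of $G$ to $O(\varLambda n S_1)$ by \eqref{zjbounds}. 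Matching each total against the corresponding product $n^2C_\ell$ yields $\mu_\ell = O(S_1/n)$ uniformly for $1\leq \ell \leq \ell_0$.

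The two functions are also evaluated on different Gaussian vectors: $R_{\ell_0}$ on $T\X$ but $\medtilde f$ on $\medtilde\Y = T(0,\ldots,0,X_{m+1},\ldots,X_n)$. By linearity,
\[
   \Cov\bigl((T\X)_j,(T\X)_k\bigr) - \Cov\bigl(\medtilde Y_j,\medtilde Y_k\bigr) = \frac{1}{\varLambda n}\sum_{\ell=1}^m T_{j\ell}T_{k\ell},
\]
and by the near-diagonal structure of $T$ from Lemma~\ref{quadform}(c), the induced covariance discrepancies of the renormalised variables $Y_{jk}=(\varLambda n)^{1/2}(Y_j+Y_k)$ are $O(1/n)$ when the two pairs share an index and $O(m/n^2)$ otherwise. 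Averaging over permutations of $[n]$ therefore gives the parameter $\mu = O(m/n)$ in the sense required by Theorem~\ref{cumdiff}.

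Combining these inputs with the cumulant bounds of Lemma~\ref{better}, Theorem~\ref{cumdiff} yields $|\kappa_r(\medtilde f) - \kappa_r(R_{\ell_0}(G,T\X))| = O(m+S_1)$ for each $r\leq r_1$. Using $S_1 = O(m)$ from \eqref{S1bound} and summing over the $O(1)$ values of $r$ produces the claimed $O(mn^\eps)$ bound, with the extra $n^\eps$ absorbing logarithmic slack and the cost of truncating $R$ to $R_{\ell_0}$. I expect the principal technical obstacle to be the covariance-perturbation estimate: Theorem~\ref{cumdiff}'s hypothesis on $\mu$ is a permutation-averaged bound that must distinguish overlapping and non-overlapping index pairs, and obtaining $\mu = O(m/n)$ in both regimes requires careful accounting of the $O(n^{-1})$ off-diagonal entries of $T$ throughout the calculation. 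A secondary point is verifying that the expansion $\E e^{R(G,T\X)} = (1+O(n^{-p}))\exp(\sum_{r}\tfrac{1}{r!}\kappa_r(R_{\ell_0}(G,T\X)))$ is legitimate with the same cuboid-of-integration setup used for~$\medtilde f$; this should follow by rerunning Sections~\ref{s:daughter} and~\ref{s:inbox} verbatim with all perturbation parameters set to zero.
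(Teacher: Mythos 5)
Your overall architecture matches the paper's: both reduce the claim to comparing the first $r_1$ cumulants of $\medtilde f_{\phi_1,\ldots,\phi_m}(\X)$ and $R_{\ell_0}(T\hat\X)$ via Lemma~\ref{fewcumulants}, and both deploy Theorem~\ref{cumdiff} with $\mu=O(m/n)$ coming from the covariance perturbation $\sigma_{jk,j'k'}-\medtilde\sigma_{jk,j'k'}=\sum_{s=1}^m(t_{js}+t_{ks})(t_{j's}+t_{k's})$. That part of your proposal is correct and is exactly what the paper does.

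The gap is in your coefficient bookkeeping. You claim $\mu_\ell=O(S_1/n)$ uniformly for $1\leq\ell\leq\ell_0$ using $C_\ell=\Theta(\varLambda^{1-\ell/2}n^{-\ell/2})$, but that formula for $C_\ell$ holds only for $\ell\geq 3$. For $\ell=1$, the scaled coefficient bound must dominate $\abs{\delta_j}/(n(\varLambda n)^{1/2})$, which with $\infnorm\deltavec=O(\varLambda^{1/2}n^{1/2-\sigma/2})$ gives $C_1=O(n^{-1-\sigma/2})$, \emph{not} $\Theta(\varLambda^{1/2}n^{-1/2})$. The term $\delta_j$ is there even without the perturbation, so $C_1$ cannot be taken to be of the size of $\alpha_j$. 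Dividing the coefficient-difference total $O(\varLambda^{1/2}n^{1/2}S_1)$ by the correct $n^2 C_1 = O(n^{1-\sigma/2})$ gives $\mu_1 = O(\varLambda^{1/2}n^{-1/2+\sigma/2}S_1)$; since $\varLambda\geq Bn^{\sigma-1}$, this is $\Omega(n^{\sigma-1})S_1 \gg S_1/n$. Likewise for $\ell=2$ the paper has $\mu_2=O(1)$, not $O(S_1/n)$. With the corrected $\mu_1$ the contribution of a cumulant term with two linear factors $\ell_1=\ell_2=1$ is $\varLambda^{1/2}n^{1/2-\sigma/2}S_1$, which is not $O(mn^\eps)$; this is precisely why Theorem~\ref{cumdiff} offers the alternative parameter $\mu'_1$ for the two-linear-factor case, and the paper applies $\mu'_1 C_1^2 = O(n^{-3+\eps}S_1)+O(\varLambda n^{-3}S_1^2)$ to recover $n^\eps S_1 + \varLambda S_1^2 = O(mn^\eps)$. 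Your proposal never invokes $\mu'_1$ and so leaves the $\ell_1=\ell_2=1$ case uncontrolled. The parity argument (an odd $\ell_j=1$ forces another odd $\ell_k$, and if $\ell_k\geq 3$ the loss is absorbed) also appears in the paper and is what rescues the remaining single-linear-factor terms.
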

\begin{proof}
By Lemmas~\ref{truncatedsum} and~\ref{fewcumulants}, we need
to compare the first $r_1$ cumulants of $R_{\ell_0}(T\hat\X)$ and
$\medtilde f_{\phi_1,\ldots,\phi_m}(\X)$.
Our main tool will be Lemma~\ref{cumdiff}.
Define $Y_{jk}=(\varLambda n)^{1/2}(Y_j+Y_k)$ as before,
and similarly $\medtilde Y_{jk}=(\varLambda n)^{1/2}(\medtilde Y_j+\medtilde Y_k)$.
Then both $\{Y_{jk}\}$ and $\{\medtilde Y_{jk}\}$ satisfy~\eqref{covarbounds}.

With $C_\ell$ defined as in Theorem~\ref{maxcumcor}, we can take
\begin{align*}
     C_1 &= O(n^{-1-\sigma/2})\\
     C_2 &= O(n^{-2} S_1) = O(n^{-1-3\sigma/2})\\
     C_\ell &= O(\varLambda^{1-\ell/2} n^{-\ell/2}), \text{~~if $\ell\geq 3$}.
\end{align*}

Write $T=(t_{jk})$.  In the language of Theorem~\ref{cumdiff}, we have
\begin{align*}
   \sigma_{jk,j'k'}-\medtilde\sigma_{jk,j'k'}
      &= \sum_{s=1}^m (t_{js}+t_{ks})(t_{j's}+t_{k's}) \\
      &= \begin{cases}
         O(m/n^2), & \text{ if $j,k,j','k' \geq m+1$}, \\
         O(1), & \text{ if $\{j,k\}\cap\{j',k'\}\cap\{1,\ldots,m\}\ne\emptyset$}, \\
         O(1/n), & \text{ otherwise},
         \end{cases}
\end{align*}
which shows that we can take $\mu=O(m/n)$.
Since $C_\ell=o(n^{-1})$ for all $\ell\geq 1$, we have that
$\mu n^{r+1}C_{\ell_1}\cdots C_{\ell_r}=O(m)$ for all
$\ell_1,\ldots,\ell_r$ and $1\leq r\leq r_1$.

To complete the proof we calculate that
\begin{align*}
     \mu_1 C_1 &= O(\varLambda^{1/2}n^{-3/2} S_1) \\
     \mu'_1 C_1^2 &= O(n^{-3+\eps}S_1) + O(\varLambda n^{-3}S_1^2) \\
     \mu_2 C_2 &= O(n^{-2} S_1) \\
     \mu_\ell C_\ell &= O(\varLambda^{1-\ell/2} n^{-1-\ell/2} S_1),\text{~~~for $\ell\geq 3$}.
\end{align*}
Note that $C_\ell=O(n^{-1-\sigma/2})$ and 
$\mu_\ell C_\ell=O(\varLambda^{1-\ell/2}n^{-1-\ell/2})S_1$ for $\ell\geq 1$.
For $k\geq 2$, the value of $n^{r+1}\mu_k C_{\ell_1}\cdots C_{\ell_r}$ 
is $O(1)n^{r+1}(n^{-1-\sigma/2})^{r-1}\varLambda^{1-k/2}n^{-1-k/2}S_1
=O(S_1)$.
Since $\ell_1+\cdots+\ell_r$ must be even, the appearance of $\ell_j=1$ 
implies that some other odd $\ell_k$ appears. If $\ell_k\geq 3$, the contribution
of $n^{r+1}\mu_1 C_{\ell_1}\cdots C_{\ell_r}$ is also $O(S_1)$,
since $C_\ell=O(\varLambda^{-1/2}n^{-3/2})$ for $\ell\geq 3$.
The remaining case is that $\ell_j=\ell_k=1$ for distinct~$j,k$.
Then we have $n^{r+1}\mu'_1 C_{\ell_1}\cdots C_{\ell_r}
= O(n^\eps S_1)+O(\varLambda S_1^2)$. 
The proof is now complete on applying~\eqref{S1bound}.
\end{proof}

\begin{cor}\label{Ibound}
Under Assumptions~\ref{assumeoutside},
\[
    I(\phi_1,\ldots,\phi_m) = O(e^{-m\Omega(n^\eps)}) \abs{A}^{1/2} J_0.
\]
\end{cor}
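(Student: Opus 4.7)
The plan is to bound $I(\phi_1,\ldots,\phi_m)$ via the factorisation~\eqref{Jm} together with \eqref{Jmbound}, then call on Lemma~\ref{cumdifference} to relate the remaining expectation back to $J_0$. Concretely, I would write
\[
  \abs{I(\phi_1,\ldots,\phi_m)}
  \leq \biggl|\prod_{jk\in G}\(1+\ljk(e^{i(z_j+z_k)}-1)\)\biggr|
       \cdot \abs{\hat I(\phi_1,\ldots,\phi_m)}
\]
and handle the two factors separately.

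For the product over $\zvec$ I would use Lemma~\ref{boring}(a), noting that $\infnorm{\zvec}=o(1)$ by~\eqref{zjbounds} and Assumption~\ref{assumeoutside}(b), so $\abs{z_j+z_k}_{2\pi}=\abs{z_j+z_k}$. The definition of the matrix $A$ together with $T\trans AT=I$ gives the identity
\[
  \sum_{jk\in G}\ljk(1-\ljk)(z_j+z_k)^2
   = \varLambda n\,\zvec\Trans6 A\zvec
   = \varLambda n\sum_{j=1}^m \phi_j^2,
\]
and Lemma~\ref{lambdarange} converts this to $\sum_{jk\in G}(z_j+z_k)^2=\Theta(n)\sum_{j=1}^m\phi_j^2$. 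Since $\abs{\phi_j}\geq\rho_0=\Theta(\rho)$ for $j\in J$, we have $\sum_{j=1}^m\phi_j^2\geq m\rho_0^2=\Omega(m\varLambda^{-1}n^{-1+2\eps})$, hence
\[
  \tfrac15\varLambda\sum_{jk\in G}(z_j+z_k)^2=\Omega(mn^{2\eps}),
\]
which yields the bound $e^{-\Omega(mn^{2\eps})}$ for the first factor.

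For $\hat I$ I combine~\eqref{Jmbound} with Lemma~\ref{cumdifference} to get
\[
   \abs{\hat I(\phi_1,\ldots,\phi_m)}
   = O(1)\biggl(\frac{2\pi}{\varLambda n}\biggr)^{\!(n-m)/2}
       e^{O(mn^{\eps})}\,\E e^{R(G,T\X)}.
\]
Changing variables $\thetavec=T\phivec$ in the definition of $J_0$ and using $\abs{T}=\abs{A}^{-1/2}$, we have $J_0=\abs{A}^{-1/2}(2\pi/(\varLambda n))^{n/2}\E e^{R(G,T\hat\X)}$, where $\hat\X$ is the Gaussian truncation to $U_n(\rho)$; by the standard estimate (as used in the proof of Lemma~\ref{truncatedsum}) the truncation changes $\E e^{R(G,T\X)}$ by a factor $1+e^{-\Omega(n^{2\eps})}$, negligible here. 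Substituting gives
\[
   \abs{\hat I(\phi_1,\ldots,\phi_m)}
   = e^{O(mn^{\eps})}(\varLambda n)^{m/2}\abs{A}^{1/2}J_0
   = e^{O(mn^{\eps})}\abs{A}^{1/2}J_0,
\]
since $(\varLambda n)^{m/2}=e^{O(m\log n)}=e^{O(mn^{\eps})}$. Multiplying the two factor bounds, the $e^{-\Omega(mn^{2\eps})}$ absorbs the $e^{O(mn^{\eps})}$ and delivers $\abs{I(\phi_1,\ldots,\phi_m)}=O(e^{-m\Omega(n^{\eps})})\abs{A}^{1/2}J_0$.

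The only delicate step is the identification of $\E e^{R(G,T\X)}$ with the normalised $J_0$: one has to check that switching between $R$ and $R_{\ell_0}$, and between truncated and non-truncated Gaussians, contributes only lower-order factors already absorbed in the $e^{O(mn^{\eps})}$ slack, which follows from Lemmas~\ref{applydaughter}--\ref{better} exactly as in Theorem~\ref{inbox}.
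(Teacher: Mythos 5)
Your overall architecture matches the paper's: factorise via~\eqref{Jm}, bound the $\zvec$-product and $\hat I$ separately, and finish with~\eqref{Jmbound} and Lemma~\ref{cumdifference}. Your treatment of the second factor, identifying $\E e^{R(G,T\X)}$ with $\abs{A}^{1/2}(\varLambda n/2\pi)^{n/2}J_0$ up to negligible truncation corrections, is sound. The gap is in the first factor.

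You claim $\infnorm{\zvec}=o(1)$ ``by~\eqref{zjbounds} and Assumption~\ref{assumeoutside}(b),'' and use this to replace $\abs{z_j+z_k}_{2\pi}$ by $\abs{z_j+z_k}$ so the identity $\sum_{jk\in G}\ljk(1-\ljk)(z_j+z_k)^2=\varLambda n\sum_{j=1}^m\phi_j^2$ can be fed into Lemma~\ref{boring}(a). This is not justified. Assumption~\ref{assumeoutside}(b) only controls $\sum_{j=1}^m\phi_j^2\leq\varLambda^{-1}n^{-\sigma+\eps}$, which, combined with $\varLambda\geq Bn^{\sigma-1}$, gives only $\abs{\phi_j}\leq B^{-1/2}n^{1/2-\sigma+\eps/2}$. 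For $\sigma\leq\frac12$ this is not $o(1)$ and can be $\Theta(1)$; indeed the paper explicitly allows $\abs{z_j}\leq\pi+O(\rho)$ for $j\leq m$ in its proof. Since $\abs{x}_{2\pi}^2\leq x^2$ always, the identity gives you an \emph{upper} bound on $\sum\ljk(1-\ljk)\abs{z_j+z_k}_{2\pi}^2$, not the lower bound you need, and edges with both ends in $\{1,\ldots,m\}$ can have $\abs{z_j+z_k}>\pi$ with $\abs{z_j+z_k}_{2\pi}$ much smaller than $\abs{z_j+z_k}$. The conclusion $e^{-\Omega(mn^{2\eps})}$ does not follow for small $\sigma$.

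The paper avoids the identity entirely: each factor is $\leq 1$, so one may restrict to edges $j\leq m<k$. For those, $\abs{z_j}\in[\Omega(\rho),\pi+O(\rho)]$ and $\abs{z_k}=O(S_1/n)=o(\rho)$, which together give $\abs{z_j+z_k}_{2\pi}\geq\Omega(\rho)$ directly (whether $\abs{z_j}$ is near $\Omega(\rho)$ or near $\pi$). Each $j\leq m$ has $\Omega(n)$ such $k$ by Assumption~\ref{mainassumptions}(d) and $m=o(n)$, and Lemma~\ref{boring}(a) together with Lemma~\ref{lambdarange} yields $e^{-m\Omega(\varLambda n\rho^2)}=e^{-m\Omega(n^{2\eps})}$. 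You would need to replace your identity-based step with an argument of this kind to close the gap.
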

\begin{proof}
Consider~\eqref{Jm}.
Since $\Abs{1+\ljk (e^{i(z_j+z_k)-1}-1)}\leq 1$ always,
$\{\ljk \}$ and $\{z_j\}$ being real, we can write
\begin{align*}
   \prod_{jk\in G} \,\Abs{1+\ljk (e^{i(z_j+z_k)}-1)}
     &\leq \prod_{\substack{jk\in G \\ j\leq m, k> m}}
        \Abs{1+\ljk (e^{i(z_j+z_k)}-1)}  \\
     &\leq \prod_{\substack{jk\in G \\ j\leq m, k> m}}
             \Abs{1+\ljk (e^{i(\phi_j+O(S_1/n))}-1)},
\end{align*}
by~\eqref{zjbounds}. 
For $1\leq j\leq m$, \eqref{yjbounds} implies
that $\abs{z_j}\leq \pi+O(\rho)$, since $\zvec=\thetavec-\yvec$,
and also $\abs{z_j}\geq\infnorm{T^{-1}}^{-1}\rho_0=\Omega(\rho)$
by our assumption on $\phi_j$.

Also, for $1\leq5 j\leq m$, Assumption~\ref{mainassumptions}(d)
and $m=o(n)$ imply that there are $\Omega(n)$
values of $k>m$ such that $jk\in G$.
Applying Lemma~\ref{boring}(a) and the fact that 
$S_1/n=o(\rho)$, we find that
\[
    \prod_{jk\in G} \,\Abs{1+\ljk (e^{i(z_j+z_k)}-1)}
    \leq e^{-m\Omega(n^{2\eps})}.
\]

Applying~\eqref{Jm}, \eqref{Jmbound} and Lemma~\ref{cumdifference},
the corollary is proved.
\end{proof}

\nicebreak
\subsection{Putting the parts together}\label{s:finale}

We now have all the pieces required to complete the proof of Theorem~\ref{mainthm}.
Define
\[
     F^*(G,\thetavec) := \begin{cases}
                  F_{\betavec}(\thetavec), &\text{ if }\Card{\{ j \st \abs{\theta_j}_{2\pi}\leq\frac12\pi\}}\leq \frac12 n; \\
                                        0, &\text{ otherwise.}
                                   \end{cases}
\]
Note that $F^*$ inherits the periodicity (period $2\pi$ in each coordinate direction) of~$F$.  Our
reason for introducing $F^*$ is the following.
\begin{lemma}\label{F*lemma}
\[
  \biggl| \int_{[-\pi,\pi]^n-\calB_0-\calB_\pi} F_{\betavec}(\thetavec)\,d\thetavec \biggr|
  = O(e^{-\Omega(\log^2 n)})J_0 +  2\,\biggl| \int_{[-\pi,\pi]^n-\calB_0} F^*(G,\thetavec)\,d\thetavec\biggr|.
\]
\end{lemma}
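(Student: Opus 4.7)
The plan is to exploit the translation symmetry
\[F_{\betavec}(\thetavec+(\pi,\ldots,\pi)\trans)=F_{\betavec}(\thetavec),\]
which holds because $\sum_j d_j$ is even by Assumption~\ref{mainassumptions} (so the phase $e^{-i\pi\sum_j d_j}$ equals $1$), together with the fact that $F^*$ is set up to be the ``half'' of $F_{\betavec}$ supported where $s(\thetavec):=|\{j:|\theta_j|_{2\pi}\leq\pi/2\}|\leq n/2$.

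The pointwise identity I would first verify is
\[F_{\betavec}(\thetavec)=F^*(G,\thetavec)+F^*(G,\thetavec+(\pi,\ldots,\pi)\trans)\]
almost everywhere on $[-\pi,\pi]^n$. Since $s(\thetavec+(\pi,\ldots,\pi)\trans)=n-s(\thetavec)$, outside the measure-zero set $\{s=n/2\}$ exactly one of $\thetavec$ and $\thetavec+(\pi,\ldots,\pi)\trans$ satisfies the defining inequality of $F^*$; on that side the corresponding summand equals $F_{\betavec}$ by periodicity, and the other vanishes.

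Next I would integrate this identity over $A-\calB_0-\calB_\pi$, where $A:=[-\pi,\pi]^n$. Because $\calB_\pi=\calB_0+(\pi,\ldots,\pi)\trans\pmod{2\pi}$, this region is invariant on the torus under the shift, and a change of variables $\thetavec\mapsto\thetavec+(\pi,\ldots,\pi)\trans$ in the second summand yields the exact identity
\[\int_{A-\calB_0-\calB_\pi}F_{\betavec}\,d\thetavec=2\int_{A-\calB_0-\calB_\pi}F^*\,d\thetavec.\]
I would then bridge to an integral over $A-\calB_0$: since $\calB_0\cap\calB_\pi=\emptyset$ for small enough $\rho$, and every $\thetavec\in\calB_\pi$ has $|\theta_j|_{2\pi}$ close to $\pi$ for all $j$ (so $s(\thetavec)=0$ and $F^*=F_{\betavec}$ there), we obtain $\int_{A-\calB_0-\calB_\pi}F^*=\int_{A-\calB_0}F^*-J_0$. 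Combining this with the triangle inequality, and invoking Lemma~\ref{goodplaces}(b) to absorb contributions from $A-\calB_{\thetavec}(\eta)-\calB_{\thetavec}^\pi(\eta)$ into the $O(e^{-\Omega(\log^2 n)})J_0$ term, delivers the stated bound.

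The main obstacle I expect is handling the subtracted $J_0$ piece cleanly: a naive triangle inequality applied to $|2\int_{A-\calB_0}F^*-2J_0|$ would pick up a spurious $2J_0$ term, not of order $O(e^{-\Omega(\log^2 n)})J_0$. Overcoming this requires either exploiting the phase alignment between $J_0$ and the dominant contribution to $\int_{A-\calB_0}F^*$ (which itself comes from $\calB_\pi$), or reorganising the splitting so the $\calB_\pi$ contribution is excised before taking the absolute value, effectively working with $\int_{A-\calB_0-\calB_\pi}F^*$ and only bounding by $\int_{A-\calB_0}F^*$ at the final step, where any cross terms become part of the exponentially small Lemma~\ref{goodplaces}(b) remainder.
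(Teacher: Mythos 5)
Your plan follows the paper's (very terse) argument, and your bookkeeping is exactly what a careful reader has to do to unpack the one-sentence proof. But the obstacle you flag at the end is real and you do not resolve it: neither of your two suggested fixes closes the gap as stated, and in fact the gap is not yours to close by cleverness — it points to a sign error in the definition of $F^*$. As you correctly compute, with $F^*(G,\thetavec)=F_{\betavec}(\thetavec)\One[s(\thetavec)\leq n/2]$ where $s(\thetavec)=|\{j:\abs{\theta_j}_{2\pi}\leq\pi/2\}|$, one has $s(\thetavec)=0\leq n/2$ for $\thetavec\in\calB_\pi$, hence $\int_{\calB_\pi}F^*=J_0$, and the identity $F_{\betavec}=F^*(\thetavec)+F^*(\thetavec+\pi\One)$ plus the change of variables gives
\[
 \int_{[-\pi,\pi]^n-\calB_0-\calB_\pi}F_{\betavec}\,d\thetavec
 = 2\int_{[-\pi,\pi]^n-\calB_0}F^*\,d\thetavec - 2J_0 + O(e^{-\Omega(\log^2 n)})J_0.
\]
No triangle-inequality rearrangement makes the $-2J_0$ term disappear, and your ``phase alignment'' idea cannot save the stated form either: using $\int_{A-\calB_0}F^*=J_0+\int_{A-\calB_0-\calB_\pi}F^*$ just reproduces the same identity. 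Worse, with this definition of $F^*$ the way the lemma is used in Theorem~\ref{boxingOK} breaks: one needs $\abs{\int_{A-\calB_0}F^*}$ to be exponentially small, but it is $\approx J_0$, and the integral of $\abs{F^*}$ over the region $\Babs$ in Theorem~\ref{MishaMagic} picks up a full $J_0$-sized contribution from $T^{-1}\calB_\pi$ where $F^*=F_{\betavec}$.

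The intended reading is surely the opposite inequality, i.e.\ $F^*(G,\thetavec)=F_{\betavec}(\thetavec)$ when $s(\thetavec)\geq n/2$ (equivalently: when at most $n/2$ of the $\abs{\theta_j}_{2\pi}$ exceed $\pi/2$). Then $F^*$ vanishes on $\calB_\pi$ and equals $F_{\betavec}$ on $\calB_0$, so $\int_{A-\calB_0-\calB_\pi}F^*=\int_{A-\calB_0}F^*$ exactly, and the lemma follows directly from the pointwise identity, the change of variables, the triangle inequality, and Lemmas~\ref{firstcut}--\ref{secondcut} to handle the boundary set $\{s(\thetavec)=n/2\}$ — no extra idea needed. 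Under that corrected definition your ``reorganising the splitting'' route is exactly right and, indeed, trivial: the $\calB_\pi$ contribution is already zero. So: your derivation is correct, the problem you noticed is genuine, but your two proposed workarounds (phase cancellation or deferred absolute value) cannot repair the lemma under the definition as printed; what is needed is the reversed indicator, which also makes the downstream application in Theorem~\ref{boxingOK} consistent.
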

\begin{proof}
 Except in a domain that lies entirely within the region covered by
  Lemmas~\ref{firstcut} and~\ref{secondcut}, one of $F^*(G,\thetavec)$ and
 $F^*(G,\thetavec+(\pi,\ldots,\pi)\trans )$ is equal to $F_{\betavec}(\thetavec)$
 while the other is~0.
\end{proof}

\begin{thm}\label{boxingOK}
\[
     \int_{[-\pi,\pi]^n} F_{\betavec}(\thetavec)\,d\thetavec 
       = \(2+O(e^{-\Omega(\log^2 n)})\) J_0.
\]
\end{thm}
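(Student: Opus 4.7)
The plan is to invoke Lemma~\ref{F*lemma} to reduce the problem to bounding an integral of $F^*$, use Lemma~\ref{goodplaces} to confine to the good region $\calB_\thetavec(\eta)\cup\calB_\thetavec^\pi(\eta)$, pass to $\phivec$-coordinates, and tile the resulting region in order to apply Corollary~\ref{Ibound}.  (When $\sigma>1-2\eps$, Corollary~\ref{bigLambda} handles the theorem directly, so I assume $\sigma\leq 1-2\eps$.)  The quantity to control is
\[
   \Abs{\int_{[-\pi,\pi]^n} F_\betavec(\thetavec)\,d\thetavec - 2J_0}
     = \Abs{\int_{[-\pi,\pi]^n-\calB_0-\calB_\pi} F_\betavec(\thetavec)\,d\thetavec},
\]
using the shift-symmetry $\int_{\calB_\pi}F_\betavec = J_0$.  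Lemma~\ref{F*lemma} bounds this, up to $O(e^{-\Omega(\log^2 n)})J_0$, by twice an integral of $F^*$ over the complement of the two dominant regions.  By Lemma~\ref{goodplaces}(b), that $F^*$-integral is negligible outside $\calB_\thetavec(\eta)\cup\calB_\thetavec^\pi(\eta)$; and on $\calB_\thetavec(\eta)$ more than $n/2$ components $\theta_j$ have $|\theta_j|_{2\pi}\leq\pi/2$, so $F^*=0$ there.  The symmetry $\thetavec\mapsto\thetavec+(\pi,\ldots,\pi)\trans $, under which $F_\betavec$ is invariant and $\calB_0\leftrightarrow\calB_\pi$, then reduces the task to bounding $\Abs{\int_{\calB_\thetavec(\eta)-\calB_0} F_\betavec(\thetavec)\,d\thetavec}$.

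After the change of variables $\phivec = T^{-1}\thetavec$, the region $\calB_0$ becomes $U_n(\rho)$, and by Lemma~\ref{goodplaces}(c) we have $T^{-1}\calB_\thetavec(\eta)\subseteq\calB_\phivec$.  I would cover $\calB_\phivec\setminus U_n(\rho)$ by axis-aligned parallelepipeds of the form $\phivec^{(0)}+U_n(\rho/3)$ with centres $\phivec^{(0)}$ on a lattice of spacing $\Theta(\rho)$.  For each tile, set $J:=\{j : |\phi_j^{(0)}|\geq \rho_0+\rho/3\}$ and $m:=|J|\geq 1$.  The tile then satisfies Assumptions~\ref{assumeoutside}: part (b) from $\sum_{j\in J}(\phi_j^{(0)})^2\leq\varLambda^{-1}n^{-\sigma+\eps}$ (the $\calB_\phivec$ constraint), and (c) from the tile shape (after relabelling coordinates to place $J$ first).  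Corollary~\ref{Ibound} then bounds the absolute value of each tile integral by $O(e^{-m\Omega(n^\eps)})\abs{A}^{1/2}J_0$.

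For the summation, there are at most $\binom nm\leq n^m$ subsets $J$ of size $m$, and the $\calB_\phivec$ constraint confines each large coordinate to an interval of length $O(\varLambda^{-1/2}n^{-\sigma/2+\eps/2})$, giving $n^{O(1)}$ lattice points in each large direction and hence $n^{O(m)}$ tiles with $|J|=m$.  Converting back to $\thetavec$-coordinates introduces a Jacobian $\abs{A}^{-1/2}$ that cancels the $\abs{A}^{1/2}$ from Corollary~\ref{Ibound}, yielding
\[
    \sum_{m\geq 1} n^{O(m)}\, e^{-m\Omega(n^\eps)}\,J_0 = O(e^{-\Omega(n^\eps)})\,J_0,
\]
which is far smaller than $e^{-\Omega(\log^2 n)}J_0$.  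The main obstacle is the tiling step itself: the tiles must cover $\calB_\phivec\setminus U_n(\rho)$ while respecting the strict dichotomy between ``large'' coordinates ($|\phi_j|\geq\rho_0$ throughout the tile) and ``small'' coordinates ($|\phi_j|\leq \tfrac{4}{3}\rho$ throughout) demanded by Assumptions~\ref{assumeoutside}(b,c); the threshold $\rho_0+\rho/3$ in the definition of $J$ provides the needed slack on both sides, and the exponential decay in Corollary~\ref{Ibound} easily absorbs any polynomial-in-$n$ overhead from boundary tiles.
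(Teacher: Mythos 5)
There is a genuine gap in the tiling step, and it is precisely the difficulty that Theorem~\ref{MishaMagic} and Appendix~\ref{AppendixB} are designed to overcome. Your tiles $\phivec^{(0)}+U_n(\rho/3)$ do not satisfy Assumption~\ref{assumeoutside}(c): for $j\notin J$ (the ``small'' coordinates), that assumption requires the integration interval $I_j$ to \emph{contain} the centred interval $[-\frac13\rho,\frac13\rho]$, not merely to have comparable length. Your tiles instead have $I_j=\phi_j^{(0)}+[-\frac13\rho,\frac13\rho]$, which is displaced from the origin for all but one tile in each small direction. This centring requirement is not cosmetic: Corollary~\ref{Ibound} ultimately rests on Theorem~\ref{genbox} through Lemma~\ref{cumdifference}, and Theorem~\ref{genbox} compares cumulants of a Gaussian truncated to $\calC$ against those of the untruncated Gaussian; this approximation (via~\cite[Lemma~4.1]{mother}) needs $\calC$ to contain $U_n(t_1\rho)$ so that the truncated distribution captures almost all the Gaussian mass. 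A cuboid displaced to $[\frac16\rho,\frac12\rho]$ in some small coordinate carries exponentially small Gaussian mass and the cumulant expansion fails there.

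The obvious repair --- make the small-coordinate intervals centred, i.e.\ use tiles of the form $\prod_{j\in J}I_j\times\prod_{j\notin J}[-\frac43\rho,\frac43\rho]$ --- destroys disjointness: tiles with different $J$ then overlap massively in the small directions, and for a complex integrand whose magnitude is \emph{not} small (the entire point of Section~\ref{s:outofbox}), you cannot bound $\abs{\int_A H}$ by summing $\abs{\int_{A_i}H}$ over an overlapping cover of~$A$. Cancellation is only preserved over the \emph{full} tile, so you need a genuinely disjoint decomposition in which every piece has the centred structure. Constructing such a decomposition (modulo a region where the absolute value can be used) is exactly the content of the sets $S_J$, Lemma~\ref{S-lemma}, and the inclusion--exclusion argument in the proof of Theorem~\ref{MishaMagic}; the intersections $\bigcap_k S_{J_k}$ are shown in Lemma~\ref{S-lemma}(c) to decompose into slices $\wvec+W(\wvec)$ with $W(\wvec)$ satisfying exactly~\eqref{W_property}, and the lattice translations $\tau_B$ handle the wraparound near $\pm\pi$ that your proposal does not address. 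Your reduction via Lemma~\ref{F*lemma}, Lemma~\ref{goodplaces}, the Jacobian cancellation $|\det T|=\abs{A}^{-1/2}$, and the final $n^{O(m)}e^{-m\Omega(n^\eps)}$ summation are all in the right spirit and match the paper's eventual bookkeeping, but the missing step --- a tiling that is simultaneously disjoint and centred in the small coordinates --- is the genuinely hard part, and the proposal does not supply it.
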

\begin{proof}
Since Appendix~\ref{AppendixB} is written with independent
notation for future applications, we provide the correspondence.
\[
 \text{\begin{tabular}{cc|cc}
    Theorem~\ref{MishaMagic} & This section & Theorem~\ref{MishaMagic} & This section\\
    \hline
    $M$ & $2\pi T$ &
     $\rho$ & $\rho$  \\
     $B$ & $T^{-1}[-\pi,\pi]^n$  &
     $\xvec\mapsto H(\xvec)$ & $\phivec\mapsto F^*(T\phivec)$ \\
     $B_0$ & $T^{-1}\calB_0$ &
      $\zeta$ &  $n^{-\sigma-\eps}$
 \end{tabular}}
 \]
We first establish that the conditions of Theorem~\ref{MishaMagic} are
satisfied.  $2\pi T$ satisfies~\eqref{M_property} by Lemma~\ref{quadform}(c).

Define $\Babs = T^{-1}U_n(\pi)-\calB_{\phivec}$,
where $\calB_{\phivec}$ is defined in~\eqref{bdefs}.
Since $\rho_0\leq \frac19\rho$, we know that
$\sum_{j:\phi_j\geq\rho_0} \phi_j^2\leq \varLambda^{-1}n^{-\sigma+\eps}$
implies that $\sum_{j:\phi_j\geq\rho/3} \abs{\phi_j}\leq\frac13n^{1-\sigma}\rho$,
as in Lemma~\ref{goodplaces}(a),
so condition~\eqref{S_property} is satisfied.
Next, the conditions (i) and (ii) on $\wvec=(w_1,\ldots,w_n)$
in Theorem~\ref{MishaMagic}
imply that $\abs{w_j}>\rho_0$ for $j\in J$ and
$\sum_{j\in J} w_j^2\leq \varLambda^{-1}n^{-\sigma+\eps}$,
and so $\abs J \leq n^{1-\sigma-\eps}$.

By Corollary~\ref{Ibound} this means we can take
\[
   L(j) = \begin{cases}
                 O(e^{-j\Omega(n^\eps)}) \abs{A}^{1/2} J_0,
                    & \text{~if $1\leq j\leq n^{1-\sigma-\eps}$}; \\
                 0, & \text{~otherwise}.
             \end{cases}
\]
Now applying Theorem~\ref{MishaMagic} using $\infnorm{T}=O(1)$, we have
\begin{align*}
   \biggl| \int_{T^{-1}[-\pi,\pi]^n-U_n(\rho)} \!\!F^*(T\phivec)\,d\phivec \biggr|
   &\leq \int_{\Babs} \!\Abs{F^*(T\phivec)}\,d\phivec 
   + \sum_{j=1}^n O\(n^j e^{-j\Omega(n^\eps)}\) \abs{A}^{1/2} J_0 \\
   &= \int_{\Babs} \!\Abs{F^*(T\phivec)}\,d\phivec +
        O(e^{-\Omega(n^\eps)}) \abs{A}^{1/2} J_0.
\end{align*}
By Lemma~\ref{goodplaces}(b,c), this is equivalent to
\[
   \biggl| \int_{[-\pi,\pi]^n-\calB_0} \!\!F^*(\thetavec)\,d\thetavec \biggr|
   = O(e^{-\Omega(\log^2 n)}) J_0,
\]
which completes the proof by Lemma~\ref{inbox} and Lemma~\ref{F*lemma}.
\end{proof}


\nicebreak
\section{Regular graphs}\label{s:regular}

The simplest case of Theorem~\ref{mainthm} is the estimation of the
number $\RG(n,d)$ of regular graphs of order~$n$ and degree~$d$.
Please refer to~\cite{MWsparse} for the early history.
The main milestones for the problem are~\cite{MWsparse} for
$d=o(n^{1/2})$, \cite{MWreg} for $\min\{d,n-d-1\} \geq cn/\log n$ for
constant $c>\frac23$, and \cite{Liebenau} for a large range that overlaps
both the sparse and dense regimes. 
The main term, for the full range $1\leq d\leq n-2$ is
\begin{equation}\label{regconjecture}
  \RG (n,d) \sim \sqrt2\, e^{1/4}
     \(\lambda^\lambda (1{-}\lambda)^{1{-}\lambda}\)^{\binom n2}
      \binom{n-1}{d}^{\!n} .
\end{equation}
We will show that there is an asymptotic expansion whenever
$\min\{d,n-d-1\} \geq n^\sigma$ for some $\sigma>0$.

In terms of the earlier terminology, $G$ is a complete graph,
$\lambda=d/(n-1)$ and $d_1=\cdots=d_n=d$.
The unique solution to~\eqref{betaeqn} is
$\ljk =\lambda$ for all $j,k$, corresponding
to $\beta_1=\cdots=\beta_n=\frac12\ln\( \lambda/(1-\lambda) \)$. 

The matrix $A$ is $(n-1)/n$ on the diagonal and
$1/n$ off the diagonal. Its eigenvalues are $2-2/n$ (once)
and $1-2/n$ ($n-1$ times), so its determinant is
$\abs{A}=(2-2/n)(1-2/n)^{n-1}$.

In order to compare $\RG(n,d)$ to~\eqref{regconjecture}, we cast our
expansion in  similar terms.
Define
\[
   \eps_1 := \log(1-1/n)+(n-1)\log(1-2/n)
\]
and define $\xi(N)=\Dfrac{1}{12N}-\Dfrac{1}{360N^3}+\cdots\,$ by
\[
    N! = \frac{N^N}{e^N}\sqrt{2\pi N}\,e^{\xi(N)}.
\]
and then $\eps_2 = -\Dfrac{1-7\varLambda}{12\varLambda}
+ \Dfrac{1-4\varLambda}{12\varLambda n} + \cdots\,$ by
\[
   \eps_2 := n\xi(n-1) - n\xi(\lambda(n-1)) - n\xi((1-\lambda)(n-1))
     - \dfrac12 n\log(1-1/n),
\]
which satisfies
\[
  \binom{n-1}{d}^{\!n}
  = \frac{\(\lambda^\lambda (1{-}\lambda)^{1{-}\lambda}\)^{-n(n-1)}}
            {2^{n/2} \pi^{n/2} n^{n/2}}
     \, e^{\eps_2}.
\]

Define the random variable $\Y$ and $m,s,R_m(\Y)$ as in
Theorem~\ref{mainthm}.  Then we have
\begin{align*}
   \RG (n,d) = \sqrt2\, &
     \(\lambda^\lambda (1{-}\lambda)^{1{-}\lambda}\)^{\binom n2}
      \binom{n-1}{d}^{\!n} \\
   &\times \exp\biggl( -\dfrac12\eps_1 - \eps_2 
       + \sum_{r=1}^{r_0} \,\dfrac{1}{r!} \kappa_r(R_m(\Y)) + O(n^{-p})\biggr).
\end{align*}

Define the random variables $Y_{jk}=(\varLambda n)^{1/2}(Y_j+Y_k)$
as before.  For explicit computation of the cumulants of
$R_m(\Y)$ we can use Theorem~\ref{maxcumcor} to determine
which isomorphism types of graphs
$K(Y_{j_1k_1}^{\ell_1},\ldots,Y_{j_rk_r}^{\ell_r})$ are required,
then the number of labelled graphs in each isomorphism class
are determined by the automorphism groups.
The necessary covariances are:
\[
  \Cov(Y_{jk},Y_{j'k'}) = \begin{cases}
       \displaystyle\sigma_2 = \frac{2n}{(n-1) },
           & \text{~if~~}\abs{\{j,k\}\cap\{j',k'\}} = 2; \\[2ex]
       \displaystyle\sigma_1 = \frac{(n-3)n}{(n-1)(n-2) },
           & \text{~if~~}\abs{\{j,k\}\cap\{j',k'\}} = 1; \\[2ex]
       \displaystyle \sigma_0=-\frac{2n}{(n-1)(n-2) },
           & \text{~if~~}\abs{\{j,k\}\cap\{j'k'\}} = 0.
    \end{cases}
\]

Figure~\ref{fig:cumexample} shows the contribution to the
third cumulant from the form $\kappa(Y_{uv}^3,Y_{vw}^4,Y_{wx}^3)$.
The pairing in the middle corresponds to $\sigma_0\sigma_1^2\sigma_2^2$.
As explained in Theorem~\ref{cums}, the pairing on the right doesn't contribute
at all, because the graph whose vertices are the three dashed groups
(called $G_\pi$ in the theorem) is disconnected.
The total over all pairings with connected $G_\pi$ is
$216\sigma_0^2\sigma_1^2\sigma_2 + 216\sigma_0\sigma_1^2\sigma_2^2
+ 108\sigma_1^2\sigma_2^3 + 216\sigma_0\sigma_1^4+144\sigma_1^4\sigma_2$.
This must be multiplied by the distinct choices of $u,v,w,x$, namely 
$\frac12 n(n-1)(n-2)(n-3)$, where the $\frac12$ comes
from the automorphism group of $K(Y_{uv}^3,Y_{vw}^4,Y_{wx}^3)$.
Finally, the coefficient
$c_3^2c_4=-\frac{1}{864}\varLambda^3(1-4\varLambda)(1-6\varLambda)$
needs to be applied.

\begin{figure}
\[ \includegraphics[scale=0.73]{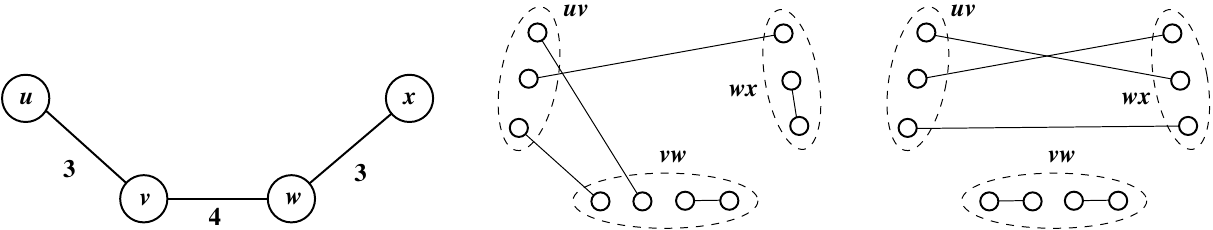} \]
\caption{$K(Y_{uv}^3,Y_{vw}^4,Y_{wx}^3)$ and two pairings.}
 \label{fig:cumexample}
 \end{figure}

In Table~\ref{table} we give enough cumulants to show that
\[
    -\dfrac12\eps_1 - \eps_2 
       + \sum_{r=1}^{r_0} \,\dfrac{1}{r!} \kappa_r(R_m(\Y))
    = \dfrac14 + O(n^{-1/3})
\]
for  $\varLambda = \Omega(n^{-5/9})$ (i.e., for
$\min\{d,n-d-1\} \geq n^{4/9}$), which is enough to verify again
that~\eqref{regconjecture} is valid for all~$d$.
In each case we give a representative graph, and then the
total contribution of all isomorphic graphs.

\def\contrib#1{{\large$#1$}${}+O(n^{-1/3})$}

\begin{table}[t]
\centering
\renewcommand{\arraystretch}{1.4}
\begin{tabular}{c|c||c|c}
 $K(Y_{12}^4)$ & \contrib{\frac{1-6\varLambda }{4\varLambda}} &
 $K(Y_{12}^4,Y_{13}^3,Y_{24}^3)$ &
   \contrib{- \frac{4}{\varLambda^2n}} \\
 $K(Y_{12}^6)$ & \contrib{- \frac{1}{12\varLambda^2n}} & 
 $K(Y_{12}^4,Y_{34}^3,Y_{15}^3)$ &
   \contrib{\frac{10}{\varLambda^2n}} \\
 $K(Y_{12}^3,Y_{13}^3)$ &
   \contrib{-\frac{7(1-4\varLambda)}{6\varLambda }} &
 $K(Y_{12}^4,Y_{34}^3,Y_{56}^3)$ &
  \contrib{- \frac{3}{2\varLambda^2n}} \\
 $K(Y_{12}^3,Y_{34}^3)$ & 
    \contrib{\frac{1-4\varLambda }{2\varLambda }} &
 $K(Y_{12}^3,Y_{13}^3,Y_{14}^3,Y_{15}^3)$ &
  \contrib{\frac{37}{2\varLambda^2n}} \\
 $K(Y_{12}^5,Y_{13}^3)$ &
   \contrib{\frac{4}{3\varLambda^2n}} &
 $K(Y_{12}^3,Y_{13}^3,Y_{14}^3,Y_{25}^3)$ &
   \contrib{\frac{30}{\varLambda^2n}} \\
 $K(Y_{12}^5,Y_{34}^3)$ &
    \contrib{- \frac{1}{2\varLambda^2n}} &
 $K(Y_{12}^3,Y_{13}^3,Y_{24}^3,Y_{35}^3)$ &
   \contrib{\frac{6}{\varLambda^2n}} \\
 $K(Y_{12}^4,Y_{13}^4)$ &
    \contrib{\frac{13}{24\varLambda^2n}} &
 $K(Y_{12}^3,Y_{13}^3,Y_{45}^3,Y_{16}^3)$ &
   \contrib{- \frac{30}{\varLambda^2n}} \\
 $K(Y_{12}^4,Y_{13}^3,Y_{14}^3)$ &
   \contrib{- \frac{37}{4\varLambda^2n}} &
 $K(Y_{12}^3,Y_{13}^3,Y_{45}^3,Y_{26}^3)$ &
   \contrib{- \frac{24}{\varLambda^2n} } \\
 $K(Y_{12}^3,Y_{13}^4,Y_{24}^3)$ &
   \contrib{- \frac{3}{\varLambda^2n}} &
  $K(Y_{12}^3,Y_{34}^3,Y_{56}^3,Y_{17}^3)$ &
   \contrib{\frac{18}{\varLambda^2n}} \\
  \multicolumn{2}{c||}{ } &
  $K(Y_{12}^3,Y_{34}^3,Y_{56}^3,Y_{78}^3)$ &
   \contrib{- \frac{2}{\varLambda^2n}} \\
\end{tabular}
\caption{Contributions to cumulants for $\varLambda=\Omega(n^{-5/9})$}
\label{table}
\end{table}

By considering the general form, we find that there are
polynomials $p_j(x)$ for $j\geq 2$, with $p_j$ having degree
$j$ for each~$j$,
such that Theorem~\ref{regularthm} holds.

\begin{figure}[ht]
\setlength{\unitlength}{1.0cm}
\begin{picture}(12,9.3)
  \put(0.4,-0.3){\includegraphics[scale=0.75]{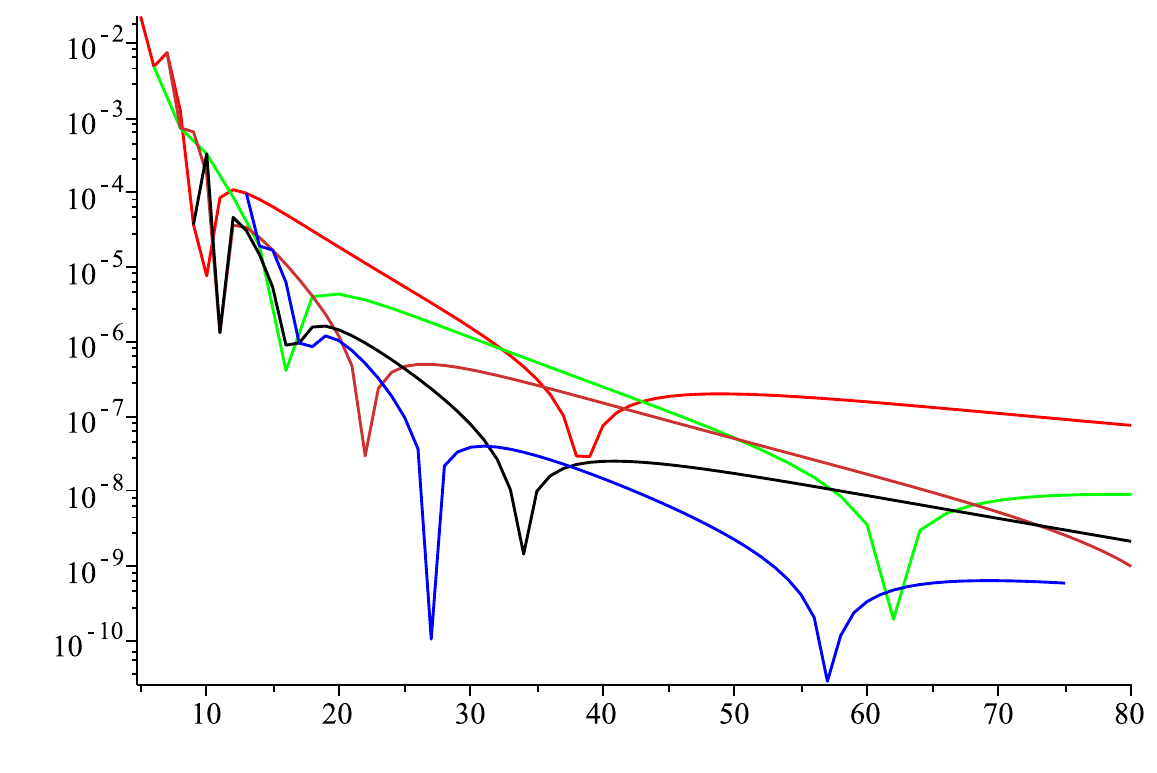}}
  \put(9.8,4.6){$d=2$}
  \put(13.2,3.35){$d=3$}
  \put(11.1,3.7){$d=4$}
  \put(7.2,2.4){$d=6$}
  \put(6.0,1.3){$d=10$}
  \put(6,-0.1){number of vertices}
\end{picture}
\caption{Absolute relative precision for low degree}
\label{plot}
\end{figure}

Using cumulants up to the 14th, we have
\begin{align*}
  p_1(x) &= \dfrac14 x, \\
  p_2(x) &= -\dfrac14 x^2, \\
  p_3(x) &= \dfrac{1}{24}(2-23x)x^2, \\
  p_4(x) &= \dfrac{1}{24}(22-129x)x^3, \displaybreak[1] \\
  p_5(x) &= -\dfrac{1}{12}(3-115x +483x^2)x^3, \displaybreak[1]\\
  p_6(x) &= -\dfrac{1}{60}(375-6615x+22097x^2)x^4, \\
  p_7(x) &= \dfrac{1}{720}(1046-87318x+1002900x^2-2791541x^3)x^4.
\end{align*}
This computation required 1,504,943 multigraphs with up to 14 edges.
The second author's graph isomorphism software \texttt{nauty}~\cite{PGI2}
was used to find the multigraphs and their automorphism groups,
as well as to assist in counting pairings when the multigraph has
low diameter. 

It is interesting to note that $p_j(x)$ has $x^{\lfloor j/2\rfloor+1}$ as a divisor for
$2\leq j\leq 7$.  If this is true for all larger~$j$, Theorem~\ref{regularthm}
with a decreasing error term may hold even for constant~$d$.
This is what happens experimentally, as illustrated in Figure~\ref{plot}
for~$k=7$.
However, we stress that we have not proved this observation.

De Panafieu found an asymptotic expansion of a different form
for constant degree, though it is probably valid for increasing
degree within some (undetermined) range~\cite{Panafieu}.
De Panafieu's expansion agrees with ours for as many terms as
we can compute for both, which provides further evidence that
our expansion works even for constant degree.
In fact, this enables us to conjecture the next few coefficients in
our expansion.
\begin{conj}\label{p89conj}
\begin{align*}
   p_8(x) &= \dfrac{1}{1680}(104594-3726282x+31805060x^2
         - 75882319x^3)x^5 \\
   p_9(x) &= -\dfrac{1}{180}(2235-329800x+7204710x^2-48922725x^3
         +102061471x^4)x^5 \qedhere
\end{align*}
\end{conj}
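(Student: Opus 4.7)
The plan is to extend the direct computation that produced $p_1,\ldots,p_7$ to two further orders of cumulants, and to cross-check the outcome against the independent constant-degree expansion of de~Panafieu. Since the coefficient of $1/(\varLambda^j n^{j-1})$ in the exponent of~\eqref{regularexpansion} is obtained by collecting, over all multigraphs $K=K(Y_{j_1k_1}^{\ell_1},\ldots,Y_{j_rk_r}^{\ell_r})$, the expansion in powers of $1/n$ of their contributions to $\sum_r \kappa_r(R_{\ell_0}(\Y))/r!$, the task for $j\in\{8,9\}$ is in principle mechanical: only finitely many isomorphism classes contribute.

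First I would determine, from Theorem~\ref{maxcumcor} and the relation between $y(K)$ and the number of components/loops of $K$ in Lemma~\ref{exactcum}, an explicit upper bound on the edge count, the cumulant order $r$, and the exponents $\ell_1,\ldots,\ell_r$ that can contribute to $p_8$ and $p_9$. Using \texttt{nauty} one enumerates all such multigraphs and their automorphism groups, exactly as in the calculation that produced $p_1,\ldots,p_7$. For each multigraph $K$, Theorem~\ref{cums} expresses its contribution as a sum over pairings $\pi$ with $G_\pi$ connected; each pairing yields a product of the three covariance values $\sigma_0,\sigma_1,\sigma_2$ listed before Figure~\ref{fig:cumexample}, multiplied by $c_{\ell_1}\cdots c_{\ell_r}$ and by $n(n-1)(n-2)\cdots/\lvert\operatorname{Aut}(K)\rvert$. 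Expanding each $\sigma_i$ and the falling factorial as a series in $1/n$ and grouping terms of orders $n^{-7}$ and $n^{-8}$ gives candidate polynomials for $p_8(\varLambda)$ and $p_9(\varLambda)$, whose degrees must agree with Theorem~\ref{regularthm}.

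Second, I would assemble the answer by adding the Stirling-type corrections $-\tfrac12\eps_1 - \eps_2$ of Section~\ref{s:regular}, whose full expansions in $1/n$ via $\xi(N)=\frac{1}{12N}-\frac{1}{360N^3}+\cdots$ contribute only to the $\varLambda^0$ part of each $p_j$ and are straightforward to write down to the required order. As an independent check, I would translate de~Panafieu's constant-degree expansion~\cite{Panafieu} into the variables $(n,\varLambda)$ and match coefficients at orders $j=8,9$; because both expansions are unique in the overlap of their domains of validity, agreement there would confirm the conjectured $p_8$ and $p_9$.

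The chief obstacle is computational scale rather than conceptual difficulty: the 14-cumulant calculation already required roughly $1.5\times 10^6$ multigraphs, and the list grows rapidly with cumulant order, so carrying out the 16- and 18-cumulant sums will demand a carefully engineered enumeration — exploiting the $S_r$-symmetry of cumulant arguments, pruning pairings whose $G_\pi$ is disconnected before any arithmetic is done, and memoising partial products over subgraphs shared by many isomorphism classes. A secondary issue is that the algebraic identities in $\varLambda$ can only be read off once contributions of many multigraphs have been combined, so intermediate expressions are bulky and require exact rational arithmetic; organising the symbolic bookkeeping so that the cancellations that force the divisibility pattern $x^{\lfloor j/2\rfloor+1}\mid p_j(x)$ occur automatically would both speed up the computation and make the result easier to verify.
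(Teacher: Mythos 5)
The statement you are asked to prove is labelled a \emph{conjecture} in the paper, and indeed the paper contains no proof of it. The authors obtained the displayed forms of $p_8$ and $p_9$ by matching against de~Panafieu's constant-degree expansion, not by extending their own cumulant computation (which they explicitly say was carried only to the $14$th cumulant, yielding $p_1,\ldots,p_7$). So the paper offers evidence, not a proof.

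Your first route --- carrying the direct cumulant computation of $\sum_r \kappa_r(R_{\ell_0}(\Y))/r!$ two more orders, via Theorem~\ref{cums} to enumerate pairings, Theorem~\ref{maxcumcor} and Lemma~\ref{exactcum} to prune which multigraphs can contribute at a given order in $1/(\varLambda n)$, automorphism counting with \texttt{nauty}, and adding the Stirling-type corrections $-\frac12\eps_1-\eps_2$ --- is the correct way to turn the conjecture into a theorem, and it faithfully mirrors how $p_1,\ldots,p_7$ were produced. But you do not actually carry it out, and neither did the authors; that is precisely why the statement remains a conjecture. As written, the first route establishes nothing beyond the strategy already described in Section~\ref{s:regular}.

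Your second route has a genuine logical gap. You claim that ``because both expansions are unique in the overlap of their domains of validity, agreement there would confirm the conjectured $p_8$ and $p_9$.'' There is no proven overlap. De~Panafieu's expansion is established only for \emph{constant} degree $d$, while Theorem~\ref{regularthm} requires $\min\{d,n-d-1\}\geq n^\sigma$ for some $\sigma>0$, i.e.\ $d\to\infty$. The paper itself flags this: de~Panafieu's expansion ``is probably valid for increasing degree within some (undetermined) range'' --- speculation, not a theorem. Absent a rigorous overlap, agreement of the two expansions in the first several terms is circumstantial evidence only and cannot confirm Conjecture~\ref{p89conj}. This is exactly why the authors stopped at a conjecture; your proposal inherits the same limitation while presenting the check as if it were a proof.
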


If $E(n,d)$ denotes the actual value of the term denoted $O(\varLambda^{-k-1}n^{-k})$
in~\eqref{regularexpansion} for the case $k=7$, we found that
$\abs{E(n,d)}<9\varLambda^{-4}n^{-7}$ for all values we were able to
compute for $n\geq 10$ and $1\leq d\leq n-2$.

For Figure~\ref{plot} we computed many exact values of $\RG(n,d)$ using
the method of~\cite{Exact} and a simple recursion.
They are available in machine-readable form at~\cite{numbers}.
For example, the exact value of $\RG(37,18)$ is used 
Table~\ref{table3718}
to illustrate the precision of Theorem~\ref{regularthm} for increasing~$k$.

\begin{table}[ht]
\centering
\begin{tabular}{c|c|l}
$k$ & value of \eqref{regularexpansion} & rel.\,error \\
\hline
1 & $1.6355978242\times 10^{168}$ & $7.3\times 10^{-3}$ \\
2 & $1.6245837392\times 10^{168}$ & $4.9\times 10^{-4}$ \\
3 & $1.6238422249\times 10^{168}$ & $3.7\times 10^{-5}$ \\
4 & $1.6237874594\times 10^{168}$ & $3.6\times 10^{-6}$ \\
5 & $1.6237823335\times 10^{168}$ & $4.7\times 10^{-7}$ \\
6 & $1.6237816946\times 10^{168}$ & $7.2\times 10^{-8}$ \\
7 & $1.6237815979\times 10^{168}$ & $1.2\times 10^{-8}$ \\
8 & $1.6237815817\times 10^{168}$ & $2.2\times 10^{-9}$ \\
9 & $1.6237815788\times 10^{168}$ & $3.9\times 10^{-10}$ \\
\hline
exact & \multicolumn{1}{c}{$1.6237815782\times 10^{168}$}
\end{tabular}
\caption{Values of \eqref{regularexpansion} for $\RG(37,18)$
(assuming Conjecture~\ref{p89conj} for $k=8,9$)}
\label{table3718}
\end{table}


\nicebreak

\section{The probability of an edge}\label{s:observations}

Let $H^-,H^+$ be disjoint subgraphs of~$G$, with degree sequences
$\hvec^-,\hvec+$. 
Then the probability that a random $\dvec$-factor of~$G$
contains~$H^+$ and is edge-disjoint from~$H^-$ is
\[
      \frac{N(G-H^--H^+,\dvec-\hvec^+)}{N(G,\dvec)}.
\]

This opportunity to explore the structure of random factors will be
taken up in another paper.
In this paper we will be content with the probability that a pair of vertices
appears as an edge in a random $\dvec$-factor.

Precise bounds were given in~\cite{Msubgraphs} when
$\dmax(\dmax+\lmax)=o(\sum_{j=1}d_j)$, where 
$\dmax=\max_{j=1}^n d_j$ and $\lmax$ is the maximum degree
of the complement of~$G$.
The case of dense near-regular degree sequences, with $G$ almost a
complete graph, was done in~\cite{ranx}.
The intermediate range of average degree for near-regular degree
with $G=K_n$ was solved in~\cite{Liebenau}.
Although~\cite{Liebenau} does not explicitly engage with the
beta equations~\eqref{betaeqn}, the expression it gives for the
probability of an edge is compatible with the more
general Theorem~\ref{edgeprob}.

\nicebreak

\begin{proof}[Proof of Theorem~\ref{edgeprob}]
We have
\[
    \frac{\Prob(uv\in H)}{1-\Prob(uv\in H)} = \frac{N(G-uv,\dvec-\evec)}{N(G-uv,\dvec)},
\]
where $\evec$ is the degree sequence of~$uv$.
 
 Define $A:=A(G-uv,\betavec)$ as in Theorem~\ref{mainthm}.
 and let $\Y$ be a Gaussian  random vector with density
$(2\pi)^{-n/2} \abs{A}^{1/2} (\varLambda n)^{n/2}
   e^{-\frac12\varLambda n\sum_{j=1}^n \yvec\Trans6 A\yvec}$.
 We will apply Theorem~\ref{mainthm} with $p=1$. 
 First note that
  \[
     \frac{M(G-uv,\dvec-\evec)}{M(G-uv,\dvec)}
     = e^{\beta_u+\beta_v} = \frac{\lambda_{uv}}{1-\lambda_{uv}}.
 \]
 and that the two determinants are the same.
 Next, define
 \begin{align*}
   R'(\thetavec) &= i\(\deltavec+(1-\lambda_{uv})\evec\)\trans \thetavec
            + \sum_{\ell=3}^{\ell_0} \sum_{jk\in G-uv}c_\ell(\ljk)(\theta_j+\theta_k)^\ell \\[-1ex]
   R''(\thetavec) &= i\(\deltavec-\lambda_{uv}\evec\)\trans \thetavec 
            + \sum_{\ell=3}^{\ell_0} \sum_{jk\in G-uv}c_\ell(\ljk)(\theta_j+\theta_k)^\ell.
 \end{align*}
 Then Theorem~\ref{mainthm} gives
  \[
      \frac{\Prob(uv\in H)}{1-\Prob(uv\in H)} = \frac{\lambda_{uv}}{1-\lambda_{uv}}
      \exp\biggl(\, \sum_{r=1}^{r_0} \mu_r(R'(\Y)) -  \sum_{r=1}^{r_0} \mu_r(R''(\Y))
                   + O(n^{-1})\biggr).
 \]
 Note that $R'(\thetavec)$ and $R''(\thetavec)$ differ only by a small change
 in the linear term.  Applying Theorem~\ref{cumdiff} we have $\mu=0$
 since the two distributions are the same.  Also,
 $C_\ell=O(\varLambda^{-1/2}n^{-3/2})$ and $\mu_\ell=0$ for $\ell\geq 3$,
 and $C_2=\mu_2=0$.
 Finally, $\mu_1n^2C_1=O(\varLambda^{-1/2}n^{-1/2})$ and
 $\mu'_1n^3C_1^2 = O\( \varLambda^{-1}n^{-1}(\onenorm\deltavec+1)\)$.
 Considering all the possibilities, we have
 \[
      \frac{\Prob(uv\in H)}{1-\Prob(uv\in H)} = \frac{\lambda_{uv}}{1-\lambda_{uv}}
      \exp\( O(\varLambda^{-1}n^{-1}(\onenorm\deltavec+1))\),
 \]
 which proves the lemma.
\end{proof}
Clearly, Lemma~\ref{edgeprob} can be extended to larger subgraphs.

\begin{appendices}

\nicebreak
\section{Matrices and graphs}\label{AppendixA}

Here we will collect some  linear algebra and graph theory that
we need for our proofs.

\begin{thm}\label{matrixthm1}
  Let $M$ be an $n\times n$ real positive-definite symmetric
  matrix with minimum eigenvalue $\mumin>0$.
 Let $D$ be a real diagonal matrix with entries in $[\dmin,\dmax]$,
 where $\dmin>0$ and such that $\maxnorm{M-D} \leq r\dmin/n$
 for some~$r$.
 Then the following are true.
\begin{itemize}[topsep=1ex]
   \item[(a)]  $\displaystyle
    \infnorm{M^{-1}}\leq \frac{r\dmax+\mumin}{\mumin\dmin}$
    ~and~
     $\displaystyle\maxnorm{M^{-1}-D^{-1}}\leq 
      \frac{r(r\dmax+\mumin)}{\mumin\dmin n}$.
\end{itemize}
  Furthermore, there exists a real matrix $T$ such that
     $T\Trans4 M T=I$ and $T$ can be chosen such that
     the following hold.
\begin{itemize}[topsep=1ex,itemsep=0.2ex]
     \item[(b)]
        $\displaystyle\onenorm{T},\infnorm{T}
        \leq \frac{r\dmax^{1/2}+\mumin^{1/2}}{\mumin^{1/2}\dmin^{1/2}}$.
     \item[(c)]  $\displaystyle \onenorm{T^{-1}}, \infnorm{T^{-1}}
        \leq \frac{(r+1)(r\dmax^{1/2}+\mumin^{1/2})\dmax^{1/2}}
                     {\mumin^{1/2}}$.
     \item[(d)]  $\displaystyle \maxnorm{T-D^{-1/2}} 
        \leq \frac{r((r+1)\mumin^{1/2}+2r\dmax^{1/2})}
                      {2\mumin^{1/2}\,\dmin^{1/2}\,n}$.
     \item[(e)] $\displaystyle \maxnorm{T^{-1}-D^{1/2}} 
        \leq \frac{r((r+3)\mumin^{1/2}\dmax^{1/2}+4r\dmax)\dmin^{1/2}}{2\mumin^{1/2}\,n}$.
\end{itemize}
\end{thm}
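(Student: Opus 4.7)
For part~(a), I would prove both bounds by direct manipulation. To bound $\infnorm{M^{-1}}$: given $v$ with $\infnorm{v}\leq 1$, set $x := M^{-1}v$. Symmetry and positive-definiteness give $\twonorm{M^{-1}}=1/\mumin$, hence $\twonorm{x} \leq \sqrt n/\mumin$. Writing $Mx=v$ as $Dx = v - (M-D)x$ and estimating the $i$th component by Cauchy--Schwarz,
\[
   \bigl|((M-D)x)_i\bigr| \leq \twonorm{(M-D)_{i,\cdot}}\,\twonorm{x}
         \leq \frac{r\dmin}{\sqrt n}\cdot \frac{\sqrt n}{\mumin} = \frac{r\dmin}{\mumin},
\]
so $\infnorm{x} \leq 1/\dmin+r/\mumin$ and the bound follows using $\dmax\geq\dmin$. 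For $\maxnorm{M^{-1}-D^{-1}}$, I would use the identity $M^{-1}-D^{-1} = -D^{-1}(M-D)M^{-1}$: each entry is bounded by $\dmin^{-1}\cdot\maxnorm{M-D}\cdot\sum_k|(M^{-1})_{kj}|$, and by symmetry $\sum_k|(M^{-1})_{kj}| \leq \onenorm{M^{-1}} = \infnorm{M^{-1}}$, into which we insert the estimate just proved.

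For parts (b)--(e), I would construct $T := D^{-1/2}(I+N)^{-1/2}$ with $N := D^{-1/2}(M-D)D^{-1/2}$. This is well defined because $I+N = D^{-1/2}MD^{-1/2}$ is symmetric positive definite with $\lambda_{\min}(I+N)\geq\mumin/\dmax$ and $\maxnorm{N}\leq r/n$, and it satisfies $T\trans M T = (I+N)^{-1/2}(I+N)(I+N)^{-1/2} = I$. The max-norm estimates (d) and (e) on $T-D^{-1/2} = D^{-1/2}((I+N)^{-1/2}-I)$ and on $T^{-1}-D^{1/2} = ((I+N)^{1/2}-I)D^{1/2}$ I would derive from the integral representation
\[
   (I+N)^{-1/2}-I = -\frac{1}{\pi}\int_0^\infty \frac{((1+t)I+N)^{-1}N}{(1+t)\sqrt t}\,dt
\]
(and its counterpart for $(I+N)^{1/2}-I$), bounding the integrand entry by entry by combining Cauchy--Schwarz with $\twonorm{N_{\cdot,j}}\leq r/\sqrt n$ and the spectral bound $\twonorm{((1+t)I+N)^{-1}}\leq 1/(t+\mumin/\dmax)$ for small $t$, together with a convergent Neumann series giving $|(((1+t)I+N)^{-1}N)_{ij}| = O((r/n)(1+t-r)^{-1})$ for $1+t>r$. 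Integrating then restores the claimed $1/n$ scaling. For the induced-norm bounds (b) and (c), I would write $\infnorm{T}\leq\dmin^{-1/2}\infnorm{(I+N)^{-1/2}}$ and prove $\infnorm{(I+N)^{-1/2}}\leq 1 + r\dmax^{1/2}/\mumin^{1/2}$ directly by mirroring the argument of part~(a) applied to $I+N$; symmetry of $(I+N)^{-1/2}$ gives the same bound for $\onenorm{T}$, and $T^{-1} = (I+N)^{1/2}D^{1/2}$ is handled analogously.

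The hardest step will be controlling the integral representations uniformly in $r$. The Neumann series converges only for $1+t>r$, and for small $t$ the Cauchy--Schwarz/spectral bound alone yields only $1/\sqrt n$ scaling; merging the two regimes and verifying that the transition near $1+t = r$ does not introduce an extra factor will require care. A secondary subtlety is that the triangle inequality $\infnorm{T}\leq\infnorm{D^{-1/2}} + n\maxnorm{T-D^{-1/2}}$ derived from (d) gives only a quadratic-in-$r$ bound for (b), so the direct estimate of $\infnorm{(I+N)^{-1/2}}$ is essential to obtain the sharp linear-in-$r$ dependence stated in (b) and (c).
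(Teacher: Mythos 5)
The paper's own ``proof'' of Theorem~\ref{matrixthm1} is a one-line citation to \cite[Lem.~4.9]{mother}, so any self-contained argument is necessarily a different route, and it is useful to have one. Your part~(a) is correct: the row-wise Cauchy--Schwarz estimate $\abs{((M-D)x)_i}\leq\twonorm{(M-D)_{i,\cdot}}\twonorm{x}\leq(r\dmin/\sqrt n)(\sqrt n/\mumin)$ combined with the splitting $Dx=v-(M-D)x$ gives $\infnorm{M^{-1}}\leq 1/\dmin+r/\mumin$, which is the stated bound after using $\dmin\leq\dmax$; and the identity $M^{-1}-D^{-1}=-D^{-1}(M-D)M^{-1}$ with $\onenorm{M^{-1}}=\infnorm{M^{-1}}$ then gives the max-norm bound exactly. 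The construction $T:=D^{-1/2}(I+N)^{-1/2}$ with $N:=D^{-1/2}(M-D)D^{-1/2}$ is also the natural one and clearly satisfies $T\trans MT=I$.

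There is, however, a genuine gap in how you propose to execute parts~(b)--(e). Two pieces of machinery you put forward do not deliver the stated $1/n$ scaling. First, for~(b) you claim $\infnorm{(I+N)^{-1/2}}\leq 1+r\dmax^{1/2}/\mumin^{1/2}$ ``directly by mirroring the argument of part~(a) applied to $I+N$'', but part~(a) bounds $\infnorm{M^{-1}}$ given max-norm control of $M-D$; to mirror it for $(I+N)^{-1/2}$ you would need max-norm control of $(I+N)^{1/2}-I$, which is precisely part~(e) and not yet available. Second, for~(d) and~(e) you split the integral $\frac1\pi\int_0^\infty((1+t)I+N)^{-1}N\,(1+t)^{-1}t^{-1/2}\,dt$ into a small-$t$ regime bounded by Cauchy--Schwarz/spectral estimates (which, as you note, give only $O(r/\sqrt n)$ per entry) and a Neumann regime for $1+t>r$ (which gives $O(r/n)$). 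When $r>1$ the interval $t\in[0,r-1]$ has positive length and the weight is integrable there, so the small-$t$ regime contributes $\Omega(r/\sqrt n)$, not $O(r^2/n)$ as required; no amount of care at the boundary $1+t=r$ repairs this, because the problem is the bound inside $[0,r-1]$, not the matching.

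The fix is available and unifies both problems: apply part~(a) itself to the resolvent $(1+t)I+N$ for every $t\geq0$. Taking $M'=(1+t)I+N$, $D'=(1+t)I$, $\mumin'=t+\mumin/\dmax$, $r'=r/(1+t)$, part~(a) gives $\infnorm{((1+t)I+N)^{-1}}\leq\frac{r+t+\mumin/\dmax}{(t+\mumin/\dmax)(1+t)}$ and $\maxnorm{((1+t)I+N)^{-1}-(1+t)^{-1}I}\leq\frac{r(r+t+\mumin/\dmax)}{(t+\mumin/\dmax)(1+t)^2n}$, both \emph{uniformly} in $t$. Feeding the first into $(I+N)^{-1/2}=\frac1\pi\int_0^\infty((1+t)I+N)^{-1}t^{-1/2}\,dt$ yields $\infnorm{(I+N)^{-1/2}}\leq 1+r(\dmax/\mumin)^{1/2}$ (hence~(b); (c) follows via $\infnorm{(I+N)^{1/2}}\leq\infnorm{I+N}\,\infnorm{(I+N)^{-1/2}}\leq(1+r)(1+r(\dmax/\mumin)^{1/2})$, as you observe). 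Feeding the second into $(I+N)^{-1/2}-I=\frac1\pi\int_0^\infty\bigl(((1+t)I+N)^{-1}-(1+t)^{-1}I\bigr)t^{-1/2}\,dt$ gives $\maxnorm{(I+N)^{-1/2}-I}=O(r^2/n)$ with the right constants, without any regime-splitting; an analogous computation handles $(I+N)^{1/2}-I$ for~(e). In short, the strategy is sound and the key construction is right, but the ingredient that makes parts~(b)--(e) close is to recycle part~(a) inside the integral representation, rather than Cauchy--Schwarz plus Neumann series.
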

\begin{proof}
This is a special case of \cite[Lem.~4.9]{mother}.
\end{proof}


For an $n\times n$ real matrix $M$ and $J \subseteq [n]$,  denote by $M_J$
the matrix whose $j$-th columns are equal to
$\evec_j$ if $j\notin J$ and $\fvec_j$ otherwise,
where $\evec_1,\ldots,\evec_n$ is the standard basis and
$\fvec_1,\ldots,\fvec_n$ are the columns of~$M$.
In particular, $M_{\emptyset} = I$ and $M_{[n]} = M$. 

\begin{lemma}\label{l:Mtrans}
Suppose the matrix $M$ satisfies
\begin{equation}\label{M_property}
    \begin{aligned}
    	& \text{off-diagonal entries of $M$ and $M^{-1}$ are  uniformly $O(n^{-1})$,}\\
    		&\text{diagonal entries of $M$ and $M^{-1}$ are uniformly $\Theta(1)$.} 
    \end{aligned}
\end{equation} 
Then, for sufficiently small $\zeta$, if $\abs{J}\leq\zeta n$ or $\abs{J}\geq (1-\zeta)n$,
$M_J$ also satisfies~\eqref{M_property}.
\end{lemma}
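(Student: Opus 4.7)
My plan is to use a block decomposition. After permuting the indices so those in $J$ come first, the matrix $M_J$ takes the form
\[
M_J = \begin{pmatrix} M_{JJ} & 0 \\ M_{J^c J} & I \end{pmatrix},
\qquad
M_J^{-1} = \begin{pmatrix} M_{JJ}^{-1} & 0 \\ -M_{J^c J}\,M_{JJ}^{-1} & I \end{pmatrix},
\]
where $M_{JJ}$ denotes the principal submatrix of $M$ indexed by $J$, and similarly for the other blocks. The entries of $M_J$ itself are entries of $M$ or of $I$, so they trivially satisfy~\eqref{M_property}. The work is therefore to show that $M_{JJ}^{-1}$ has diagonal entries $\Theta(1)$ and off-diagonal entries $O(n^{-1})$, and that $M_{J^c J}M_{JJ}^{-1}$ has entries $O(n^{-1})$.

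When $\abs{J}\leq \zeta n$, I would apply Theorem~\ref{matrixthm1}(a) directly to $M_{JJ}$: its diagonal entries are $\Theta(1)$ while its off-diagonal entries are $O(n^{-1})$, so the perturbation parameter $r$ needed by that theorem relative to the diagonal is $O(\abs{J}/n) = O(\zeta)$, which can be made as small as required by shrinking $\zeta$. This gives the claimed bounds on $M_{JJ}^{-1}$. Then each entry of $M_{J^c J}M_{JJ}^{-1}$ is a sum over $k\in J$ of $M_{ik}(M_{JJ}^{-1})_{kj}$; the single $k=j$ term contributes $O(n^{-1})$ and the remaining $\abs{J}-1$ terms together contribute $O(\abs{J}/n^2)=O(n^{-1})$.

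When $\abs{J}\geq (1-\zeta)n$, let $K = J^c$ (so $\abs{K}\leq\zeta n$) and partition
\[
M = \begin{pmatrix} M_{JJ} & M_{JK} \\ M_{KJ} & M_{KK} \end{pmatrix},
\qquad
M^{-1} = \begin{pmatrix} A & B \\ C & D \end{pmatrix}.
\]
The block-inversion formula gives $M_{JJ}^{-1} = A - BD^{-1}C$. Since $D$ is the $K\times K$ principal block of $M^{-1}$ and $\abs{K}$ is small, the small-$J$ argument applied to $D$ (using~\eqref{M_property} for $M^{-1}$) shows that $D^{-1}$ has diagonal entries $\Theta(1)$ and off-diagonal entries $O(n^{-1})$. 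Splitting the double sum $\sum_{k,l\in K} B_{ik}(D^{-1})_{kl}C_{lj}$ into diagonal and off-diagonal parts of $D^{-1}$ yields the entrywise bound $(BD^{-1}C)_{ij} = O(\abs{K}/n^2 + \abs{K}^2/n^3) = O(n^{-1})$. Combined with the given bounds on $A$, this shows $M_{JJ}^{-1}$ satisfies~\eqref{M_property}, and then $M_{J^c J}M_{JJ}^{-1}$ is handled exactly as in the small-$J$ case.

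The only real obstacle is uniformity of the constants: one must verify that the hypotheses of Theorem~\ref{matrixthm1} are met with parameters bounded independently of $J$ (and of $n$). This reduces to choosing $\zeta$ small enough that the perturbation parameter $r=O(\zeta)$ lies in the admissible range, using the absolute constants implicit in~\eqref{M_property}; no deeper difficulty arises.
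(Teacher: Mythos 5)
Your block-triangular decomposition and the Banachiewicz formula $M_{JJ}^{-1} = A - BD^{-1}C$ are both correct, and this is a genuinely different route from the paper's. The paper avoids submatrix algebra altogether: for $\abs J\leq\zeta n$ it compares the full $n\times n$ matrix $M_J$ with the diagonal matrix $D_J$ having the same diagonal, shows $\infnorm{M_J\xvec}\geq\infnorm{D_J\xvec}-\infnorm{D_J-M_J}\infnorm\xvec=\Theta(1)\infnorm\xvec$ so that $\infnorm{M_J^{-1}}=O(1)$, and then bounds $\maxnorm{M_J^{-1}-D_J^{-1}}\leq\infnorm{M_J^{-1}}\maxnorm{D_J-M_J}\onenorm{D_J^{-1}}=O(n^{-1})$; for $\abs J\geq(1-\zeta)n$ it instead compares $M_J$ with a column-rescaled copy $\hat M=M\diag(\hat m_1,\ldots,\hat m_n)$ of $M$ itself. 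Your decomposition buys a cleaner structural picture of $M_J^{-1}$, at the small extra cost of having to control $M_{JJ}^{-1}$ and the product $M_{J^cJ}M_{JJ}^{-1}$ separately; both approaches are of comparable length.

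There is, however, a genuine gap in one step. You apply Theorem~\ref{matrixthm1}(a) to $M_{JJ}$ (and, implicitly via ``the small-$J$ argument,'' to the block $D=(M^{-1})_{KK}$ in the large-$J$ case), but that theorem requires its input matrix to be real \emph{symmetric positive-definite} with a controlled minimum eigenvalue $\mumin$. The matrix $M$ of Lemma~\ref{l:Mtrans} is assumed only to satisfy the entrywise bounds~\eqref{M_property}; in the paper's applications $M$ is (a multiple of) the matrix $T$ with $T\Trans5 AT=I$, which is not symmetric, so $M_{JJ}$ need not be symmetric or have a real spectrum, and the theorem's hypotheses and its parameter $\mumin$ are simply unavailable. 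The fix is easy and keeps your architecture: replace the appeal to Theorem~\ref{matrixthm1}(a) by the elementary diagonal-dominance argument. Writing $M_{JJ}=D+E$ with $D$ its diagonal part, one has $\maxnorm E=O(n^{-1})$, hence $\infnorm E=O(\abs J/n)=O(\zeta)$, so for $\zeta$ small $M_{JJ}^{-1}=(I+D^{-1}E)^{-1}D^{-1}$ exists with $\infnorm{M_{JJ}^{-1}}=O(1)$, and $\maxnorm{M_{JJ}^{-1}-D^{-1}}\leq\infnorm{M_{JJ}^{-1}}\,\maxnorm E\,\onenorm{D^{-1}}=O(n^{-1})$. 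With that substitution (and the analogous one for $D^{-1}$ in the large-$J$ case), your proof is complete.
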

 \begin{proof} 
  Obviously $M_J$ itself has the desired structure.
  Let $D_J$ denote the diagonal matrix whose diagonal entries are those of the
  matrix $M_J$.
  For $\abs{J}\leq\zeta n$, we have for sufficiently small $\zeta$ and any~$\xvec$ that
 	\[
 		\infnorm{M_J \xvec}\geq  \infnorm{D_J \xvec} - \infnorm{D_J- M_J} \infnorm{\xvec} 
 		 = \Theta(1) \infnorm{\xvec} - O(\zeta)\infnorm{\xvec}  = \Theta(1) \infnorm{\xvec}.
 	\]
 	Therefore, we have $\infnorm{M_J^{-1}} = O(1)$ and 
 	\[
 	 	\maxnorm{M_J^{-1} - D_J^{-1}}
 	 	 \leq \infnorm{M_J^{-1}}  \maxnorm{D_J - M_J} \onenorm{D_J^{-1}}= O(n^{-1}). 
 	 \]
  Thus $M_J^{-1}$ exists and has the required structure.
  
  Now consider $\abs{J}\geq(1-\zeta)n$. Let $M=(m_{jk})$.
  Define $\hat M=M\diag(\hat m_1,\ldots,\hat m_n)$, where
  $\hat m_j=m_{jj}^{-1}$ if $j\notin J$ and $\hat m_j=1$ otherwise.
  Note that the $j$-th column of $\hat M - M_J$ is 0 if $j\in J$ and
  is uniformly $O(n^{-1})$ otherwise.
  Since $\infnorm{\hat M^{-1}}=\Theta(1)$, we have for sufficiently small $\zeta$
  and any $\xvec$ that
  \[
     \infnorm{M_J\xvec} \geq \infnorm{\hat M\xvec} - \infnorm{(\hat M-M_J)\xvec}
     = \Omega(1)\infnorm\xvec -O(\zeta)\infnorm\xvec
     = \Omega(1)\infnorm\xvec.
 \]
 This shows that $\infnorm{M_J^{-1}}=O(1)$ and the rest of the argument is
 the same as in the previous case.
\end{proof}

\begin{lemma}\label{l:edges_sub}
 Let $G$ be a graph  with average degree $d$. Then, 
 for nonnegative integer  $h\leq  d$ there is a subgraph of $G$ with 
 at least $hd/2$ edges and with degrees bounded above by $h$.
\end{lemma}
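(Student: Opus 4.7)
The plan is to invoke Vizing's theorem, which guarantees that the edge set of any simple graph $G$ admits a partition into $\chi'(G) \leq \Delta(G)+1$ matchings. Since $G$ is simple we have $\Delta(G) \leq n-1$, so there are at most $n$ such matchings, say $C_1,\ldots,C_k$ with $k \leq n$. The desired subgraph $H$ will be built as a union of some of these matchings: since a union of $h$ matchings automatically has maximum degree at most $h$, the degree constraint will come for free, and only the edge-count bound needs to be verified.

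If $h \geq k$, then in particular $h \geq \Delta(G)$, so I would simply take $H := G$; its maximum degree is at most $h$, and its edge count $nd/2$ is at least $hd/2$, the last inequality holding because $h \leq d \leq n-1 < n$ for any simple graph.

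Otherwise $h < k$. Reorder the color classes so that $|C_1| \geq |C_2| \geq \cdots \geq |C_k|$ and set $H := C_1 \cup \cdots \cup C_h$. Each $C_i$ is a matching, so every vertex is incident to at most one edge of each $C_i$, which gives $\Delta(H) \leq h$. By averaging, the $h$ largest color classes contain at least $h \cdot (|E(G)|/k) \geq h \cdot (nd/2)/n = hd/2$ edges, using $k \leq n$ and $|E(G)| = nd/2$.

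There is no substantial obstacle in this argument; it is Vizing's theorem combined with a pigeonhole/averaging step. The only points requiring care are the case split on whether $h \geq k$ or $h < k$, and the bound $h \leq n-1$, which is forced by the hypotheses $h \leq d$ and the simplicity of $G$. Degenerate cases such as $h = 0$ or $E(G) = \emptyset$ are handled uniformly by letting $H$ be the empty subgraph.
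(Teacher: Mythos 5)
Your argument is correct, but it takes a genuinely different route from the paper's. You decompose $E(G)$ into at most $\Delta(G)+1 \leq n$ matchings via Vizing's theorem, take the $h$ largest, and win by averaging; the degree bound is automatic from being a union of $h$ matchings, and the case split on $h \geq k$ versus $h < k$ together with $h \leq d \leq n-1 < n$ closes the edge-count. The paper instead runs a probabilistic argument: it fixes an $h$-regular graph $H$ on $n$ (or $n-1$) vertices, applies a uniformly random vertex permutation $\sigma$, computes $\E\,\abs{E(G)\cap E(H^\sigma)} \geq \abs{E(G)}\,h/n = hd/2$, and extracts a good $\sigma$ by the first-moment method; the desired subgraph is $G \cap H^\sigma$, whose degrees are bounded by $\Delta(H) = h$. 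The trade-off is that your approach leans on Vizing's theorem (a nontrivial result, though standard) and is fully deterministic, while the paper's proof is self-contained and elementary but probabilistic. Your route also gives the slightly sharper bound $(h/\chi'(G))\abs{E(G)}$, which can exceed $hd/2$ when $\chi'(G) < n$, though the lemma does not need this refinement.
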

\begin{proof}
  Let $n = \abs{V(G)}$. Let $H$ be a graph on the same vertex set as $G$ 
  with at least $h(n-1)/2$ edges such that its degrees are bounded above by $h$.
  Such graph $H$ exists as we can either take $h$-regular graph on $n$ vertices 
  or  $h$-regular graph on $n-1$ vertices. Let $H^\sigma$ 
  denote the graph isomorphic to $H$ under permutation $\sigma$ of its vertices. 
  Now take $\X $ to be a uniformly random permutation of $V(H)$. Then, 
  for any possible edge $e$
  \[ 
  	\text{Pr} (e \in E(H^{\X })) = \frac{2(n-2)!}{n!} \abs{E(H)}\geq  \frac{(n-2)! }{n!} h(n-1)  = h/n.
  \]
  Thus, we get that 
  \[
  	\E \,\abs{E(G) \cap E(H^{\X })}   \geq  \abs{E(G)}\, h/n = hd/2.
  \] 
   The last inequality implies the existence of a permutation 
   $\sigma$ such that graph $H^{\sigma}$ has at least $hd/2$ edges 
   in common with~$G$. These edges form the required subgraph of~$G$. 
\end{proof}

The following lemma sets out one important structural property of strongly nonbipartite graphs.
As usual, the length of a walk is the number of its edges.

\begin{thm}\label{oddpaths} Let $G$ be a graph with $n$
vertices such that $q(G)\geq \tauQ n$ for some fixed $\tauQ>0$.
Let $U$ be a nonempty set of vertices.
Define
\[
     L := 1 + 2 \,\lceil \log_{1+\tauQ}(2/\tauQ)\rceil.
\]
Then there at least $\dfrac{\tauQ n\abs{U}}{8L}$
edge-disjoint walks of odd length at most~$L$ that start
and finish in~$U$.
\end{thm}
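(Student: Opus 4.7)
The plan is to prove the contrapositive: any edge set $F\subseteq E(G)$ such that $G^*:=G-F$ contains no odd walk of length $\leq L$ with both endpoints in $U$ must satisfy $|F|\geq \tauQ n|U|/8$. Given this, the theorem follows by iteratively extracting such a walk from the current graph and removing its edges; each walk uses at most $L$ edges, so at least $|F|/L\geq \tauQ n|U|/(8L)$ walks are extracted before no walk remains.

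For the contrapositive, perform BFS in $G^*$ from $U$ with shells $S_0=U,S_1,S_2,\ldots$, and set $V_{\mathrm{near}}:=S_0\cup\cdots\cup S_{(L-1)/2}$ ($L$ is odd by construction). The assumption on $F$ implies: no $G^*$-edge $uv$ can have $u\in S_i$, $v\in S_j$ with $i+j$ even and $i+j\leq L-1$, because concatenating a shortest $U$-to-$u$ walk of length $i$, the edge $uv$, and a shortest $v$-to-$U$ walk of length $j$ would yield a forbidden odd walk. In particular each $S_i$ with $i\leq(L-1)/2$ is independent in $G^*$, and inside $V_{\mathrm{near}}$ the graph $G^*$ is bipartite with the natural BFS-parity bipartition.

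Apply the signless Laplacian variational bound $\theta\trans Q(G)\theta\geq \tauQ n\|\theta\|^2$ to the test vector $\theta_v=(-1)^{d^*(v)}$ on $V_{\mathrm{near}}$ (and $\theta_v=0$ elsewhere), where $d^*$ denotes distance in $G^*$. Decomposing $\theta\trans Q\theta=\sum_{jk\in G}(\theta_j+\theta_k)^2$ by edge class: $G^*$-edges inside $V_{\mathrm{near}}$ join consecutive shells and contribute $0$; $G^*$-edges leaving $V_{\mathrm{near}}$ run from $S_{(L-1)/2}$ to $S_{(L+1)/2}$ and each contribute $1$; $F$-edges contribute at most $4$. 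Writing $N=|V_{\mathrm{near}}|\geq|U|$ and $B$ for the number of boundary $G^*$-edges, this yields $\tauQ nN\leq 4|F|+B$. If $B\leq \tauQ nN/2$, then $|F|\geq \tauQ nN/8\geq \tauQ n|U|/8$ and we are done.

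The remaining case $B>\tauQ nN/2$, where the specific choice $L=1+2\lceil\log_{1+\tauQ}(2/\tauQ)\rceil$ is essential, is the main obstacle. Here the BFS from $U$ in $G^*$ must expand strongly at the frontier; applying the signless Laplacian bound to the prefix-indicator vectors $\mathbf{1}_{V_{\leq k}}$ for successive $k$ establishes — modulo contributions absorbable into $|F|$ — a geometric growth $|V_{\leq k+1}|\geq(1+\tauQ)|V_{\leq k}|$, forcing $N\geq(2/\tauQ)|U|$ after the full $(L-1)/2\geq \log_{1+\tauQ}(2/\tauQ)$ iterations. Combining this with a refined bound from the tent-shaped test vector $\theta_v=(-1)^{d^*(v)}\bigl(1-\tfrac{2d^*(v)}{L-1}\bigr)^{+}$, whose support vanishes at the frontier so the analogous boundary term disappears (at the cost of a factor $(L-1)^{-2}$ on the $G^*$-edge contributions inside $V_{\mathrm{near}}$), then yields $|F|\geq\tauQ n|U|/8$. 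Aligning the constants between the expansion dichotomy and the tent estimate so that the final count is exactly $\tauQ n|U|/(8L)$ edge-disjoint walks is the delicate combinatorial point.
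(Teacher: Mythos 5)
Your framework (contrapositive, BFS shells, bipartiteness of $G^*$ on the ball of radius $(L-1)/2$, Rayleigh bound with the parity test vector) matches the paper's, and the case $B\leq\tauQ nN/2$ is handled correctly. The case $B>\tauQ nN/2$ is a genuine gap. First, the prefix-indicator vectors $\mathbf 1_{V_{\leq k}}$ cannot give expansion: the quadratic form $\mathbf 1_{V_{\leq k}}\trans Q\,\mathbf 1_{V_{\leq k}}=4\abs{E(G[V_{\leq k}])}+\abs{\partial_G V_{\leq k}}$ has internal $G^*$-edges contributing $4$ each rather than $0$, and that term alone can absorb the entire lower bound $\tauQ n\abs{V_{\leq k}}$, so nothing forces the boundary to be large. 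You need the parity vector $(-1)^{d^*(v)}$ restricted to $V_{\leq k}$ at each level, exactly as you used it for $V_{\mathrm{near}}$ itself. Second, and more fundamentally, even after that repair, geometric growth anchored at $\abs{V_{\leq 0}}=\abs U$ over $(L-1)/2$ levels only gives $N\geq(2/\tauQ)\abs U$, and since $\abs U$ may be $1$ this is neither a contradiction with $N\leq n$ nor a lower bound on $\abs F$. Your tent test vector would then need an \emph{upper} bound on $N$ (roughly $N=O(\tauQ(L-1)^2\abs U)$) to convert into $\abs F\geq\tauQ n\abs U/8$, and the expansion lower bound does not supply one; you acknowledge yourself that the constants are not aligned.

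The paper closes exactly this gap with one observation you are missing: the first step $N_0\to N_1$ is special. Each vertex of $N_1\setminus U$ has at most $\abs U$ neighbours inside $U$, so the $\geq\tauQ n\abs U/2$ boundary $H$-edges of $U$ force $\abs{N_1\setminus U}\geq\tauQ n/2$. This anchors the subsequent geometric growth at $\Omega(n)$ rather than at $\abs U$, so after $(L-1)/2$ levels the ball size would exceed $n$ --- an outright contradiction, showing $G^*[V_{\mathrm{near}}]$ cannot be bipartite and hence that a good walk exists. No second Rayleigh estimate, tent vector, or case split on $B$ is needed; the paper packages the whole argument through the constrained quantity $q_u(H)$ and the fact that removing fewer than $\tauQ nu/8$ edges keeps it at least $\tauQ n/2$.
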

\begin{proof}
  Let $u:=\abs{U}$. We call a walk \textit{good} if it has odd length at most $L$
  and starts and finishes in $U$. For a spanning subgraph $H$ of $G$ denote 
  \[
  	q_u(H) = \min_{\abs{A}+\abs{B}\geq u} \Dfrac{\xvec_{AB}\trans  Q(H) \xvec_{AB} }
	{\twonorm{\xvec_{AB}}^2} =
  	\min_{\abs{A}+\abs{B}\geq u} \Dfrac{4\abs{E(H[A])} 
	 +4 \abs{E(H[B])} + \abs{\partial_H (A\cup B)}}{\abs{A}+\abs{B}},
  \] 
 where the both minima are with respect to two disjoint sets of vertices $A,B$ such
  that $\abs{A\cup B} = \abs{A}+\abs{B}\geq u$; 
 the vector $\xvec_{AB}$ has components equal to $1$ and $-1$ for positions corresponding to $A$ and $B$, respectively, and other components equal to $0$;
 the matrix $Q(H)$ is the signless Laplacian of $H$;
 and  $\partial_H(A\cup B)$ denotes the set of edges of $H$ with one end in
 $A\cup B$ and the other end not in $A\cup B$. Note that
 \[
 	q_u(G) \geq q(G) \geq \tau_Q n.
 \]

 Suppose that $H$ is a spanning subgraph of $G$ with $q_u(H) \geq \tauQ n/2$.
 We show that there is a good walk in~$H$.
   For $k \geq 0$,
     let $N_k$ denote the set of vertices at distance at most $k$  from $U$ in the graph~$H$.
   Suppose first that $H[N_k]$ is bipartite for $k<(L-1)/2$,
   and let $A_k,B_k$ denote its parts.
    We have that 
     $\abs{A_k\cup B_k} \geq \abs{U}=u$ and $E(H[A_k]) = E(H[B_k]) = \emptyset$.
       Since $q_u(H) \geq \tauQ n/2$, we find that
     \[
     	\abs{\partial_{H} (N_k)} = \abs{\partial_H (A_k\cup B_k) }
     	\geq \dfrac12\tauQ n (\abs{A_k}+\abs{B_k}) = \dfrac12\tauQ n\abs{N_k}. 
     \]
     Therefore, we get that
     \[
     	\abs{N_{k+1}} \geq \abs{N_k} + \dfrac{\abs{\partial_{H} (N_k)} }{n}  
     	 \geq (1+ \tau_Q/2) \abs{N_k}.
     	\]
     	Similarly, since every vertex from $N_1$ is adjacent with at most $u$ vertices from $U$, we find that
    $
     		\abs{N_1} \geq \abs{U} + \tau_q n/2 > \tau_q n/2.
     $
     Therefore,
     \[
     	\abs{N_k} > \dfrac12 (1+ \tau_Q/2)^{k-1}  \tau_q n.
     \]
      Since $\abs{N_k}\leq n$, the last inequality is impossible when
       $k = (L-1)/2$, which shows that $H[N_{(L-1)/2}]$ is
       not bipartite.
       
   Since $H[N_{(L-1)/2}]$ has an odd cycle that is reachable from~$U$,
   there is an odd-length walk from~$U$ to~$U$.
   Consider a shortest such walk $P$ and suppose its length is
   at least $L+2$. By the definition of $H[N_{(L-1)/2}]$, there
   is a path of length at most $(L-1)/2$ from $U$ to a
   central vertex of~$P$, which implies there is an odd-length walk from~$U$ to~$U$
   that is shorter than~$P$. This shows that $P$ has length 
   at most~$L$. 
 
 Starting with $H := G$, we can find good walks in~$H$
 and remove them from $H$ until $q_u(H) < \tau_Q n/2 $.
 If we found less than 
$ \dfrac{\tauQ n u}{8L}$  good walks then we deleted at most 
  $ \dfrac{\tauQ nu}{8}$ edges. However, for any two disjoint $A,B$
  with $\abs{A}+\abs{B} \geq u$, we have
  \begin{align*}
  	 \Dfrac{4\abs{E(H[A])} +4 \abs{E(H[B])} + \abs{\partial_H (A\cup B)}}{\abs{A}+\abs{B}}
  	& \geq 
  	  \Dfrac{4\abs{E(G[A])} +4 \abs{E(G[B])} + \abs{\partial_G (A\cup B)} -    \tauQ nu/2}{\abs{A}+\abs{B}} 
  	  \\
  	  &\geq  q_u(G) -     \tauQ n/2 \geq  \tauQ n/2.
  \end{align*}
  Thus, $q_u(H) \geq \tauQ n/2$ which would contradict
  our stopping criterion.
\end{proof} 

\begin{thm}\label{shortpaths}
Let $G$ be a graph with $n$ vertices, minimum degree at least~$4r$,
and with $h(G )\geq m$.  Let $U_1,U_2$ be disjoint sets of vertices with
$\abs{U_1},\abs{U_2}\geq m$ and $\abs{U_1}+\abs{U_2}\geq n-r$.
Then there are at least $rm$ mutually
edge-disjoint paths of length at most~3
with one end in $U_1$ and the other end in $U_2$.
\end{thm}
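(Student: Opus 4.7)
My plan is an iterative extraction argument: starting from $H = G$, repeatedly delete the edges of a path of length at most~$3$ from $U_1$ to $U_2$ in $H$ until no such path remains; I aim to show that at least $rm$ paths are extracted. Suppose for contradiction that only $k < rm$ paths come out, so at most $3k < 3rm$ edges have been deleted and $H$ admits no path of length at most $3$ from $U_1$ to $U_2$. Define $R := U_1 \cup N_H(U_1)$ and $B := U_2 \cup N_H(U_2)$. The absence of short paths in $H$ forces $R \cap B = \emptyset$, no $H$-edge between $R$ and $B$, and $R \setminus U_1,\, B \setminus U_2 \subseteq W$ (any violation would complete a path of length $1$, $2$, or $3$). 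Writing $W' := W \setminus (R \cup B)$, every $w \in W'$ has all its $H$-neighbours in $W$, so by the minimum-degree hypothesis at least $\deg_G(w) - (|W|-1) \geq 3r+1$ of its $G$-edges to $U_1 \cup U_2$ have been deleted; these edges have distinct endpoints in $W'$, so $(3r+1)|W'| \leq 3k$ and hence $|W'| < m$.

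The core estimate uses Cheeger's inequality. Assume without loss of generality $|U_1| \leq n/2$ (otherwise apply the same argument to $U_2$). The three edge sets $e_G(U_1, U_2)$, $e_G(U_1, B \cap W)$ and $e_G(U_1, W')$ are pairwise disjoint and consist entirely of deleted edges --- each would be a forbidden $R$-$B$ crossing or $U_1$-$W'$ crossing in $H$ --- so together they contain at most $3k$ edges. The only remaining part of $\partial_G U_1$ is $e_G(U_1, R \cap W) \leq |U_1| \cdot |R \cap W| \leq |U_1|\, r$. Combining with the Cheeger lower bound $|\partial_G U_1| \geq m|U_1|$ yields $(m-r)|U_1| \leq 3k$, and using $|U_1| \geq m$ gives $k \geq m(m-r)/3$. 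For $m \geq 4r$ this already contradicts $k < rm$.

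The remaining regime $m < 4r$ I handle by a direct construction rather than by contradiction. If $|U_i| \leq 2r$ for some $i \in \{1,2\}$, each $u \in U_i$ has $\deg_{U_{3-i}}(u) \geq 4r - (|U_i|-1) - |W| \geq r+1$, giving $e_G(U_1, U_2) \geq (r+1)m > rm$ length-$1$ paths at once. The hardest case --- and the main obstacle of the proof --- is when both $|U_1|, |U_2| > 2r$ with $e_G(U_1, U_2) < rm$: Cheeger then forces $|\partial_G U_i| > 2mr$ for both $i$, so each of $e_G(U_1, W)$ and $e_G(U_2, W)$ exceeds $rm$, while $|W| \leq r$ and each $w \in W$ has at least $3r+1$ edges into $U_1 \cup U_2$. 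In this situation the required $rm$ short paths must be assembled from length-$2$ paths $u_1$-$w$-$u_2$ through a single $w \in W$ (whenever $w$ has neighbours in both sides) and length-$3$ paths $u_1$-$w_1$-$w_2$-$u_2$ through a pair in $W$ (when $W$ splits into ``$U_1$-only'' and ``$U_2$-only'' vertices), with edge-disjointness enforced by a pigeonhole argument on the $W$-degree counts coming from the lower bound $\deg_G(w) \geq 4r$.
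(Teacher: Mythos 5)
Your iterative-extraction argument is a genuinely different route from the paper's, and the parts you have written out are correct: the observation that after extracting $k<rm$ paths every remaining $H$-edge from $U_1$ can only go to $R\cap W = N_H(U_1)\cap W$, the Cheeger estimate $(m-r)\abs{U_1}\leq 3k$, and the conclusion for $m\geq 4r$ all check out, and so does the case $\abs{U_i}\leq 2r$ (which matches the paper's first case). But the final regime you flag as ``the main obstacle'' --- $m<4r$, $\abs{U_1},\abs{U_2}>2r$, $e_G(U_1,U_2)<rm$ --- is not actually proved: the closing paragraph describes a plan (``must be assembled from length-2 paths \dots and length-3 paths \dots with edge-disjointness enforced by a pigeonhole argument'') without carrying it out. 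That plan is not routine to execute. For instance, a single vertex $w\in W$ with neighbours on both sides only supplies $\min(\deg_{U_1}(w),\deg_{U_2}(w))$ edge-disjoint length-2 paths, and even with the constraints $\sum_w\deg_{U_1}(w)>rm$, $\sum_w\deg_{U_2}(w)>rm$, $\deg_{U_1}(w)+\deg_{U_2}(w)\geq 3r+1$, $\abs{W}\leq r$ one still has to argue carefully about how the length-3 paths through a pair $w_1,w_2$ share internal edges with the length-2 paths before any pigeonhole closes the gap.

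The paper sidesteps all of this with a single construction for the whole case $\abs{U_1},\abs{U_2}>2r$: partition the residue $R=V\setminus(U_1\cup U_2)$ into $R_1$ (vertices whose majority of neighbours in $U_1\cup U_2$ lie in $U_1$) and $R_2$ (the rest), then build $P_1$ (direct edges $U_1$--$U_2$), $P_2$ (length-2 paths through $R$, greedily extending each edge of $E(U_1,R_2)\cup E(R_1,U_2)$ using the majority property and $\abs R\leq r$), and $P_3$ (length-3 paths extending each edge of $E(R_1,R_2)$ at both ends). It then uses $\abs{P_1\cup P_2}+\abs{P_1\cup P_3}\geq\abs{E(U_1\cup R_1,U_2\cup R_2)}\geq 2rm$ (Cheeger applied to the smaller of the two halves) and takes whichever of $P_1\cup P_2$, $P_1\cup P_3$ is larger. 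The majority split and the two-way pigeonhole on $P_1\cup P_2$ versus $P_1\cup P_3$ is exactly the idea your sketch is missing; without it, your proof is incomplete.
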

\begin{proof}
  Define $R := V(G) \setminus (U_1\cup U_2)$.
  
  Suppose first that $\abs{U_1}\leq 2r$.  By the minimum
  degree condition and the fact that $\abs R\leq r$, each vertex
  of $U_1$ is adjacent at least $r$ vertices of~$U_2$.
  Since $\abs{U_1}\geq m$ by assumption, there are
  at least $rm$ edges from $U_1$ to~$U_2$.  Similarly if
  $\abs{U_2}\leq 2r$.
  
  The remaining case is when $\abs{U_1},\abs{U_2}>2r$.
  Divide $R$ into two parts: $R_1$ are those vertices at least
  half of whose neighbours in $U_1\cup U_2$ lie in~$U_1$,
  and $R_2$ are the rest.
  Write $E(U,W)$ for the set of edges between $U$ and~$W$.
  Consider three sets of edge-disjoint paths from $U_1$ to~$U_2$:
  \begin{itemize}[noitemsep,topsep=1ex]
    \item[(a)] $P_1$ consists of single edges from $U_1$ to $U_2$.
       Obviously $\abs{P_1}=\abs{E(U_1,U_2)}$.
    \item[(b)] $P_2$ consists of paths of two edges going from
      $U_1$ to $U_2$ via $R$.  By the definitions of $R_1$ and $R_2$,
      each edge in $E(U_1,R_2)\cup E(R_1,U_2)$ can be extended
      by an edge while remaining edge-disjoint, so we can take
      $\abs{P_2} = \abs{E(U_1,R_2)\cup E(R_1,U_2)}$.
    \item[(c)] $P_3$ consists of paths of three edges going from
      $U_1$ to $R_1$ to $R_2$ to $U_2$.  By the minimum degree
      condition, the definition of $R_1$, and the fact that $\abs{R_2}\leq r$,
      each vertex in $R_1$ is adjacent to more vertices in $U_1$ than in
      $R_2$,  and symmetrically for a vertex in~$R_2$.  Thus each
      edge in $E(R_1,R_2)$ can be extended by an edge at each end
      to make $\abs{P_3}=\abs{E(R_1,R_2)}$ edge-disjoint paths.
   \end{itemize}
   Now note that $P_1$ is edge-disjoint from both $P_2$ and $P_3$.
   Also $\abs{P_1\cup P_2}+\abs{P_1\cup P_3}
     \geq \abs{E(U_1\cup R_1,U_2\cup R_2)}
     \geq 2rm$, where the last inequality is from applying $h(G )\geq m$
     to the smaller of $U_1\cup R_1$ and $U_2\cup R_2$.
   Consequently, at least of one of $P_1\cup P_2$ and $P_1\cup P_3$
    has at least $rm$ edge-disjoint paths.
\end{proof}

\nicebreak
\section{Integral bound for periodic functions} \label{s:general_boxing}\label{AppendixB}

In this appendix we will prove a general bound on the integral of a complex
periodic function on~$\Reals^n$.  In order to facilitate future applications, this appendix is
notationally independent of the rest of the paper.

Our setting is that we have a periodic function $H:\Reals^n\to\Complexes$ whose
basic cell is a parallelepiped $B$ centered at the origin.  We also have a small cuboid
$B_0\subseteq B$ centered at the origin (not necessarily aligned with~$B$), and our aim is
to bound $\Abs{\int_{B-B_0} H(\xvec)\,d\xvec}$.
The reason for excluding $B_0$ is that in our applications $H$ is very large
in~$B_0$ and we need that $\int_{B-B_0} H(\xvec)\,d\xvec$ is small in comparison
with $\int_{B_0} H(\xvec)\,d\xvec$.
In prior applications (see~\cite{mother} for a list of about 30 examples),
$\int_{B-B_0} \abs{H(\xvec)}\,d\xvec$ was small enough, though proving it
wasn't always trivial.
For the present application, $\int_{B-B_0} \abs{H(\xvec)}\,d\xvec$ is much too
large so we need another approach.

Our method assumes that we can bound lower-dimensional integrals
of $H$ over domains of the form $\{x_1\}\times\cdots\times\{x_m\}
\times I_{m+1}\times\cdots\times I_n$, where $I_{m+1},\ldots,I_n$ are
intervals with $[-\frac13\rho,\frac13\rho]\subseteq I_j\subseteq [-\frac43\rho,\frac43\rho]$ for each~$j$.
We combine such domains to create disjoint regions which (in a sense that
involves the periodicity of~$H$) tile all of $B-B_0$ except for a part $\Babs$
in which we assume $H$ to be small enough to use the integral of~$\abs H$.

\smallskip

Let us proceed to the formal setup of our general bound on the integral of a complex periodic function. 
As usual, let  $U_n(t)=\{\xvec=(x_1,\ldots,x_n)\st  \abs{x_j}\leq t \text{ for }1\leq j\leq n\}$.
  For $J\subseteq[n]$    denote 
     \[
        	U_{J}(t) := \{(x_1,\ldots,x_n)\in U_n(t)\st x_j =0 \text{ if $j\in J$} \}.
    \]
Let vectors $\fvec_1, \ldots,\fvec_n$ form a basis in $\Reals^n$.
  Consider the lattice $\calL \subseteq \Reals^n$ consisting of the points 
    $k_1 \fvec_1 + \cdots + k_n \fvec_n$ where  $k_1,\ldots, k_n$ are integers.
   Let 
   \[
      B := \bigl\{t_1 \fvec_1 + \cdots + t_n \fvec_n \st  t_1,\ldots,t_n \in (-\tfrac12,\tfrac12\,]\bigr\}.
   \] 
   For any $\xvec\in \Reals^n$ let   $\tau_B(\xvec)$ be the unique point in $B$ such that
    $\xvec - \tau_B(\xvec) \in \calL $. For a   set $S \subseteq \Reals^n$,  we define 
   $\tau_B(S) = \bigcup_{\xvec\in S}\tau_B(\xvec)$.
Let 
\[
 \text{
$M$ be the matrix whose columns are vectors $\fvec_j$, $j=1,\ldots,n$.} 
\]
 Let $\evec_1, \ldots,\evec_n$ denote  the standard basis in $\Reals^n$.

\begin{thm}\label{MishaMagic}
  Assume that $M$ satisfies~\eqref{M_property}.
  Assume that $\zeta,\rho>0$ are sufficiently small.
 Let $H:\Reals^n \rightarrow \Complexes$ 
 be an absolutely integrable $\calL $-periodic function, that is,  $H(\xvec) = H(\pi_B(\xvec))$.
 Let $\Babs$ be such that  
 \begin{equation}\label{S_property}
 \Bigl\{(x_1,\ldots,x_n) \in B \st 
    \sum_{j =1}^n \abs{x_j} \One _{\abs{x_j}>\rho/3} > \dfrac13\zeta n\rho\Bigr\}
    \subseteq\Babs\subseteq B
\end{equation}
and $\Babs\cap S$ is measurable within $S$ for any affine subspace $S$
of~$\Reals^n$.
Suppose that there is a function $L:[n]\to\Reals_+$ such that
    \[
    		\biggl|\int_{W} H(\wvec + \yvec)\, d \yvec \biggr| \leq L(\abs J) 
    \]
 for every $J\subseteq [n]$ with $1\leq \abs{J} \leq n$, 
 every axis-aligned cuboid $W$ such that
 \begin{equation}\label{W_property}
 U_J(\tfrac13\rho) \subseteq 	W   \subseteq U_J(\tfrac43\rho).
 \end{equation}
  and every
\[  \wvec = (w_1, w_2,\ldots, w_n)\trans \in (B +  U_n(\tfrac13\rho)) \cap \Span\{\evec_j\st j\in J\}\] 
such that 
 \begin{itemize}
 \item[(i)] $\abs{w_j} > \Dfrac{\rho\, m_{jj}}{9 \max_{j\in[n]} m_{jj}}$  for all $j\in J$, where $m_{jj}$ are the diagonal entries of $M$,
  \item[(ii)]  $\tau_B(\wvec + U_J(2\rho)) \cap (B-\Babs ) \neq \emptyset$.
  \end{itemize}
    Then,
    \[
    		\biggl|\int_{B - U_n(\rho)} H(\xvec)\, d \xvec \biggr| \leq  
    		 	\int_{\Babs} \abs{H(\xvec)}\, d \xvec + \sum_{j=1}^{n}\, (n\infnorm{M})^j L(j).
    \]
 \end{thm}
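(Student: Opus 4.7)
The plan is to decompose $B - U_n(\rho) - \Babs$ into axis-aligned cuboidal pieces, apply the hypothesis to bound each piece's integral, and sum carefully, exploiting the $\calL$-periodicity of~$H$ and the flexibility in conditions~(i)--(ii) to absorb boundary effects. The starting point is the trivial split
\[
\biggl|\int_{B - U_n(\rho)} H(\xvec)\, d\xvec\biggr| \leq \int_\Babs |H(\xvec)|\, d\xvec + \biggl|\int_{B - U_n(\rho) - \Babs} H(\xvec)\, d\xvec\biggr|,
\]
which reduces the task to bounding the second integral by $\sum_{j=1}^n (n\infnorm{M})^j L(j)$.

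I would tile $\Reals^n$ by the axis-aligned cubes $Q_{\boldsymbol k} := \tfrac{2\rho}{3}\boldsymbol k + (-\tfrac13\rho,\tfrac13\rho)^n$ for $\boldsymbol k \in \mathbb{Z}^n$, so that every $\xvec \in Q_{\boldsymbol k}$ satisfies $\{j : |x_j| > \rho/3\} = J(\boldsymbol k) := \{j : k_j \neq 0\}$. Each $\xvec \in Q_{\boldsymbol k}$ decomposes as $\wvec' + \yvec$ where $\wvec'$ is supported on $J(\boldsymbol k)$ and $\yvec \in U_{J(\boldsymbol k)}(\rho/3)$; by the triangle inequality and the hypothesis applied with $W = U_{J(\boldsymbol k)}(\rho/3)$,
\[
\biggl|\int_{Q_{\boldsymbol k}} H\,d\xvec\biggr| \leq \int_{\wvec'} \biggl|\int_{U_{J(\boldsymbol k)}(\rho/3)} H(\wvec'+\yvec)\,d\yvec\biggr|\, d\wvec' \leq \bigl(\tfrac{2\rho}{3}\bigr)^{|J(\boldsymbol k)|} L(|J(\boldsymbol k)|).
\]
Condition~(i) holds because $|w'_j| \geq \rho/3 > \rho\, m_{jj}/(9\max_i m_{ii})$ using $m_{jj} \leq \max_i m_{ii}$, and condition~(ii) holds for relevant cubes because any point of $Q_{\boldsymbol k} \cap (B - U_n(\rho) - \Babs)$ lies within $\tau_B(\wvec' + U_{J(\boldsymbol k)}(2\rho))$ once the slack of the wide thickening $U_J(2\rho)$ absorbs the shift between $\wvec'$ and the point's true $J$-coordinates.

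For the counting, since $M$ satisfies~\eqref{M_property}, $B \subseteq U_n(\tfrac12\infnorm{M})$, so each $k_i \neq 0$ with $Q_{\boldsymbol k}$ meeting $B + U_n(\rho/3)$ ranges over $\tfrac32\infnorm{M}/\rho + O(1)$ integers. For fixed $J$ with $|J| = j$ there are thus $(3\infnorm{M}/(2\rho))^j(1+o(1))$ relevant cubes, and summing over the $\binom{n}{j} \leq n^j$ choices of~$J$ yields at most $(3n\infnorm{M}/(2\rho))^j$ cubes of type~$j$ in total. Combining with the per-cube bound $(2\rho/3)^j L(j)$ cancels the $\rho$-dependence and gives precisely $(n\infnorm{M})^j L(j)$ for each~$j$, summing over $j \in [1,n]$ to the claimed bound.

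The main obstacle is handling cubes that straddle $\partial B$ or whose intersection with $B - U_n(\rho) - \Babs$ is a proper (non-cuboidal) subset of the cube. For the first issue, the $\calL$-periodicity of $H$ lets such a cube be translated by an element of $\calL$ into a cube meeting $B + U_n(\rho/3)$; the hypothesis's allowance of $\wvec \in (B + U_n(\rho/3)) \cap \Span\{\evec_j : j \in J\}$ and the wide thickening $U_J(2\rho)$ in condition~(ii) provide exactly the slack needed to make this completion legal. For the second, only those $\vvec$-slices for which the hypothesis applies contribute nonzero bulk, while the residual ``trim'' near the boundary is absorbed into $\int_\Babs |H|$ via~\eqref{S_property}, which forces $\Babs$ to contain all $\xvec$ with too many large coordinates. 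Choosing $\zeta$ sufficiently small keeps $|J(\xvec)| \leq \zeta n$ throughout the relevant region, ensuring that the periodicity-based completion stays within the hypothesis's allowed scope.
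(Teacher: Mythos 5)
Your approach — tiling $\Reals^n$ with an axis-aligned grid of cubes of side $\tfrac23\rho$ and applying the hypothesis cube-by-cube — is genuinely different from the paper's, which builds overlapping sets $S_J = \tau_B(\varOmega_J' + U_J(\rho_J^+))$ indexed by $J\subseteq[n]$ and controls overlaps by inclusion–exclusion. Your per-cube arithmetic is sound: a cube with $|J(\boldsymbol k)|=j$ contributes at most $(\tfrac23\rho)^j L(j)$, and there are roughly $(\tfrac32\infnorm{M}/\rho)^j\binom nj \leq (\tfrac{3n\infnorm{M}}{2\rho})^j$ such cubes, so the product recovers $(n\infnorm{M})^j L(j)$ (up to a factor $(1+O(\rho/\infnorm M))^j$ from the ``$+O(1)$'' in the integer count, which already exceeds what the theorem states, though that slack is probably harmless in applications).

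The real gap is the handling of cubes that straddle the boundary of $B - U_n(\rho) - \Babs$. The hypothesis only bounds integrals over exact cuboids $\wvec + W$ with $U_J(\tfrac13\rho)\subseteq W\subseteq U_J(\tfrac43\rho)$; when $Q_{\boldsymbol k}$ meets $\Babs$, the region you actually need to integrate over is $Q_{\boldsymbol k}\setminus\Babs$, which is an arbitrary measurable set and not amenable to the hypothesis. Your statement that ``only those $\vvec$-slices for which the hypothesis applies contribute nonzero bulk'' is not an argument — $H$ is not zero off those slices — and the claim that the trim ``is absorbed into $\int_\Babs|H|$'' would double-count, since that term is already spent on the part of $B-U_n(\rho)$ lying in $\Babs$. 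There is also a localized problem: even for cubes fully inside $B - U_n(\rho) - \Babs$, your iterated integral applies the hypothesis to \emph{every} $\wvec'$ in the $J$-slab, but conditions (i), (ii) and $\wvec'\in B+U_n(\tfrac13\rho)$ are verified only for those $\wvec'$ corresponding to an actual point of $B-\Babs$ — a nearby $\wvec'$ in the same cube shares the same $\abs{w'_j}>\tfrac13\rho$ (so (i) is fine), but condition (ii) requires the $2\rho$-thickening of \emph{that specific} $J$-slice (with its own fixed $J$-coordinates) to meet $B-\Babs$, which you have not shown. The paper circumvents all of this by not attempting to tile $B-U_n(\rho)-\Babs$ directly: its $S_J$ sets consist of \emph{full} cuboids (parameterised continuously by $\zvec\in\varOmega_J'$ in $\Span\{\fvec_j\}$, each satisfying P2 by construction), and Lemma~\ref{S-lemma}(a) shows the complement $B-U_n(\tfrac13\rho)-S$ is already contained in $\Babs$, so no boundary-cube surgery is ever needed. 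To repair your argument you would have to reproduce something like this structure — define the relevant set of cubes so that (a) each is a full cuboid to which the hypothesis applies for every slice, and (b) the uncovered portion of $B-U_n(\rho)$ lies in $\Babs$ — and neither (a) nor (b) is established by what you have written.
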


Throughout this appendix, when we say that ``$\kappa$ is sufficiently small'', we
 mean that $\kappa= \kappa(n)$ and $\limsup_n \kappa(n)$ is bounded by a sufficiently small constant
 that depends only on the implicit constants in the bounds   $O(n^{-1})$  and $\Theta(1)$ for  elements of the matrices  $M$ and $M^{-1}$.
 Whenever we use  the notations $O(\,)$ and $\Theta(\,)$,  
 the implicit  constants also depend only on the implicit constants in the bounds    of \eqref{M_property}.

The proof of Theorem \ref{MishaMagic} relies on the six technical lemmas given in the next subsection. 

\nicebreak
\subsection{Six lemmas}\label{AppendixB1}

For a matrix $M$ and $J\subseteq [n]$, recall the definition of $M_J$ from
Appendix~\ref{AppendixA}.

\begin{lemma}\label{l:MJdet}
For any $M,J$, the absolute value of
$\abs{M_J}$ is at most $\infnorm{M}^{\abs{J}}$.
\end{lemma}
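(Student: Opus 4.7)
The plan is to reduce $\abs{M_J}$ to the determinant of a principal submatrix of $M$, and then invoke a standard spectral bound. First I would observe that, by definition of $M_J$, each column indexed by $j\notin J$ equals the standard basis vector $\evec_j$. Performing Laplace expansion along such a column leaves only the $(j,j)$-entry, which equals $1$ and has sign $(-1)^{j+j}=+1$, so the determinant reduces to the minor obtained by deleting row $j$ and column $j$. Iterating this elimination over every $j\notin J$ leaves exactly the principal submatrix $M[J,J]$ of $M$ with rows and columns indexed by $J$, giving
\[
   \abs{\det M_J} = \abs{\det M[J,J]}.
\]

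Next I would apply the elementary inequality $\abs{\det A} \leq \rho(A)^k \leq \infnorm{A}^k$ valid for any $k\times k$ matrix $A$, where $\rho(A)$ denotes the spectral radius and $\infnorm{\cdot}$ is the matrix norm induced by the vector $\ell_\infty$-norm (equivalently, the maximum absolute row sum). This holds because $\abs{\det A}$ is the product of the absolute values of the eigenvalues of $A$, which is at most $\rho(A)^k$, and the spectral radius is bounded by every induced norm. Since each row of $M[J,J]$ is obtained by restricting the corresponding row of $M$ to the coordinates in $J$, its $\ell_1$-norm cannot exceed that of the full row, and therefore $\infnorm{M[J,J]} \leq \infnorm{M}$.

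Putting these two steps together yields
\[
   \abs{\det M_J} = \abs{\det M[J,J]} \leq \infnorm{M[J,J]}^{\abs J} \leq \infnorm{M}^{\abs J},
\]
which is the claim. I do not foresee any substantive obstacle: the lemma is a purely linear-algebraic fact used as a technical tool in the appendix, and the argument is a one-line cofactor expansion combined with the spectral-radius inequality.
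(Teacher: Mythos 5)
Your proof is correct and follows essentially the same route as the paper: reduce $\abs{M_J}$ to the determinant of the principal submatrix $M[J,J]$ (the paper invokes block triangularity, you use cofactor expansion, which is the same observation), then bound the eigenvalues of that submatrix by $\infnorm{M}$ (the paper cites Gershgorin's theorem, you cite the spectral-radius-is-dominated-by-induced-norm fact, which amounts to the same estimate). No gaps.
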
  
 \begin{proof}
    Since $M_J$ is block triangular, $\abs{M_J}$ equals the determinant
    of a $\abs{J}\times\abs{J}$ submatrix of $M$.
    By Gershgorin's theorem, all the eigenvalues of the submatrix
    have absolute value at most $\infnorm{M}$. The bound follows.
 \end{proof}
 
   \begin{lemma}\label{l:pi}
        For a set $S\in \Reals^n$ define
        $\diam(S) = \sup_{\xvec,\yvec \in S} \infnorm{\xvec-\yvec}.$
      \begin{itemize}\itemsep=0pt
      			\item[(a)]  If  $\diam(S) < \dfrac{1}{\infnorm{M^{-1}}}$ then 
      			$\tau_B$ maps $S$ into $B$ injectively.
      		        \item[(b)]  If  $\diam(S) < \dfrac{1}{2\infnorm{M^{-1}}}$ 
		        and  $S \cap B \cap \Span\{\fvec_j \st j \in J\} \neq  \emptyset$ for some $J\in [n]$  then 
      		        for any $\xvec \in S$, we have 
      		        \[
      		        	\xvec  - \tau_B(\xvec) = \sum_{j\in J} k_j \fvec_j, \qquad \text{ where } \qquad k_j \in\{-1,0,1\}.
      		        \]
       	\item[(c)] Let $S_1,\ldots,  S_m \subseteq \Reals^n$ be   sets such that 
    $\diam(S_k) < \dfrac{1}{2 \infnorm{M^{-1}}}$
for $k =1,\ldots,m$.  Then, 
\[
	\bigcap_{k=1}^m \tau_B(S_k) 
	=\tau_B \biggl(\,\bigcap_{k=1}^m\, (\xivec_k + S_k)\biggr)
\]	
for some $\xivec_1,\ldots, \xivec_m\in \calL $, which are determined uniquely up 
to uniform translation. 
   \end{itemize}
   \end{lemma}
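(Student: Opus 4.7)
The plan is to derive all three parts from a single underlying principle: whenever two points share the same image under $\tau_B$, their difference lies in the lattice $\calL$, hence equals $\sum_j k_j\fvec_j$ for integers $k_j$, and the bound $\infnorm{M^{-1}}$ converts the diameter hypothesis into a bound on $\max_j\abs{k_j}$.  For part~(a), if $\xvec,\yvec\in S$ satisfy $\tau_B(\xvec)=\tau_B(\yvec)$, write $\xvec-\yvec=\sum_j k_j\fvec_j$ with $(k_1,\ldots,k_n)\trans = M^{-1}(\xvec-\yvec)$, so that
\[
   \max_j\abs{k_j} \leq \infnorm{M^{-1}}\,\infnorm{\xvec-\yvec} \leq \infnorm{M^{-1}}\,\diam(S) < 1,
\]
forcing every $k_j=0$ and hence $\xvec=\yvec$.

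For part~(b), pick any $\yvec\in S\cap B\cap \Span\{\fvec_j\st j\in J\}$ and expand $\yvec=\sum_j t_j\fvec_j$, $\tau_B(\xvec)=\sum_j s_j\fvec_j$, $\xvec-\tau_B(\xvec)=\sum_j k_j\fvec_j$, with $s_j,t_j\in(-\tfrac12,\tfrac12]$, $k_j\in\mathbb{Z}$, and $t_j=0$ for $j\notin J$.  The identity $M^{-1}(\xvec-\yvec)=(k_j+s_j-t_j)_j$ together with $\infnorm{M^{-1}(\xvec-\yvec)}<\tfrac12$ forces $k_j=0$ for $j\notin J$ (any $k_j\ne 0$ with $s_j\in(-\tfrac12,\tfrac12]$ yields $\abs{k_j+s_j}\geq\tfrac12$) and, via $s_j-t_j\in(-1,1)$, yields $k_j\in\{-1,0,1\}$ for $j\in J$.

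For part~(c), I would first dispose of the empty case: if $\bigcap_k\tau_B(S_k)=\emptyset$, then any $\zvec\in\bigcap_k(\xivec_k+S_k)$ would project into $\bigcap_k\tau_B(S_k)$, so $\bigcap_k(\xivec_k+S_k)=\emptyset$ for every choice of $\{\xivec_k\}$ and the identity is vacuous.  Otherwise, fix $\xvec_0\in\bigcap_k\tau_B(S_k)$, pick $\wvec_k^{(0)}\in S_k$ with $\tau_B(\wvec_k^{(0)})=\xvec_0$, and set $\xivec_k:=\xvec_0-\wvec_k^{(0)}\in\calL$, so that $\xvec_0\in\xivec_k+S_k$ for every $k$.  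The inclusion $\tau_B(\bigcap_k(\xivec_k+S_k))\subseteq\bigcap_k\tau_B(S_k)$ is immediate from $\tau_B(\xivec_k+S_k)=\tau_B(S_k)$.  For the reverse inclusion, given $\xvec\in\bigcap_k\tau_B(S_k)$, part~(a) applied to each translate $\xivec_k+S_k$ provides a unique $\zvec_k\in\xivec_k+S_k$ with $\tau_B(\zvec_k)=\xvec$; since both $\zvec_k$ and $\xvec_0$ lie in $\xivec_k+S_k$, the triangle inequality gives $\infnorm{\zvec_k-\zvec_{k'}}<\tfrac{1}{\infnorm{M^{-1}}}$, and a second application of part~(a), using $\tau_B(\zvec_k)=\tau_B(\zvec_{k'})$, forces $\zvec_k=\zvec_{k'}$; the common value lies in $\bigcap_k(\xivec_k+S_k)$ and projects to $\xvec$.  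Uniqueness up to uniform translation follows from the same rigidity applied to two valid systems $\{\xivec_k\}$ and $\{\xivec_k'\}$: matched preimages $\zvec-\xivec_k\in S_k$ and $\zvec'-\xivec_k'\in S_k$ must coincide, so $\xivec_k-\xivec_k'=\zvec-\zvec'$ is independent of $k$.

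The hard part is the existence step in $(\subseteq)$ of part~(c): one must ensure that a single choice of translates $\{\xivec_k\}$ simultaneously realises \emph{every} point of $\bigcap_k\tau_B(S_k)$ as $\tau_B$ of a common preimage, and not merely the reference $\xvec_0$.  This is exactly the purpose of the strengthened bound $\diam(S_k)<\tfrac{1}{2\infnorm{M^{-1}}}$ (as opposed to the weaker $\tfrac{1}{\infnorm{M^{-1}}}$ of part~(a)): it keeps pairwise distances $\infnorm{\zvec_k-\zvec_{k'}}$ within the range where the lattice rigidity of part~(a) can synchronise preimages across different $k$.
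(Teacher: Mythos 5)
Your proof is correct and follows essentially the same approach as the paper's: all three parts rest on the same lattice-rigidity observation that a nonzero element of $\calL$ has $\infnorm{M^{-1}\cdot}\geq 1$, and your part (c) (fixing a reference point $\xvec_0$ and synchronising preimages across $k$) is a slightly more explicit unwinding of the paper's terser argument that $\xivec_j-\xivec_k$ is independent of the chosen point $\avec$.
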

 \begin{proof}
 	Note that $\infnorm{\xivec} < \frac{1}{\infnorm{M^{-1}}}$
 	implies $\infnorm{M^{-1}\xivec} <  1$. 
 	If, in addition, $\xivec \in \calL $, which is 
 	equivalent to saying that  all components of $M^{-1}\xivec$  are integers, then we can conclude 
 	$\xivec =(0,\ldots,0)$. 
 	Part (a) follows immediately.
 	
 	For (b) we can find some  $\sum_{j\in J} t_j \fvec_j \in S$ that $t_j \in (-\frac12,\frac12]$ for $j\in J$.   
 	Observe  that   
 	\[
 	 \infNorm{M^{-1}(\xvec  - \sum_{j\in J} t_j \fvec_j))} \leq 
 	  \infnorm{M^{-1}} \diam(S)  <   \dfrac12.
 	 \] 
 	Therefore, coefficients of $\xvec$ in the basis $\{\fvec_j\}$ 
 	 lie in $(-1,1)$ for $j\in J$ and lie in $(-\frac12, \frac12)$ for $j \notin J$, which implies (b).  
 
 	 We proceed to (c). Consider any point $\avec$  from  
 	 $\bigcap_{k=1}^m \tau_B(S_k) $.  Then we can find $\xivec_k \in \calL $
 	 and $\xvec_k \in S_k$ such that  $\avec = \xvec_k + \xivec_k $ for all $k$.  
 	 It is sufficient to prove that  $ \xivec_j  -  \xivec_k $ does not
	 depend on $\avec$ for all $j$ and $k$. Indeed, 
 	 if $\avec' = \xvec_j' + \xivec_j'$ and  $\avec' = \xvec_k' + \xivec_k'$  for another point 
 	 $\avec'\in \bigcap_{j=1}^m \tau_B(S_j) $ then 
 	  \[
 	  		\infnorm{\xivec_j - \xivec_k  -  \xivec_j' + \xivec_k'}
 	  		\leq  \infnorm{\xvec_j - \xvec_k}  + \infnorm{\xvec_j' - \xvec_k'}
 	  		\leq  \frac{1}{\infnorm{M^{-1}}}.
 	  \]
 	  Since $\xivec_j - \xivec_k  -  \xivec_j' + \xivec_k' \in \calL $, we
 	   conclude $\xivec_j - \xivec_k   =   \xivec_j' - \xivec_k'$.  Part (c) follows by part~(a).
 \end{proof}

Let $\Babs$ be the region defined in \eqref{S_property}. 
Note that for any $J\subseteq [n]$ and $\xvec \in B-\Babs$, we have  
\begin{equation}\label{crit_bound}
\sum_{j\in J} \,\abs{x_j} \leq  \sum_{j \in J}\, \(\dfrac13\rho+\abs{x_j}\One _{\abs{x_j}>\rho/3}\) 
\leq  \dfrac13(\abs{J} + \zeta n) \rho.
\end{equation}

     For $J\subseteq [n]$ with $\abs{J} \leq \zeta n$,
   let 
   \[
    \rho_J^+=  \rho \Bigl(1 - \Dfrac{\abs{J}}{2\zeta n}\Bigr) 
       \qquad \text{ and  } \qquad 
    \rho_J^-=  \Dfrac{\rho}{6\max_{j\in [n]} m_{jj}}  \Bigl(1  + \Dfrac{\abs{J}}{2\zeta n}\Bigr)  
   \]
   where $m_{jj}$ are the diagonal entries of $M$.  
        Define  $\varOmega_J$ as the set of all points $\xvec$ satisfying  the following properties:
      \begin{itemize}\itemsep=0pt
   		\item[P1.] $\xvec = (x_1,\ldots, x_n)= \sum_{j \in J }t_j \fvec_j  \in B$, 
		          that is, $t_j \in (-\frac12,\frac12]$ for $j\in J$. Define $t_j=0$
   		for $j\notin J$.
  		\item[P2.] $\tau_B(\xvec + U_J(\rho_J^+))$ contains a point from  $B-\Babs $. 
   	   \item [P3.] At least  one of the following  holds:
   \begin{itemize}
   	\item[(i)] $\min_{j\in J} \abs{x_j} > \frac13\rho$ 
   	\item [(ii)]  $\min_{j \in J} \abs{t_j} >  \rho_J^-$ and  $\infnorm{\xvec} > \frac43\rho$.
   \end{itemize} 	
  \end{itemize}

   \begin{lemma}\label{l:oneU}
   	 Assume $M$ and $M^{-1}$ satisfy  \eqref{M_property}. Then 
   	 for any sufficiently small $\zeta, \rho$  the following  holds. 
       Take  some $J \subseteq [n]$ with  $\abs{J}\leq \zeta n$ and $\xvec=  \sum_{j \in J }t_j \fvec_j$. 
 	  \begin{itemize}\itemsep=0pt
 	       \item[(a)]  If  $\xvec$ satisfies P1, P2  for  $J$  then  $\sum_{j \in J} \abs{t_j} =  O(n) \zeta  \rho$
	       and $\abs{x_j}=O(\zeta\rho)$ for $j\notin J$.
  	       \item[(b)] If $\xvec \in \varOmega_J$ we have $\min_{j \in J} \abs{t_j} >  \rho_J^-$. 
  	        \item[(c)] If $\yvec= (y_1,\ldots,y_n) \in \tau_B( \varOmega_J + U_J(\rho_J^+))$  
	            then $ \abs{y_j} > \Dfrac{\rho\,m_{jj}}{9\max_{j\in [n]}m_{jj}}$ for all $j \in J$.
  	        \end{itemize}
  \end{lemma}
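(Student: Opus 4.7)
I will work in the $\fvec$-basis throughout: the $\fvec$-coordinates of $\xvec$ are $t_1,\ldots,t_n$ (with $t_j=0$ for $j\notin J$ by hypothesis), and Lemma~\ref{l:pi}(b) controls the integer shift hidden inside $\tau_B$. The standing assumption $\abs{J}\leq\zeta n$ together with~\eqref{M_property} gives $\infnorm{M},\infnorm{M^{-1}},\onenorm{M^{-1}}=O(1)$ and, writing $(M^{-1})_{jk}=m'_{jk}$, the identity $m_{jj}m'_{jj}=1+O(1/n)$ (from expanding $MM^{-1}=I$); I use these freely.

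For part~(a), P2 furnishes $\yvec\in U_J(\rho_J^+)$ and $\zvec\in B-\Babs$ with $\xvec+\yvec-\zvec=\xivec\in\calL$. Applying Lemma~\ref{l:pi}(b) to $\{\xvec\}+U_J(\rho_J^+)$, which has diameter $O(\rho)$, forces $\xivec=\sum_{j\in J}k_j\fvec_j$ with $k_j\in\{-1,0,1\}$. Set $J^+:=\{j\in J\st k_j\ne 0\}$. \emph{The main step is to prove $\abs{J^+}=O(\zeta n\rho)$}: the naive bound $\abs{J^+}\leq\zeta n$ is off by a factor~$\rho$. Comparing $\fvec$-coordinates, any $j\in J^+$ forces $t_j$ to lie within $\infnorm{M^{-1}}\rho_J^+=O(\rho)$ of $\pm\tfrac12$, whence the standard coordinate $z_j=x_j-\xivec_j=m_{jj}(t_j-k_j)+O(\zeta)$ has absolute value $\Theta(1)>\rho/3$. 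Since $\zvec\in B-\Babs$ imposes $\sum_j\abs{z_j}\One_{\abs{z_j}>\rho/3}\leq\tfrac13\zeta n\rho$ and each $j\in J^+$ contributes at least a positive constant to this sum, we conclude $\abs{J^+}=O(\zeta n\rho)$.

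With $\abs{J^+}$ controlled, $\sum_{j\in J}\abs{t_j}=O(\zeta n\rho)$ follows by splitting: $\sum_{J^+}\abs{t_j}\leq\tfrac12\abs{J^+}$, while on $J\setminus J^+$ we have $t_j=\hat z_j-\hat y_j$ where $\hat z_j=(M^{-1}\zvec)_j$ and $\hat y_j=(M^{-1}\yvec)_j$; the bound $\abs{\hat z_j}\leq O(\abs{z_j})+O(1/n)\sum_k\abs{z_k}$ combined with $\sum_k\abs{z_k}\leq\tfrac13\zeta n\rho+\tfrac{n}{3}\rho=O(n\rho)$ and $\sum_{j\in J}\abs{z_j}=O(\zeta n\rho)$ (from splitting the sum at $\rho/3$) gives the desired estimate. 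The claim $\abs{x_j}=O(\zeta\rho)$ for $j\notin J$ is then immediate from $x_j=\sum_{i\in J}m_{ji}t_i$ and $m_{ji}=O(1/n)$.

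Parts~(b) and~(c) are short consequences of part~(a). Part~(b) is trivial under P3(ii); under P3(i), part~(a) gives $\abs{x_j}=O(\zeta\rho)$ for $j\notin J$ and $\sum_{i\in J}\abs{x_i}=O(\zeta n\rho)$, so $t_j=m'_{jj}x_j+\sum_{i\ne j}m'_{ji}x_i$ has leading term of magnitude at least $m'_{jj}\rho/3\geq\rho/(3\max_i m_{ii})+O(\rho/n)$ and off-diagonal correction $O(\zeta\rho)$, comfortably exceeding $\rho_J^-\leq\rho/(4\max_i m_{ii})$ for small $\zeta$. For part~(c), write $\yvec=\tau_B(\xvec+\wvec)$ with $\wvec\in U_J(\rho_J^+)$; Lemma~\ref{l:pi}(b) applied to $\{\xvec\}+U_J(\rho_J^+)$ yields $\xvec+\wvec-\yvec=\xivec=\sum_{j\in J}k_j\fvec_j$ with $k_j\in\{-1,0,1\}$, and any $k_j\ne 0$ forces $\abs{t_j}=\Theta(1)$, so by part~(a) there are at most $O(\zeta n\rho)$ such indices. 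For $j\in J$, $y_j=x_j-\xivec_j=m_{jj}(t_j-k_j)+O(\zeta\rho)$, where $\abs{t_j-k_j}>\rho_J^-$ by part~(b) when $k_j=0$ and $\abs{t_j-k_j}\geq\tfrac12$ when $k_j=\pm 1$; hence $\abs{y_j}>m_{jj}\rho_J^--O(\zeta\rho)>m_{jj}\rho/(9\max_i m_{ii})$ for sufficiently small $\zeta,\rho$.
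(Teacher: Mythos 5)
Your proof is correct, and rests on the same three ingredients as the paper's: Lemma~\ref{l:pi}(b) to control the lattice shift, the bound~\eqref{crit_bound} on coordinate sums for points of $B-\Babs$, and the near-diagonal structure of $M,M^{-1}$. Where you genuinely diverge is in part~(a). You isolate the ``wrap-around'' index set $J^+=\{j\in J\st k_j\ne0\}$ and prove the separate estimate $\abs{J^+}=O(\zeta n\rho)$ by arguing that each such index forces $\abs{z_j}=\Theta(1)$ and hence contributes a constant to the $\Babs$-criterion sum for $\zvec$; you then split $\sum_{j\in J}\abs{t_j}$ over $J^+$ and $J\setminus J^+$. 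The paper avoids this detour entirely by noting that the $\fvec$-coordinate $\medtilde t_j$ of the reference point $\avec\in B-\Babs$ satisfies $\Abs{\abs{t_j}-\abs{\medtilde t_j}}=O(\rho_J^+)$ even when $k_j\ne0$ (a wrap-around moves $t_j$ from near $\pm\frac12$ to near $\mp\frac12$, preserving $\abs{t_j}$), so that $\sum_{j\in J}\abs{t_j}$ is bounded directly by $\sum_{j\in J}\abs{\medtilde t_j}+\abs{J}\rho_J^+$ and then by the $M^{-1}\avec$ computation. Your route is a bit longer but makes the wrap-around bookkeeping more explicit, and the $\abs{J^+}=O(\zeta n\rho)$ bound is a reusable observation in its own right. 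Part~(b) is the contrapositive of the paper's argument (you pass $\xvec\to\tvec$ via $M^{-1}$, the paper passes $\tvec\to\xvec$ via $M$ and derives a contradiction), and part~(c) matches the paper closely, with the number of nonzero $k_j$ bounded by $O(\zeta n\rho)$ again using part~(a). Both versions are sound.
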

  \begin{proof}
   	By P2, we can find a point $\avec$ such that
   	\[
\avec = (a_1,\ldots,a_n)\trans  = \sum_{j=1}^n \medtilde{t}_j \fvec_j \in \tau_B(\xvec + U_J(\rho_J^+)) \cap (B-\Babs ).
\]  
  Note that $\Abs{\abs{t_j}-\abs{\medtilde t_j}}\leq \rho_J^+$.
  Since $(\medtilde t_1,\ldots,\medtilde t_n)\trans =M^{-1}\avec$,
  property P1, assumption \eqref{M_property} for $M^{-1}$,
  and the bound \eqref{crit_bound} for sets $J$ and $[n]$, imply  that
 \begin{align*}
  \sum_{j\in J} \abs{t_j}  &\leq    
  \sum_{j\in J} ( \abs{\medtilde{t}_j}   +   \rho_J^+ )  
  \\
  &\leq \sum_{j\in J} \( \Theta(1)\abs{a_j}+O(n^{-1})\onenorm{\avec} 
    + \rho_J^+\) = O(n)\zeta\rho.
\end{align*}
The bound on $\abs{x_j}$ for $j\notin J$ follows from Lemma~\ref{l:Mtrans}.
This completes the proof of (a).

For (b) note that if P3(ii) holds then we are finished. Otherwise, we may assume P3(i)  since $\xvec \in \varOmega_J$.
Note that $\xvec=M(t_1,\ldots,t_n)\trans $.
 If $\abs{t_j} \leq \rho_J^{-}$ for some $j\in J$  then, by assumption \eqref{M_property} for $M$ and  part (a),  we have
\[
  	\abs{x_j} \leq \abs{m_{jj} t_j}  +  O(n^{-1}) \sum_{j\in J}\, \abs{t_j} \leq  m_{jj} \rho_J^{-}+  O(1)\zeta \rho < \tfrac13\rho
\]
 for sufficiently small $\zeta$.  This  contradicts P3(i)  and thus proves (b).

 For (c) assume  $\xvec\in \varOmega_J$ and $\yvec \in \tau_B(\xvec +U_J(\rho_J^+))$, we have that 
$\yvec \in \xvec +\xivec +U_J(\rho_J^+)$ for some $\xivec  =  (\xi_1,\ldots,\xi_n) \in \calL $.
Then we have $y_j = x_j +  \xi_j$ for $j \in J$.
Note that
\[
      \diam\(\xvec +U_J(\rho_J^+)\) \leq 2\rho_J^+ 
      \leq 2\rho \leq \frac{1}{2\infnorm{M^{-1}}}
\]
for sufficiently small $\rho$.
Using Lemma \ref{l:pi}(b), we find that 
$\xivec = \sum_{j\in J} k_j \fvec_j $, where $k_j \in \{-1,0,1\}$.
Define $k_j=0$ for $j\notin J$.
For $j\in J$, if $k_j \neq 0$ then  
$\abs{t_j + k_j }\geq \frac12>\rho_J^-$ by P1, while if $k_j=0$,
$\abs{t_j + k_j }>\rho_J^-$ by part~(b).
Observe also that   
\[
      \sum_{j\in [n]}\,\abs{t_j + k_j } \leq \sum_{j\in J}\, ( \abs{t_j}  + O(\rho))  = O(n)\zeta \rho,
\]
where we have used the fact that, when $\rho$ is small, $k_j\ne 0$ implies that
$t_j=\pm\frac12+O(\rho)$.
Using assumption \eqref{M_property} and $(\xvec+\xivec)\trans  = M(t_1+k_1,\ldots,t_n+k_n)\trans $,
 we can bound 
\begin{align*}
	\abs{y_j}  = \abs{x_j+\xi_j}
	  &=\biggl| m_{jj} (t_j+k_j)  
	-  O(n^{-1}) \sum_{j\in [n]}	\abs{t_j + k_j }\biggr| \\
	&\geq
	  m_{jj}  \rho_J^-  -  O(\zeta\rho) > 
	\Dfrac{\rho\, m_{jj}}{9\max_{j \in [n]} m_{jj} }. 
\end{align*}	
  This completes the proof.
   \end{proof}

 We next derive  a useful property of the intersection of two regions of the form 
  $\xvec + U_J(\rho_J^+)$ in the  lemma stated below.
  
  \begin{lemma}\label{l:inter}
  	 Assume $M$ and $M^{-1}$ satisfy  \eqref{M_property}. 
    Then, for sufficiently small $\zeta, \rho$  the following  holds.
      Suppose $\abs{J}, \abs{K}\leq\zeta n$ and $\tau_B(\xvec + U_J(\rho_J^+))\cap \tau_B(\yvec + U_K(\rho_K^+)) \neq \emptyset$ 
      for some  $\xvec \in \varOmega_J$ and $\yvec \in \varOmega_K.$    	 
  	 Then there is $\zvec\in  \varOmega_{J\cap K}$  such that 
  	 \[
  	 		\tau_B(\xvec + U_J(\rho_J^+)) \subseteq \tau_B(\zvec + U_{J\cap K}(\rho_{J\cap K}^+)). 
  	 \]
  \end{lemma}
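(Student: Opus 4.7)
My plan is an explicit construction of $\zvec$ followed by a careful verification of each clause of $\varOmega_{J\cap K}$ and the claimed inclusion.  First, unpack the intersection hypothesis to obtain $\uvec\in U_J(\rho_J^+)$, $\wvec\in U_K(\rho_K^+)$, and $\xivec=\sum_j n_j\fvec_j\in\calL$ with $\xvec+\uvec=\yvec+\wvec+\xivec$.  Writing $\xvec=\sum_{j\in J}t_j\fvec_j$ and $\yvec=\sum_{k\in K}s_k\fvec_k$, this reduces to $M\boldsymbol{\tau}=\wvec-\uvec$ in the lattice basis, where $\tau_j=t_j[j\in J]-s_j[j\in K]-n_j$ and $(\wvec-\uvec)_k=0$ on $J\cap K$ (while $|(\wvec-\uvec)_k|\leq\rho_K^+$ on $J\setminus K$, $\leq\rho_J^+$ on $K\setminus J$, and $\leq\rho_J^++\rho_K^+$ on $[n]\setminus(J\cup K)$).

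Next, a lattice-coordinate analysis.  Applying $M^{-1}$ and using the structure guaranteed by~\eqref{M_property}, one obtains $\infnorm{\boldsymbol{\tau}}=O(\rho)$; since $t_j,s_j\in(-\tfrac12,\tfrac12]$, integrality of $n_j$ for sufficiently small $\rho$ forces $n_j=0$ whenever $j\notin J\cap K$, so $|t_j|=O(\rho)$ for $j\in J\setminus K$.  Exploiting the vanishing of $\wvec-\uvec$ on $J\cap K$ in the expansion $\tau_j=(M^{-1})_{jj}w_j+\sum_{k\ne j}(M^{-1})_{jk}(\wvec-\uvec)_k$ refines this to $|t_j|\leq\rho_K^+/m_{jj}+O(\zeta\rho)$ for $j\in J\setminus K$, which is strictly less than $\rho_{J\cap K}^+$ for small $\zeta$.

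Now construct $\zvec:=\sum_{j\in J\cap K}t'_j\fvec_j$ by solving the linear system $\sum_{j'\in J\cap K}t'_{j'}m_{kj'}=\sum_{j'\in J}t_{j'}m_{kj'}$ for $k\in J\cap K$, i.e., forcing $z_k=x_k$ in standard basis on $J\cap K$.  The principal submatrix $(m_{kj})_{k,j\in J\cap K}$ remains $\Theta(1)$-diagonal and $O(1/n)$-off-diagonal by~\eqref{M_property}, so for $\zeta$ small enough it is invertible with inverse of the same form; applied to the right-hand side of magnitude $O(\rho|J\setminus K|/n)$, this yields $|t'_j-t_j|=O(\rho|J\setminus K|/n)$, after reducing modulo~$1$ into $(-\tfrac12,\tfrac12]$ if necessary (accounted for by an additional lattice translate, which does not affect the $\tau_B$-image).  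The inclusion $\tau_B(\xvec+U_J(\rho_J^+))\subseteq\tau_B(\zvec+U_{J\cap K}(\rho_{J\cap K}^+))$ then follows coordinate-wise: $J\cap K$-coordinates vanish by construction; $J\setminus K$-coordinates of $\xvec-\zvec$ are bounded by $\rho_K^+\leq\rho_{J\cap K}^+$ via the refined bound above; and for $k\notin J$, the $O(\rho|J\setminus K|/n)$ shift from $\xvec-\zvec$ combines with the free $\rho_J^+$ margin to stay within $\rho_{J\cap K}^+$, using the slack $\rho_{J\cap K}^+-\rho_J^+=\rho|J\setminus K|/(2\zeta n)$ and small $\zeta$.

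Verification of $\zvec\in\varOmega_{J\cap K}$: P1 is immediate from $|t'_j|\leq\tfrac12$; P2 follows from the established inclusion combined with P2 for $\xvec$; for P3 the correction $|t'_j-t_j|=O(\rho|J\setminus K|/n)$ is dominated by $\rho_J^--\rho_{J\cap K}^-=\Theta(\rho|J\setminus K|/(\zeta n))$, so $|t'_j|>\rho_{J\cap K}^-$ is preserved, while the standard-basis clause of P3 transfers from $\xvec$ to $\zvec$ since $z_k=x_k$ on $J\cap K$ and the refined bound on $t_j$ for $j\in J\setminus K$ together with Lemma~\ref{l:oneU}(a) forces any $\infnorm{\xvec}$-attaining coordinate to lie in $J\cap K$.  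The main obstacle is the thin geometric margin $\rho|J\setminus K|/(2\zeta n)$, which shrinks with $|J\setminus K|$; achieving the sharp correction $|t'_j-t_j|=O(\rho|J\setminus K|/n)$, rather than the naive $O(\rho)$, is essential and requires fully exploiting the $O(1/n)$ off-diagonal decay of $M$ and its principal submatrices throughout the analysis.
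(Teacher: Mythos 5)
Your construction of $\zvec$ is the same as the paper's: the unique element of $\Span\{\fvec_j : j\in J\cap K\}$ whose standard coordinates match $\xvec$ on $J\cap K$ (reduced by $\tau_B$). The verifications of the $J\cap K$-coordinates and of the $[n]\setminus J$-coordinates of $\xvec-\zvec$ in the inclusion claim, and the P1/P3 checks, are essentially in line with the paper.

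However, there is a genuine gap in the $J\setminus K$-coordinate bound, which is the heart of the inclusion.  You want $\abs{(\xvec-\zvec)_j}\leq\rho_{J\cap K}^+$ for $j\in J\setminus K$, and you claim this is ``bounded by $\rho_K^+$'' via the refined lattice-coefficient bound $\abs{t_j}\leq\rho_K^+/m_{jj}+O(\zeta\rho)$.  Two problems.  First, the derivation of that refined bound implicitly uses $(M^{-1})_{jj}=1/m_{jj}$, which from~\eqref{M_property} is only $1/m_{jj}+O(n^{-1})$ — minor, but worth stating.  Second, and more seriously, the refined bound on the lattice coefficient $t_j$ does not convert to the needed bound on the standard coordinate.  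Since $(\xvec-\zvec)_j = m_{jj}t_j + O(\zeta\rho)$, your bound yields $\abs{(\xvec-\zvec)_j}\leq\rho_K^+ + O(\zeta\rho)$, where the $O(\zeta\rho)$ term comes from the off-diagonal sum $\sum_{k\neq j}(M^{-1})_{jk}(\wvec-\uvec)_k$ and does not shrink with $\abs{K\setminus J}$.  The available slack is $\rho_{J\cap K}^+-\rho_K^+ = \rho\,\abs{K\setminus J}/(2\zeta n)$, which vanishes when $K\subsetneq J$.  So in that case (where $J\neq K$ but $J\cap K=K$), you need $\abs{(\xvec-\zvec)_j}\leq\rho_K^+$ \emph{exactly}, and your argument does not deliver it.  The paper avoids the detour through $t_j$: it writes $\xvec-\xivec-\yvec=\wvec-\wvec'$ (its equation~\eqref{xyww}) and reads off, directly from $\xvec-\xivec-\yvec\in U_J(\rho_J^+)+U_K(\rho_K^+)$, the exact coordinate-wise bound $\abs{w_j-w_j'}\leq\rho_K^+$ for $j\in J\setminus K$; the remaining error $\abs{w_j'}$ is then $O(\rho\,\abs{K\setminus J}/n)$, which matches the available slack term for term (and is identically zero when $K\subseteq J$).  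You should replace your refined $t_j$-bound by a direct bound on the standard coordinate $w_j=(\xvec-\zvec)_j$ along those lines.
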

  
  \begin{proof}
  For $J=K$, the lemma holds trivially with $\zvec=\xvec$, so assume
  that $J\ne K$.
  
  	Take $\xivec  =(\xi_1,\ldots,\xi_n)\in \calL $ such that 
  	$\(\xvec + U_J(\rho_J^+) \)  \cap  \(\xivec +\yvec + U_K(\rho_K^+)\) \neq \emptyset$ 
  	which is actually unique by Lemma \ref{l:pi}(c).
  	 Let $\xvec =\sum_{j\in J} t_j \fvec_j $, $\yvec = \sum_{j\in K} t_j'\fvec_j$.
  	Observing  $\xivec +\yvec \in \xvec + U_J(\rho_J^+)+ U_K(\rho_K^+)$
  	and using Lemma \ref{l:pi}(b), we find that  $\xivec = \sum_{j\in J} k_j \fvec_j$, where
  	$k_j  \in \{-1,0,1\}$ for all $j \in J$.  
  	Similarly, $\xivec\in\Span\(\{\fvec_j\st j\in K\}\)$
  	and so $\xivec\in\Span\(\{\fvec_j\st j\in J\cap K\}\)$.
       Observe that 
       \begin{equation}\label{xxiy}
       	\xvec - \xivec - \yvec  
       	 \in U_{J}(\rho_J^+) + U_{K}(\rho_K^+) \subseteq U_{J\cap K}(\rho_J^++\rho_K^+).  
       \end{equation}
       
  Then,
       using \eqref{M_property} for $M^{-1}$, we get that $t_j = O(\rho)$ for $j\in J- K$
       and $t_j' = O(\rho)$ for $j\in K- J$.
       By Lemma~\ref{l:Mtrans}, we can expand
       \[ 
       \sum_{j\in J-K} t_j \fvec_j = \vvec 	+ \wvec
       \qquad\text{ and }\qquad 
          	\sum_{j\in K-J} t_j' \fvec_j = \vvec'
              	+ \wvec',
       \]
       where $\vvec,\vvec'\in\Span\(\{\fvec_j\st j\in J\cap K\}\)$
       and $\wvec = (w_1,\ldots,w_n)\trans $, $\wvec' = (w_1',\ldots,w_n')\trans $
           are  such that $w_j = w_j' =0$ for all $j\in J \cap K$.
Since $\xvec-\xivec-\yvec\in\Span(\{ \evec_j\st j\notin J\cap K\})$
by~\eqref{xxiy},
\begin{equation}\label{xyww}
      \xvec-\xivec-\yvec = \vvec + \wvec-\vvec' - \wvec'
        + \sum_{j\in J\cap K} (t_j-t'_j-k_j)\fvec_j 
        = \wvec - \wvec'.
\end{equation}
Using Lemma \ref{l:Mtrans} and assumption \eqref{M_property} for $M^{-1}$, we obtain that
      \[
      			 \abs{w_j} =  O( n^{-1}) \Bigl\| \sum_{j\in J-K} t_j \fvec_j\Bigr\|_1 
      					  =  O( n^{-1}) \rho\, \abs{J-K} \qquad \text{for  $j \notin J$.}
      \]
      Similarly, $\abs{w_j'} = O( n^{-1}) \rho\, \abs{K-J}$ for $j \notin K$.
      From~\eqref{xxiy} and~\eqref{xyww},
      $\wvec  - \wvec'  \in U_J(\rho_J^+) + U_K(\rho_K^+)$.  Therefore,
       \begin{equation*}
      			 \abs{w_j} \leq    \rho_K^+  + \abs{w_j'}
			 =\rho_K^+ + O( n^{-1}) \rho\, \abs{K-J}  \qquad \text{for $j \in J-K$.}
       \end{equation*}
       
      Define $\zvec = (z_1,\ldots,z_n)\trans  =\tau_B \(\vvec+\sum_{j\in J\cap K} t_j \fvec_j \).$ 
      We want to show that $\zvec \in \varOmega_{J\cap K}$. Property P1 holds immediately; that is,
      $\zvec = \sum_{j\in J\cap K} \hat{t}_j\fvec_j$
      for some $\hat t_j\in(-\frac12,\frac12]$, $j\in J\cap K$.
           Note  that 
             $\zvec = \xvec + \xivec' - \wvec$
            for some $\xivec' \in \Span\{\fvec_j\st j \in J\cap K\}$. Similarly to before (using $\xvec,\zvec \in B$ and Lemma \ref{l:pi}(b)) we get that 
            $\xivec'=\sum_{j\in J\cap K} k_j'\fvec_j$
            with $k_j' \in\{-1,0,1\}$.

 Next we show that $\zvec$ satisfies P2. 
      For sufficiently small $\zeta$,  we have  
   \begin{align*}
   		\rho_{J\cap K}^+ - \rho_{J}^+ = \rho \tfrac{\abs{J-K}}{2\zeta n}
        &\geq \max_{j\notin J} \,\abs{w_j},\\
        \rho_{J\cap K}^+ = \rho_{K}^+ +\rho \tfrac{\abs{K-J}}{2\zeta n}
                &\geq \max_{j\in J-K} \abs{w_j}.
   \end{align*}
  Therefore, 
  \[
  	\tau_B \(\xvec + U_J(\rho_J^+) \)\subseteq 
	\tau_B\( \xvec - \wvec + U_{J\cap K} (\rho_{J\cap K}^+)\) 
	= \tau_B\( \zvec + U_{J\cap K} (\rho_{J\cap K}^+)\),
  \]
  which establishes the final part of the lemma.
  In particular, we find that  property P2  for~$\zvec$ and the set $J\cap K$ follows from P2 for  $\xvec$ and the set $J$ (which holds since  $\xvec \in \varOmega_J$). 
 
 It remains to show that $\zvec$ satisfies P3.
  Using  Lemma~\ref{l:oneU}(b) for $\xvec$ we get $\min_{j\in J}\abs{t_j} > \rho_J^-$.
 Take any  $j\in J \cap K$ .
If   $k_j'\neq  0$ then $\abs{\hat{t}_j} = \frac12-O(\rho)> \rho_{J\cap K}^-$.   Otherwise,  recalling the bounds for the components of $\wvec$, we get 
\begin{align*}
	\abs{\hat{t}_j} &\geq  \abs{t_j}  -   O(n^{-1}) \rho\abs{J-K} \geq  \rho_J^-  - O(n^{-1}) \rho\abs{J-K}   \\&=
	\rho_{J\cap K}^{-} + \dfrac{\rho \abs{J-K}}{12 \zeta \max_{j\in [n]} \abs{m_{jj}} n }   -  O(n^{-1}) \rho\abs{J-K} 
	 > \rho_{J\cap K}^{-}
\end{align*}
for sufficiently small $\zeta$.
If $\xivec\ne 0$, then $\abs{\hat{t}_j}=\frac12-O(\rho)$  for some $j\in J-K$ and so, since $\infnorm{M^{-1}}=O(1)$, $\infnorm{\zvec} = \Omega(1)>\frac43\rho$,  which gives us the property P3(ii).

Next, consider the case  of trivial $\xivec'$.
Recall that $\xvec\in\varOmega_J$.
  If $\min_{j\in J} \abs{x_j} > \frac13\rho$  then 
  we have P3(i) already since  $z_j = x_j$ for all $j \in J\cap K$.
Therefore, we may assume  $\infnorm{\xvec} > \frac43\rho$.
Using Lemma~\ref{l:oneU}(a) for $\zvec$ we get  
 $\sum_{j\in J \cap K} \abs{\hat{t}_j} = O(n)\zeta \rho$ and 
    then, for any $j\notin J \cap K$, we can bound 
   \[
 	 \abs{x_j} \leq \abs{z_j} + \rho_{J\cap K}^+ \leq 
 	    O(n^{-1}) \sum_{j\in J\cap K} \abs{\hat{t}_j}  + \rho < \tfrac43\rho
 	    < \infnorm\xvec.
\]
   This  implies
  \[
   \infnorm{\zvec} \geq \max_{j\in J\cap K} \abs{z_j}  =  \max_{j\in J\cap K} \abs{x_j} = \infnorm{\xvec} > \tfrac43\rho.
 \]
 Thus, we get P3(ii) for $\zvec$ which completes the proof.
  \end{proof}

   We are also  interested in intersections of several regions 
  	 of the form    $\xvec+ U_J(\rho_J^+)$.
	 
 \begin{lemma}\label{l:manyU} 
 Assume $M$ and $M^{-1}$ satisfy \eqref{M_property} and let $J_1,\ldots,J_m\subseteq[n]$
 be such that $\abs{J_k}\leq\zeta n$ for each $k$ and $\bigcap_{k=1}^m J_k=\emptyset$,
 where $\zeta$ is sufficiently small.  Define $J=\bigcup_{k=1}^m J_k$.
Suppose that $\xvec_k\in\varOmega_{J_k}$ for all~$k$. 
 \begin{itemize}\itemsep=0pt
  \item[(a)]  For each $k$, $\xvec_k$ is uniquely determined by its coordinates from~$J_k$.
  \item[(b)]  $ \bigcap_{k=1}^m \tau_B(\xvec_k+ U_{J_k}(\rho_{J_k}^+)) \neq \emptyset$
    if and only if $\xvec_1,\ldots,\xvec_m$ agree on the coordinates from~$J$.
   \item[(c)] Suppose 
   $\xvec_1,\ldots,\xvec_m$ agree on the coordinates from~$J$.
     Define $\wvec=(w_1,\ldots,w_n)\trans $ to agree with 
       $\xvec_1,\ldots,\xvec_m$ on the coordinates from~$J$ and zero otherwise.
      Then $\wvec\in B$.  Moreover,
     there are intervals $I_1=I_1(\wvec),\ldots,I_k=I_k(\wvec)$ such that 
     $[-\frac13\rho,\frac13\rho]\subseteq I_k\subseteq [-\frac43\rho,\frac43\rho]$ for each $k$ and
     \[
 	  		\bigcap_{k=1}^m  \tau_B(\xvec_k+ U_{J_k}(\rho_{J_k}^+))
 	  		=\wvec + W(\wvec),
    \]
    where
     \[
 	  W(\wvec) = \{ (v_1,\ldots,v_n)\st v_j =0 \text{ for } j \in J 
 	     	\text{ and } v_j \in I_j, \text{ otherwise}\}.
 \] 
 \end{itemize}
 \end{lemma}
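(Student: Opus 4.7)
The proof naturally splits along parts~(a), (b) and~(c), which I would tackle in that order.

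For part~(a), I would observe that $\xvec_k=\sum_{j\in J_k}t_{k,j}\fvec_j$, so the standard coordinates $((\xvec_k)_i)_{i\in J_k}$ solve the linear system $\sum_{j\in J_k}t_{k,j}m_{ij}=(\xvec_k)_i$ whose coefficient matrix is the principal $|J_k|\times|J_k|$ submatrix of $M$. By~\eqref{M_property} this matrix has diagonal entries $\Theta(1)$ and off-diagonal entries $O(n^{-1})$, so for $|J_k|\leq\zeta n$ with $\zeta$ sufficiently small it is strictly diagonally dominant and invertible. This uniquely determines $(t_{k,j})_{j\in J_k}$ and hence~$\xvec_k$.

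For part~(b) in the ``only if'' direction, assuming the intersection is nonempty I would apply Lemma~\ref{l:pi}(c) to obtain lattice points $\xivec_1,\ldots,\xivec_m\in\calL$ (unique up to simultaneous translation) with $\bigcap_k(\xivec_k+\xvec_k+U_{J_k}(\rho_{J_k}^+))\neq\emptyset$. After normalizing $\xivec_1=0$, the pairwise reasoning from the proof of Lemma~\ref{l:inter} (Lemma~\ref{l:pi}(b) applied symmetrically in $J_k$ and $J_{k'}$) yields $\xivec_k-\xivec_{k'}\in\Span\{\fvec_j\st j\in J_k\cap J_{k'}\}$ with coefficients in $\{-1,0,1\}$; in particular $\xivec_k\in\Span\{\fvec_j\st j\in J_1\cap J_k\}$. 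To force $\xivec_k=0$, I would fix any $j\in J_1\cap J_k$; the hypothesis $\bigcap_l J_l=\emptyset$ supplies some $k'$ with $j\notin J_{k'}$, and then $j\notin J_k\cap J_{k'}$ makes the $\fvec_j$-coefficient of $\xivec_k-\xivec_{k'}$ vanish, while $j\notin J_1\cap J_{k'}$ makes that of $\xivec_{k'}$ vanish, forcing that of $\xivec_k$ to vanish as well. With every lattice shift zero, any $\avec$ in the intersection satisfies $a_j=(\xvec_k)_j$ for each $j\in J_k$, so for $j\in J_k\cap J_{k'}$ the desired agreement $(\xvec_k)_j=(\xvec_{k'})_j$ is immediate.

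For part~(c) together with the ``if'' direction of~(b), suppose the $\xvec_k$ agree on coordinates from $J$ and set $I_j:=\bigcap_{k=1}^m[(\xvec_k)_j-\rho_{J_k}^+,(\xvec_k)_j+\rho_{J_k}^+]$ for $j\notin J$. Since $|(\xvec_k)_j|=O(\zeta\rho)$ for $j\notin J_k$ by Lemma~\ref{l:oneU}(a), each factor in the intersection contains $[-\rho_{J_k}^++O(\zeta\rho),\rho_{J_k}^+-O(\zeta\rho)]$ and lies inside $[-\rho_{J_k}^+-O(\zeta\rho),\rho_{J_k}^++O(\zeta\rho)]$, yielding the required $[-\rho/3,\rho/3]\subseteq I_j\subseteq[-4\rho/3,4\rho/3]$ for small enough $\zeta,\rho$. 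The inclusion $\wvec+W(\wvec)\subseteq\bigcap_k\tau_B(\xvec_k+U_{J_k}(\rho_{J_k}^+))$ is a direct computation: for $\avec=\wvec+\vvec$ with $\vvec\in W(\wvec)$, agreement and $v_j=0$ for $j\in J$ give $(\avec-\xvec_k)_j=0$ for $j\in J_k$, while the definition of $I_j$ gives $|(\avec-\xvec_k)_j|\leq\rho_{J_k}^+$ for $j\notin J_k$, so $\avec-\xvec_k\in U_{J_k}(\rho_{J_k}^+)$. The reverse inclusion reuses the vanishing-lattice-shift argument from~(b). Finally, $\wvec\in B$ follows from $|w_j|=O(\zeta n\rho)$, obtained from the sum bound in Lemma~\ref{l:oneU}(a) applied to each $\xvec_k$, combined with the \eqref{M_property} structure of $M^{-1}$, yielding $|(M^{-1}\wvec)_i|=O(\zeta n\rho)$ uniformly, which lies in $(-\tfrac12,\tfrac12]$ for sufficiently small $\zeta,\rho$.

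The main obstacle will be the lattice-shift cascade inside part~(b): each pairwise shift is individually only constrained to lie in $\Span\{\fvec_j\st j\in J_k\cap J_{k'}\}$, and pinning all of them simultaneously down to zero requires chaining these pairwise constraints through every triple and exploiting the hypothesis $\bigcap_l J_l=\emptyset$ at a single point $j$, as outlined above. The $\wvec\in B$ bookkeeping in~(c) is more routine but depends sensitively on the smallness of $\zeta n\rho$, which I take to be part of the ``sufficiently small $\zeta,\rho$'' hypothesis.
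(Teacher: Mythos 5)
Your proof is correct, and parts~(a) and~(c) follow the same lines as the paper's argument (for~(a) you spell out the strict diagonal dominance of the $J_k$-principal submatrix, whereas the paper cites Lemma~\ref{l:Mtrans}, which is proved by the same device; for~(c) the interval bookkeeping and the observation that $|(\xvec_k)_j|=O(\zeta\rho)$ for $j\notin J_k$ are identical). The genuinely different step is the ``only if'' direction of~(b). The paper iterates Lemma~\ref{l:inter}: repeatedly absorbing pairs $S_{J_k},S_{J_{k'}}$ into a single $\tau_B(\zvec+U_{J_k\cap J_{k'}}(\rho^+_{J_k\cap J_{k'}}))$ until, using $\bigcap_k J_k=\emptyset$ and $\varOmega_\emptyset=\{\Zero\}$, the whole intersection is trapped inside $U_n(\rho)$; this forces each $\xvec_k+U_{J_k}(\rho_{J_k}^+)$ to sit inside $B$, so the $\tau_B$'s drop out and the coordinate agreement is read off directly. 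You instead call Lemma~\ref{l:pi}(c) to extract simultaneous lattice shifts $\xivec_k$, normalise $\xivec_1=\Zero$, and reuse the \emph{mechanism} inside Lemma~\ref{l:inter}'s proof --- namely the two-sided application of Lemma~\ref{l:pi}(b) --- to place $\xivec_k-\xivec_{k'}\in\Span\{\fvec_j:j\in J_k\cap J_{k'}\}$, after which the hypothesis $\bigcap_l J_l=\emptyset$ kills each $\fvec_j$-coefficient of each $\xivec_k$ by routing through a third index $k'$ with $j\notin J_{k'}$. Your chaining argument is correct (and is vacuously fine when $J_1\cap J_k=\emptyset$). The paper's route has the side benefit of producing the containment $\bigcap_k S_{J_k}\subseteq U_n(\rho)$ explicitly, which makes the removal of $\tau_B$ clean and also bounds $\infnorm{\wvec}$ by $\rho$ for free; your route avoids invoking Lemma~\ref{l:inter} as a black box and handles all $J_k$ symmetrically once the normalisation is fixed, but you must separately argue (as you do at the end of~(c)) that $\wvec\in B$.
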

  \begin{proof}
  Part (a) follows from property P1 for $\xvec_k,J_k$ and Lemma~\ref{l:Mtrans}.
  
  For part (b), first assume that $ \bigcap_{k=1}^m \tau_B(\xvec_k+ U_{J_k}(\rho_{J_k}^+)) \neq \emptyset$.
  Using Lemma \ref{l:inter} multiple times, we conclude that 
  			\[
 	  		\bigcap_{k=1}^m  \tau_B\(\xvec_k+ U_{J_k}(\rho_{J_k}^+)\)
 	  		\subseteq U_n(\rho)
 	  	\]
 	  	since $\rho_{\emptyset}^+ = \rho$, $\bigcap_{k=1}^m J_k=\emptyset$ and $\varOmega_{\emptyset} = \{\Zero\}$.
In particular, since the left hand side is nonempty by assumption,
 	  	\[
 	  		\xvec_k+ U_{J_k}(\rho_{J_k}^+)  \subseteq U_n (\rho + 2 \rho_{J_k}^+) \subseteq B.
 	  	\]
 	  	Since $\rho$ is sufficiently small,
 $\bigcap_{k=1}^m \tau_B\(\xvec_k   + U_{J_k}(\rho_{J_k}^+)\) = \bigcap_{k=1}^m \(\xvec_k   + U_{J_k}(\rho_{J_k}^+) \)$.
 In particular, the points of 
     $ \bigcap_{k=1}^m \(\xvec_k   + U_{J_k}(\rho_{J_k}^+) \)$
 	  	  agree in coordinate $j$ for all $j \in J$.

Now assume that $\xvec_1,\ldots,\xvec_m$ agree on coordinates from $J$ and
define $\wvec$ as in part~(c). 
  Let $\xvec_k = (v_{k,1},\ldots, v_{k,n})\trans  = \sum_{j\in J_k} t_{k,j} \fvec_j$. 
     From Lemma \ref{l:oneU}(a),  we  get  $\sum_{j \in J_k} \abs{t_{k,j}} = O(n) \zeta \rho$. 
   Then, by assumption \eqref{M_property} for $M^{-1}$, we obtain that 
   \[
   v_{k,j} = O(n^{-1}) \sum_{j \in J_k} \abs{t_{k,j}} =  O(1) \zeta \rho
     \qquad \text{for all $j\notin J_k$}.
   \]
   We conclude that the 
   components of vectors in $\xvec_k+U_{J_k}(\rho_{J_k}^+)$ corresponding to indices $j \notin J_k$  take values in the interval 
   \[I_{k,j} := [-\rho_{J_k}^+, \rho_{J_k}^+] + v_{k,j}
    = [-\rho_{J_k}^+, \rho_{J_k}^+] + O(1)\zeta \rho.\]  
   Since $\rho_{J_k}^+\in [\frac12\rho, \rho]$, we find that 
   $[-\frac13\rho,\frac13\rho] \subseteq I_{k,j} \subseteq  [-\frac43\rho,\frac43\rho]$. 
   Therefore, for $j\notin J$, the $j$th component of vectors in
   $\bigcap_{k=1}^m \(\xvec_k   + U_{J_k}(\rho_{J_k}^+)\)$
   lies in the interval $I_j=\bigcap_{k=1}^m I_{k,j}$
   and $[-\frac13\rho,\frac13\rho] \subseteq I_j\subseteq  [-\frac43\rho,\frac43\rho]$.
   By part (a), $\xvec_1,\ldots,\xvec_m$ are uniquely determined by
   $\wvec$ and $J_1,\ldots,J_k$ and therefore $I_j=I_j(\wvec)$.
   Since $\bigcap_{k=1}^m J_k=\emptyset$,
   \[
   \bigcap_{k=1}^m \(\xvec_k   + U_{J_k}(\rho_{J_k}^+)\) 
      = \wvec + W(\wvec) \subseteq U_n(\tfrac43\rho).
   \]
   In particular, $\wvec\in U_n(\frac43\rho)\subseteq B$.
   Therefore, for each $k$, 
   \[
       \xvec_k   + U_{J_k}(\rho_{J_k}^+) \subseteq U_n(\tfrac43\rho+2\rho_{J_k}^+) \subseteq B.
   \]
   This implies $\tau_B\(\xvec_k   + U_{J_k}(\rho_{J_k}^+)\)
     = \xvec_k   + U_{J_k}(\rho_{J_k}^+)$, which completes the proof of part~(c).
   Note also that $\wvec\in \bigcap_{k=1}^m \tau_B\(\xvec_k   + U_{J_k}(\rho_{J_k}^+)\)$,
   which completes the proof of part~(b).
    \end{proof}

For $J\subseteq [n]$, define
\begin{align*}
    S_J := \begin{cases}
            U_n(\rho)-U_n(\frac13\rho), & \text{ if $J=\emptyset$}; \\
            \tau_B\(\varOmega_J' + U_J(\rho_J^+)\), &\text{ otherwise},
           \end{cases}
\end{align*}
where $\varOmega_J'$ is defined by
\[
\varOmega_J'  := \varOmega_J - \bigcup_{K\subseteq J, K\neq  J} \tau_B\(\varOmega_K + U_K(\rho_K^+)\).
\] 
The following lemma establishes the important properties of $S_J$.
  \begin{lemma}\label{S-lemma} 
    Assume $M$ and $M^{-1}$ satisfy  \eqref{M_property}.
    Then the following holds if $\rho,\zeta$ are sufficiently small.
 	 \begin{itemize} 
 	  \item[(a)]  We have  $\displaystyle B - U_n(\tfrac13\rho) - \Babs  \subseteq
 	  		 \bigcup_{J\subseteq [n]\st \abs{J}\leq \zeta n} 
			 \!\! S_J \subseteq B - U_n(\tfrac13\rho)$.
 	  		
 		\item[(b)] If $J\cap K \neq \emptyset$
 	  		 and $\abs{J},  \abs{K}\leq\zeta n$ then $S_J$ and $S_K$ are disjoint.
                    
                   \item[(c)] Suppose  $J_1,\ldots,J_m \subseteq [n]$
                    with $\abs{J_1}, \ldots, \abs{J_m} \leq \zeta n$  such that   $\bigcap_{k=1}^m J_k = \emptyset $
                    and
                    $\bigcap_{k=1}^m S_{J_k} \neq \emptyset$. 
                    Assume $J = \bigcup_{k=1}^m J_k\ne\emptyset$.                    Then,
                    \[ 
                      \bigcap_{k=1}^m S_{J_k}
                        = \bigcup_{\wvec \in \varOmega} (\wvec + W(\wvec)) 
                      \] 
                    where $\varOmega\subseteq B \cap \Span\{\evec_j \st j \in J\}$ is
                    measurable in $\Span\{\evec_j \st j \in J\}$,
                    $W(\wvec)$ is defined in Lemma~\ref{l:manyU}(c), and 
                    the union is disjoint.
               \item[(d)] Under the conditions of part (c),
                  if $\wvec=(w_1,\ldots,w_n)\trans \in\varOmega$ then
                  $\abs{w_j}>\frac{\rho\,\abs{m_{jj}}}{9\max_{j\in [n]}\abs{m_{jj}}}$ for $j\in J$
                  and $\wvec+U_n(2\rho)$ contains a point from $B-\Babs$.
        
 	 \end{itemize}
 \end{lemma}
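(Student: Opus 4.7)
The four parts of the lemma are proved in turn, leveraging Lemmas~\ref{l:oneU}, \ref{l:inter}, and~\ref{l:manyU} in different combinations.

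For part~(a) I would treat the two inclusions separately. The inclusion $\bigcup S_J \subseteq B - U_n(\tfrac13\rho)$ is verified pointwise: for $\yvec \in S_J$ with $J \neq \emptyset$, Lemma~\ref{l:oneU}(c) combined with the P3 dichotomy (case~(i) delivers $|y_j| > \tfrac13\rho$ directly, while case~(ii) forces $\infnorm{\yvec} > \tfrac43\rho$ at an index in $J$ via Lemma~\ref{l:oneU}(a)) produces some $j \in J$ with $|y_j| > \tfrac13\rho$; the case $J = \emptyset$ is by definition. For the reverse inclusion, given $\xvec \in B - U_n(\tfrac13\rho) - \Babs$, take $J_0 = \{j : |x_j| > \tfrac13\rho\}$; then $|J_0| \leq \zeta n$ by~\eqref{S_property}. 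Solving the $|J_0|$-dimensional linear system on the submatrix of $M$ indexed by $J_0$ (invertible by Lemma~\ref{l:Mtrans}) produces $\xvec' \in \Span\{\fvec_j : j \in J_0\}$ with $(\xvec')_j = x_j$ for $j \in J_0$; one checks $\xvec' \in \varOmega_{J_0}$ directly, with P1 and P3(i) from the construction and P2 witnessed by $\xvec$ itself since $\xvec - \xvec' \in U_{J_0}(\tfrac13\rho) \subseteq U_{J_0}(\rho_{J_0}^+)$. If $\xvec' \in \varOmega_{J_0}'$ then $\xvec \in S_{J_0}$; otherwise Lemma~\ref{l:inter} lets us pass to a strictly smaller index set and iterate until termination.

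Part~(b) is a short contradiction argument: assuming $S_J \cap S_K \neq \emptyset$ with $J \cap K \subsetneq J$ (without loss of generality), pull back to witnesses $\xvec \in \varOmega_J'$ and $\xvec' \in \varOmega_K'$ and apply Lemma~\ref{l:inter} to produce $\zvec \in \varOmega_{J \cap K}$ with $\xvec \in \tau_B(\varOmega_{J \cap K} + U_{J \cap K}(\rho_{J \cap K}^+))$, directly contradicting $\xvec \in \varOmega_J'$. Part~(c) packages Lemma~\ref{l:manyU}: any common point $\yvec \in \bigcap_k S_{J_k}$ determines unique $\xvec_k \in \varOmega_{J_k}'$ (Lemma~\ref{l:manyU}(a)) which must agree on all $J$-coordinates (Lemma~\ref{l:manyU}(b)), defining the consensus $\wvec$ with zero non-$J$ entries; Lemma~\ref{l:manyU}(c) then gives the tile $\wvec + W(\wvec)$ for each consistent tuple, and varying $\wvec$ yields the disjoint union. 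Measurability of $\varOmega$ inside $\Span\{\evec_j : j \in J\}$ follows because the defining constraints are linear apart from P2, whose measurability is inherited from the measurability hypothesis on $\Babs$.

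Part~(d) is the most delicate. The lower bound $|w_j| > \rho\, m_{jj}/(9 \max_j m_{jj})$ for $j \in J$ follows by applying Lemma~\ref{l:oneU}(c) to any $\xvec_k$ with $j \in J_k$, since $\wvec$ agrees with $\xvec_k$ on $J_k$. For the second claim, fix any common point $\yvec \in \bigcap_k S_{J_k}$ and any index $k_0 \in \{1,\ldots,m\}$. The proof of Lemma~\ref{l:manyU}(c) shows that $\tau_B$ acts trivially on $\xvec_{k_0} + U_{J_{k_0}}(\rho_{J_{k_0}}^+)$, so $\yvec = \xvec_{k_0} + \uvec_\yvec$ for a unique $\uvec_\yvec \in U_{J_{k_0}}(\rho_{J_{k_0}}^+)$; in particular, the agreement $\yvec|_J = \wvec|_J$ forces $(\uvec_\yvec)_j = w_j - (\xvec_{k_0})_j$ on $J \setminus J_{k_0}$. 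Now invoke P2 for $\xvec_{k_0}$ to obtain $\bvec = \xvec_{k_0} + \uvec_b \in B - \Babs$ for some $\uvec_b \in U_{J_{k_0}}(\rho_{J_{k_0}}^+)$. Computing coordinate-wise: $b_j - w_j = 0$ on $J_{k_0}$; $b_j - w_j = (\uvec_b - \uvec_\yvec)_j$ on $J \setminus J_{k_0}$, bounded by $2\rho_{J_{k_0}}^+ < 2\rho$; and $|b_j - w_j| = |b_j| \leq |(\xvec_{k_0})_j| + \rho_{J_{k_0}}^+ < 2\rho$ off $J$ using $|(\xvec_{k_0})_j| = O(\zeta\rho)$ from Lemma~\ref{l:oneU}(a). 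Hence $\bvec \in (\wvec + U_n(2\rho)) \cap (B - \Babs)$. The main obstacle is precisely this final coordinate bookkeeping: the crucial trick is to parametrize both $\yvec$ and $\bvec$ in the common form $\xvec_{k_0} + \uvec$, so that the $\xvec_{k_0}$ contribution cancels in $\bvec - \wvec$ and each entry of the difference reduces to that of two vectors in $U_{J_{k_0}}(\rho_{J_{k_0}}^+)$; without this cancellation, $\bvec - \wvec$ would generically be of order $1$ along the $J \setminus J_{k_0}$ directions and the argument would collapse.
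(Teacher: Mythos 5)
Your proposal follows essentially the same road map as the paper, deploying Lemmas~\ref{l:oneU}, \ref{l:inter} and~\ref{l:manyU} in the same roles, so most of your argument is sound. There are, however, a couple of concrete issues in part~(a).

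In the reverse inclusion of (a), your candidate $\xvec' = \sum_{j\in J_0} t_j\fvec_j$ (with $(\xvec')_j = x_j$ for $j\in J_0$) need not lie in $B$: the coefficients $t_j$ come from $M_{J_0}^{-1}$ rather than $M^{-1}$, and there is no reason why $t_j\in(-\tfrac12,\tfrac12]$. If $\xvec'\notin B$ then P1 fails outright. The paper's fix is to replace $\xvec'$ by $\tau_B(\xvec')$; but then, when $\tau_B$ acts nontrivially, P3(i) is lost (some coordinate jumps by a lattice vector) and one must instead verify P3(ii) — i.e.\ $\abs{t_j}>\rho_J^-$ for all $j\in J_0$ together with $\infnorm{\tau_B(\xvec')}>\tfrac43\rho$. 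Your proposal only mentions P3(i), so this branch is missing. Relatedly, your claim $\xvec-\xvec'\in U_{J_0}(\tfrac13\rho)$ is slightly too strong: for $j\notin J_0$ one has $\abs{x_j}\leq\tfrac13\rho$ but $(\xvec')_j = O(\zeta\rho)$, giving only $\abs{x_j-(\xvec')_j}<\tfrac12\rho\leq\rho_{J_0}^+$; the conclusion that the difference lands in $U_{J_0}(\rho_{J_0}^+)$ still holds, but the intermediate bound is wrong.

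Parts (b) and (c) match the paper's argument. In part (d) your coordinate-by-coordinate bookkeeping is correct but longer than necessary: since $\wvec\in\wvec+W(\wvec)\subseteq\bigcap_k S_{J_k}$, one has $\wvec=\xvec_{k_0}+\uvec_\wvec$ with $\uvec_\wvec\in U_{J_{k_0}}(\rho_{J_{k_0}}^+)$, so the whole set $\xvec_{k_0}+U_{J_{k_0}}(\rho_{J_{k_0}}^+)$ is contained in $\wvec+U_n(2\rho_{J_{k_0}}^+)\subseteq\wvec+U_n(2\rho)$, and P2 for $\xvec_{k_0}$ finishes it in one line — there is no need to introduce a separate common point $\yvec$ (equivalently, your argument reduces to the paper's on the choice $\yvec=\wvec$). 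That said, your version, correctly carried out, does arrive at the same conclusion.
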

 \begin{proof}
 	   To prove (a) take any $\xvec \in \varOmega_J'$ with $0<\abs{J}\leq \zeta n$. 
 	   Note that  $\xvec + U_J(\rho_J^+)$ is disjoint from  $U_n(\frac13\rho)$
 	   by property P3  and $\rho_J^+ \leq \rho$. 
      If 
 	   $\tau_B(\xvec + U_J(\rho_J^+)) \neq  \xvec + U_J(\rho_J^+)$  
	   we get  $\infnorm{\xvec + \xivec} \geq \frac12-O(\rho)$ for any $\xivec \in \calL $ and then 
 	$\tau_B(\xvec + U_J(\rho_J^+)) \subseteq B - U_n(\frac13\rho)$.
 	Therefore, $S_J \subseteq B-U_n(\frac13\rho)$ for all $0<\abs{J}\leq \zeta n$, and $S_\emptyset\subseteq B-U_n(\frac13\rho)$ by definition, which implies the upper containment of part  (a). 

Now take any point $\yvec=(y_1,\ldots,y_n)\trans  \in B - U_n(\frac13\rho) - \Babs $. 	 
Define $J =J(\yvec)$ to be the set of indices $j\in[n]$  such that $\abs{\yvec_j}>\frac13\rho$. 
Since $\yvec \in B-  \Babs $, we have 
$\frac13\rho\abs{J} \leq 
\sum_{j\in J} \abs{y_j} \leq \zeta n \frac13\rho$, by the definition of $\Babs $, so $\abs{J}\leq \zeta n$. 
By Lemma~\ref{l:Mtrans}, we can expand
\[
	\yvec = \sum_{j \in J} t_j \fvec_j + \wvec,
\]
where $\wvec =(w_1,\ldots,w_n)\trans $ with $w_j = 0$ for $j \in J$.
First, we are going to show that  $\xvec =\tau_B(\sum_{j \in J} t_j \fvec_j) \in \varOmega_J$ .  Note that P1 holds.  

From \eqref{crit_bound}, we know that $\onenorm\yvec=O(n\rho)$
and $\sum_{j\in J} \abs{y_j}=O(n \zeta \rho)$.
From Lemma~\ref{l:Mtrans}, this implies that
\begin{equation}\label{sumtjbound}
  \sum_{j \in J} \,\abs{t_j} 
  = \sum_{j\in J} O\(\abs{y_j}+\onenorm\yvec/n\) = O(\zeta n \rho ).
\end{equation}
Using Lemma~\ref{l:Mtrans} again,  we have if  
$j \notin J$ then $\abs{w_j} \leq \abs{y_j} +  O(\zeta\rho) < \frac12\rho\leq \rho_J^+ $,
for sufficiently small $\zeta$.
This implies $\yvec \in \tau_B(\xvec +  U_J(\rho_J^+))$ and  so 
$\xvec$ satisfies~P2.

We have already observed that $\infnorm{\wvec} \leq \rho_J^+$. If 
$\xvec = \sum_{j \in J} t_j \fvec_j$, then $\abs{x_j} = \abs{y_j} >\frac13\rho $ for all
$j\in J$ so P3(i) holds. Otherwise, we have  that $ \infnorm{\xvec} = \frac12-O(\rho)>\frac43\rho$. 
Using assumption~\eqref{M_property} for $M$ and~\eqref{sumtjbound}, we bound  
\[
 	\tfrac13\rho < \abs{y_j} \leq \abs{m_{jj} t_j} +  O(n^{-1}) \sum_{j\in J} \,\abs{t_j} 
\]
for all $j\in J$, which implies $\abs{t_j } > \rho/(6m_{jj})> \rho_J^-$. 
Since $\yvec \in B$ and $\infnorm{\wvec}$ is small, if
$\xvec \ne \sum_{j \in J} t_j \fvec_j$ then $\abs{t_j} = \frac12 + O(\rho)$
for some $j\in J$. This implies P3(ii), completing the proof that
$\xvec \in \varOmega_J$.  

Next, if $\xvec \in \varOmega_J'$  (which includes $J=\emptyset$)
then $ \tau_B(\xvec + U_J(\rho_J^+))\subseteq S_J\cup U_n(\frac13\rho)$, which implies $\yvec\in S_J$.
Otherwise, if $\xvec \notin \varOmega_J'$ (which implies $J\ne\emptyset$),
 suppose that $K$ is the smallest subset of $J$ for which 
$\yvec\in \tau_B(\zvec + U_K(\rho_K^+))$ for some $\zvec\in\varOmega_K$.
If $\zvec\in\varOmega_K'$, then  $\yvec\in S_K$ for the same reason.
 Otherwise,  there is some $K'\subset K$ and
 $\xvec' \in \varOmega_{K'}$  such that $\zvec \in \tau_B(\xvec' + U_{K'}(\rho_{K'}^+))$.  
Obviously, $\zvec \in \tau_B (\zvec + U_K(\rho_K^+))$.  
Using Lemma \ref{l:inter}, we find that, for some $\zvec'\in\varOmega_{K'}$,
\[
\yvec \in \tau_B (\zvec + U_K(\rho_K^+)) \subseteq \tau_B(\zvec' + U_{K'}(\rho_{K'}^+)). 
\]
This contradicts the minimality of $K$, completing the proof of part~(a).
 	   
 For (b), assume by contradiction that $S_J \cap S_K \neq \emptyset$ for some 
 $J \cap K \neq \emptyset$ and  $\abs{J},\abs{K} \leq\zeta n$. Then, we can find some 
 $\xvec \in \varOmega_J'$ and $\yvec \in \varOmega_K'$ that 
 $\tau_B(\xvec + U_{J}(\rho_J)) \cap \tau_B (\yvec + U_{K}(\rho_K))\neq \emptyset$.	  
From Lemma \ref{l:inter}, we find some $\zvec \in\varOmega_{J\cap K}$ such that
\[
	\xvec \in \tau_B(\xvec + U_{J}(\rho_J^+))  \subseteq
	\tau_B(\zvec + U_{J\cap K}(\rho_{J\cap K}^+))
\]
This contradicts $\xvec\in\varOmega_J'$, proving part~(b).
 
   We proceed to (c). 
   Note that $S_\emptyset\cap S_K=\tau_B\(\varOmega_\emptyset'
        + U_{\emptyset}(\rho_{\emptyset}^+)\) \cap
       \tau_B\(\varOmega_K' + U_{K}(\rho_{K}^+)\)$ if $K\ne\emptyset$,
      since $S_K\cap U_n(\frac13\rho)=\emptyset$ by part(a). 
   Note also that for any $\xvec\neq \yvec$ 
       	with $\xvec,\yvec \in B\cap \Span \{\fvec_j\st j\in J_k\}$
        the sets  $\tau_B(\xvec + U_{J_k}(\rho_{J_k}^+))$ 
       and $\tau_B(\yvec + U_{J_k}(\rho_{J_k}^+))$ are disjoint,
        by Lemma~\ref{l:Mtrans}.
       Then we get that
      	\[
          \bigcap_{k =1}^m S_{J_k}  = 
        \bigcap_{k =1}^m \tau_B(\varOmega_{J_k}' + U_{J_k}(\rho_{J_k}^+)  
   =   \bigcup_{\xvec_1 \in \varOmega_1', \ldots, \xvec_m \in \varOmega_m'} 
    \bigcap_{k =1}^m  \tau_B(\xvec_k + U_{J_k}(\rho_{J_k}^+)),
      \]  
      	where  the  union  over $\xvec_1 \in \varOmega_1', \ldots, \xvec_m \in \varOmega_m'$ is disjoint.

Define $\varOmega$ to be the set of all vectors $\wvec$ for which there are
 $\xvec_1\in\varOmega_1',\ldots,\xvec_m\in\varOmega_m'$ such that
$\wvec$ agrees
with $\xvec_1,\ldots,\xvec_m$ on the coordinates from~$J$ and
are zero otherwise.
    By Lemma \ref{l:manyU}, 
    \[
      \bigcup_{\xvec_1 \in \varOmega_1', \ldots, \xvec_m \in \varOmega_m'}
      \bigcap_{k =1}^m  \tau_B(\xvec_k + U_{J_k}(\rho_{J_k}^+))
      = \bigcup_{\wvec\in\varOmega} ( \wvec+W(\wvec) ).
    \]
       This completes the proof of part~(c).
       
 For $\wvec\in\varOmega$
 there are $\xvec_1,\ldots,\xvec_m$ as in the proof of part~(c).
 Applying Lemma \ref{l:oneU}(c) to each $J_k$, we also find that 
         $
         	\abs{w_j}>\frac{\rho\,\abs{m_{jj}}}{9\max_{j\in [n]}\abs{m_{jj}}}
         $
         for all $j \in J$. 
         Since $\wvec \in \pi_B(\xvec_k + U_{J_k}(\rho_{J_k}^+))$  
          and  $\rho_{J_k}^+ \leq \rho$, we find that
         \[
         	\tau_B(\xvec_k + U_{J_k}(\rho_{J_k}^+)) \subseteq  \tau_B(\wvec  + U_n(2\rho)),
         \] 
          therefore $\tau_B(\wvec  + U_n(2\rho))
$ contains a point from $B-\Babs $
by property~P2.    This completes part~(d).      
 \end{proof}

 \subsection{Proof of Theorem \ref{MishaMagic}}\label{AppendixB2}
 Now we are ready to prove the main theorem in this Appendix.
  
 \begin{proof}
   Let $S:= \bigcup_{J\subseteq [n]\st \abs{J}\leq \zeta n} S_J$.
 We use the inclusion-exclusion formula to estimate
 \[
 	\biggl|\int_{S-S_\emptyset} H(\xvec)\, d \xvec \biggr| 
 	\leq \sum_{J_1,\ldots,J_m}  \,\biggl| \int_{\bigcap_{k=1}^m S_{J_k}} H(\xvec)\, d \xvec \biggr|,
 \]
 where the sum is over sets $\{J_1,\ldots,J_m\}$ such that 
 $\bigcup_{k=1}^m J_k\ne\emptyset$ and $\abs{J_k}\leq\zeta n$ for $k\in[m]$.
 
Take any $J\subseteq[n]$ with $0<\abs{J}\leq \zeta n$.
By periodicity of~$H$ and Lemma~\eqref{l:pi}(a), we get
 \[
 	\biggl|\int_{S_J} H(\xvec) \,d \xvec \biggr|
 	\leq \abs{M_J} \int_{\varOmega_J'}  \bigg| \int_{U_J(\rho_{J}^+)} H(\zvec + \yvec)\, d\yvec \bigg|\,   d\zvec. 
 \]	
If $\zvec = (z_1,\ldots,z_n)  = \sum_{j\in J} t_j \fvec_j$, 
then by Lemma~\ref{l:oneU}(a), $z_j=O(\zeta\rho)$ for $j\notin J$.
Define $\wvec=(w_1,\ldots,w_n)\trans $ by  taking 
$w_j = z_j$ if $j\in J$ and $w_j =0$ otherwise.
Then 
\[
  \wvec \in \zvec + U_J( O(\zeta \rho)) \subseteq  B + U_n(\tfrac13\rho)
\]
 and $\wvec \in \Span\{\evec_j\st j\in J\}$ and
that $\zvec +U_J(\rho_{J}^+) = \wvec + W_J$ for some 
$W_J$  satisfying~\eqref{W_property}.
Since $\zvec \in \varOmega_J$,  by Lemma~\ref{l:oneU}(c), we also find that 
         $\abs{w_j} = \abs{z_j}>\frac{\rho\,\abs{m_{jj}}}{9\max_{j\in [n]}\abs{m_{jj}}}$
         for all $j \in J$ so condition (ii) holds. 
        Since  $\rho_{J_k}^+ < \rho$, we also find that
         \[
         	\tau_B(\zvec + U_{J}(\rho_{J}^+)) \subseteq  \tau_B(\wvec  + U_n(2\rho)),
         \] 
          therefore $\tau_B(\wvec  + U_n(2\rho)) $ contains a point from $B-\Babs $.     
The determinant of the Jacobian $M_J$  is bounded by $\infnorm{M}^{\abs J}$
by Lemma \ref{l:MJdet}. The measure of  $\varOmega_J' \subseteq \{t_j \fvec_j \st j\in J, \abs{t_j}\leq\frac12\}$ is at most $1$.    
By our assumptions, we get that
\[	
	\biggl|\int_{S_J} H(\xvec) d \xvec \biggr| \leq \infnorm{M}^{\abs J} \,L(\abs J).
\]	
 
 Now consider intersections of the regions $S_{J_1},\ldots, S_{J_m}$,
 where $1\leq\abs{J_1},\ldots,\abs{J_m}\leq\zeta n$ and  $m\geq 2$,
 Define $J=\bigcup_{j=1}^m J_k$.
         If $J_1, \ldots J_m$  are not pairwise disjoint then     
     $\bigcap_{k=1}^m S_{J_k}$ is empty by Lemma \ref{S-lemma}(b).
         In the case that $\bigcap_{k=1}^m S_{J_k} \neq \emptyset$,
         we find from  Lemma \ref{S-lemma}(c)
         that there is a measurable set $\varOmega$ and a function
         $W(\wvec)$  satisfying~\eqref{W_property},
         such that
          \[ 
                      \bigcap_{k=1}^m S_k
                        = \bigcup_{\wvec \in \varOmega}  (\wvec + W(\wvec)).
          \] 
     By Lemma~\ref{S-lemma}(d), assumptions (i) and (ii) of the theorem are
     satisfied. 

         Since  $\varOmega \subseteq B \cap \Span\{\evec_j\st j\in J\}$
         and $B\subseteq U_n(\frac12\infnorm{M})$, we have  that 
         the measure of~$\varOmega$ is bounded above by $\infnorm{M}^{\abs{J}}$.
         Therefore the assumptions imply that 
 	    \[
 	    	 \biggl|\int_{\bigcap_{k=1}^m S_{J_k}} H(\xvec)\, d \xvec \biggr|  
		      \leq \infnorm{M}^{\abs{J}}\,L(\abs J).
 	    \]
 	    The number of choices for  disjoint  $J_1,\ldots J_m\in[n]$ 
 	    such that $|\bigcup_{k=1}^m {J_k}| = j$ is bounded above by~$n^j$ (choose
	    a subset of size $j$ then the cycle partition of a permutation on that subset).
	    This count includes the case $m=1$ which we bounded above.
	    Thus, we get
 	     \[
 	     	\biggl|\int_{S-S_\emptyset} H(\xvec)\, d \xvec \biggr| 
		  \leq \sum_{j=1}^{ n} \,(n\infnorm{M})^j\,L(j).
 	     \] 
	    Next, by Lemma~\ref{S-lemma}(a) and $S_\emptyset=U_n(\rho)-U_n(\frac13\rho)$,
	     we have a disjoint union
	    \[  B - U_n(\rho) = (S - S_\emptyset) \cup \(B - U_n(\tfrac13\rho) - S\),
	    \]
	    and $B - U_n(\frac13\rho) - S\subseteq\Babs$.
 	    Thus we get that 
 	     \[	
 	     		\biggl|\int_{B - U_n(\frac13\rho) - S} H(\xvec) \,d \xvec \biggr| 
 	     		\leq\int_{\Babs } \abs{H(\xvec)} \,d \xvec .
 	     \]
 	     This completes the proof. 
 \end{proof}
\end{appendices}


\nicebreak

\end{document}